\documentclass[12pt,
  oneside]{amsart}

\usepackage{amscd, amsmath, colonequals, cite}

\usepackage[usenames]{xcolor}
\usepackage{tikz}\usetikzlibrary{positioning,decorations.markings, patterns}
\usepackage{pgfplots}\pgfplotsset{compat=1.15}
\usepackage{caption, subcaption}

\title[]{A survey on normal forms of real submanifolds with CR singularity}
\author[]{Xianghong Gong$^{\dagger}$}

\address{Department of Mathematics,
 University of Wisconsin, Madison, WI 53706, U.S.A.}
 \email{gong@math.wisc.edu}

\author{Laurent Stolovitch}
\address{CNRS and Laboratoire J.-A. Dieudonn\'e
U.M.R. 6621, Universit\'e de Nice - Sophia Antipolis, Parc Valrose
06108 Nice Cedex 02, France.}
\email{stolo@unice.fr}
\thanks{$^{\dagger}$Partially supported by  NSF grant  DMS-2349865.
}

 \keywords{ Local analytic geometry, CR singularity, normal form, integrability, reversible mapping, linearization, small divisors, hull of holomorphy}
 \subjclass[2010]{32V40, 37F50, 32S05, 37G05}

\newcommand{\Fix}{\operatorname{Fix}}
\newcommand{\Diff}{\mathrm{Diff}}
\newcommand{\hatDiff}{\widehat{\Diff}}
\newcommand{\bX}{\bf X}
\newcommand{\tdd}[1]{\tfrac{\partial}{\partial #1 \vphantom{\hat X}}}

\newtheorem{thm}{Theorem}[section]
\newtheorem{cor}[thm]{Corollary}
\newtheorem{prop}[thm]{Proposition}
\newtheorem{lemma}[thm]{Lemma}
\newtheorem{remark}[thm]{Remark}
\newtheorem{assumption}[thm]{Assumptions}

\newcommand{\hot}{\mathrm{h.o.t.}}
\theoremstyle{definition}

\newtheorem{defn}[thm]{Definition}
\newtheorem{exmp}[thm]{Example}

\newtheorem{rem}[thm]{Remark}

\newtheorem{conj}[thm]{Conjecture}
\newtheorem{problem}[thm]{Problem}

\renewcommand{\th}[1]{\begin{thm}\label{#1}}
\newcommand{\eth}{\end{thm}}
\newcommand{\co}[1]{\begin{cor}\label{#1}}
\newcommand{\eco}{\end{cor}}
\renewcommand{\le}[1]{\begin{lemma}\label{#1}}
\newcommand{\ele}{\end{lemma}}
\newcommand{\pr}[1]{\begin{prop}\label{#1}}
\newcommand{\epr}{\end{prop}}

\newcommand{\ga}{\begin{gather}}
\newcommand{\ega}{\end{gather}}
\newcommand{\gan}{\begin{gather*}}
\newcommand{\egan}{\end{gather*}}
\newcommand{\al}{\begin{align}}
\newcommand{\eal}{\end{align}}
\newcommand{\aln}{\begin{align*}}
\newcommand{\ealn}{\end{align*}}
\newcommand{\eq}[1]{\begin{equation}\label{#1}}
\newcommand{\eeq}{\end{equation}}

\newcommand{\ci}{~\cite}

\newcommand{\f}[2]{\frac{#1}{#2}}

\newcommand{\dd}[2]{\frac{\partial #1}{\partial #2}}

\newcommand{\cc}{{\mathbb C}}
\newcommand{\nn}{{\mathbb N}}
\newcommand{\zz}{{\mathbb Z}}
\newcommand{\rr}{{\mathbb R}}
\newcommand{\Tt}{{\bf T}}
\newcommand{\qq}{{\bf Q}}

\newcommand{\ov}{\overline}

\newcommand{\id}{\operatorname{I}}

\newcommand{\RE}{\operatorname{Re}}
\newcommand{\IM}{\operatorname{Im}}

\newcommand{\cL}{\mathcal}

\newcommand{\all}{\alpha}

\newcommand{\gaa}{\gamma}

\newcommand{\del}{\delta}
\newcommand{\Del}{\Delta}
\newcommand{\var}{\varphi}
\newcommand{\e}{\epsilon}
\newcommand{\om}{\omega}
\newcommand{\Om}{\Omega}

\newcommand{\la}{\lambda}

\newcommand{\pd}{\partial}

\newcommand{\re}[1]{(\ref{#1})}
\newcommand{\rea}[1]{$(\ref{#1})$}

\newcommand{\nrc}[1]{Corollary~\ref{#1}}
\newcommand{\rp}[1]{Proposition~\ref{#1}}
\newcommand{\rt}[1]{Theorem~\ref{#1}}

\newcommand{\rta}[1]{Theorem~$\ref{#1}$}

\newcounter{pp}
\newcommand{\bpp}{\begin{list}{$\hspace{-1em}\alph{pp})$}{\usecounter{pp}}}
\newcommand{\epp}{\end{list}}

\newcounter{ppp}
\newcommand{\bppp}{\begin{list}{$\hspace{-1em}(\roman{ppp})$}{\usecounter{ppp}}}
\newcommand{\eppp}{\end{list}}

\def\beq{\begin{equation}}
\def\eeq{\end{equation}}

\begin{document}


\begin{abstract}
We survey results dating back from the seminal works of Bishop and Moser-Webster as well as more recent advances.
\end{abstract}

\date{\today}
 \maketitle
\tableofcontents


%


\setcounter{section}{0}
\setcounter{thm}{0}\setcounter{equation}{0}
\section{Introduction}
\label{sect1}

Let $M$ a real analytic submanifold of $\cc^n$. 
We first describe some local holomorphic invariants of $M$.
Denote by $T_xM$  the real tangent space of $M$ at $x$ and $T_x^{(1,0)}M$ the complex tangent space consisting of vectors $\sum c_j\frac{\pd}{\pd z_j}$ that are tangent to $M$ at $x$.   When  
$x\mapsto d_x:=\dim T_x^{(1,0)}M$ is a constant function on $M$,
$M$ is called a  Cauchy-Riemann (CR) submanifold. We say that a point $x_0\in M$ is  CR singular or a  complex tangent, 
 if $d_x$ is not  constant  in any neighborhood of $x_0$; let $M_{CRs}$ denote the set of complex tangents of $M$. Thus, a real hypersurface is always a CR submanifold.
 This survey is concerned  with the local holomorphic classification of real analytic submanifolds in $\cc^n$. One way to approach such a classification is to find a formal classification of real submanifolds under formal holomorphic mappings,  that are  formal power series in complex variables and then study the convergence of the mappings. 
The normal form of a real analytic hypersurface at a Levi non-degenerate point was achieved by the cerebrated works of \'E.~Cartan~\cite{Ca32,Ca33},  Tanaka~\cite{Ta62}, and Chern-Moser~\cite{chern-moser}. Chern and Moser constructed a formal normal form first and then showed that the normal form can be achieved by a (convergent) local biholomorphic map.   There are many important works about normal forms of real analytic hypersurfaces since the work of Chern-Moser, which study the Levi-degenerate case. A problem that has attracted a lot of attention is to study if two real analytic real hypersurfaces in $\cc^n$ are holomorphically equivalent if they are formally equivalent. Positive solutions were obtained depending on the types of degeneracy of the Levi forms. 
Baouendi, Ebenfelt and Rothschild~\ci{BER97}
 investigated  the holomorphic equivalence of real analytic CR of submanifolds in $\mathbb C^n$.  They showed that the formal equivalence of
  two real analytic hypersurfaces that are finitely non-degenerate indeed implies  their holomorphic equivalence.
 Juhlin-Lamel~\ci{JL13} proved convergence of formal equivalences  for 1-nonminimal hypersurfaces in $\cc^2$.
Finally,  Kossovskiy-Shafikov~\ci{KS16}  proved 
that there are two germs of
real analytic smooth  hypersurfaces of $m$-nonminimal ($m\geq2$) in $\mathbb C^2$ that are formally but
not holomorphically equivalent. Lamel and Stolovitch~\cite{lamel-stolo} gave a higher co-dimensional version of Chern-Moser theory. They defined
 an appropriate notion of normal form as well as conditions ensuring the holomorphy of a transformation  to such a normal form.

To limit the scope of this survey, we  would like  to focus on real submanifolds that have CR singularity; even within this scope we will mainly discuss topics in connections with the normal form theory. The normal-form results in this direction are already rich,  and they show solid connections between several complex variables and  dynamical systems, for instance. We will also present at the end of survey some open problems.

 A real submanifold in $\cc^n$ with a CR singularity must have codimension at least $2$.
The study of real submanifolds with CR singularity was initiated by E.~Bishop in his pioneering work~\cite{Bi65}. He investigated  a $C^\infty$ real submanifold $M$ of which
the complex tangent space  at a complex tangent
has dimension $1$, the smallest possible value.
 The very basic models of such submanifolds   are    the Bishop quadrics
$$
Q_\gaa\subset\cc^n\colon z_n=z_1\ov z_1+\gaa(z_1^2+\ov z_1^2),\quad \IM z_2=\cdots=\IM z_{n-1}=0,
$$
where $0\leq\gamma\leq\infty$ is the
 Bishop invariant, and $Q_\infty$ is defined by
$x_n=z_1^2+\ov z_1^2$, $\IM z_2=\cdots=\IM z_{n}=0$.
According to Bishop, the complex tangent at the origin  is  {\it elliptic} if $0\leq\gaa<1/2$,  {\it parabolic} if $\gaa=1/2$, or {\it hyperbolic} if $\gaa>1/2$.

In~\cite{MW83}, Moser and Webster studied the normal form problem of   real analytic $n$-submanifolds
$M$ in $\cc^n$ at a complex tangent with $\gamma\neq0$. They discovered that when $\gamma$ is non zero, the $M$ is associated with a  triple $\{\tau_1,\tau_2,\rho\}$ defined in a neighborhood of the origin of complexification $\mathcal M$ of $M$, where $\mathcal M$ can be locally identified with $\cc^n$, and $\tau_1,\tau_2$ are holomorphic involutions, i.e. $\tau_j^2=I$, and $\rho$ is an antiholomorphic involution, and $\tau_2=\rho\tau_1\rho^{-1}$. The significance of this discovery is that the local holomorphic (resp. formal holomorphic) classification of Bishop type $n$-submanifolds (with $\gamma\neq0$) in $\cc^n$ is equivalent to the local holomorphic (resp. formal holomorphic)  classification of the set of $\{\tau_1,\tau_2,\rho\}$.  The composition $\sigma=\tau_2\tau_1$ is reversible, i.e. $\sigma^{-1}=\tau_1\sigma\tau_1^{-1}$ with $\tau_1^2=I$. Such mappings in the real category  have been studied extensively in dynamical systems. For instance, Birkhoff~\cite{Bi15,Bi66} established the existence of periodic orbits for such planar mappings along with area-preserving mappings. The reader is referred to~\ci{sevryuk-lnm} for reversible dynamics.

%
%

The survey is organized as follows. In Section 2, we give a brief introduction on the Bishop invariant and the Moser-Webster normal form theory. In Section~\ref{sec:elli}, we present the results of Moser-Webster on surfaces with non vanishing elliptic Bishop invariant. 

In Section~\ref{gamma=0}, we describe results on real analytic surfaces with vanishing Bishop invariant. The normalization for this case is quite different from the case of non-vanishing Bishop invariant since this is the case where the Moser-Webster involution theory breaks down. 

In Section~\ref{gamma>.5}, we describe results on real analytic surfaces with   hyperbolic CR singularity. We will first discuss the case where the hyperbolic Bishop invariant is non-exceptional, i.e. the Moser-Webster invariant $\mu$ is not a root of unity. When $\mu$ is a root of unity, we will present a KAM type result on the existence of a large family of complex invariant curves that intersecting the real surfaces in real curves.  

In Section~\ref{sect:exec}, we describe results for holomorphically flattened surfaces in $\cc^2$ with the exceptional hyperbolic complex tangent. Among other technics, we will illustrate how the Écalle-Voronin moduli space for $1$-dimensional holomorphic maps can be used for the study of a reversible 2-dimensional map of which the linear part is periodic.  

In Section~\ref{gamma=.5}, we present the moduli space for real analytic $n$-manifolds in $\cc^n$ that are formally equivalent to $Q_{1/2}$. It turns out the classification has an infinite-dimensional moduli space, via an application of Voronin's moduli space for holomorphic mappings of $\cc^n$ with $n>1$.

In Section~\ref{sect:highcod}, we survey results on local hull of holomorphy for codimension two submanifolds with CR singularity.  

In Section~\ref{sect:lowdim}, we present a result on normalization of real analytic $m$-submanifolds with CR singularity in $\cc^n$ with $2(n+1)/3\leq m<n$.

The results from Section 2 through Section~\ref{gamma=.5} are concerned with 
$n$ submanifolds in $\mathbb C^n$  whose complex tangent space at the CR singular point is {\it minimum}, i.e.  one-dimensional.
In Section~\ref{sect:maxCR}, we discuss real analytic $n$-submanifolds $M$ in $\cc^n$ whose complex tangent space at the CR singular point has the maximum $n/2$ dimension with $n\geq 4$ an even integer.   For this maximum CR singularity, we describe and abelian CR singularity and its convergence of normalization under a Diophantine-type condition. Then we  use the normal form for abelian CR singularity to find the local hull of holomorphy when the complex tangents are elliptic. 
We also present results on the existence of intersecting submanifolds for hyperbolic complex tangents. Finally we present a result on divergent Poincar\'e-Dulac normal form for elliptic CR singularity in $\cc^6$.

In Section~\ref{sect:openprob}, we list some open problems.

\section{CR-singular  $n$-submanifolds in $\cc^n$}
\setcounter{thm}{0}\setcounter{equation}{0}
\subsection{Notation} 
Let us first introduce notation used throughout the paper.
We say that two real analytic submanifolds $M,\tilde M$ in $\cc^n$ are equivalent, if $M,\tilde M$ contain the origin and there is a biholomorphic mapping $\Phi$ of $\cc^n$ defined is a  neighborhood of the origin such that $\Phi(0)=0$ and $\Phi(M)=\tilde M$ as  germs of real analytic submanifold. We say that $M,\tilde M$ are formally equivalent, if $\Phi(0)=0$, $\Phi'(0)$ is non-singular, and $\Phi(z)$ is given by formal power series in $z$, and $\Phi(M)=\tilde M$ as formal submanifolds at $0$. A formal power series $f(z,\bar z)$ of the sole {\it complex} variable $z$ might be referred to {\it formal holomorphic} power series.

\subsection{Bishop invariant $\gamma$} 
Let $M$ be a real analytic $n$-submanifold in $\cc^n$ that has a complex tangent at the origin. Suppose that the dimension   of $T_0^{(1,0)}M$ is one, the smallest possible value. By a linear change of coordinates, we may assume that $T_0M$ is given by $z_n=0$ and $y':=(y_2,\dots, y_{n-1})=0$. Thus
\begin{equation}\label{eq:M1}
	M\subset \cc^n: z_n=F_n(z_1,\bar z_1,x'),\quad y_\all=F_\all(z_1,\bar z_1,x'),\quad 1< \all<n
\end{equation}
where  $F(z_1,\bar z_1,x')=O(2)$ and $F_\all(z_1,\ov z_1,x')=O(2)$ are respectively complex-valued and real-valued power series in $z_1,\bar z_1,x'$ without constant or linear terms. Since $F_\all$ are  real-valued, we have
\eq{barF_a}
\ov F_\all(\bar z_1,z_1,x'):=\ov{F_\all(z_1,\bar z_1,x')}=F_\all(z_1,\ov z_1,x')=O(2).
\eeq
 The set $M_{CRs}$ of complex tangents is the zero set of $\omega=dz_1\wedge\cdots\wedge dz_n|_M$. We have $\om=C(z_1,\ov z_1,x')dz_1\wedge d\bar z_1\wedge dx_2\wedge\cdots dx_{n-1}$. We assume that the complex tangent point $0$ is {\it non-degenerate}, i.e. $C(z_1,\bar z_1,0)$ is not identically zero, i.e. $dC\neq0$ on $T_0^{(1,0)}M$. Changing coordinates to simplify the quadratic term in $F_n$, we can get
\eq{Fn}
F_n(z_1,\bar z_1,y')=q_\gamma(z_1,\bar z_1)+i\del_n(z_1+\bar z_1)x_{2}+O(3)\eeq
where $\delta_n=0$   and $\del_n=0,1$ for $n>2$, and
 \eq{qgamma}
 q_\gamma(z_1,\bar z_1)= z_1\ov z_1+\gaa(z_1^2+\ov z_1^2),\quad 0\leq\gamma<\infty; \qquad q_\infty(z_1,\bar z_1)=z_1^2+\bar z_1^2.\eeq
Note that
$$C(z_1,\bar z_1,x')=z_1+2\gamma \bar z_1+i\delta_n x_{n-1}+O(2).$$
Thus for $\gamma\neq1/2$, $M_{CRs}$ is a codimension-two submanifold of $M$. When $\gamma_n=1/2$, one can see that $M_{CRs}$ can be a real analytic subset of the real hypersurface $\RE C=0$ in $M$ and $\dim_{CRs}$ can be any number between $0$ and $n-1$.
   When the dimension is $n-1$, we must have $\del_n=0$ and the complex tangents near the origin remain to be parabolic since the set of complex tangents near a hyperbolic or elliptic complex tangent has codimension two in $M$.

\begin{rem}One can see that  $M_{CRs}$, defined by $C=0$, is a closed subset of $M$ and the $\dim T_x^{(1,0)}M$ is upper semi-continuous on $M_{CRs}$. The set of non-degenerate complex tangents $x$, defined as the set of $x$ such that    $dC$ is not identically $0$  on $T_x^{(1,0)}M$, is    an open subset of $M_{CRs}$. 
\end{rem}

\subsection{Moser-Webster involutions} 
A useful   technic to study real analytic submanifolds is  to complexify   submanifolds and holomorphic mappings. Let us describe the complexification in our situation.  Let
\begin{equation}\label{eq:Mcomplex}
	\mathcal M\subset\cc^n\times\cc^n:\begin{cases}
 z_n=F_1(z_1,w_1,\frac{z'+w'}{2}),\quad w_n=\bar F_n(w_1,z_1, \frac{z'+w'}{2}), \\ z_j-w_j=2iF_j(z_1,w_1,\frac{z'+w'}{2})
 \end{cases}
\end{equation}
be the \emph{complexification} of $M$, where $(z,w)$ are coordinates of $\cc^n\times\cc^n$.
The antiholomorphic involution $\rho(z,w)=(\bar w,\bar z)$
preserves $\cL M$; via embedding $z\to (z,\ov z)$, we have
$M=\cL M\cap\Fix(\rho)$. Thus, $M$ becomes a totally real and real analytic $n$-submanifold in $\mathcal M$. 
A biholomorphic map 
$z\mapsto f(z)$ can be complexified to yield a  biholomorphic map of $\cc^n\times \cc^n$:
\[F\colon (z,w)\mapsto\big(f(z),\bar f(w)\big).\]
Note that $F$ commutes with $\rho$ and preserves $\pi_1^{-1}(w)$ and $\pi_2^{-1}(z)$, where $\pi_1(z,w)=w$ and $\pi_2(z,w)=z$. Further, if the $f$ sends $M$ into another $n$-submanifold $\tilde M$ of the same form, then $\tilde M$ must have the same Bishop invariant of $M$ and $F$ maps $\mathcal M$ into the complexification $\widetilde {\mathcal M}$ of $\tilde M$.
It is interested to see that when $\gamma\neq0$, $\pi^{-1}(w)$ can be regarded as a discrete version of Segre varieties~\ci{We94}, which consists of only two points.
This is very different from the Segre varieties that arise from the complexification of a real analytic hypersurface~\ci{We77} which are powerful in various applications in several complex variables.  

When $\gamma\neq0$,  Moser and Webster observed that each $\pi_i\colon\mathcal M\to\cc^n$ is a two-to-one branched covering and it admits a deck transformation $\tau_i\colon\cL M\to\cL M$, and $\tau_2=\rho\tau_1\rho$. 
Being deck transformations, they satisfy
\begin{equation} \label{realization}
		z_j\circ\tau_2=z_j, \quad 
		w_j\circ\tau_1=w_j,\qquad j=1,\dots, n. 
\end{equation}
Further one can verify that  $z_1,\dots, z_n$ restricted on $\cL M$ are generators of invariant holomorphic functions of $\tau_2$,
 i.e. they are functions in $z_1,\dots, z_n$, whereas $w_1,\dots, w_n$ generate the invariant functions of $\tau_1$.

Let us identify $\mathcal M$  locally  with $\cc^n$ via local holomorphic coordinate $(z_1,w_1,x')$  with $x_\all=\frac{z_\all+w_\all}{2}$.  Thus, $\{\tau_1, \tau_2\}$ becomes a pair of involutions
on $\cc^n$.  
The linear parts of $\tau_j$ are computed as 
\begin{eqnarray}\label{eq:tau1tau2}
&\tau_1:\left(\begin{matrix} z_1 \\ w_1\\ x\end{matrix}\right)\mapsto
	\left(\begin{matrix} -z_1-\tfrac{w_1+2i\delta_nx_2}{\gamma}+\hot\\ w_1\\ x+\hot \end{matrix}\right)\!,\\	&\tau_2:\left(\begin{matrix} z_1 \\ w_1\\ x\end{matrix}\right)\mapsto
	\left(\begin{matrix} z_1 \\ -\tfrac{z_1-2i\del_nx_2}{\gamma}-w_1+ \hot\\ x+\hot \end{matrix}\right)\!.
\end{eqnarray}
%
%
Since $\tau_1,\tau_2,\rho$ are periodic mappings, the holomorphic classification of each individual map is simply  determined by its linear part. 
Their composition
$\sigma:=\tau_1\circ\tau_2$
is a germ of biholomorphism of $\cc^n$ that can be seen as a ``billiard map", which plays a central role for the classification of the triples $\{\tau_1,\tau_2,\rho\}$.

The choice of holomorphic coordinates for $\mathcal M$ is, of course, not unique. It is convenient to normalize $\rho$ first according the linear parts of $\sigma$. When $\gamma\neq0,1/2$, in suitable linear coordinates $(\xi,\eta,\zeta_2,\dots,\zeta_{n-1})$ of $\cc^n$, we have
\begin{gather}\label{tauj-1}
\tau_j(\xi,\eta,\zeta)=(\lambda_j\eta,\lambda_j^{-1}\xi,\zeta)+O(2),\quad  \la_1=\la=\la_2^{-1}; \\
\label{trhot}
\tau_2=\rho\tau_1\rho, \\
\sigma(\xi,\eta,\zeta)=(\mu\xi,\mu^{-1}\eta,\zeta)+O(2),\quad\mu=\la_1\la_2^{-1}.
\end{gather}
When $0$ is an elliptic complex tangent, we have  
\begin{equation}\label{la1>1}
\la>1,\quad \rho(\xi,\eta,\zeta)=(\bar\eta,\bar\xi,\bar\zeta).
\end{equation}
 Thus $\mu>1$ and $\sigma$ is a hyperbolic map; when $0$ is hyperbolic, we have
  \eq{|la|=1}
 |\la|=1,\quad  0<\arg\lambda\leq \pi,\quad \rho(\xi,\eta,\zeta)=(\bar\xi,\bar\eta,\bar\zeta)
  \eeq
  Thus $|\mu|=1$, and  $\sigma$ is an elliptic map. 
 When $0$ is a parabolic complex tangent, we use  coordinates $\xi=z_1+w_1$, $\eta=z_1-w_1$ and $\xi_\alpha=z_\alpha+w_\alpha$. Thus
\begin{gather}\label{tauj-2}
\tau_j(\xi,\eta,\zeta)=(\xi,\eta+(-1)^{j}2(\xi-i\delta_n\zeta_2),\zeta)+O(2),\quad \rho(\xi,\eta,\zeta)=(\bar\xi,-\ov\eta,\bar\zeta),
\\
\sigma(\xi,\eta,\zeta)=(\xi+4i\delta_n\zeta_2,\eta+4\xi-8i\delta_n\zeta_2,\zeta)+O(2).
\end{gather}
In this case, we write 
\eq{la=1}
\la_1=\la=\la_2=\mu=1. 
\eeq
Thus, the $\lambda$ is determined by
\begin{equation}\label{eq:gamma1}
	\lambda+\lambda^{-1}=\gamma^{-2}-2.
\end{equation}

The following theorem is a basic result of Moser-Webster, which transforms the classification of the real analytic $n$-submanifolds of $\cc^n$ with non-vanishing Bishop invariant to the  classification of triple involutions $\{\tau_1,\tau_2,\rho\}$ with  $\rho$ and the  linear parts specified above, and vice versa.
 
\begin{thm}[\!\!\cite{MW83}] \label{thm:MW}~
Two germs of real analytic submanifolds $M,\tilde M$ in $\cc^n$ of the form \eqref{eq:M1}-\rea{qgamma} are holomorphically (resp. formally) equivalent if and only if  there is a single holomorphic (resp. formal) coordinate change $\var$ such that   $\var^{-1}\tau_1\var=\tilde\tau_1,\var^{-1}\tau_2\var=\tilde\tau_2$ and $\var\rho\var^{-1}=\rho$, where 
$\{\tau_1,\tau_2,\rho\}$ and $\{\tilde\tau_1,\tilde\tau_2,\tilde\rho\}$ are the   Moser-Webster involutions for $M,\tilde M$ respectively. Conversely, if $\tau_1,\tau_2$ are holomorphic (resp. formal)  involutions intertwined by $\tau_2=\rho\tau_1\rho$ by an anti-holomorphic (resp. anti-holomorphic and formal) involution $\rho$, where $\tau_1,\tau_2$ have the desired linear parts given by \rea{tauj-2}, or \rea{tauj-1}-\rea{trhot} together with \rea{la1>1} or \rea{|la|=1}, then a holomorphic equivalent class of $\{\tau_1,\tau_2,\rho\}$ is realized by the holomorphic (resp. formal) equivalent class of real analytic (formal) submanifold  of the form \eqref{eq:M1}-\rea{qgamma}.
\end{thm}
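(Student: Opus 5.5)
The forward direction comes from complexification, and the converse is a reconstruction argument using invariant theory of the involutions.

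\emph{Forward direction.} Let $f$ be a holomorphic (resp.\ formal) map with $f(M)=\tilde M$. Its complexification $F(z,w)=(f(z),\bar f(w))$ maps $\mathcal M$ onto $\widetilde{\mathcal M}$, commutes with $\rho$, and preserves the fibres of $\pi_1$ and of $\pi_2$. The involution $\tau_i$ is the unique nontrivial deck transformation of the two-to-one branched covering $\pi_i|_{\mathcal M}$. Hence $F\tau_iF^{-1}$ is a nontrivial deck transformation of $\tilde\pi_i$, and so it equals $\tilde\tau_i$. Writing $\varphi$ for $F$ restricted to $\mathcal M$, in the coordinates $(z_1,w_1,x')$, gives $\varphi^{-1}\tau_i\varphi=\tilde\tau_i$ after the usual relabelling, together with $\varphi\rho\varphi^{-1}=\rho$.

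In the formal category there is no actual covering. Here I would characterize $\tau_i$ algebraically: it is the unique formal involution $\ne I$ that preserves $w$ (resp.\ $z$) on $\mathcal M$. Uniqueness should follow from the linear part: the equation $\tau_1^*w=w$ with $\tau_1\neq I$ forces the linear part \eqref{eq:tau1tau2}, and a formal involution is determined by its fixed-point relation.

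\emph{Converse direction.} Given $\{\tau_1,\tau_2,\rho\}$, I would recover $M$ from the invariant functions. Since $\tau_1$ is an involution whose linear part is a reflection (its $-1$ eigenspace is one-dimensional), it is linearizable. After linearization, its ring of invariants is generated by $n$ functions $w_1,\dots,w_n$, namely the $n-1$ coordinates on the fixed hyperplane together with the square of the anti-invariant coordinate. Set $z_j=\bar w_j\circ\rho$. Because $\tau_2=\rho\tau_1\rho$, the $z_j$ are $\tau_2$-invariant. The map $\Psi=(z,w)\colon\cc^n\to\cc^{2n}$ is an embedding, and its image is a complex $n$-manifold $\mathcal M$ with $\Psi\rho=\rho_0\Psi$, where $\rho_0(z,w)=(\bar w,\bar z)$.

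Then $M:=\Psi(\Fix\rho)$, viewed in $\cc^n$ through $z$, is a real analytic $n$-submanifold. I would use the freedom in choosing the generators $w_j$ (composing with a biholomorphism of the $w$-space, and correspondingly on $z$) to bring $M$ to the form \eqref{eq:M1}--\eqref{qgamma}. The prescribed linear parts in \eqref{tauj-1}, \eqref{la1>1}, \eqref{|la|=1} or \eqref{tauj-2} are what recover the quadratic term $q_\gamma$ with $\gamma$ given by \eqref{eq:gamma1}.

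For well-definedness on classes: an equivalence $\varphi$ between two triples sends invariant rings to invariant rings. It therefore induces holomorphic maps $g,\bar g$ with $w\circ\varphi=g(\tilde w)$, and commuting with $\rho$ makes the first component the conjugate of $g$. This $g$ is then the equivalence of the submanifolds. In the formal case the same argument runs with formal invariant rings, and the formal linearization of an involution is elementary (average $\frac12(I+L\tau)$).

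\emph{Main obstacle.} The step I expect to be hardest is showing that the reconstructed $\mathcal M$ is exactly the complexification of an $M$ of the form \eqref{eq:M1}, with Moser--Webster involutions equal to the given $\tau_i$. This needs two facts. First, the $w$-invariants must generate all $\tau_1$-invariants. Second, the fibres of $\pi_2$ must be precisely the $\tau_2$-orbits, so that no extra branches appear. Both reduce, via linearization of $\tau_1$, to the invariant theory of $\zz_2$ acting by a reflection. That fibres are exactly orbits also uses $\gamma\neq0$, which guarantees that the linear parts of $\tau_1$ and $\tau_2$ are distinct reflections.
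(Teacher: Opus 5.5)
Your argument is essentially the paper's. The paper also realizes the triple by taking generators of the $\tau_2$-invariants, conjugating them by $\rho$ to get the $\tau_1$-invariants, and letting $M$ be the image of $\Fix\rho$; it writes the generators explicitly as $\eta+\eta\circ\tau_2$, $\zeta_\alpha+\zeta_\alpha\circ\tau_2$, $\eta\cdot\eta\circ\tau_2$ instead of linearizing the reflection. Your forward direction, via complexification and uniqueness of the nontrivial deck transformation, is the standard Moser--Webster argument, which the paper does not reproduce.

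One justification should be replaced. In the formal case an involution is not determined by its fixed-point set: both $(x,y)\mapsto(-x,y)$ and $(x,y)\mapsto(-x,y+x)$ fix exactly $\{x=0\}$. Uniqueness of the formal $\tau\neq I$ with $w\circ\tau=w$ comes instead from two steps. First, solve for $X$ by the implicit function theorem and factor $\bar F_n(w_1,Z,X(Z))-\bar F_n(w_1,z_1,x')=(Z-z_1)H$, where $\partial_ZH(0)\neq0$ because $\gamma\neq0$. Second, use that $\cc[[z_1,w_1,x']]$ is an integral domain, so either $Z=z_1$, giving $\tau=I$, or $H(Z,\cdot)=0$, which determines $Z$ uniquely.
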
	
 \begin{proof}For the reader's interest, we sketch  the realization part. 
  Recall that \re{realization} says that $z_1,\dots, z_n$ are generators of invariant functions of $\tau_2$.  Given the linear parts of $\tau_1,\tau_2$ as mentioned above, the invariant functions  of $\tau_2$ in coordinates $(\xi,\eta,\zeta)$ are power series in
 \begin{gather*}
 p_1(\xi,\eta,\zeta)=\eta+\eta\circ\tau_2(\xi,\eta,\zeta),  \quad p_\alpha(\xi,\eta,\zeta)=\zeta_\alpha+\zeta_\all\circ\tau_2(\xi,\eta,\zeta),  \\ p_n(\xi,\eta,\zeta)=\eta\circ\tau_2(\xi,\eta,\zeta)\eta.
 \end{gather*}
 Since $\tau_1=\rho\tau_2\rho$, the invariant functions of $\tau_1$ in coordinates $(\xi,\eta,\zeta)$ are power series in $q_j(\xi,\eta,\zeta):=\overline{p_j(\rho(\xi,\eta,\zeta))}$ for $1\leq j\leq n$. Consider first $0<\gamma<1/2$. In this case $\rho(\xi,\eta,\zeta)=(\ov\eta,\ov\xi,\ov\zeta)$. Then define $M$ via parametrization
 $$
 M\subset\cc^n\colon z_j=p_j(\xi,\ov\xi,\zeta), \quad j=1,\dots, n,\quad \xi\in\cc, \zeta\in \rr^{n-2}.$$
It is clear that complexification $\mathcal M$ of $M$ is given by
$$
\mathcal M\subset\cc^n\times\cc^n: z_j=p_j(\xi,\eta,\zeta),\quad w_j=q_j(\xi,\eta,\zeta) \quad j=1,\dots, n,\quad (\xi,\eta,\zeta)  \in\cc^n.
$$
Since $p\circ\tau_2=p$, then $z|_{\mathcal M}$ are invariant by $\tau_2$. 

When $\gamma>1/2$, we have $\rho(\xi,\eta,\zeta)=(\ov\xi,\ov\eta,\ov\zeta)$.  Using the same construction for $\mathcal M$, define
 $$
 M\subset\cc^n\colon z_j=p_j(\xi,\eta,\zeta), \quad j=1,\dots, n,\quad (\xi,\eta,\zeta)\in \rr^{n}.$$
When $\gamma=1/2$, we have $\rho(\xi,\eta,\zeta)=(\ov\xi,-\ov\eta,\ov\zeta)$. Thus, keep equation for $\mathcal M$ and define
 $$
 M\subset\cc^n\colon z_j=p_j(\xi,\eta,\zeta), \quad j=1,\dots, n,\quad (\xi,i\eta,\zeta)\in\rr^{n}.$$
In all three cases, one can verify that   $M$ has the desired Bishop invariant $\gamma$ satisfying \re{eq:gamma1}.

 For full details, see \cite{MW83}, \cite{We92}, and \cite{AG09}.  \end{proof}
\subsection{The normal form of Moser-Webster involutions} 
Moser-Webster derived their formal normal form first for the pair of involutions $\{\tau_1,\tau_2\}$ and then the triple $\{\tau_1,\tau_2,\rho\}$ satisfying $\tau_2=\rho\tau_1\rho$.
\begin{thm}[\!\!\cite{MW83}, formal normal form of involutions] \label{mw-formal}
	Let  $\tau_1$, $\tau_2$ be a pair of formal holomorphic involutions of which the linear parts are given by \rea{tauj-1} with $\lambda\in\cc^*$.
Assume that $\mu$ is not a root of unity. There exists a unique formal {\it normalized} transformation $\psi$ commuting with $\rho$  such that
	$$\psi^{-1}\circ\tau_1\circ\psi:\left\{\begin{array}{l}
		\xi'=\Lambda(\xi\eta,\zeta) \eta\\
		\eta'=\Lambda^{-1}(\xi\eta,\zeta) \xi\\
\zeta'=\zeta
	\end{array}\right.,\quad \psi^{-1}\circ\tau_2\circ\psi:\left\{\begin{array}{l}
		\xi'=\Lambda^{-1}(\xi\eta,\zeta) \eta\\
		\eta'=\Lambda(\xi\eta,\zeta) \xi\\ \zeta'=\zeta
	\end{array}\right.,$$
	where $\Lambda$ is a power series with $\Lambda(0)=\lambda$.  Consequently if  $\mu$ is not a root of unity and
$\tau_1,\tau_2,\rho$ are given by  \rea{tauj-1}-\rea{trhot}  satisfying \rea{la1>1} or \rea{|la|=1}, then $\psi\rho=\rho\psi$, and $\Lambda(t)=\bar \Lambda(t)$ for $\la>1$  or $\Lambda(t)\cdot\bar \Lambda(t)=1$ for $|\la|=1$.
\end{thm}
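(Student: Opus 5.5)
The plan is to normalize the reversible map $\sigma=\tau_1\tau_2$ first, and then recover the normal forms of $\tau_1,\tau_2$ from it. By \rea{tauj-1}, $\sigma$ has linear part $\operatorname{diag}(\mu,\mu^{-1},1)$. Since $\mu$ is not a root of unity, the only resonances are $\mu^{a}\mu^{-b}=\mu^{\pm1}$ with $a-b=\pm1$, together with the trivial resonances in the $\zeta$-directions. The resonant monomials are therefore $(\xi\eta)^k\zeta^\beta\xi$ in the $\xi$-component, $(\xi\eta)^k\zeta^\beta\eta$ in the $\eta$-component, and $(\xi\eta)^k\zeta^\beta$ in the $\zeta$-components.

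The first step is the standard Poincar\'e--Dulac procedure, order by order, but performed symmetrically. At each order I will choose a conjugation, tangent to the identity, that kills all non-resonant terms in both $\tau_1$ and $\tau_2$. It is cleaner to work directly with the pair. Set $T_0(\xi,\eta,\zeta)=(\lambda\eta,\lambda^{-1}\xi,\zeta)$ for the linear part of $\tau_1$. At order $m$, write $\tau_1=T_0+f_m+\hot$ and $\tau_2=T_0^{-1}\!\circ(\text{swap}) +g_m+\hot$ (the linear part of $\tau_2$). The involution condition $\tau_j^2=I$ forces $f_m$ to be anti-invariant under conjugation by the linear part, that is $f_m\circ T_0+T_0 f_m=0$, and similarly for $g_m$. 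A homogeneous change $\psi=I+h$ changes $f_m$ by $T_0h-h\circ T_0$. Its image is exactly the anti-invariant part, since $T_0$ is a linear involution and $h$ can be split into $\pm$ eigenparts. So one can first make $\tau_1$ equal to its linear part to all orders, i.e. conjugate $\tau_1$ to $T_0$ formally, because a formal involution is linearizable by $h=\tfrac12(I+T_0\tau_1)$.

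The second step uses the remaining freedom, namely transformations commuting with $T_0$, to normalize $\tau_2$, equivalently $\sigma=T_0\tau_2$. Now $\sigma$ is reversible with respect to $T_0$. Conjugating by $\psi$ commuting with $T_0$, the homological operator on the $T_0$-invariant part is $h\mapsto Lh-h\circ L$ with $L=\operatorname{diag}(\mu,\mu^{-1},1)$. Reversibility makes the non-resonant terms removable by an $h$ commuting with $T_0$, because the solution $h$ is unique and reversibility forces it to be $T_0$-invariant. After this, $\sigma$ is in Poincar\'e--Dulac form $\sigma(\xi,\eta,\zeta)=(M\xi,\tilde M\eta,\zeta+Z)$ with coefficients depending on $(\xi\eta,\zeta)$.

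The key extra step is to exploit reversibility, $T_0\sigma T_0=\sigma^{-1}$, to show that $Z\equiv0$ and $\tilde M=M^{-1}$ can be arranged. I expect this to be the main obstacle. The idea is that $\xi\eta$ and $\zeta$ are formal invariants that generate the $T_0$-invariant resonant functions. The $\zeta$-component is normalized using the remaining tangent-to-identity changes $\zeta\mapsto\zeta+c(\xi\eta,\zeta)$, combined with the reversibility identity which forces the drift in the invariants to be odd, hence removable. One then sets $\Lambda=\lambda\sqrt{M}$ suitably and factors $\sigma=\tau_1\tau_2$ to obtain the stated form.

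Uniqueness comes from fixing a normalization. The centralizer of the normal form consists of maps $(\xi,\eta,\zeta)\mapsto(a\xi,a\eta,\dots)$ with $a=a(\xi\eta,\zeta)$, and requiring "normalized" (for instance, trivial resonant part in $\psi$) pins $\psi$ down.

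Finally, for the reality statement, let $\rho$ be as in \rea{la1>1} or \rea{|la|=1}. Then $\rho\psi\rho$ also normalizes the triple and is also normalized, since $\rho$ preserves the class of normalized maps. By uniqueness, $\psi\rho=\rho\psi$. Applying $\tau_2=\rho\tau_1\rho$ to the normal forms and comparing the coefficients of $\eta$ gives $\Lambda^{-1}=\bar\Lambda^{-1}$ in the elliptic case, where $\rho$ swaps $\xi$ and $\eta$, so $\Lambda=\bar\Lambda$. In the hyperbolic case, where $\rho$ is plain conjugation, it gives $\bar\Lambda=\Lambda^{-1}$.
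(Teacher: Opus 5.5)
Your first two steps are sound. The map $h=\tfrac12(I+T_0\tau_1)$ does conjugate $\tau_1$ to $T_0$, and the Poincar\'e--Dulac map without resonant terms commutes with $T_0$, because $T_0\sigma T_0=\sigma^{-1}$ and conjugation by $T_0$ preserves the resonant/non-resonant splitting.

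The step you flag as the main obstacle, however, is argued with the wrong mechanism. A change $\zeta\mapsto\zeta+c(\xi\eta,\zeta)$ acts on the invariants as a conjugation tangent to the identity, so it cannot remove a nonzero drift. In fact the drift vanishes automatically. For $N=(M\xi,\tilde M\eta,\zeta+Z)$ one computes $T_0NT_0=(\tilde M\xi,M\eta,\zeta+Z)$. Hence $N$ and $N^{-1}=T_0NT_0$ induce the same map $\Phi(t,\zeta)=(tM\tilde M,\zeta+Z)$ on $t=\xi\eta$ and $\zeta$, so $\Phi\circ\Phi=I$. The linear part of $\Phi$ is a priori only unipotent, since $Z$ may contain a term linear in $t$; being involutive, it is $I$. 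A formal involution tangent to the identity is the identity, so $M\tilde M\equiv1$ and $Z\equiv0$.

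Likewise you cannot simply ``factor'' $\sigma$. Starting from $\tau_1=T_0$ and $\tau_2=(\lambda M^{-1}\eta,\lambda^{-1}M\xi,\zeta)$, you must conjugate, e.g.\ by $(a\xi,a^{-1}\eta,\zeta)$ with $a^4=\lambda^2/M$. This gives $\Lambda=\lambda(M/\mu)^{1/2}$, i.e.\ $\Lambda^2=M$, and this map has resonant terms.

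The genuine gap is the uniqueness paragraph. You assume some normalizing $\psi$ has no resonant terms, and with the paper's definition of normalized this is false in general. The tangent-to-identity maps taking one symmetric normal form to another are exactly the elements of
\[
G=\{(b\xi,b\eta,\zeta+c)\},\qquad b,c \text{ series in } (\xi\eta,\zeta).
\]
Right composition with elements of $G$ can therefore clear the resonant parts of the $\xi$- and $\zeta$-components, but the resonant part of the $\eta$-component is then forced.

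Concretely, let $n=2$, $\lambda>1$, $\rho(\xi,\eta)=(\bar\eta,\bar\xi)$, and
\[
\tau_1=\bigl(\lambda(1+i\xi\eta)\eta,\ \lambda^{-1}(1+i\xi\eta)^{-1}\xi\bigr),\qquad
\tau_2=\rho\tau_1\rho=\bigl(\lambda^{-1}(1-i\xi\eta)^{-1}\eta,\ \lambda(1-i\xi\eta)\xi\bigr).
\]
Then $\sigma=\tau_1\tau_2=(\mu(1+\xi^2\eta^2)\xi,\mu^{-1}(1+\xi^2\eta^2)^{-1}\eta)$ already commutes with $S=\mathrm{diag}(\mu,\mu^{-1})$. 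Any $\psi$ bringing the pair to the stated form conjugates $\sigma$ to $(\Lambda^2\xi,\Lambda^{-2}\eta)$. By uniqueness of the Jordan decomposition on jets, such a $\psi$ commutes with $S$, so it has only resonant terms, and a normalized one must be $I$. Yet the pair is not in the stated form: $\tau_1$ forces $\Lambda=\lambda(1+it)$ while $\tau_2$ forces $\Lambda=\lambda(1-it)$.

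So existence must be proved with a normalization that removes only one function's worth of resonant terms, for instance only in the $\xi$- and $\zeta$-components. Your reality argument then also needs repair, because in the elliptic case $\rho$ swaps the $\xi$- and $\eta$-components and does not preserve such a class. Instead, note that the set of normalizing maps is a coset $\psi_0G$, with $\rho G\rho=G$ and $\rho\psi_0\rho=\psi_0g_0$ for some $g_0\in G$ satisfying $g_0\,\rho g_0\rho=I$. Solving $g=g_0\,\rho g\rho$ order by order then gives a $\rho$-equivariant normalizing map $\psi_0g$. For that map your final computation correctly yields $\Lambda=\bar\Lambda$ in the elliptic case and $\Lambda\bar\Lambda=1$ in the hyperbolic case.
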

Here a formal map $\psi\colon\xi'=a(\xi,\eta,\zeta),\eta'=b(\xi,\eta,\zeta), \zeta'=c(\xi,\eta,\zeta')$ is {\it normalized} if
\begin{gather*}
a(\xi,\eta,\zeta)=\xi+\sum_{j\neq k+1, j+k+|P|>1} a_{jk P}\xi^j\eta^k\zeta^P, \\ b(\xi,\eta,\zeta)=\eta+\sum_{k\neq j+1, j+k+|P|>1} b_{jk P}\xi^j\eta^k\zeta^P,\\
c_\alpha(\xi,\eta,\zeta)=\zeta_\all+\sum_{j\neq k, j+k+|P|>1} c_{\all,jkP}\xi^j\eta^k\zeta^P, \quad 1<\all<n.
\end{gather*}

\section{Elliptic CR-singularity with non-vanishing Bishop invariant}\label{sec:elli}
\setcounter{thm}{0}\setcounter{equation}{0}
The main analytic results are concerned with  $0<\gamma <\frac{1}{2}$ and are due to Moser-Webster. They first showed the following.
\begin{thm}[\!\!\cite{MW83}] When $|\la|>1$, 
 the formal power series   $\psi$ defined in \rta{mw-formal} for $\tau_1,\tau_2$  is convergent in a neighborhood of the origin. 
\end{thm}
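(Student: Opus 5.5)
We plan to prove convergence by reducing to a statement about the single map $\sigma=\tau_1\tau_2$ and then running a Newton/majorant scheme in which there are no small divisors, because $|\mu|\neq1$ keeps all relevant divisors bounded away from zero.

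\textbf{Reduction to $\sigma$.} First normalize $\tau_1$. Since it is an involution with linear part $(\xi,\eta,\zeta)\mapsto(\lambda\eta,\lambda^{-1}\xi,\zeta)$, it is holomorphically linearizable. One can use the explicit averaging $\phi=\frac12(\mathrm{I}+L_1^{-1}\tau_1)$, where $L_1$ is the linear part of $\tau_1$; this conjugates $\tau_1$ to $L_1$. After this step we work only with maps commuting with $L_1$, and $\tau_2=\tau_1\sigma^{-1}$ is determined by $\sigma$. The eigenvalues of $\sigma$ are $\mu,\mu^{-1},1,\dots,1$ with $|\mu|>1$, since $\mu=\lambda^2$ and $|\lambda|>1$. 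The only resonances are therefore those generated by $\xi\eta$ and $\zeta$. Consequently the formal normal form of Theorem~\ref{mw-formal} amounts to a Poincar\'e--Dulac-type normal form
\[
\hat\sigma(\xi,\eta,\zeta)=\bigl(M(\xi\eta,\zeta)\xi,\;M(\xi\eta,\zeta)^{-1}\eta,\;\zeta\bigr),\qquad M=\Lambda^2 .
\]

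\textbf{Geometric step: invariant manifolds.} I will use the hyperbolicity of $\sigma$ transversally to the fixed-point set. The fixed points of $\sigma$ form an $(n-2)$-dimensional complex manifold; on the formal side it is $\xi=\eta=0$. By the stable/unstable manifold theorem with parameters $\zeta$ (Hadamard--Perron), there are convergent invariant hypersurfaces $W^u$ and $W^s$ tangent to $\eta=0$ and $\xi=0$ respectively, foliated over the fixed-point manifold. Applying $\tau_1$ interchanges them, because $\tau_1\sigma\tau_1=\sigma^{-1}$. After a convergent change of coordinates commuting with $\tau_1$ we may take $W^u=\{\eta=0\}$ and $W^s=\{\xi=0\}$. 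The restriction of $\sigma$ to each leaf is then a one-dimensional map $\xi\mapsto\mu(\zeta)\xi+\cdots$ with $|\mu(\zeta)|>1$, and Koenigs linearization with parameters, which converges, straightens it.

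\textbf{Remaining equations and the main obstacle.} What remains are the homological equations for the mixed terms $\xi^j\eta^k\zeta^P$ with $j,k\ge1$ and $j\neq k\pm1$. The relevant divisors have the form $\mu^{j-k}-\mu^{\pm1}$ with $j-k\neq\pm1$. Since $|\mu|>1$ and $|\mu^{m}-\mu|\ge c\,|\mu|^{\max(m,1)}$ for $m\neq1$, these are uniformly bounded below, indeed they grow. This is the key point: there are no small divisors.

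The main obstacle is that $\sigma$ is neither in the Poincar\'e domain nor in the Siegel domain. Because of the resonance $\mu\cdot\mu^{-1}=1$, the equations cannot be solved term by term independently: the unknown $\Lambda(\xi\eta,\zeta)$ enters nonlinearly. I would handle this in one of two ways. The first is a Newton iteration on $\sigma$: at each step, solve the linearized equations with the lower divisor bound, using a fixed shrinkage of the domain and quadratic convergence. The alternative I would try first, following Moser--Webster, is a majorant argument on the normalized $\psi$. One writes the conjugacy equation $\psi\hat\sigma=\sigma\psi$. Using the uniform divisor bound and the normalization of Theorem~\ref{mw-formal}, which kills the resonant terms, one shows that $\psi$ is dominated by the solution of an analytic functional equation of the form $\hat\psi=\mathrm{I}+C\,\hat f(\hat\psi)$, where $\hat f$ majorizes $\sigma-L$. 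The implicit function theorem then gives convergence of $\hat\psi$, hence of $\psi$.
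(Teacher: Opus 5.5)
The survey only quotes this theorem from Moser--Webster and gives no proof, so I am judging your proposal on its own terms. It has a genuine gap at the one step that carries the content.

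Your reductions are fine. The averaging $\frac12(\id+L_1^{-1}\tau_1)$ does linearize $\tau_1$, and the parametrized stable/unstable manifolds and the Koenigs linearization on them are correct. But they do not touch the difficulty. The relation $\hat\psi=\id+C\hat f(\hat\psi)$ is the Poincar\'e-domain majorant, which is available only when the normal form is linear. Here the equation is $\psi\circ\hat\sigma=\sigma\circ\psi$, with $\hat\sigma$ nonlinear and containing the unknown $M$. Your relation drops the term $\psi\circ\hat\sigma-\psi\circ L$. On a monomial $\xi^j\eta^k\zeta^P$ this term equals $\bigl(M(\xi\eta,\zeta)^{j-k}-\mu^{j-k}\bigr)\xi^j\eta^k\zeta^P$. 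Nothing in your sketch bounds it. Lower bounds on the constant divisors $\mu^{j-k}-\mu^{\pm1}$ and $\mu^{j-k}-1$ cannot do so by themselves: they grow only as $j-k\to+\infty$, not with the total degree.

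In fact the sketch proves too much. Those constants are the same for every germ with eigenvalues $\mu^{\pm1},1,\dots,1$. That includes germs whose Poincar\'e--Dulac normal form is $(a(\xi\eta)\xi,b(\xi\eta)\eta)$ with $ab\not\equiv1$. For such germs $\hat\sigma$ moves $\xi\eta$ by the parabolic map $u\mapsto a(u)b(u)u$. The dropped term then costs a factor of the total degree against bounded divisors, and the normalization diverges in general; time-one maps of analytically non-normalizable $1{:}{-}1$ resonant saddles are examples.

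Your Newton variant has the same hole. Linearizing at $L$ leaves the non-small term $\hat\sigma-L$ in the error. Linearizing at the current $\hat\sigma_k$ instead leads to a twisted homological equation that you do not address.

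The missing idea is to use that $\hat\sigma$ preserves $u=\xi\eta$ and $\zeta$. This integrability, which the involutions force, is exactly what condition (ii) in the definition of complete integrability preceding \rt{lFba} encodes. Expand each component of $\psi$ as $\sum_{m\ge0}c_m(u,\zeta)\xi^m+\sum_{m\ge1}d_m(u,\zeta)\eta^m$. Composition with $\hat\sigma$ is then diagonal, $c_m\mapsto M^mc_m$ and $d_m\mapsto M^{-m}d_m$, with no composition in $u$ and hence no loss.

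Put $f=\sigma-L$, and let $[g]_m$, $[g]_{-m}$ denote the coefficients of $\xi^m$, $\eta^m$. For the first component the equations read $(M^m-\mu)c_m=[f_1\circ\psi]_m$ and $(M^{-m}-\mu)d_m=[f_1\circ\psi]_{-m}$. For the $\eta$- and $\zeta$-components, replace $\mu$ by $\mu^{-1}$, resp.\ $1$. The normalization $c_1\equiv1$ turns the $m=1$ equation into a formula for $M-\mu$.

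For $m\neq1$ you need majorants of $(M^{\pm m}-\mu)^{-1}$ that are uniform in $m$, and $|\mu|>1$ supplies them. For example, when $m\ge2$, $(M^m-\mu)^{-1}=M^{-m}(1-\mu M^{-m})^{-1}$ acts in majorant form like the substitution $\xi\mapsto\xi/M$. This yields a coupled majorant system for $(\psi,M-\mu)$, analytic in its arguments, which the implicit function theorem closes. Until this step is supplied, convergence of $\psi$ is not established.
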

As an application,   they proved the following.
\begin{thm}[\!\!\cite{MW83}, convergence of normalization of surfaces]
\label{NFofM}
Let $0<\gamma<\frac12$. There exists a holomorphic change of variables near the origin transforming $M$  into 
	$$M^*\colon x_n=z_1\bar z_1+\Gamma_n(x_2,\dots, x_n)(z_1^2+\bar z_1^2),\quad y_2=\cdots=y_{n-1}=0,\quad \Gamma_n(0)=\gamma.$$
Furthermore, for $n=2$, $\Gamma_2(x_2)=\gamma+\delta x_2^s$ with $\del=0$ for $s=\infty$ and $\del=0,1$ otherwise.  
\end{thm}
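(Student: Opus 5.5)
The proof goes through the Moser--Webster correspondence of \rt{thm:MW}, together with the convergence of the normalizing map $\psi$ from the preceding theorem. For $0<\gamma<\frac12$ the origin is elliptic, so by \re{la1>1} we have $\lambda>1$ and $\mu=\lambda^2>1$. In particular $\mu$ is not a root of unity, and \rt{mw-formal} applies. It gives a normalized $\psi$ commuting with $\rho$ that conjugates $\tau_1,\tau_2$ to
\[
\tau_1^*(\xi,\eta,\zeta)=\bigl(\Lambda\eta,\Lambda^{-1}\xi,\zeta\bigr),\qquad \tau_2^*(\xi,\eta,\zeta)=\bigl(\Lambda^{-1}\eta,\Lambda\xi,\zeta\bigr),
\]
with $\Lambda=\Lambda(\xi\eta,\zeta)$ real, i.e.\ $\Lambda=\bar\Lambda$. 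By the previous theorem $\psi$ converges. Hence the triple $\{\tau_1,\tau_2,\rho\}$ is holomorphically equivalent to $\{\tau_1^*,\tau_2^*,\rho\}$ with $\rho(\xi,\eta,\zeta)=(\bar\eta,\bar\xi,\bar\zeta)$. By \rt{thm:MW}, $M$ is then holomorphically equivalent to any real analytic submanifold realizing this normalized triple. It remains to compute such a realization explicitly and show that it is $M^*$.

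For the realization I follow the sketch after \rt{thm:MW}: write down the generators of the $\tau_2^*$-invariant functions and parametrize by $\eta=\bar\xi$ and real $\zeta$. The invariants of $\tau_2^*$ are $\zeta$, $\xi\eta$ (since $\Lambda$ depends only on $(\xi\eta,\zeta)$, which $\tau_2^*$ preserves), and $\eta+\Lambda\xi$. I take
\[
z_1=\eta+\Lambda\xi,\qquad z_\alpha=\zeta_\alpha,\qquad z_n=\xi\eta .
\]
Restricting to $\eta=\bar\xi$, real $\zeta$ and real $\Lambda$, set $u=\xi\bar\xi$. Then $|z_1|^2=(1+\Lambda^2)u+\Lambda(\xi^2+\bar\xi^2)$ and $z_1^2+\bar z_1^2=(1+\Lambda^2)(\xi^2+\bar\xi^2)+4\Lambda u$. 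Eliminating $\xi^2+\bar\xi^2$ gives
\[
|z_1|^2-\frac{\Lambda}{1+\Lambda^2}\bigl(z_1^2+\bar z_1^2\bigr)=\frac{(1-\Lambda^2)^2}{1+\Lambda^2}\,u .
\]
Here $u=x_n$ is real and $\Lambda=\Lambda(x_n,x')$. Solving for $x_n$ gives $x_n=A(x)\bigl(|z_1|^2+\Gamma(x)(z_1^2+\bar z_1^2)\bigr)$ with $\Gamma=-\Lambda/(1+\Lambda^2)$. The sign of $\Gamma$ can be flipped by $z_1\mapsto iz_1$, or equivalently by using $\lambda^{-1}$. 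Since $A(0)\neq0$ and $\gamma$ is matched by \re{eq:gamma1}, the next step is to absorb the unit factor $A$. I do this by a holomorphic change of the form $z_n\mapsto h(z_n,z')$, a real-analytic reparametrization of $x_n$ that turns $x_n/A(x)=\dots$ into $x_n=\dots$ via the implicit function theorem, combined with rescaling $z_1$ by a real holomorphic function of $(z',z_n)$ if needed. This yields $x_n=z_1\bar z_1+\Gamma_n(x_2,\dots,x_n)(z_1^2+\bar z_1^2)$ with $y'=0$ and $\Gamma_n(0)=\gamma$. The condition $y_n=0$ holds because $z_n=\xi\bar\xi$ is real.

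For $n=2$ the remaining freedom consists of holomorphic maps preserving this form, essentially $z_2\mapsto\phi(z_2)$ with $\phi$ real and $z_1\mapsto z_1 g(z_2)$. I will use it to normalize the single-variable function $\Gamma_2(x_2)$. If $\Gamma_2-\gamma$ vanishes to order $s$, rescaling $x_2\mapsto cx_2$ and then applying a real change $x_2\mapsto x_2+\cdots$ should reduce it to $\gamma+\delta x_2^s$, with $\delta=\pm1$ made $1$ by the scaling (or possibly by $x_2\mapsto -x_2$ combined with the form-preserving adjustments). I expect this last step to be the main obstacle. The coordinate change in $x_2$ also alters the coefficient of $z_1\bar z_1$, which must be renormalized by rescaling $z_1$, and this feeds back into $\Gamma$. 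The cleanest way is to work at the level of $\Lambda(t)$ in the involution normal form. There, $t\mapsto\phi(t)$ with $\phi$ real and tangent to the identity acts on $\Lambda$ only by $\Lambda\mapsto\Lambda\circ\phi$ (up to the allowed normalizations). A one-variable real function $\lambda+ct^s+\cdots$ can then be brought to $\lambda+\delta t^s$ by solving $\Lambda(\phi(t))=\lambda+\delta t^s$ for $\phi$ by taking an $s$-th root. The sign of $\delta$ is handled by the choice of orientation when a real root exists, and $s=\infty$ is the case where $\Lambda$ is constant.
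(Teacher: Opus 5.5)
Your argument for the first assertion is correct and follows the route the survey indicates (Moser--Webster's): formal normal form, convergence of $\psi$ since $\lambda>1$, then realization through the $\tau_2^*$-invariants $\eta+\Lambda\xi$, $\xi\eta$, $\zeta$ on $\{\eta=\bar\xi,\ \zeta \text{ real}\}$. The factor $A$ is removed simply by the real holomorphic change $z_n\mapsto z_n/A(z_n,z')$; no rescaling of $z_1$ is needed. One slip: replacing $\lambda$ by $\lambda^{-1}$ leaves $\Lambda/(1+\Lambda^2)$ unchanged, so it is $z_1\mapsto iz_1$ that fixes the sign of $\Gamma$.

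The $n=2$ refinement, which you flag as the main obstacle, does not work as written, for two reasons.

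\emph{Normalizing $\Lambda$ is not normalizing $\Gamma_2$.} Bringing $\Lambda$ to $\lambda+\delta t^s$ does not bring $\Gamma_2$ to $\gamma+\delta x_2^s$. In your realization, $\Gamma_2$ is $\Lambda/(1+\Lambda^2)$ rewritten in the variable $x_2=t(1-\Lambda^2)^2/(1+\Lambda^2)$. Both the nonlinear map $x\mapsto x/(1+x^2)$ (whose second derivative at $\lambda$ is in general nonzero) and this change of variable create terms of order $x_2^{2s}$ and higher. So you must normalize $\Gamma_2$ itself, and the difficulty you anticipated there is not real. Take $g$ with real coefficients and $g(0)\neq0$. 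The map $(z_1,z_2)\mapsto(z_1g(z_2),\,z_2g(z_2)^2)$ keeps $y_2=0$. Because $g(z_2)=g(x_2)$ is real on $M$, it multiplies $z_1\bar z_1$ and $z_1^2+\bar z_1^2$ by the same factor $g(x_2)^2$. Hence it just replaces $\Gamma_2$ by $\Gamma_2\circ\chi^{-1}$ with $\chi(x)=xg(x)^2$, which is an arbitrary real-analytic germ with $\chi(0)=0$, $\chi'(0)>0$. If $\Gamma_2(x)-\gamma=cx^s(1+O(x))$, write this as $\operatorname{sgn}(c)\,h(x)^s$ with $h(x)=x\,\bigl(|c|(1+O(x))\bigr)^{1/s}$ and take $\chi=h$.

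\emph{The sign of $\delta$ cannot be normalized.} Your ``choice of orientation'' step must therefore fail, because the available reparametrizations always have positive derivative. On the involution side, a map preserving the normal form must be $(\xi,\eta)\mapsto(\xi f(\xi\eta),\eta f(\xi\eta))$. The factor must be the same in both slots, otherwise $\Lambda(0)=\lambda$ becomes $-\lambda$, and $f$ must be real to commute with $\rho$. Such a map acts by $\Lambda\mapsto\Lambda(tf(t)^2)$. On the surface side, $M$ lies in $\{x_2\geq0\}$, and $x_2\mapsto-x_2$ would make the coefficient of $z_1\bar z_1$ negative, which no change of $z_1$ can repair. Taking a negative real $s$-th root for odd $s$ is therefore not an admissible transformation. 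The sign of the leading coefficient is a holomorphic invariant, and both signs occur. The correct conclusion, as in Moser--Webster, is $\Gamma_2=\gamma+\delta x_2^s$ with $\delta=\pm1$ for finite $s$; the ``$\delta=0,1$'' in the statement should be read this way.
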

\begin{remark}
The normal form is useful and it
allows Moser-Webster~\cite{MW83} to show that 	near an  elliptic tangent with $\gamma\neq0$, there is an analytic family of holomorphic discs defined by $z_j=c_j\in\rr$ for $j>1$  and  $|z_1|^2+\Gamma_n(c_2,\dots, c_n)(z_1^2+\bar z_1^2)\leq c_n$ whose boundaries are ellipse contained in the normalized submanifold $M^*$,  and the union of these discs is the local hull of holomorphy of $M^*$.
\end{remark}

\section{Elliptic CR-singularity surface with vanishing Bishop invariant}\label{gamma=0}
When $\gamma=0$, the method of Moser-Webster involutions $\{\tau_1,\tau_2,\rho\}$ breaks down. For instance, for the complexification of $w=z\ov z$, the $\pi_1,\pi_2$ are no longer branched coverings as $\pi_1^{-1}(0)$ is a line. 
When $Q_0$ is perturbed, for instance let $M\subset\cc^2$ be defined by   $w=z\ov z+z^3+\bar z^3$. Then the complexification of $M$ yields two $3$-to-1 branched coverings $\pi_i\colon\cL M\to \cc^2$; one can verify that the only deck transformation of each $\pi_i$ is the identity map. 

In \cite{moser-zero} (see also \cite{HK95, huang-surv}), Moser carried out a study for $\gamma=0$ from a  formal power series point of view. When $M$ is not formally equivalent to $Q_0$, 
Moser derived the following  formal pseudo-normal form for $M$ with
$\gamma=0$:
\begin{equation}
	w=z\bar{z}+z^s+\bar{z}^s+2\RE\{\sum_{j\ge s+1}a_{j}z^j\}.
\end{equation}
Here $s$ is the simplest higher order invariant of $M$ when $\gamma=0$, which we call it the {\it Moser
invariant}. Moser showed the following convergence result.
\begin{thm}[\!\!\cite{moser-zero}]
If a real analytic surface $M$ is formally equivalent to the quadric $Q_0=\{(z,w)\in
{\mathbb C}^2: w=|z|^2\}$, then $M$ is
holomorphically equivalent to  $Q_0$.
\end{thm}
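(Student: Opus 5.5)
We argue along the lines of Moser's work \cite{moser-zero}. Since the Moser--Webster involutions are not available when $\gamma=0$, we will work directly with the defining equation and show that $M$ is foliated by holomorphic discs, as $Q_0$ is. We then straighten this foliation holomorphically.

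First we normalize. After a holomorphic change of coordinates, $M$ is given by $w=z\bar z+H(z,\bar z)$ with $H=O(3)$. Using holomorphic changes of $w$, and the fact that a real-valued $w$ can be replaced by a complex one, we may assume that $M$ has the form $\RE w$ and $\IM w$ parametrized over the $z$-plane. The formal equivalence to $Q_0$ then says the following. For every $N$ there is a polynomial biholomorphism that makes $H=O(N)$. Equivalently, there is a formal map $\hat\Phi$ with $\hat\Phi(M)=Q_0$. We will use this to find a real analytic function $u=\RE w+\dots$ on $M$ whose level sets $\{u=t\}$, $t>0$ small, are real analytic Jordan curves. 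These are the curves that should bound the holomorphic discs.

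Second, we construct the discs. For $Q_0$ the discs are $\{w=t,|z|^2\le t\}$. For $M$ we try to solve a Bishop-type equation: we look for analytic discs $A_t:\ov{\mathbb D}\to\cc^2$ with $A_t(\pd\mathbb D)\subset M$, $A_t(0)$ close to $(0,t)$, and winding number one in $z$. Rescaling by $z=\sqrt t\,\zeta$ and $w=t\,\omega$ turns $M$ into a real analytic surface $M_t$ whose defining function is $O(\sqrt t)$-close to that of $Q_0$ in the unit-scale coordinates. Here the hypothesis enters: because $M$ is formally equivalent to $Q_0$, after a polynomial change of coordinates the perturbation is $O(t^{N})$ and not merely $O(\sqrt t)$. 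This flat dependence on $t$ is what we need. The degenerate situation $\gamma=0$ (Bishop index zero) otherwise prevents a uniform implicit function argument, because the linearized Bishop problem at $Q_0$ has a kernel coming from the $S^1$-symmetry and from the parameter $t$. We handle this by fixing the normalization of the discs, namely the value at $0$ and the argument of the derivative, and then applying the implicit function theorem in a H\"older space uniformly in $t$. The result is a real analytic one-parameter family of discs $D_t$ whose boundaries foliate $M\setminus\{0\}$ near $0$.

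Third, we straighten the foliation. The union of the $D_t$ is a real analytic Levi-flat hypersurface with boundary, foliated by complex curves. We define a holomorphic function $W$ near $0$ that is constant equal to $t$ on $D_t$; this is possible because the family is real analytic in $t$ and the Levi-flat hypersurface extends across the origin by the flatness obtained above. We then take $Z$ to be the Riemann-map coordinate on each disc, normalized by the second step, which extends holomorphically in $(z,w)$. The map $(z,w)\mapsto(Z,W)$ sends $\pd D_t$ to $\{|Z|^2=W=t\}$, so it maps $M$ onto $Q_0$.

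The main obstacle is the second and third steps near $t=0$. There the discs shrink, and one must show that the family of discs, and hence $(Z,W)$, is holomorphic rather than merely smooth at the origin. We would first try to use the formal conjugacy $\hat\Phi$ truncated at high order as an approximate solution. Then we would show that the true holomorphic solution agrees with $\hat\Phi$ to infinite order and that Cauchy estimates on the shrinking discs are uniform. Together these give convergence.
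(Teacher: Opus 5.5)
There is a genuine gap, and it sits exactly where you place the ``main obstacle'': nothing in the proposal shows that $W$ and $Z$ are holomorphic at the origin. (The survey only cites \cite{moser-zero} for this theorem, so the comparison here is with what a proof must supply.)

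\textbf{Why Steps 1--3 do not reach the origin.} Steps 1--2 are the standard Bishop-disc construction. They give discs $D_t$ for $0<t<t_0$, real analytic in $t$ on that open interval, and that is all. The $O(t^N)$ closeness you extract from formal equivalence comes from an $N$-dependent polynomial change of coordinates, so its constants and its range $t<t_N$ also depend on $N$. What it yields is an infinite-order asymptotic expansion of $\widehat M=\bigcup_t D_t$ at $0$. That is $C^\infty$-type information, never real analyticity.

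So the sentence ``the Levi-flat hypersurface extends across the origin by the flatness obtained above'' is unsupported. Real analyticity of $\widehat M$ at $0$ amounts to holomorphic flattening of $M$. For $\gamma=0$ that is the later Huang--Krantz theorem \cite{HK95} quoted in Section~\ref{sect:highcod}, a substantial result in its own right rather than a by-product of the disc construction.

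Even granting $W$, the holomorphy of $Z$ across $w=0$ is exactly where $s=\infty$ must be used, and the disc construction cannot see $s$. Take the flat surface $u=|z|^2+\epsilon(z^s+\bar z^s)$, $v=0$. The normalized Riemann coordinate of the leaf $\{w=t\}$ is $Z=z+\epsilon z^{s+1}/w+O(\epsilon^2)$. This is perfectly good for $t>0$ but singular at $w=0$.

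\textbf{Why the proposed remedy fails.} First, there is no ``the'' formal conjugacy $\hat\Phi$ for the true map to agree with. As the survey remarks right after the theorem, $Q_0$ has divergent formal automorphisms, for example $(z,w)\mapsto\bigl(z\,g(w),\,w\,g(w)\bar g(w)\bigr)$ with $g$ any formal series and $g(0)\neq0$. So a given $\hat\Phi$ is in general divergent, and no holomorphic map can agree with it to infinite order. You must produce a convergent conjugacy, not compare with a given one. The claim that ``the true holomorphic solution'' agrees with $\hat\Phi$ also presupposes the object whose existence is in question.

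Second, your rescaled implicit-function argument controls $(Z,W)$ only on a horn-shaped neighbourhood of $\widehat M\setminus\{0\}$. That neighbourhood has size about $\delta\sqrt t$ in $z$ and $\delta t$ in $w$, so its $w$-projection is a thin sector $|\arg w|<\delta'$ around the positive real axis. Points with, say, $w<0$ are never reached, and Hartogs extension does not apply. Cauchy estimates there blow up like $(\delta t)^{-k}$. An asymptotic expansion on such a sector does not imply holomorphy in a full neighbourhood of $0$. This is precisely the sectorial (Stokes-type) situation of Sections~\ref{sect:exec}--\ref{gamma=.5}, where formal normalizations are realized on sectors yet diverge.

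\textbf{What is missing.} You need a genuine convergence mechanism: one that controls the normalization for complex $w$ in a full neighbourhood of $0$, not only for real $t>0$, and that uses the vanishing of Moser's invariant at every order with estimates uniform in the order.
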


Note that there are formal biholomorphisms of $Q_0$ that are divergent, which leads to substantial difficulty to further classify the Moser pre-normal form. 
In a complete new approach,
Huang-Yin~\ci{HY09} developed a method for the holomorphic classification for surfaces with vanishing Bishop invariant. Their formal classification is the following
\begin{thm}[\!\!\cite{HY09}]
	Let $M$ be a formal  Bishop surface with  vanishing Bishop invariant at $0$  and a finite
	Moser invariant $s$. Then
	there exists a formal transformation
	$(z',w')=F(z,w)$ with $ F(0,0)=(0,0)$
	such that
	in the $(z',w')$
	coordinates, $M'=F(M)$ is given  by  
	\begin{equation}\label{HYnf}
		w'=|z'|^2+z'^s+\bar{z'}^s+\varphi(z')+\overline{\varphi(z')}
	\end{equation}
	where
	$
	\varphi(z')=\sum\limits_{k=1}^{\infty}\sum\limits_{j=2}^{s-1}a_{ks+j}(z')^{ks+j}.
	$
Furthermore,  $\tilde F$ is another such formal transformation for a possibly different $\var$ in the same form, if and only if $\tilde F=R\circ F$ with $R(z',w')=(\mu z',w')$  and $\mu^s=1$.
\end{thm}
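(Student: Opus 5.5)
We plan to start from Moser's formal pseudo-normal form
$$
w=z\bar z+z^s+\bar z^s+2\RE\Big\{\sum_{j\geq s+1}a_jz^j\Big\},
$$
and remove, degree by degree, the coefficients $a_j$ with $j\equiv 0,1 \pmod s$. We keep only the formal transformations that preserve the pseudo-normal form. These are the maps preserving the complexified structure $w=z\bar z+\cdots$ with $w$ real. Their general shape will be
$$
z'=z+f(z,w),\qquad w'=w+g(w),
$$
where $g$ is real, i.e. $\bar g=g$. The role of $g$ is to preserve the real-valuedness of $w$. The pseudo-normal form $w=z\bar z+h(z)+\overline{h(z)}$ with $h=z^s+O(s+1)$ is stable under such maps, once we re-substitute $w$ using the equation of $M$.

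We assign weights $z\sim 1$, $w\sim 2$ and work with the homogeneous linearized equation at order $N$. Substituting $w=z\bar z+\cdots$, the equation reads:
$$
2\RE\big(\text{new harmonic part of degree }N\big)=2\RE(\text{old})+\Big[\bar z f_{N-1}+z\bar f_{N-1}-g_N+sz^{s-1}f_{N-s+1}+\cdots\Big]_{\text{harmonic}}.
$$
Here the dots are the terms produced by $z^s$ interacting with the lower-order parts of $f$. The term $z\bar z$ alone cannot kill pure $z^N$ terms, since $\bar z f+z\bar f$ only produces mixed terms once $w$ is replaced by $z\bar z$. Hence every pure holomorphic term must come from the interaction with $z^s+\bar z^s$. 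The key computation is this: taking $f=c\,w^{k}z$, or more generally $f=cz^aw^b$, and expanding $w^b=(z\bar z+z^s+\bar z^s+\cdots)^b$, the cross terms contribute to $z^{ks}$ and $z^{ks+1}$. The mixed terms that arise at the same time must be re-absorbed by lower-order adjustments in $f$ and $g$. We will set up the filtration by $N$ and show that the resulting linear map from $(f,g)$ of appropriate weight to the coefficients $a_{ks},a_{ks+1}$ is surjective. The choice of $g_N$ (a real multiple of $w^{N/2}$, and similar terms) handles the $z^{ks}$ coefficient, while $f$ handles $z^{ks+1}$. The remaining residues $ks+j$ with $2\le j\le s-1$ are not in the image; this is exactly why $\varphi$ retains them.

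The main obstacle is this bookkeeping. The substitution $w\mapsto z\bar z+z^s+\bar z^s$ is not homogeneous, so a change made at one degree feeds terms into higher degrees. The normalization at degree $ks$ therefore involves solving a triangular system in the parameters of $(f,g)$ across several degrees simultaneously. I would first redo the Huang--Yin style argument using the holomorphic invariant function approach. Concretely, on the complexification the projection $\pi_1$ is an $s$-to-$1$ branched cover generically. I would try to prove surjectivity by an explicit induction on $k$, checking that the coefficient multiplying the free parameter is a nonzero integer (something like $k s$ or $s$).

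For uniqueness, suppose $F$ maps one normal form to another. We would show, by the same linearized equation, that the degree-$N$ components of $F$ are forced to vanish degree by degree, since the kernel of the linearized operator restricted to normal-form-preserving data is trivial. This leaves only the linear part. Preserving $z^s+\bar z^s$ and $z\bar z$ forces the linear part $z\mapsto\mu z$, $w\mapsto w$ with $|\mu|=1$ and $\mu^s=1$. Conversely, such rotations clearly preserve the form of \eqref{HYnf}.
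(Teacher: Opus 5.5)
\textbf{Gap.} The survey does not prove this statement; it only quotes \cite{HY09}. So I am judging your plan against what a proof has to contain.

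\textbf{What is fine.} Your framework is right: Moser's pseudo-normal form, maps $z'=z+f(z,w)$, $w'=w+g(w)$ with $g$ real, and weights $z\sim 1$, $w\sim 2$. The claim that $g=g(w)$ is real does hold: the lowest weighted part of a formal holomorphic function that is real on $M$ must be a real multiple of a power of $w$, and you induct.

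\textbf{The kernel is not trivial.} Everything that makes the theorem true is deferred to ``bookkeeping'', and your uniqueness argument is incorrect. The leading operator
$L(f_{N-1},g_N)=\bar z f_{N-1}(z,z\bar z)+z\bar f_{N-1}(\bar z,z\bar z)-g_N(z\bar z)$
has a two-real-dimensional kernel at \emph{every} weight. It is spanned by
$f=c\,zw^{k-1}$, $g=2\RE(c)\,w^{k}$, and by
$f=c\,w^k-\bar c\,z^2w^{k-1}$, $g=0$.
These are the infinitesimal automorphisms of $Q_0$. Once a complement of $\ker L$ is fixed, they are the only free parameters.

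They are also invisible at their own weight. The first harmonic coefficient the first family can change is $a_{ks}$, and for the second family it is $a_{ks+1}$, far above the weights $2k-1$ and $2k$. So ``the degree-$N$ components are forced to vanish degree by degree'' fails: a nonzero kernel component at weight $2k-1$ can only be detected at weight $ks$. If you meant the kernel restricted to maps that preserve the normal form, that statement is the uniqueness claim itself.

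\textbf{What you actually need.} Both existence and uniqueness rest on a triangularity statement across widely separated weights. Order the coefficients as $a_s,a_{s+1},a_{2s},a_{2s+1},\dots$. You must show:
\begin{itemize}
\item the kernel parameters of weight $2k-1$ (resp. $2k$) do not touch any earlier coefficient;
\item they act on $a_{ks}$ (resp. $a_{ks+1}$) by an invertible real-linear map.
\end{itemize}
One way to get the first point is quasi-homogeneity under $z\mapsto tz$, $w\mapsto t^2w$, $a_j\mapsto t^{j-2}a_j$, together with the rotation $z\mapsto e^{i\theta}z$, keeping $a_s$ as a variable. Only then does the count of $4$ real parameters per $k$ against the $4$ real unknowns in $a_{ks},a_{ks+1}$ explain why the $a_{ks+j}$, $2\le j\le s-1$, survive. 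Your assertion that these residues ``are not in the image'' is the uniqueness statement, not something you have derived.

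\textbf{The diagonal coefficients need the compensating terms.} They cannot be read off from the seed $f=czw^{k-1}$, $g=2\RE(c)w^k$ alone, because the compensating terms forced in between contribute at the same weight. Take $k=2$ on the model $H=z^s$:
\begin{itemize}
\item the seed contributes $\alpha:=(1-s)c+\bar c$ to the coefficient of $z^{2s}$;
\item it also creates the mixed term $\alpha z^{s+1}\bar z$, and removing it forces the term $\alpha z^{s+1}$ in $f$;
\item through $s z^{s-1}f$ that term contributes another $-s\alpha$.
\end{itemize}
So the true coefficient is $(1-s)\alpha$, and for larger $k$ a whole chain of such terms enters. The result is a real-linear map $c\mapsto \kappa c+\kappa'\bar c$, not a single integer. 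Its invertibility, $|\kappa|\neq|\kappa'|$, has to be checked: for $k=2$ it fails exactly when $s=2$.

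\textbf{Two smaller points.} A real $g$-coefficient is one real parameter tied to $\RE c$, so $g$ cannot by itself kill the complex $a_{ks}$. And the $s$-to-$1$ covering $\pi_1$ supplies no invariant functions to work with: its deck group is generically trivial, as the survey notes for $w=z\bar z+z^3+\bar z^3$.
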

The following results are fundamental and provide a holomorphic classification of the surfaces with $\gamma=0$.
\begin{thm}[\!\!\cite{HY09}]
Let $M$ and $M'$ be  real analytic Bishop surfaces with vanishing Bishop invariant and finite  Moser invariant.
	Suppose that
	$F$ is a formal biholomorphic map that transforms $M$ into $M'$.   Then $ F$ is convergent.
\end{thm}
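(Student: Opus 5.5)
We first reduce to the normal form. By the formal classification theorem of Huang--Yin stated just above, $M$ and $M'$ can be formally mapped to normal forms \eqref{HYnf}, and I expect these normalizing maps to be convergent. Granting that, it suffices to show that any formal map between two surfaces in normal form is convergent. The uniqueness part of that theorem then says that such a formal map is, up to the rotation $z'\mapsto \mu z'$ with $\mu^s=1$, the composition of the normalizing maps. So everything rests on one claim: the \emph{formal normalizing transformation} $F$ sending a real analytic $M$ to the normal form \eqref{HYnf} is convergent, or can be chosen so. If $M$ is formally equivalent to $M'$, the two normal forms agree up to a rotation. Then $F'^{-1}\circ R\circ F$ is convergent, and it equals the given formal map up to the finite group of rotations, which are themselves holomorphic.

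To prove convergence of the normalization I would not use Moser--Webster involutions, which break down at $\gamma=0$. Instead I would use the complexification $\mathcal M$ and the $s$-to-$1$ branched coverings $\pi_1,\pi_2$. The idea of Huang--Yin is to replace the missing deck transformations by a canonical holomorphic structure attached to $\mathcal M$. Concretely, write $w=z\bar z+z^s+\bar z^s+\dots$. On the level set $w=\epsilon$, the equation in $\bar z$ defines an $s$-valued function. Using the finiteness of $s$ one obtains a holomorphic function $u$ on $M$, in effect the $s$-th root $u=z\,(1+\ldots)$ of a canonical invariant, for instance solving $u^s=z^s+\dots$ suitably. Together with $w$, the function $u$ gives holomorphic coordinates in which $M$ is of the form $w=|u|^2+u^s+\bar u^s+\mathrm{Re}$-terms with a controlled structure.

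Step by step, I would do the following.
\begin{enumerate}
\item Set up the normalization order by order in a weighted grading, giving $z$ weight $1$ and $w$ weight $2$. Show that at each weight the homological equation is uniquely solvable once the terms $a_{ks+j}$ with $j\in\{2,\dots,s-1\}$ are kept. The solution operator involves no small divisors, only finite-dimensional linear algebra with bounded coefficients depending on $s$.
\item Show that these linear operators have inverses with norms bounded independently of the degree, up to a fixed geometric factor.
\item Run a majorant series argument, in the style of Moser's proof for $Q_0$, to bound the coefficients of $F$ geometrically.
\end{enumerate}

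The main obstacle is step (ii). The map $H(z,w)\mapsto$ (terms removed) mixes holomorphic terms of different weights through the term $z^s+\bar z^s$. Also, the presence of divergent automorphisms of $Q_0$ shows that without the $z^s$ term the estimates fail. So the estimate must genuinely use $s<\infty$, with the operator's invertibility coming from the term $z^s$, and the bound must degrade only polynomially in the degree. I would try to prove it by solving first for the $w$-independent part via the restriction to the complexified Segre-type curves $w=0$, then propagating in $w$ with a Cauchy-estimate loss compensated by the weight shift $s-2>0$. If that loss is too large, I would instead pass to a rapidly convergent (Newton-type) scheme, since a polynomial loss is harmless there.
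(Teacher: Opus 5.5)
\textbf{There is a genuine gap.} Your argument rests entirely on the claim that the formal map $\Phi$ taking a real analytic $M$ to its Huang--Yin normal form \eqref{HYnf} converges, and that claim is not available.

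It is strictly stronger than the theorem. If $\Phi$ always converged, $\Phi(M)$ would be a real analytic surface, so the normal form of every real analytic $M$ would converge. Conversely, once the theorem is known, convergence of $\Phi$ is equivalent to convergence of the normal form. Whether the Huang--Yin normal form converges is exactly what this paper lists as an open problem in Section~\ref{sect:openprob}.

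The hedge ``or can be chosen so'' buys nothing. By the uniqueness statement, $\Phi$ is determined up to the finite group of rotations $z'\mapsto\mu z'$ with $\mu^s=1$. So if one normalizing map diverges, all of them do.

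Steps (i)--(iii) do not fill the gap. The claims that the homological operator has no small divisors, and that its inverses are bounded uniformly in the weight, are precisely what would be needed, and you give no argument for either. The normal form is obtained by using the residual formal automorphisms of $Q_0$, which are infinite-dimensional and possibly divergent, through their interaction with the weight-shifting term $z^s+\bar z^s$. This is what produces the peculiar family of surviving coefficients $a_{ks+j}$ with $2\leq j\leq s-1$. So there is no reason to expect a degree-by-degree triangular structure with uniform bounds, and estimates of the kind you posit would settle the open problem. The Newton-scheme fallback does not address this coupling either. Your construction of an ``$s$-th root'' $u$ is too vague to add anything beyond Moser's pseudo-normal form.

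The route indicated in the paper never normalizes $M$. It works with the given formal equivalence $F$ itself. Via Lemma~2.1 of Huang--Yin, one reduces to the case where $F$ is tangent to the identity, and then proves directly that this single map converges. Because only a given formal equivalence has to be controlled, not the normalizing map, the argument does not depend on whether the normal form \eqref{HYnf} converges.
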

 
An important step in their proof is  for the convergence of $F$  when $F$ is reduced to the case that
it is  
 tangent to the identity map via Lemma~2.1 in~\cite{HY09}.
\begin{cor}[\!\!\cite{HY09}]
Let $M_1$ and $M_2$ be  real analytic Bishop surfaces with $\gamma=0$ and $s\not =\infty$ at $0$. Suppose that $M_1$ has
	a formal normal form:
	$$w'=z'\bar{z'}+{z'}^s+\bar{z'}^s+2\RE\{\sum\limits_{k=1}^{\infty}\sum\limits_{j=2}^{s-1}
	a_{ks+j}({z'})^{ks+j}\};
	$$
	and suppose that $M_2$ has a formal normal form:
	$$w'=z'\bar{z'}+{z'}^s+\bar{z'}^s+2\RE\{\sum\limits_{k=1}^{\infty}\sum\limits_{j=2}^{s-1}
	b_{ks+j}({z'})^{ks+j}\}.$$
	Then $(M_1,0)$ is biholomorphic to $(M_2,0)$ if and only if there is a constant $\theta$, with $e^{s\theta i}=1$,
	such that $a_{ks+j}=e^{\theta j i}b_{ks+j}$ for any
	$k\ge 1$ and $j=2,\cdots,s-1$.
\end{cor}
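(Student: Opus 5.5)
The plan is to combine the two preceding results of Huang--Yin: the formal normal form theorem, with its uniqueness clause, and the convergence theorem for formal maps between real analytic Bishop surfaces with $\gamma=0$ and finite Moser invariant. Let $N_a$ and $N_b$ denote the normal form surfaces with coefficients $a_{ks+j}$ and $b_{ks+j}$, regarded a priori as formal surfaces. By hypothesis there are formal transformations $F_1$ and $F_2$ with $F_1(M_1)=N_a$ and $F_2(M_2)=N_b$.

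For the \emph{only if} direction, suppose $\Phi$ is a biholomorphism with $\Phi(M_1)=M_2$. Then $F_2\circ\Phi$ is a formal transformation sending $M_1$ to the normal form $N_b$. Both $F_1$ and $F_2\circ\Phi$ normalize $M_1$. The first step is to read the uniqueness clause of the formal normal form theorem as a statement about the whole family of normal forms, not just one of them: any two normalizing maps of $M_1$ differ by $R(z',w')=(\mu z',w')$ with $\mu^s=1$. This gives $F_2\circ\Phi=R\circ F_1$, and hence $R(N_a)=N_b$.

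Next write $\mu=e^{i\theta}$ with $e^{is\theta}=1$ and substitute. The image $R(N_a)$ is given by
\[
w'=|z'|^2+z'^s+\bar z'^s+2\RE\Big\{\sum a_{ks+j}e^{-i(ks+j)\theta}z'^{ks+j}\Big\},
\]
where I used $\mu^{-s}=1$ to keep the term $z'^s$ unchanged. Since $e^{-iks\theta}=1$, the coefficients become $a_{ks+j}e^{-ij\theta}$. A graph of the form $w'=h(z',\bar z')$ is determined by $h$, and the coefficient of $z'^m\bar z'^0$ with $m$ not divisible by $s$ is read off directly. Comparing with $N_b$ therefore gives $b_{ks+j}=e^{-ij\theta}a_{ks+j}$. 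Replacing $\theta$ by $-\theta$, which still satisfies $e^{is\theta}=1$, yields the stated relation $a_{ks+j}=e^{ij\theta}b_{ks+j}$.

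For the \emph{if} direction, the same computation shows that $R$ sends $N_a$ to $N_b$ when $\mu=e^{-i\theta}$, or with the appropriate sign. Then $F_2^{-1}\circ R\circ F_1$ is a formal biholomorphism taking $M_1$ to $M_2$. Both surfaces are real analytic Bishop surfaces with vanishing Bishop invariant and finite Moser invariant $s$; the invariant $s$ agrees because it appears in both normal forms. By the convergence theorem of Huang--Yin stated above, this formal map converges, so $(M_1,0)$ and $(M_2,0)$ are biholomorphic.

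I expect the main obstacle to be the first step: justifying that the uniqueness clause applies across different target normal forms, not only for a single fixed $\varphi$. The statement already allows a possibly different $\varphi$, so this should be immediate, but I would check that the formal maps are normalized consistently, with the linear part preserving $w$ up to the allowed rotation. Everything else is bookkeeping of the phase factors.
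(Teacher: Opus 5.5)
Your proposal is correct and is essentially the argument behind the paper's citation of Huang--Yin: the uniqueness clause of the formal normal form theorem forces two normalizing maps of $M_1$ to differ by a rotation $(z',w')\mapsto(\mu z',w')$ with $\mu^s=1$, which yields the coefficient relation, and the convergence theorem for formal equivalences gives the converse. The only slips are harmless sign bookkeeping. The relation $b_{ks+j}=e^{-ij\theta}a_{ks+j}$ already is $a_{ks+j}=e^{ij\theta}b_{ks+j}$, so no replacement $\theta\mapsto-\theta$ is needed, and in the \emph{if} direction the rotation you want is $\mu=e^{i\theta}$, not $e^{-i\theta}$.
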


One can also consider normal forms of real analytic submanifolds under volume-preserving homomorphisms. Motivated by Moser's results, the following is proved.
\begin{thm}[\!\!\cite{Go94}]
Let $M\subset\cc^n$ be a real analytic surface with vanishing Bishop invariant. Then there exists a unique formal biholomorphism $\psi$ that preserves $dz\wedge dw$ such that $\psi(M)$ is given by
\begin{equation}\label{vol-nf}
	w=z\bar{z}+2\RE\{z^2\sum_{j+k\geq1, j\geq k}z^{j} \bar z^k\}.
\end{equation}
Furthermore, the $\psi$ converges if all $a_{jk}$ vanish.
\end{thm}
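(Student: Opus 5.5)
We will prove the statement in two stages: first the existence and uniqueness of the formal volume-preserving normal form \eqref{vol-nf}, then convergence when all coefficients $a_{jk}$ vanish, i.e. when $M$ is formally equivalent to $Q_0$ by a volume-preserving map.

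\emph{Formal part.} We write $M\colon w=z\bar z+H(z,\bar z)$ with $H=O(3)$; a real-valued $\IM w$-part can first be removed by a holomorphic map $w\mapsto w+h(z,w)$, since $M$ is a surface in $\cc^2$. We represent volume-preserving formal maps as time-one maps (or generating functions) of Hamiltonian vector fields $X_f=f_w\partial_z-f_z\partial_w$, so the unknowns are a single power series $f(z,w)$. We proceed by induction on a weighted degree, giving $z$ weight $1$ and $w$ weight $2$. At each weighted degree $d$, the homological equation expresses the change of the degree-$d$ part of the defining function as a linear operator $L$ applied to the degree-$d$ part of $f$, with everything restricted to $Q_0$, where $w=z\bar z$. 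Concretely, $L f=\ov{w}$-type combination obtained from $f_w|_{w=z\bar z}\,\bar z-f_z|_{w=z\bar z}$ together with its conjugate, acting on the real functions $2\RE\sum c_{jk}z^j\bar z^k$.

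The main computation is to identify a complement of the image of $L$. Take the monomial $z^p w^q$ in $f$. Its contribution is
\[
q z^p (z\bar z)^{q-1}\bar z-p z^{p-1}(z\bar z)^q=(q-p)\,z^{p+q-1}\bar z^{q},
\]
up to conjugation and real parts. Hence the monomials $z^{j}\bar z^k$ reached with nonzero coefficient are those with $j=p+q-1$, $k=q$, and $q\ne p$. We will check that the unreachable terms, together with the normalization of the $\IM w$ part, are exactly the complement $\{z^{2+j}\bar z^k: j\ge k,\ j+k\ge1\}$ appearing in \eqref{vol-nf}. Uniqueness then follows because the kernel of $L$, namely $p=q$, i.e. $f=(zw)^p$-type terms, consists of flows preserving $Q_0$. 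We must verify that these kernel elements cannot be used to alter higher-order normalized terms. This requires a secondary check of the nonlinear interaction.

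\emph{Convergence.} When all $a_{jk}=0$, the map $\psi$ sends $M$ to $Q_0$. By Moser's theorem above, there is a convergent biholomorphism $\phi$ with $\phi(M)=Q_0$. Then $\psi\circ\phi^{-1}$ is a formal automorphism of $Q_0$, and the task reduces to replacing $\phi$ by a convergent automorphism-corrected map that is volume preserving. We would compute the Jacobian $J=\det\phi'$ and look for a convergent automorphism $g$ of $Q_0$ with $\det(g\circ\phi)'\equiv1$. Automorphisms of $Q_0$ include the maps $(z,w)\mapsto (z\,u(w),w\,|u|^2)$-type families with real data, and we expect these to allow solving a scalar equation for the Jacobian. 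Failing that, we fall back on the uniqueness from the formal part: we would construct the volume-preserving normalization by a Newton/majorant iteration. The small divisors are the factors $q-p$, which are integers, so they are bounded away from $0$ and cause no small-divisor problem. The only difficulty is the loss coming from restricting to $w=z\bar z$, which we would control with majorant series in weighted degree.

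The main obstacle is this last step. One must show that the Jacobian can be normalized to $1$ by a convergent automorphism of $Q_0$, or alternatively make the rapid-convergence scheme close. The formal bookkeeping of the image of $L$ is routine by comparison.
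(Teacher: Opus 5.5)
\textbf{Formal part.} Your homological operator is computed incorrectly, and the conclusions you draw from it are wrong. For $X_f=f_w\partial_z-f_z\partial_w$ acting on $M\colon w=z\bar z+H$, the weight-$d$ part of $H$ changes by $-Lf$, where
\[
Lf=\bigl(f_z+\bar z f_w+z\,\ov{f_w}\bigr)\big|_{w=z\bar z},\qquad L(c\,z^pw^q)=(p+q)c\,z^{p+q-1}\bar z^q+q\,\bar c\,z^q\bar z^{p+q-1}.
\]
The terms $f_z$ and $\bar z f_w$ enter with the same sign. The antiholomorphic term $z\ov{f_w}$ cannot be dropped, because it couples $z^j\bar z^k$ with $z^k\bar z^j$. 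So there is no kernel ``$p=q$''; for instance, the flow $(e^tz,e^{-t}w)$ of $f=zw$ does not preserve $Q_0$. Moreover, a nontrivial kernel would contradict the uniqueness of $\psi$, and no ``secondary check'' could repair that.

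The correct bookkeeping at weight $d$ is as follows. Write $f=\sum_q c_qz^{d+1-2q}w^q$. Then $c_q$ is sent to $\bigl((d+1-q)c_q,\,q\bar c_q\bigr)$, in coordinates given by the coefficients of $z^{d-q}\bar z^q$ and $z^q\bar z^{d-q}$.
\begin{itemize}
\item For $q\le (d-2)/2$, this real $2$-plane meets the normal-form plane $\{(a,\bar a)\}$ only in $0$, because $d+1\ne 2q$.
\item For $q=(d-1)/2$, the two sources $c_{(d-1)/2}$ and $c_{(d+1)/2}$ together fill the whole pair.
\item For $q=d/2$, the map $c\mapsto(\frac d2+1)c+\frac d2\bar c$ is bijective, so $|z|^d$ is fully covered.
\end{itemize}
Hence $L$ is injective, and its image is complementary exactly to $\{2\RE(a\,z^{d-q}\bar z^q)\colon q\le (d-2)/2\}$, which is the normal-form space. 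This is what gives existence and uniqueness.

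There is also a setup problem. You cannot first flatten by $w\mapsto w+h$, since that map is not unimodular, and then work only with real $H$. Unimodular flows do not preserve $\IM H=0$, so you must work with complex-valued $H$ and take the complement to be a real subspace.

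\textbf{Convergence.} Reducing via Moser's theorem to the formal automorphism $g_0=\psi\circ\phi^{-1}$ of $Q_0$ is the right idea. However, the step that carries the proof is missing, and the family you propose cannot supply it. The Jacobian of $(z\,u(w),\,w\,u\bar u)$ is $u\,(wu\bar u)'$, which depends on $w$ only (and equals $1$ only for $u\equiv1$), so it cannot cancel a $z$-dependent $J$. Nor is $\det(g\circ\phi)'\equiv1$ solvable for arbitrary $\phi$; what makes it solvable here is the formal existence of $\psi$.

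The missing ingredient is the description of all formal automorphisms of $Q_0$. Solving $B=\bar zA+z\ov A$ on $w=z\bar z$ gives the infinitesimal automorphisms $(a_1z+a_2z^2-w\bar a_2)\partial_z+w(a_1+\bar a_1)\partial_w$ with $a_j=a_j(w)$. Correspondingly,
\[
g_0(z,w)=\Bigl(e^{i\theta(w)}\sqrt{\psi_0(w)/w}\,\frac{z-wb(w)}{1-\bar b(w)z},\ \psi_0(w)\Bigr),
\]
with $\theta$ and $\psi_0$ real formal series. Then $K:=\det g_0'=\det(\phi^{-1})'$ is convergent and equals $e^{i\theta}\sqrt{\psi_0/w}\,\psi_0'\,(1-wb\bar b)(1-\bar bz)^{-2}$. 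From this:
\begin{itemize}
\item $\bar b=\partial_zK(0,w)/(2K(0,w))$ converges;
\item $\theta$ and $\psi_0$ are recovered convergently from $K(0,w)/(1-wb\bar b)$ by taking modulus and argument and integrating $\frac{d}{dw}\psi_0^{3/2}$.
\end{itemize}
So $g_0$ converges, and hence $\psi=g_0\circ\phi$ converges.

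Your Newton/majorant fallback never uses $a_{jk}=0$. If it worked, it would prove convergence of the unimodular normal form for every $M$, which the paper lists as an open problem. It cannot be treated as routine.
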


\section{Non-exceptional hyperbolic surfaces 
}\label{gamma>.5}
\setcounter{thm}{0}\setcounter{equation}{0}
When $M$ has an {\it non-exceptional} hyperbolic complex tangent, i.e. the Moser-Webster invariant $\mu$ is not a root of unity and $|\mu|=1$, the formal normal form of Moser-Webster \rt{NFofM} still applies (with $\gamma>1/2$ of course). Note that the normal form is contained in the hyperplane $\IM z_n=0$.  This turns out to be one of obstructions for the convergence of the normalization. Moser-Webster proved the following result.
\begin{prop}[\!\!\cite{MW83}]\label{non-flat-hyper}
	If $1/2<\gamma<\infty$ then the hyperbolic surface $z_1=z_1\bar z_1+\gamma \bar z_1^2+\gamma z_1^3\bar z_1$ cannot be transformed into a real hyperplane by any (convergent) biholomorphic transformation.
\end{prop}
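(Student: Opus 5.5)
The plan is to argue by contradiction and turn flatness into a statement about the dynamics of $\sigma=\tau_1\tau_2$. Suppose a convergent biholomorphism $\Phi$ sends $M$ into $\IM z_2=0$. Then $h:=z_2\circ\Phi$ is a convergent holomorphic function with $h=z_2+O(2)$ that is real-valued on $M$. Complexify: $h(z)$ restricted to $\mathcal M$ is invariant under $\tau_2$, by \re{realization}. Since $h|_M$ is real, $\ov h(w)=h(z)$ on $\mathcal M$. The left side is a function of $w$ alone, so it is invariant under $\tau_1$. Hence $F:=h(z)|_{\mathcal M}$ is a convergent, nonconstant first integral of both $\tau_1$ and $\tau_2$, and so of $\sigma$. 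In the coordinates \rea{tauj-1}--\rea{|la|=1} its lowest-order part is a nonzero multiple of $\xi\eta$, and $F\circ\rho=\ov F$. So it suffices to show that for this explicit surface, $\sigma$ admits no convergent first integral of the form $\xi\eta+O(3)$.

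First I will compute the involutions for $M$. The particular term $\gamma z_1^3\bar z_1$ is chosen so that the defining equation stays low degree in $\bar z_1$ (for each $z$ it is quadratic in $w_1=\bar z_1$, namely $w_2=z_1w_1+\gamma w_1^2+\gamma z_1^3w_1$). The deck transformation $\tau_1$ (fixing $w$) swaps the two roots $z_1$ of the conjugate equation. The deck transformation $\tau_2$ (fixing $z$) swaps the two roots $w_1$, and the root-sum relation gives it in closed form as $w_1\mapsto -(z_1+\gamma z_1^3)/\gamma-w_1$. Write $\sigma$ explicitly in the coordinates $(z_1,w_1)$ of $\mathcal M$.

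Then I will split according to $\mu$, which lies on the unit circle since $\gamma>1/2$ by \re{eq:gamma1}.

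\emph{Case $\mu$ a root of unity.} If $\mu^q=1$, a first integral $\xi\eta+\dots$ forces $\sigma^q$ (tangent to the identity) to preserve every level set of $F$. I will push the formal normalization of $\sigma^q$ to the first order at which $\sigma^q-\mathrm{id}$ is nonzero. I will check that the resulting leading vector field is not tangent to the levels of $\xi\eta$, which is a finite computation giving a formal contradiction.

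\emph{Case $\mu$ not a root of unity.} Here \rt{mw-formal} shows formal flattening is always possible, so the obstruction must be analytic. With a convergent first integral, $\sigma$ becomes integrable, and the standard integrable-normal-form argument (in the spirit of Rüssmann/Vey/Ito) gives a \emph{convergent} conjugacy of $\sigma$ to $(\xi,\eta)\mapsto(M(\xi\eta)\xi,M(\xi\eta)^{-1}\eta)$. Then every periodic orbit of $\sigma$ near $0$ lies in a one-parameter family filling the level curve $\{\xi\eta=c\}$ on which $M(c)$ is a root of unity; in particular no periodic point is isolated. I will then use the explicit form of $\sigma$, where the term $\gamma z_1^3\bar z_1$ produces a nonzero higher resonant contribution, to exhibit isolated (nondegenerate) periodic points of $\sigma$ accumulating at the origin in the complex domain. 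A nondegenerate fixed point of $\sigma^k$ is an isolated point, whereas in the integrable picture it would sit on a curve of fixed points of $\sigma^k$. That contradiction finishes the proof.

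The main obstacle is this last step: producing isolated periodic points for every $\gamma\in(1/2,\infty)$, uniformly in the small-divisor behaviour of $\mu$. What I would try first is to scale $\xi=\epsilon u$, $\eta=\epsilon v$ near a level $\xi\eta=c$ with $M(c)$ a primitive $k$-th root of unity. The map $\sigma^k$ is then the identity plus a small twist plus a nonintegrable perturbation, and I would show that the averaged perturbation has a nondegenerate zero on the resonant curve, which persists by the implicit function theorem. The explicit closed forms of $\tau_1$ and $\tau_2$ are what should make this averaged term computable.
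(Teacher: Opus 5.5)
You correctly show that a convergent flattening yields a convergent common first integral $F=h(z)|_{\mathcal M}$ of $\tau_1,\tau_2$, with quadratic part a nonzero multiple of $\xi\eta$. Your exceptional/non-exceptional split also matches the outline the paper gives after the proposition. The genuine gap is the non-exceptional case. The step you call the main obstacle is the whole content of the proposition there, and the averaging scheme you suggest cannot supply it as described. By Theorem~\ref{mw-formal}, for non-exceptional $\gamma$ the surface is formally flattenable, i.e.\ $\sigma$ is formally integrable. So no finite jet of $\sigma$ at $0$ carries any obstruction. Near a level $\xi\eta=c$ with $M(c)$ a primitive $k$-th root of unity, $k\to\infty$ as $c\to0$, because $M(0)=\mu$ is not a root of unity. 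The averaged $k$-resonant part of the non-integrable remainder there is invisible at every finite order at $0$; it depends on the whole germ and on the small divisors of $\mu$. Nothing in the closed forms of $\tau_1,\tau_2$ makes it computable. Proving that it has nondegenerate zeros for every non-exceptional $\gamma$ is just a restatement of the non-integrability of $\sigma$ you set out to prove.

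What is missing is a \emph{global} property of this particular $\sigma$. In the coordinates $(z_1,w_1)$ of $\mathcal M$ one gets $\tau_1(z_1,w_1)=(-z_1-f(w_1),w_1)$ and $\tau_2(z_1,w_1)=(z_1,-w_1-f(z_1))$ with $f(t)=t/\gamma+t^3$. Both have real coefficients and Jacobian determinant $-1$. Hence $\sigma$ preserves $\rr^2$ and $dz_1\wedge dw_1$: it is a real polynomial area-preserving map with an elliptic fixed point. This is what lets the paper, following Moser--Webster, invoke Birkhoff's theory of elliptic area-preserving maps.

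Area preservation is also what justifies your ``R\"ussmann/Vey'' step. After a symplectic Morse lemma making $F$ a function of $\xi\eta$, an area-preserving germ preserving $\xi\eta$ is automatically $(\xi e^{a(\xi\eta)},\eta e^{-a(\xi\eta)})$. For reversible maps in general, an invariant function does not give a convergent normal form; compare the discussion of flattened hyperbolic surfaces in Section~\ref{gamma>.5}.

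One way to finish without averaging: $\sigma=S\circ g^{2}\circ S$ with $S(z_1,w_1)=(w_1,z_1)$ and $g(x,y)=(y,-x-f(y))$ a cubic H\'enon map. So every $\sigma^k$ has only finitely many fixed points (Friedland--Milnor). But if $a'(0)\neq0$, the convergent normal form has whole curves $\{\xi\eta=c\}$ of periodic points near $0$. A Lindstedt computation gives $a'(0)\neq0$, since $f$ is odd with nonzero cubic term. In the exceptional case $\sigma^q$ would even fix the axes pointwise.

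Finally, your exceptional-case test sits at the wrong order. For this odd $\sigma$ and $q\neq4$, the leading term of $\sigma^q-\mathrm{id}$ is the twist term $\beta\,(\xi\eta)(\xi\partial_\xi-\eta\partial_\eta)$, which is tangent to the levels of $\xi\eta$. So the obstruction must be sought in later resonant terms. Since exceptional $\gamma$ are dense, that requires an argument valid for every $q$, not one finite computation.
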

When $\gamma$ is exceptional, the proof is much simpler by showing that $M$ cannot be flattened to sufficient higher order. For non-exceptional $\gamma$, the proof uses Birkhoff's theorem on the existence of elliptic area-preserving mappings.
\subsection{Non-flattenable hyperbolic surfaces}
We have seen that for the Moser-Webster involutions $\{\tau_1,\tau_2,\rho\}$, the invariant functions of $\tau_1,\tau_2$ play an important role it realizing $\{\tau_1,\tau_2,\rho\}$ for a real analytic manifold $M$. It was observed by Moser-Webster that the existence of non-trivial holomorphic functions that are both invariant by $\tau_1,\tau_2$ is equivalent to the flatness of $M$, i.e. there exists a local biholomorphic map $\psi$ such that $\psi(M)$ is contained in $\IM z_2=0$. The following is in Moser-Webster.
\begin{prop}\label{firt-int}
Let $\{\tau_1,\tau_2,\rho\}$ be the involutions associated to $M\subset\cc^2$ with $\gamma\neq0$. Then  there exists a holomorphic function $h$ on $\mathcal M$ such that $h\circ\tau_j=h$ for $j=1,2$ and its Taylor polynomial of degree $2$ is not constant, if and only if $M$ can be holomorphically flattened. 
\end{prop}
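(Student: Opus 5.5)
Both directions go through the description of the $\tau_1$- and $\tau_2$-invariant functions in Theorem~\ref{thm:MW} and its proof. Since $n=2$, the restrictions $z_1,z_2$ to $\mathcal M$ generate the $\tau_2$-invariant functions, and $w_1,w_2$ generate the $\tau_1$-invariant functions.

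\emph{Flattenable $\Rightarrow$ invariant function.} Suppose a biholomorphism $\psi$ sends $M$ into $\IM z_2=0$, i.e. $z_2=\bar z_2$ on $M$ in the new coordinates. By Theorem~\ref{thm:MW}, $\psi$ complexifies to a map intertwining the involutions, so we may assume $M$ itself lies in $\IM z_2=0$. Complexifying the identity $z_2=\bar z_2$ on $M$ gives $z_2=w_2$ on $\mathcal M$. Set $h=z_2|_{\mathcal M}=w_2|_{\mathcal M}$. Then $h\circ\tau_2=h$ because $z_2$ is $\tau_2$-invariant, and $h\circ\tau_1=h$ because $w_2$ is $\tau_1$-invariant. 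By \re{Fn}, $z_2=q_\gamma(z_1,w_1)+O(3)$ on $\mathcal M$, a nondegenerate quadratic form in the coordinates $(z_1,w_1)$ of $\mathcal M$. Hence the degree-two Taylor polynomial of $h$ is not constant.

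\emph{Invariant function $\Rightarrow$ flattenable.} Let $h$ be invariant under both $\tau_1$ and $\tau_2$.
\begin{itemize}
\item[(i)] \emph{Reduce to a real, normalized $h$.} Replace $h$ by $h-h(0)$. Since $\tau_1=\rho\tau_2\rho$, the function $\bar h\circ\rho$ is also invariant under both involutions, so $h+\bar h\circ\rho$ and $i(h-\bar h\circ\rho)$ are $\rho$-real invariant functions. At least one of them has a nonconstant quadratic part, because the quadratic part of $h$ is not constant. Call it $h$ again; now $\ov{h\circ\rho}=h$.
\item[(ii)] \emph{Write $h$ in terms of $z$ and of $w$.} Invariance under $\tau_2$ gives $h=f(z_1,z_2)|_{\mathcal M}$ with $f$ holomorphic. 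Reality of $h$ then gives $h=\bar f(w_1,w_2)|_{\mathcal M}$.
\item[(iii)] \emph{Make $f$ a coordinate.} The quadratic part of $h$ is a multiple of $q_\gamma(z_1,w_1)$, since $\tau_1,\tau_2$-invariance forces the quadratic invariant to be $\xi\eta$ in the coordinates of \re{tauj-1}. This should show that $f(z_1,z_2)=cz_2+O(2)$ with $c\neq0$ real, because $z_1$ itself is not $\tau_1$-invariant at the linear level. So $z_2'=f(z_1,z_2)$, $z_1'=z_1$ is a local biholomorphism.
\item[(iv)] \emph{Conclude.} On $M$, i.e. on $\mathcal M\cap\Fix(\rho)$ with $w=\bar z$, step (ii) gives $f(z)=\bar f(\bar z)=\ov{f(z)}$. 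So $f$ is real on $M$, and $M\subset\{\IM z_2'=0\}$.
\end{itemize}

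The main obstacle is step (iii): showing that the linear part of $f$ in $z_2$ is nonzero. This has to be read off from the quadratic Taylor polynomial of $h$. I would argue by contradiction. If $f=az_1+O(2)$ or $f=O(2)$ in $z_1$ alone, then the quadratic part of $h$ would be a function of $z_1$ alone, or a multiple of $z_1$, which is incompatible with invariance under $\tau_1$ given its linear part in \re{eq:tau1tau2}.
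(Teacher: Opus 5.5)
Your proposal is correct and follows essentially the paper's argument. The paper also passes to a $\rho$-real invariant function, namely $\RE(c^{-1}h)$. It writes that function's holomorphic extension as $\tilde g(z_1,z_2)$, using that $z_1,z_2$ generate the $\tau_2$-invariants, and then uses $(z_1,\tilde g)$ as flattening coordinates. You additionally write out the easy converse, which the paper omits.

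One minor caveat concerns step (iii). Your justification, that the quadratic invariants are spanned by $q_\gamma$ and that $z_1^2$ is not $\tau_1$-invariant, holds for $0<\gamma<\infty$, which is the range the paper's computation implicitly uses as well. At $\gamma=\infty$ one has $\tau_1=(-z_1,w_1)+\hot$, so $z_1^2$ is invariant and your stated reason fails. There you should instead invoke the $\rho$-reality from (i) to exclude a pure $z_1^2$ term; the coefficient of $z_2$ is then nonzero but need not be real.
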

\begin{proof}We may assume that $h(0)=0$ and $M$ has the form $z_2=F_2(z_1,\bar z_1)=q_\gamma(z_1,\ov z_1)+O(3)$. We use $(z_1,w_1)$ has holomorphic coordinates for $\mathcal M$. Then $h(z_1,w_1)= h_2(z_1,z_2)$ and $h(z_1,w_1)=h_1(w_1,w_2)$. By a straightforward computation, we get $$
h_2(z_1,q_{\gamma}(z_1,w_1))=cq_{\gamma}(z_1,w_1)+O(3)
$$
with $c\neq0$.  Recall that $M$ is identified with real and real analytic surface $M_0$ in $\mathcal M$ via embedding $z_1\to (z_1,\ov z_1)$.  Then $\tilde h =\RE(c^{-1} h )$ is also invariant by $\tau_1,\tau_2$. Let $g(z_1,w_1)$ be the unique holomorphic function on $\mathcal M$ that is invariant under $\tau_2$ such that $g=\tilde h$ on $M_0$. Then $g(z_1,w_1)=\tilde g(z_1,F_2(z_1,w_1))$. Thus  $z\to(z_1,\tilde g(z))$ transforms $M$ into $\cc\times\rr$, since $\tilde g(z_1,F_2(z_1,\bar z_1))$ is real valued.
\end{proof}

\subsection{Flatten hyperbolic surfaces}  
For the flatten case, the divergence of normalization still persists.
\begin{thm}[\!\!\cite{Go96a}]
For each non-exceptional $\gamma\in(1/2,\infty)$ there exists a real analytic surface $M\subset\cc\times\rr$ that cannot be transformed into the Moser-Webster normal form by any holomorphic change of coordinates.
\end{thm}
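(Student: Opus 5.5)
The strategy is to translate the statement into one about the Moser--Webster involutions and then build a divergence example by a small-divisor argument. By \rt{thm:MW}, a surface $M\subset\cc^2$ with hyperbolic $\gamma$ corresponds to a triple $\{\tau_1,\tau_2,\rho\}$ with $|\la|=1$ and $\rho(\xi,\eta)=(\bar\xi,\bar\eta)$. Since $M\subset\cc\times\rr$, \rp{firt-int} gives a holomorphic first integral $h$ common to $\tau_1,\tau_2$ with nondegenerate quadratic part. In suitable coordinates $h=\xi\eta$, so $\tau_1,\tau_2$ preserve the foliation $\{\xi\eta=t\}$.

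Bringing $M$ to Moser--Webster normal form by a holomorphic map is equivalent to convergence of the unique normalized $\psi$ of \rta{mw-formal}. That $\psi$ would conjugate $\sigma=\tau_1\tau_2$ to $(\xi,\eta)\mapsto(M(\xi\eta)\xi,M(\xi\eta)^{-1}\eta)$ with $M=\Lambda^2$. So it suffices to construct a reversible pair preserving $\xi\eta$ whose $\psi$ diverges.

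\emph{Construction.} I would work in the class of pairs preserving $\xi\eta$. Take
\[
\tau_1(\xi,\eta)=\bigl(\Lambda_1(\xi,\eta)\eta,\ \Lambda_1(\xi,\eta)^{-1}\xi\bigr),
\]
with $\Lambda_1$ chosen so that $\tau_1^2=I$, and set $\tau_2=\rho\tau_1\rho$. Such a $\tau_1$ can be produced by conjugating the linear involution by a map of the form $(\xi,\eta)\mapsto(\xi e^{u},\eta e^{-u})$ with $u=u(\xi,\eta)$. This map preserves $\xi\eta$, and a real analytic $M\subset\cc\times\rr$ is then obtained by the realization part of \rt{thm:MW}. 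On each leaf $\xi\eta=t$, $\sigma$ restricts to a one-dimensional map $\xi\mapsto \mu\xi+\cdots$, parametrized by $t$.

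Convergence of $\psi$ would linearize this family uniformly in $t$, that is, give a rotation $\xi\mapsto M(t)\xi$. Solving the conjugacy order by order produces coefficients with divisors of the form $\mu^{k}-1$, arising from $\mu^{j-k}$ at monomials $\xi^j\eta^k$, or $M(0)^{j-k}-1$. Since $\mu$ is non-exceptional, these divisors are nonzero. However, $\gamma$ is arbitrary and $\mu$ need not be Liouville, so I cannot rely on them being small in order to break convergence.

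\emph{Main obstacle.} The difficulty is to produce divergence for every non-exceptional $\gamma$, including Diophantine ones. The remedy I would try first is to let the perturbation $u$ depend on $t=\xi\eta$. Then the effective rotation $M(t)$ varies with $t$, and the divisors $M(t)^m-1$ vanish on a sequence of values $t_m\to0$ whenever $M'(0)\neq0$. This is the ``twist'' mechanism. Near those $t$ the normalization cannot be analytic in $t$ unless the corresponding resonant coefficients of $u$ vanish there.

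I would therefore choose $u=\sum_m c_m t^{m}\xi^{m}$ (or a similar lacunary series), with $c_m$ decaying geometrically so that $u$ converges. Then I would show that the coefficient of $\psi$ at order $\xi^{m+1}\eta^{m}$, after expanding the small divisor in $t$, grows like $c_m/|M(0)^m-1|$ times a factor of size $m!$ or $\delta^{-m}$. This growth would come from repeated differentiation in $t$ of $1/(M(t)^m-1)$, which contributes factors $(mM'(0))^{-1}$ iterated.

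An alternative is a Baire-category argument. Fix one explicit coefficient functional of $\psi$ that is a nontrivial polynomial in the perturbation with denominators $\prod(\mu^{k}-1)$ and $M'(0)^{-k}$. Show that its growth is unbounded on a dense set, and conclude that divergence holds for a generic, hence some, flat $M$. I expect that carrying out this growth estimate rigorously, namely showing that the $t$-derivatives of the divisors genuinely accumulate rather than cancel, is the hardest step.
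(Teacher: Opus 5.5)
\textbf{There is a genuine gap: the non-vanishing of the resonant obstruction, which is the heart of the theorem, is never proved.}

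Your reduction is sound. Flatness gives the common integral $\xi\eta$. A holomorphic normalization exists iff the normalized $\psi$ of Theorem~\ref{mw-formal} converges. And the twist of $\Lambda(t)^2$ is correctly identified as the mechanism that can produce divergence even for Diophantine $\mu$.

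You must show that the obstruction on the resonant leaves $\xi\eta=t_m$, $\Lambda(t_m)^{2m}=1$, is nonzero, i.e.\ that $\sigma^m$ is not the identity there. You concede this yourself, and the quantitative route you propose does not supply it:
\begin{itemize}
\item With $u=\sum c_m t^m\xi^m$ and $|c_m|\approx\epsilon^m$, the resonant coefficient on the leaf $t_m$ has a linear term of size $|c_m t_m^m|$. It also receives contributions from products $c_{m_1}\cdots c_{m_r}$ with $m_1+\cdots+m_r=m$, divided only by the non-small divisors $\mu^j-1$, $j<m$. These are of the same geometric size, so cancellation is not excluded, and the claimed growth ``$c_m/|M(0)^m-1|$ times $m!$ or $\delta^{-m}$'' is unsupported.
\item The bookkeeping is off. $\xi^{m+1}\eta^m=t^m\xi$ is a resonant monomial; it is absent from the normalized $\psi$ and absorbed into $\Lambda$. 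The formal $\psi$ carries only the constant divisors $\mu^k-1$.
\item In the leafwise picture, the relevant growth is in the $t$-degree $\ell$ at fixed $m$, roughly like $|t_m|^{-\ell}$. It depends only on whether the full obstruction vanishes at $t_m$, not on the size of $c_m$.
\item Your Baire alternative has the same hole. One coefficient functional cannot detect divergence. Showing that the closed sets where $|\psi_{jk}|\le n^{j+k}$ for all $j,k$ are nowhere dense requires divergent examples near every surface, which is the same missing non-cancellation.
\end{itemize}

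\textbf{The missing idea is to use the twist qualitatively, as in \cite{Go96a}.} Suppose a convergent normalization existed.
\begin{itemize}
\item Then $\Lambda$ is convergent, nonconstant (a condition on finitely many coefficients), and satisfies $|\Lambda(t)|=1$ for real $t$. So there are real $t_m\to0$ with $\Lambda(t_m)^{2m}=1$.
\item Every point of the leaf $\xi\eta=t_m$ is then $m$-periodic for $\sigma$.
\item Since $\psi\rho=\rho\psi$, this leaf meets $\Fix(\rho)=\{\xi,\eta\in\rr\}$, i.e.\ the real surface $M$ inside $\mathcal M$, in real curves (branches of a hyperbola).
\end{itemize}
The actual content of the paper's proof is a Siegel-type genericity theorem. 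In a suitable space of flattened surfaces with the prescribed non-exceptional $\gamma$, a generic $\sigma$ has curves of periodic points accumulating at the origin, but these curves meet $M$ only at isolated points. This contradicts the previous paragraph.

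This replaces your uncontrolled coefficient estimate by a qualitative property of the periodic-point set, established by a Baire-category argument. Your ``twist mechanism'' paragraph already contains the first half of this argument. Without the genericity statement, or an equivalent proof that the resonant-leaf obstruction is nonzero for infinitely many $m$ in your explicit example, the proposal does not establish the theorem.
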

 Inspired by a theorem of Siegel~\ci{Si54} on Hamiltonian systems, a more precise theorem \cite[Thm. 6.1]{Go96a} shows that in suitable space of flatten real analytic surfaces its corresponding $\sigma$ of a generic surface  any prescribed non-exceptional hyperbolic complex tangent has curves of periodic points in the complexified surface $\cL M$ accumulating at the origin, but the curves intersect the total real surface $M$ in $\cL M$ at isolated points only (if the intersection is non-empty). Consequently, the normal form of $\sigma$ cannot be achieved by convergent transformations, although $\sigma$ admits a non-constant invariant function by \rp{firt-int}. This is in contrast with integrable Hamiltonian systems: a Hamiltonian function in $\rr^2$ with non-resonate eigenvalue and a non-constant first-integral can always be transformed into the Birkhoff normal form.

 \subsection{Surfaces   equivalent to $Q_\gamma$}
Motivated by the results of Moser-Webster and using their involutions, Gong proved.

\begin{thm}[\!\!\cite{Go94a}]
Let $M$ be formally equivalent to $Q_\gamma$ with $1/2<\gamma<\infty$. 
	Assume that the invariant   $\lambda$ of $M$    satisfies the Diophantine condition, i.e. 
		$$ |\lambda^n-1|>\frac{c}{n^\delta},\quad n=1,2\dots, \quad c>0.$$
	Then,  $M$ is holomorphically equivalent to $Q_\gamma$.  
\end{thm}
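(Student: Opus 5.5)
By \rt{thm:MW}, it suffices to show the following. If the Moser--Webster triple $\{\tau_1,\tau_2,\rho\}$ of $M$ is formally equivalent to that of $Q_\gamma$, then it is holomorphically equivalent to it. For $Q_\gamma$ with $\gamma>1/2$, in the coordinates of \rea{tauj-1} with \rea{|la|=1}, the involutions are linear:
\[
\tau_1^0(\xi,\eta)=(\lambda\eta,\lambda^{-1}\xi),\qquad \tau_2^0(\xi,\eta)=(\lambda^{-1}\eta,\lambda\xi),\qquad \sigma^0(\xi,\eta)=(\mu\xi,\mu^{-1}\eta),\qquad \mu=\lambda^2 .
\]
Since $M$ is formally equivalent to $Q_\gamma$, the formal normal form of \rt{mw-formal} has $\Lambda\equiv\lambda$. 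The Diophantine hypothesis on $\lambda$ rules out $\mu$ being a root of unity; if needed we shrink $\delta$ and $c$ so that a Diophantine bound also holds for $\mu=\lambda^2$. Hence the unique normalized $\psi$ formally linearizes $\tau_1,\tau_2$ and commutes with $\rho$. The whole task is therefore to prove that this $\psi$ converges.

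The first reduction is from the pair of involutions to $\sigma=\tau_1\tau_2$. A map conjugating $\sigma$ to $\sigma^0$ and $\tau_1$ to $\tau_1^0$ also conjugates $\tau_2=\tau_1\sigma$. So it suffices to linearize $\sigma$ holomorphically by a map that also conjugates $\tau_1$; afterwards one averages to fix the reversing involution, as explained below. The map $\sigma$ has eigenvalues $\mu,\mu^{-1}$ with $|\mu|=1$. This is a resonant case, since $\mu^{j}\mu^{-k}=\mu$ whenever $j=k+1$, so Siegel's theorem does not apply directly. The formal information replaces it: $\sigma$ is formally conjugate to the linear map $\sigma^0$. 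In particular, the formal first integral $\xi\eta$ of $\sigma^0$ pulls back to a formal first integral of $\sigma$, and the normal form has no resonant nonlinear terms.

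The core is a Newton (KAM-type) rapid iteration in the spirit of Moser's proof of Siegel's theorem, carried out in the $(\xi,\eta)$ variables. At each step we write
\[
\tau_1=\tau_1^0+f,\qquad \tau_2=\tau_2^0+g,\qquad \rho\tau_1\rho=\tau_2,
\]
and solve the linearized conjugacy equations for a correction $\phi$, keeping only the nonresonant part. The resonant coefficients, namely the terms $\xi^{k+1}\eta^k$ in the first component and the corresponding terms in the second, are not solved for; they contribute only quadratically smaller errors. Because $\psi$ is formally linearizing and normalized, they vanish formally, and we would argue that they stay of higher order along the iteration. The small divisors are $\mu^{j-k}-1$, equivalently $\lambda^m-1$ with $m=2(j-k)\pm 1$ or similar. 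The Diophantine bound then gives $|\phi|_{r-\delta'}\le C\delta'^{-\nu}|f|_r$. We choose $\phi$ to respect the reversibility, $\phi=\tau_1^0\phi\tau_1$ to first order, and to commute with $\rho$ by replacing $\phi$ with $\tfrac12(\phi+\rho\phi\rho)$. Standard quadratic convergence on shrinking domains $r_{k+1}=r_k-\delta_k$ with $\sum\delta_k<\infty$ then yields a convergent limit. By uniqueness of the normalized $\psi$ (after composing with a normalized factor), this limit equals $\psi$.

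The main obstacle is controlling the resonant terms, that is, the terms in $\xi\eta$ that the homological equation cannot remove, during the iteration. Formal linearizability only says that their accumulated total vanishes in the limit; it does not say that they vanish at each step. The first approach I would try is to normalize at each step using the invariant function. Pulling back $\xi\eta$ formally gives $I$; modifying by a convergent change, we arrange that $\sigma$ preserves the foliation of level sets of $\xi\eta$ up to the current error. Then the resonant part can be written as $(\Lambda_k(\xi\eta)-\lambda)\eta$. By comparison with the formal normal form, $\Lambda_k-\lambda$ is bounded by the square of the previous error, so it can be absorbed into the Newton estimate.
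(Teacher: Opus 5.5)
The survey gives no argument for this theorem beyond citing \cite{Go94a}, so your outline has to be judged on its own. The reductions are sound: pass to $\{\tau_1,\tau_2,\rho\}$ by Theorem~\ref{thm:MW}, linearize $\tau_1$ and $\sigma$ together, and use the Diophantine bound in the homological equation.

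\textbf{The gap.} The genuine gap is exactly the step you flag yourself, and the fix you propose does not close it. After one Newton step the map is $\hat\sigma+[f_k]_{\mathrm{res}}+E_k$. Here $E_k$ is quadratically small, but $[f_k]_{\mathrm{res}}$ is of \emph{first} order in $\epsilon_k$, and nothing in your scheme makes it small.

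Your claim that ``by comparison with the formal normal form, $\Lambda_k-\lambda$ is bounded by the square of the previous error'' is not an argument. Formal linearizability is a statement about Taylor coefficients to all orders. It says only that the $(\xi\eta)^m$-coefficient of $\Lambda_k-\lambda$ is cancelled by contributions produced in all later normalization steps. Those contributions involve products of up to about $m$ small divisors. Cauchy estimates therefore give at best coefficient bounds $C_m\epsilon_k^2r^{-2m}$ with $C_m$ uncontrolled in $m$. That is not a sup-norm bound on any fixed domain.

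The foliation device does not help either. The pulled-back integral $(\xi\eta)\circ\psi^{-1}$ is only formal. A convergent $\sigma$-invariant function, symmetrized by $\tau_1$, would already flatten $M$ by Proposition~\ref{firt-int}, so you cannot assume one. The approximate version, on the other hand, is automatic: reversibility of $\sigma$ under $\tau_1^0$ already forces the first-order resonant part to be a twist $(\xi,\eta)\mapsto(\mu\xi(1+\alpha),\mu^{-1}\eta(1-\alpha))$ with $\alpha=\alpha(\xi\eta)$. This says nothing about the size of $\alpha$.

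\textbf{The missing idea.} Let the order of vanishing double, so that formal linearizability kills the resonant terms \emph{exactly} in the degree range handled at each step.
\begin{enumerate}
\item First linearize $\tau_1$ holomorphically by $\tfrac12(I+\tau_1^0\circ\tau_1)$, and afterwards use only corrections commuting with $\tau_1^0$.
\item Suppose $\sigma=\hat\sigma+f$ with $f=O(|z|^d)$ is formally linearizable. Strip from a formal linearizer its terms of degree $<d$; these necessarily commute with $\hat\sigma$, so what remains is a linearizer $I+\psi$ with $\psi=O(|z|^d)$.
\item Compare terms of degree $m\leq 2d-2$ in $\psi\circ\hat\sigma-\hat\sigma\circ\psi=f\circ(I+\psi)$. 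This shows that the resonant Taylor coefficients of $f$ of degrees $d\leq m\leq 2d-2$ vanish identically.
\item At step $k$, solve only for the polynomial block of $f_k$ in degrees $[d_k,2d_k-2]$. There are no resonant terms there, and the divisors are bounded below by $c'd_k^{-\delta}$. The non-resonant solution is automatically $\tau_1^0$-symmetric, by uniqueness. Leave the tail untouched.
\item The new error is $O(|z|^{2d_k-1})$. On a ball shrunk by a factor $1-\theta_k$ it is at most $Cd_k^{a}\theta_k^{-b}\bigl(\epsilon_k^2+\epsilon_k(1-\theta_k)^{2d_k-1}\bigr)$.
\item Take $d_{k+1}=2d_k-1$ and $\theta_k\sim k^{-2}$, after a preliminary polynomial normalization to high order and a dilation. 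The iteration then converges.
\end{enumerate}

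No $\rho$-averaging is needed. Once $\{\tau_1,\tau_2\}$ has a convergent linearizer, the normalized one of Theorem~\ref{mw-formal} differs from it by a centralizer element $(\xi a(\xi\eta),\eta a(\xi\eta))$. This element is fixed by a one-variable implicit-function equation, so the normalized linearizer converges, and it commutes with $\rho$ by uniqueness.

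A minor point: to transfer the Diophantine bound from $\lambda$ to $\mu=\lambda^2$, replace $c$ by $c2^{-\delta}$. Shrinking $\delta$ goes the wrong way.
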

It turns out that for the above result,  a small divisor condition is necessary. Indeed, one has the following.
\begin{thm}[\!\!\cite{Gon04}]
There exists a real analytic surface $M$ with non-exceptional hyperbolic Bishop invariant $\gamma$ such that $M$ is formally but not holomorphically equivalent to $Q_\gamma$.
\end{thm}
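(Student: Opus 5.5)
By \rt{thm:MW}, it suffices to construct a triple $\{\tau_1,\tau_2,\rho\}$ with the hyperbolic linear parts \rea{tauj-1}, \rea{|la|=1} (so $|\la|=1$, $\rho(\xi,\eta)=(\bar\xi,\bar\eta)$, $n=2$) that is formally but not holomorphically equivalent to the triple of $Q_\gamma$.

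\emph{Model triple.} For $Q_\gamma$ the normal form of \rt{mw-formal} has constant $\Lambda\equiv\la$. Hence $\sigma_0(\xi,\eta)=(\mu\xi,\mu^{-1}\eta)$ is linear, with $\mu=\la^2$.

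\emph{Choice of $\la$ and perturbation.} Fix $\la=e^{2\pi i\theta}$ with $\theta$ irrational but extremely Liouville, so that $|\mu^{q_k}-1|$ decays super-exponentially along a sequence $q_k\to\infty$. I would construct $\tau_1=\psi\,\tau_1^0\,\psi^{-1}$ and $\tau_2=\rho\tau_1\rho$. Here $\psi$ is a formal map that commutes with $\rho$ and is given as an infinite composition of polynomial maps $\psi_k$. Each $\psi_k$ is tangent to the identity to order $q_k$ and has coefficients of size $\e_k$.

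The goal is that the resulting $\tau_1$ converges while the conjugacy $\psi$ diverges. Then $M$ is formally equivalent to $Q_\gamma$ by \rt{thm:MW}.

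\emph{Holomorphic inequivalence.} To rule out holomorphic equivalence I would use periodic points, as in \cite{Go96a}. If $M$ were holomorphically equivalent to $Q_\gamma$, then $\sigma=\tau_1\tau_2$ would be holomorphically linearizable to $\sigma_0$ on a fixed polydisc. It would then have no periodic orbits of period $q_k$ near $0$ other than those forced by the linear model; the linear model has only $0$ as a periodic point, since $\mu$ is not a root of unity.

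So at step $k$ I would add a term $\e_k(\xi\eta)^{m}\xi$-type resonant-breaking perturbation, preserving reversibility and the relation with $\rho$. This perturbation creates a $q_k$-periodic orbit of $\sigma$ at distance $r_k\to0$ determined by $|\mu^{q_k}-1|/\e_k$. A convergent linearization would forbid such orbits inside its domain, giving a contradiction.

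\emph{Main obstacle.} The hardest step is the bookkeeping of the sizes. We need
\begin{enumerate}
\item $\tau_1$ to stay convergent on a fixed neighborhood, which requires $\sum\e_k R^{q_k}<\infty$;
\item the periodic orbits created at step $k$ to persist under all later steps $j>k$, which is ensured because those steps are tangent to the identity to order $q_j\gg q_k$;
\item the formal equivalence to be preserved, which means each step must be a formal conjugation of the previous triple.
\end{enumerate}

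I would arrange this by realizing each step as a genuine conjugation by $\exp$ of a polynomial vector field that is reversible with respect to $\tau_1^0$. The coefficient is chosen so that the conjugated map differs from the previous one by $\e_k$ terms. Meanwhile, the small divisor $(\mu^{q_k}-1)^{-1}$ makes the conjugating map itself of size $\e_k/|\mu^{q_k}-1|$, which blows up. The Liouville property lets us choose $\e_k=|\mu^{q_k}-1|^{1/2}$, which makes both requirements compatible.

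A check via the Birkhoff/Siegel-type estimate then verifies that the intersection of these periodic curves with the totally real $M=\Fix(\rho)$ does not matter: only their existence in $\mathcal M$ is used.
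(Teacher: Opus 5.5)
\textbf{There is a genuine gap: the convergence of $\tau_1$.} Your reduction to Moser--Webster triples is fine, and so is the periodic-point obstruction: a holomorphically linearizable $\sigma$ has no periodic points near $0$ other than $0$. What is missing is the heart of the construction. Nothing in your scheme keeps $\tau_1=\psi\tau_1^0\psi^{-1}$ convergent once $\psi$ is forced to diverge. Your condition (1), $\sum_k\epsilon_kR^{q_k}<\infty$, only measures the first-order effect of step $k$ against the \emph{linear} model.

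Even one step is delicate. Write $\psi=\id+P$ and $P=P_++P_-$ with $\tau_1^0\circ P_\pm\circ\tau_1^0=\pm P_\pm$. Then exactly $\psi\tau_1^0\psi^{-1}=\tau_1^0-2\,\tau_1^0\circ P_-\circ\psi^{-1}$. In general this is only defined where $\psi^{-1}$ is, a polydisc of radius about $(|\mu^{q_k}-1|/\epsilon_k)^{1/q_k}\to0$.

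A single step can be saved by a flow whose $\tau_1^0$-even and odd parts commute. Take $X=c\,\xi^{q+1}\partial_\xi+d\,\eta^{q+1}\partial_\eta$ with $c,d$ real and $d-c\lambda^q$ small, and set $X_\pm=\frac12(X\pm(\tau_1^0)_*X)$. Then $e^{X}\tau_1^0e^{-X}=\tau_1^0\circ e^{-2X_-}$.

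At the next step, however, you conjugate a map that is already nonlinear. With $A=X_{2,+}-X_{2,-}$ of size $\epsilon_2|\mu^{q_2}-1|^{-1}$, you get $\tau_1^0\circ\exp(-2(e^{A})_*X_{1,-})\circ e^{-2X_{2,-}}$. The push-forward of the old nonlinearity by the huge flow has no compensating small divisor. In one variable,
\[
\exp(c\,z^{q_2+1}\partial_z)_*(z^{q_1+1}\partial_z)=z^{q_1+1}(1+q_2cz^{q_2})^{1-q_1/q_2}\partial_z,
\]
which has a branch point at $|z|=|q_2c|^{-1/q_2}$. So in this natural realization the domains of the successive $\tau_1^{(k)}$ shrink to $0$, and your estimate (1) does not see it. If you put the new step innermost instead, you get $\Psi_{k-1}\circ(\cdots)\circ\Psi_{k-1}^{-1}$, which is again only defined on the shrinking domain of $\Psi_{k-1}^{-1}$.

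There are also secondary problems:
\begin{itemize}
\item If $\psi_k$ really had coefficients of size $\epsilon_k$, then $\psi$ would converge.
\item A $(\xi\eta)^m\xi$ term is resonant. Adding it changes $\Lambda$ in \rt{mw-formal} and destroys formal equivalence; conjugation only produces near-resonant terms such as $\xi^{q_k+1}$.
\item The existence and persistence of $q_k$-periodic orbits of the two-dimensional $\sigma$ at radius $r_k$ is asserted, not proved. Since $\sigma$ is not polynomial, there is no Cremer root count, and at radius $r_k$ the lower-degree terms from earlier steps dominate the new $\epsilon_k$-term.
\end{itemize}

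\textbf{The missing idea.} The survey gives no proof; it only points to \cite{Gon04} for a connection between a special family of one-dimensional maps and hyperbolic surfaces formally equivalent to $Q_\gamma$. That is what you need: work in a class where formal linearizability is automatic, so you construct the map itself and never have to control a divergent conjugacy. Your repaired single step already has the right shape.

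Take $\tau_1(\xi,\eta)=(a(\eta),a^{-1}(\xi))$ with $a(z)=\lambda z+O(z^2)$. Then $\tau_2=\rho\tau_1\rho=(\bar a(\eta),\bar a^{-1}(\xi))$, where $\bar a(z)=\overline{a(\bar z)}$. Hence $\sigma=\tau_1\tau_2=(f(\xi),\,a^{-1}\circ f^{-1}\circ a(\eta))$ with $f=a\circ\bar a^{-1}$ and $f'(0)=\lambda^2=\mu$.

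\begin{itemize}
\item Formal side: both factors are formally linearizable because $\mu$ is not a root of unity, so $\sigma$ is too. This forces $\Lambda\equiv\lambda$ in \rt{mw-formal}, i.e. $M$ is formally equivalent to $Q_\gamma$.
\item Holomorphic side: a holomorphic equivalence would linearize $\sigma$. Restricted to its unique invariant curve $\{\eta=0\}$ tangent to the $\xi$-axis, it would then linearize $f$.
\end{itemize}

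Every germ with $f'(0)=\mu$ and $\overline{f(\bar z)}=f^{-1}(z)$ arises this way. To see it, parametrize the fixed curve of the antiholomorphic involution $z\mapsto f(\bar z)$ by $a|_{\rr}$ with $a'(0)=\lambda$. So the theorem reduces to a one-variable problem: exhibit, for a suitable Liouville $\mu$, a non-linearizable germ reversed by $z\mapsto\bar z$. That is where your Cremer-type periodic-orbit idea should be run.
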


\subsection{A KAM theory for  general  hyperbolic surfaces}
It is evident that the Moser-Webster theory is already rich for the surfaces $M$ in $\cc^2$. Important dynamics phenomena already occur in the surface case. We shall now focus on a recent  development  for the surface case. 
We return to
  $$\tau_1:\left\{\begin{array}{l}
	\xi'=\lambda\eta+{\rm h.o.t.} \\
	\eta'=\lambda^{-1}\xi+{\rm h.o.t.}
\end{array},\right.\ \tau_2:\left\{\begin{array}{l}
	\xi'=\lambda^{-1}\eta+{\rm h.o.t.} \\
	\eta'=\lambda\xi+{\rm h.o.t.}
\end{array},
\right. $$
$$\lambda:=e^{\frac{1}{2}i\alpha},  \tfrac{\alpha}{\pi}\in\rr\setminus\qq, \quad \Lambda(\xi\eta)=\lambda+\sum_{n\geq 1}\tilde c_n (\xi\eta)^n.$$
Here $\Lambda$ appears in the Moser-Webster formal normal form of $\{\tau_1,\tau_2\}$.
The following theorem is a holomorphic KAM-like result that shows the holomorphic dynamic has a "lot" of invariant analytic sets in a neighborhood of its fixed point. These invariant sets are biholomorphic  to $\{\xi\eta=\omega\}$ intersecting  a neighborhood $\{|\xi|,|\eta|<r\}$ of the origin.  This phenomenon takes its root in a real Hamiltonian dynamical system on $\Tt^n\times\rr^{n}$ which are small perturbations of completely integrable system of the form $\dot \theta= \omega(I),\; \dot I= 0$, $(\theta,I)\in\Tt^n\times\rr^{n}$, $\omega$ analytic. For the later, any torus $\Tt^n\times\{I_0\}$ is invariant and the motion on it is a rotation of angle $\omega(I_0)$. KAM theory \cite{bhs-book}, named after Kolmogorov-Arnold-Moser \cite{Kolmogorov,arnold-kol, moser-anneau}, asserts that under some non-degeneracy conditions, a small enough analytic perturbation still has a lot of invariant manifolds diffeomorphic to a torus on which the dynamics is conjugate to a rotation. A similar phenomena was found for reversible mappings on  $\Tt^n\times\rr^{m}$ by Sevryuk\cite{sevryuk-lnm}. More recently, in the framework of local holomorphic vector fields near a fixed point, Stolovitch shown the existence of a lot of analytic invariant sets biholomorphic to {\it resonant manifolds} on which the dynamic is holomorphically linearizable \cite{Stolo-kam}. The ideas developed there served as stepping stone for the following result~:
\begin{thm}[\!\!\cite{stolo-zhao-cr}] Assume $\Lambda(\xi\eta)\neq \lambda$.
	If $r>0$ is small enough, there exists a "asymptotic full measure" parameters set ${\mathcal O}_{r}\subset]-r^2,r^2[$  such that
	$\forall \  \omega\in{\mathcal O}_{r}$, $\exists$ $\mu_\omega\in\rr$ and an holomorphic transformation $\Psi_{\omega}$, Whitney smooth in $\omega$, on  
	${\mathcal C}_{\omega}^{r}:=\{\xi\eta=\omega, \ |\xi|,|\eta|<r\}$ with $\Psi_{\omega}\circ\rho=\rho\circ\Psi_{\omega}$
	and such that  on  ${\mathcal C}_{\omega}^{r}$,
	$$\Psi_{\omega}^{-1}\circ \tau_1\circ\Psi_{\omega}:\left\{\begin{array}{l}
		\xi'=e^{\frac{\rm i}{2}\mu_{\omega}} \eta\\
		\eta'=e^{-\frac{\rm i}{2}\mu_{\omega}} \xi
	\end{array}\right.,\;\
	\Psi_{\omega}^{-1}\circ
	\tau_2\circ\Psi_{\omega}:\left\{\begin{array}{l}
		\xi'=e^{-\frac{\rm i}{2}\mu_{\omega}}\eta\\
		\eta'=e^{\frac{\rm i}{2}\mu_{\omega}}\xi
	\end{array}\right..$$
	Here, "asymptotic full measure" means $\frac{| {\mathcal O}_{r}|}{2r^2}\stackrel{r\to 0}{\longrightarrow} 1$.
\end{thm}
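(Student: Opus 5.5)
Our plan is to run a KAM scheme adapted to the pair of involutions and to the reversibility structure. We work directly with the pair $\{\tau_1,\tau_2\}$, keeping $\tau_2=\rho\tau_1\rho$ with $\rho(\xi,\eta)=(\bar\xi,\bar\eta)$. The first step is a preliminary normalization. By \rt{mw-formal}, applied up to a large finite order $N$, we put $\tau_1,\tau_2$ into Moser-Webster normal form modulo $O(N+1)$: $\tau_1(\xi,\eta)=(\Lambda(\xi\eta)\eta,\Lambda^{-1}(\xi\eta)\xi)+O(N+1)$, and similarly for $\tau_2$. Here $\Lambda(t)=\lambda+ct^m+O(t^{m+1})$ with $c\neq0$, which uses the hypothesis $\Lambda\neq\lambda$.

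On the polydisc $|\xi|,|\eta|<r$, the remainder is then $O(r^{N+1})$. This is small compared with the twist $\omega\mapsto\Lambda(\omega)$ on the scale $|\omega|<r^2$, whose derivative is of order $r^{2m-2}$. We rescale $\xi=r\tilde\xi$, $\eta=r\tilde\eta$, with parameter $\omega=r^2\tilde\omega$, so that the problem becomes a small perturbation of an integrable normal form that has a non-degenerate frequency map $\omega\mapsto\mu(\omega)=2\arg\Lambda(\omega)/\ldots$. Since $\Lambda\bar\Lambda=1$, the frequencies are real for real $\omega$.

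On each invariant curve $\xi\eta=\omega$ of the normal form we use the coordinate $\xi=\sqrt{\omega}e^{i\theta}$, which identifies $\mathcal C_\omega$ with an annulus (a complexified circle). On this annulus $\tau_1$ acts as the reflection $\theta\mapsto\alpha(\omega)-\theta$ and $\sigma$ acts as the rotation by $\mu(\omega)$. The KAM iteration looks for $\Psi_\omega$ of the form $\mathrm{id}+$ a correction commuting with $\rho$, together with a corrected frequency $\mu_\omega$. At each step we linearize the conjugacy equation for the pair. Writing the perturbation of $\tau_1$ and of $\tau_2$, and imposing the compatibility $\tau_2=\rho\tau_1\rho$, reduces the problem to a single cohomological equation for $\sigma=\tau_1\tau_2$ with a reversibility symmetry: $u(\theta+\mu)-u(\theta)=f(\theta)$ on the annulus, together with a normal (transversal) component equation that adjusts $\omega$.

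Reversibility plays the role that symplecticity plays in classical KAM: the averages that would obstruct solvability vanish or can be absorbed into $\mu_\omega$. This follows Sevryuk's reversible KAM and the resonant-manifold approach of \cite{Stolo-kam}. The small divisors $|e^{ik\mu(\omega)}-1|$ are controlled by imposing a Diophantine condition $|k\mu(\omega)-2\pi l|\ge \kappa |k|^{-\tau}$. The iteration proceeds as in Newton's method with shrinking analyticity strips, in the standard quadratic scheme, and we obtain Whitney smoothness in $\omega$ by Pöschel's method of extending the solution smoothly in the parameter.

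The main obstacle, which we would spend the most effort on, is the measure estimate. We need that the set $\mathcal O_r$ of $\omega\in(-r^2,r^2)$ for which the final frequency $\mu_\omega$ stays Diophantine has relative measure tending to $1$. Because the twist degenerates at $0$ (as $\mu'(\omega)\sim\omega^{m-1}$), we choose $\kappa=\kappa(r)$ and $N$ depending on $m$ so that three conditions hold simultaneously. First, the perturbation size $r^{N+1-2m}$ is below the KAM threshold relative to $\kappa$. Second, the excluded resonant strips, of total measure about $\kappa/\inf|\mu'|$ summed over $k$, have measure $o(r^2)$. Third, we discard a small neighborhood $|\omega|<r^{2+\epsilon}$ where the twist is too weak. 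We expect the bookkeeping for these three to be delicate but standard once the reversible cohomological step is established.
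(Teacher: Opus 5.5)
Your outline takes essentially the same route as the cited result; the survey gives no proof and only cites \cite{stolo-zhao-cr}. That route is a high-order Moser--Webster normalization, with the twist supplied by $\Lambda\neq\lambda$, followed by a reversible holomorphic KAM scheme for the pair $\{\tau_1,\tau_2\}$ (with $\rho$) on the hyperbolas $\xi\eta=\omega$, in the spirit of Sevryuk and \cite{Stolo-kam}, with Diophantine exclusion of parameters, Whitney extension in $\omega$, and a measure count giving asymptotically full measure. The remaining points are bookkeeping: run the scheme on a slightly larger polydisc so that $\Psi_\omega$ is defined on all of $\mathcal C^r_\omega$, including $|\omega|$ close to $r^2$ where the annulus is thin, and discard a small neighbourhood of $\omega=0$, where both the twist and the hyperbolas degenerate.
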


In other words, $\Psi_{\omega}({\mathcal C}_{\omega}^{r})$ is a holomorphic invariant set of the $\tau_i$'s and their restrictions  are
 conjugated to  linear maps by a single transformation $\Psi_\omega$.
\begin{figure}[hbtp]
	\begin{center}
		\leavevmode
		\includegraphics[scale=.35]{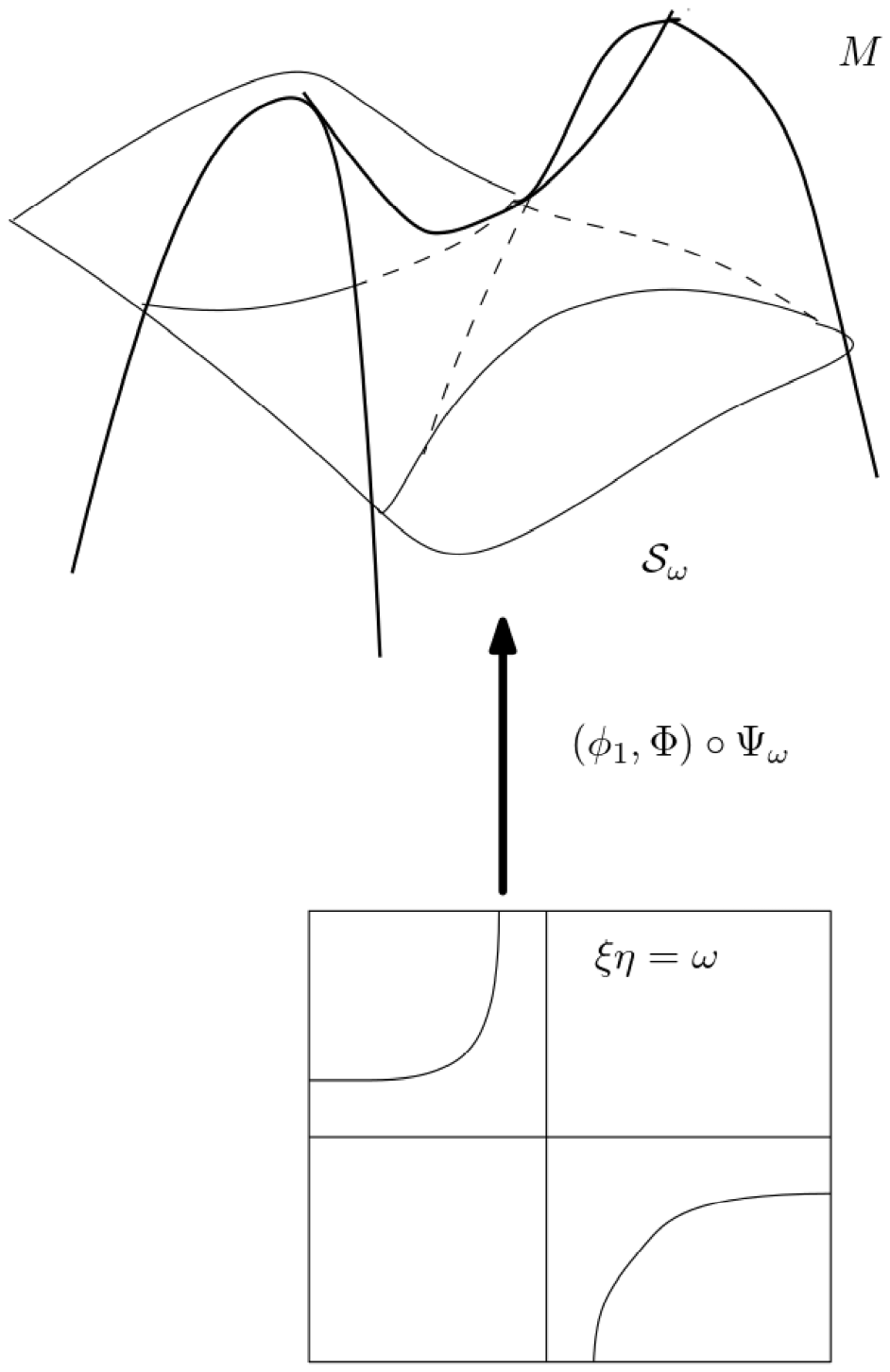}
		\caption{Holomorphic hyperbola~: Intersection of $M$ by a holomorphic curve.}\label{figure1}
	\end{center}
\end{figure}
From this dynamical result, we can infer some information on the real surface near its CR singularity~:
\begin{thm}[\!\!\cite{stolo-zhao-cr}]
	Let $M$ be a surface with an hyperbolic CR singularity at the origin which is non exceptional and not formally equivalent to a quadric. Then, there exist a neighborhood of the origin and a Whitney smooth family of holomorphic curves $\{{\mathcal S}_{\omega}\}_{\omega\in{\mathcal O}} $ which intersects $M$ along holomorphic hyperbolas. The latter are collections of 2 real curves which are simultaneously holomorphically mapped to the two branches of the same hyperbolas $\xi\eta=\omega$, $\omega\neq0$ (See Figure \ref{figure1}).
\end{thm}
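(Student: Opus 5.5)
We derive the statement from the preceding KAM theorem of \cite{stolo-zhao-cr} by pushing the invariant curves down to $\cc^2$ through the complexification. Throughout, $\tau_1,\tau_2,\rho$ are the Moser--Webster involutions of $M$, realized on $\mathcal M\cong\cc^2$ with coordinates $(\xi,\eta)$. Since $0$ is a hyperbolic complex tangent, $\rho(\xi,\eta)=(\bar\xi,\bar\eta)$ by \re{|la|=1}, so $M$ is identified with $\Fix(\rho)=\rr^2\subset\mathcal M$. The map $\pi_2(z,w)=z$ restricted to $\mathcal M$ is the two-to-one branched covering whose deck transformation is $\tau_2$, and it restricts to the embedding of $M$ into $\cc^2$.

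\textbf{Step 1: the hypothesis gives $\Lambda\neq\lambda$.} We first check that $\Lambda(\xi\eta)\neq\lambda$ in the formal normal form of \rt{mw-formal}. If $\Lambda\equiv\lambda$, then $\{\tau_1,\tau_2\}$, and with it $\rho$ because $\psi\rho=\rho\psi$, would be formally conjugate to the pair of the quadric $Q_\gamma$. By \rt{thm:MW}, $M$ would then be formally equivalent to $Q_\gamma$, contradicting the hypothesis. So the preceding theorem applies.

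\textbf{Step 2: invariant curves in $\mathcal M$.} The preceding theorem gives, for $\omega\in\mathcal O_r$, holomorphic curves $\mathcal C'_\omega=\Psi_\omega(\mathcal C^r_\omega)$. Each is invariant under $\tau_1$ and $\tau_2$, and each is $\rho$-invariant because $\Psi_\omega\rho=\rho\Psi_\omega$ and $\rho$ preserves $\{\xi\eta=\omega\}$ when $\omega$ is real. Its real part is
\[
\mathcal C'_\omega\cap\Fix(\rho)=\Psi_\omega\bigl(\{\xi\eta=\omega,\ \xi,\eta\in\rr\}\cap\{|\xi|,|\eta|<r\}\bigr).
\]
This set is the image of the two branches of a real hyperbola, since $\omega\neq0$ real (discard $\omega=0$, a null set).

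\textbf{Step 3: push down to $\cc^2$.} Set $\mathcal S_\omega=\pi_2(\mathcal C'_\omega)$. Because $\mathcal C'_\omega$ is $\tau_2$-invariant and $\pi_2$ is the quotient by $\tau_2$, the image $\mathcal S_\omega$ is a holomorphic curve, the image of an analytic set under a finite proper map. The restriction $\pi_2|_{\mathcal C'_\omega}$ is generically two-to-one, identifying $(\xi,\eta)$ with $\tau_2(\xi,\eta)$.

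\textbf{Step 4: compute $\mathcal S_\omega\cap M$.} On the model curve, $\tau_2$ acts by $(\xi,\eta)\mapsto(e^{-i\mu_\omega/2}\eta,\,e^{i\mu_\omega/2}\xi)$. This is a holomorphic automorphism of $\xi\eta=\omega$, and there is a holomorphic parametrization of the quotient curve by $\mathcal C^r_\omega$ modulo this involution. A point $z\in\mathcal S_\omega\cap M$ has preimage $p\in\mathcal C'_\omega$ with $\pi_2(p)=z$ and $\rho$-conjugate data. Hence either $p$ itself is $\rho$-fixed, or $\rho(p)=\tau_2(p)$, a fixed point of $\rho\tau_2$.

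We must show that only the first case contributes a curve, or that the second also yields curves mapped to the hyperbola. This uses the explicit linear forms of $\rho$ and $\tau_2$ in the $\Psi_\omega$-coordinates, where everything commutes. Then $\pi_2$ maps the real hyperbola in $\mathcal C'_\omega$ injectively, for small $r$ off the branch locus, onto $\mathcal S_\omega\cap M$. This produces two real curves, and the holomorphic map $\pi_2\circ\Psi_\omega$, inverted on $\mathcal S_\omega$, sends them simultaneously to the two branches of $\xi\eta=\omega$.

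\textbf{Step 5: Whitney smoothness.} The family $\{\mathcal S_\omega\}$ inherits Whitney smoothness in $\omega$ from $\Psi_\omega$, since $\pi_2$ is fixed and holomorphic.

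\textbf{Main obstacle.} I expect Step 4 to be the hardest: the careful bookkeeping of real points, that is, of fixed points of $\rho$ versus $\rho\tau_2$ on the invariant curve. The curves of fixed points of $\rho\tau_2$ must not create extra real components, and the images of the two branches must not be glued. My first attempt is to compute in the linearized coordinates on $\mathcal C^r_\omega$, where $\rho\tau_2$ is an explicit antiholomorphic map of the hyperbola and its fixed set can be determined directly.
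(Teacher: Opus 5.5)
Your strategy is the right one, and it is how the statement is deduced from the preceding KAM theorem. Step 1 supplies the twist hypothesis $\Lambda\neq\lambda$. Then the $\tau_1,\tau_2$-invariant, $\rho$-symmetric curves $\mathcal C'_\omega=\Psi_\omega(\mathcal C^r_\omega)$ are pushed down to $\cc^2$ by the covering $\pi_2$. Steps 1, 2, 3 and 5 are fine.

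\textbf{Step 4 is wrong as written and left open.}

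\emph{The dichotomy is mis-derived.} The fibre of $\pi_2|_{\mathcal M}$ over $z$ is $\{p,\tau_2(p)\}$, and its only possible $\rho$-fixed point is $(z,\bar z)$. So the alternatives are $p\in\Fix(\rho)$ or $\tau_2(p)\in\Fix(\rho)$. Your condition $\rho(p)=\tau_2(p)$ is not the second alternative: writing $p=(z,w)$, it forces $w=\bar z$ and $\tau_2(p)=p$.

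\emph{Nothing commutes in the $\Psi_\omega$-coordinates.} We have $\rho\tau_2\rho=\tau_1$. On the model curve, $\tau_1\tau_2(\xi,\eta)=(e^{i\mu_\omega}\xi,e^{-i\mu_\omega}\eta)\neq\tau_2\tau_1(\xi,\eta)$.

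\emph{The final inversion fails.} As you note in Step 3, $\pi_2\circ\Psi_\omega$ is two-to-one onto $\mathcal S_\omega$, so it cannot be ``inverted on $\mathcal S_\omega$''.

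\textbf{The fix needs no computation.} Since $\mathcal C'_\omega$ is $\tau_2$-invariant, in either alternative $(z,\bar z)\in\mathcal C'_\omega\cap\Fix(\rho)$. Moreover $\pi_2$ is injective on $\mathcal M\cap\Fix(\rho)$, whose points are the $(z,\bar z)$ with $z\in M$. Set $H_\omega=\{\xi\eta=\omega\}\cap\rr^2\cap\{|\xi|,|\eta|<r\}$. Then $\pi_2$ maps $\Psi_\omega(H_\omega)$ bijectively onto $\mathcal S_\omega\cap M$. Hence there are no extra real components, and the two branches cannot be glued; the obstacle you anticipated does not arise.

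For the simultaneous holomorphic map, use a local inverse. Let $L_2(\xi,\eta)=(e^{-i\mu_\omega/2}\eta,e^{i\mu_\omega/2}\xi)=\Psi_\omega^{-1}\tau_2\Psi_\omega$ be the model involution. Then $L_2(H_\omega)\subset\{\xi\in e^{-i\mu_\omega/2}\rr\}$, which is disjoint from $H_\omega$. This holds because $e^{i\mu_\omega/2}$, a small perturbation of $\lambda=e^{i\alpha/2}\notin\rr$, is not real for $r$ small. So $H_\omega$ has a neighbourhood $V\subset\mathcal C^r_\omega$ with $V\cap L_2(V)=\emptyset$. Such a $V$ contains no fixed point of $L_2$ and meets each $L_2$-orbit at most once. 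Hence $\pi_2\circ\Psi_\omega|_V$ is an injective immersion, and therefore a biholomorphism onto a neighbourhood $U$ of $\mathcal S_\omega\cap M$ in $\mathcal S_\omega$. Its inverse is the single holomorphic map on $U$ sending the two real curves onto the two branches of $\xi\eta=\omega$.
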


\section{Exceptional hyperbolic surfaces 
}\label{sect:exec}
\setcounter{thm}{0}\setcounter{equation}{0}

\subsection{Reversible parabolic diffeomorphisms}

Let $(\phi,\tau)$ be a pair of a reversible map $\phi$ and its reversing involution $\tau$
\begin{equation}\label{eq:reversible}
	\tau^{\circ 2}=\id,\qquad  \tau\circ\phi\circ\tau=\phi^{\circ(-1)}.
\end{equation}
Denote by $\cL G$ the group of diffeomorphisms generated by $\{\phi,\tau\}$. Then
\[\cL G=\big\{\phi^{\circ n}\mid n\in\zz\big\}\cup\big\{\tau\circ\phi^{\circ n}\mid n\in\zz\big\},\]
where each $\tau_{n+1}=\tau\circ\phi^{\circ n}$ is an involution reversing $\phi$.
For every $n\in\zz$ the pair of involutions $(\tau_n,\tau_{n+1})$ satisfies $\tau_n\circ\tau_{n+1}=\phi$ and therefore generates $\cL G$.
\emph{Thus the problem of classification of reversible maps $(\phi,\tau)$ with respect to conjugation is equivalent to that of pairs of involutions $(\tau_n,\tau_{n+1})$}, i.e. if $\tilde\tau_{n+1}=\tilde\tau\circ \tilde\phi^{\circ n}$ with $\tilde\phi^{-1}=\tau\circ \tilde \phi\circ \tau$ and $\tilde\tau^2=\id$, then  $(\tau_k,\tau_{k+1})$ is equivalent to $(\tilde\tau_\ell,\tilde\tau_{\ell+1})$ if and only if  $k=\ell$ and  there is a diffeomorphism $\psi$ such that $\psi^{-1}\circ \tau_j\circ \psi=\tilde\tau_j$ for all $j=k,k+1$, and the latter is equivalent to $\psi^{-1}\circ \phi\circ \psi=\tilde\phi$ and $\psi^{-1}\circ \tau\circ \psi=\tilde\tau$ 
as $\psi^{-1}\circ \phi\circ \psi=\psi^{-1}\circ \tau_j\circ\tau_{j+1}\circ \psi=\tilde\phi$ and   $\psi^{-1}\circ \tau\circ \psi=\psi^{-1}\circ \tau_{j+1}\phi^{-j}\circ \psi=\tilde\tau$.
Thus,
one may consider just the pair
\[(\tau_1,\tau_2)=(\tau,\tau\circ\phi).\]

\goodbreak

\begin{assumption}\label{assumptions}
	We  assume that $(\phi,\tau)$ are holomorphic diffeomorphisms of $(\cc^2,0)$, such that $\phi\neq\tau$ and they satisfy three conditions:
	\begin{enumerate}
		\item $\phi\in\Diff(\cc^2,0)$ is \emph{parabolic}: $\phi^{\circ p}=\id+\hot$ for some positive integer $p\geq 1$ (the minimal with such property),
		\item $\tau\in\Diff(\cc^2,0)$ is a \emph{holomorphic reflection}: an involution whose linear part has eigenvalues  $\{1,-1\}$, which reverses $\phi$,
		\[	\tau^{\circ 2}=\id,\qquad  \tau\circ\phi\circ\tau=\phi^{\circ(-1)},\]
		\item the pair $(\phi,\tau)$ possesses an \emph{analytic first integral of Morse type}, i.e. with non-degenerate critical point at $0$, $H(0)=0$, $DH(0)=0$, $\det D^2H(0)\neq 0$,
		\[H=H\circ\phi=H\circ\tau.\]
	\end{enumerate}	
\end{assumption}

Up to a linear change of variables, 
they take the form
\begin{equation}\label{eq:phitau}
	\phi(\xi)=\Lambda\xi+\hot(\xi),\qquad 	\tau(\xi)=\sigma\xi+\hot(\xi),	\qquad H(\xi)=\xi_1\xi_2+\hot(\xi),
\end{equation}	
where
\begin{equation}\label{eq:sigmaLambda}
	\sigma=\left(\begin{smallmatrix}0\, &1\\[4pt]1&\, 0\end{smallmatrix}\right),
	\qquad 	 \Lambda=\begin{cases}
		\left(\begin{smallmatrix}\lambda & 0 \\[4pt] 0 & \lambda^{-1}\end{smallmatrix}\right),& \Lambda^{p}=I,\quad p\geq 1,\\[6pt]
		-\sigma,& \Lambda^{2}=I,\quad p=2.
	\end{cases}
\end{equation}
The case of diagonal $ \Lambda=	\left(\begin{smallmatrix}\lambda & 0 \\[3pt] 0 & \lambda^{-1}\end{smallmatrix}\right)$ arises when the second involution
$\tau_2=\tau\circ\phi$ is a holomorphic reflection as well, while
the case $\Lambda=-\sigma$ happens when $\tau_2=\tau\circ\phi$ is tangent to $-\id$.
We shall note that a possibility of $\tau_2=\tau\circ\phi$ being tangent to $\id$ is excluded by the assumption that $\phi\neq\tau$, since any involution tangent to the identity is in fact the identity.

The diffeomorphism $\phi^{\circ p}(\xi)=\xi+\hot(\xi)$ is tangent to the identity, and as such it possesses  a unique formal infinitesimal generator (see e.g. \cite[Thm. 3.17]{ilyaskenko-yakovenko})~: a formal vector field $\hat\bX(\xi)$ whose formal time-1-flow $\exp(\hat\bX)(\xi)$ is equal to the Taylor expansion of $\phi^{\circ p}(\xi)$.  This formal vector field $\hat\bX(\xi)$ has $H(\xi)$ as a first integral, and is reversed by $\tau$:  $\tau^*\hat\bX=-\hat\bX$.
This allows to reduce the problem of formal classification of $(\phi,\tau)$ to a formal classification of such formal vector fields $\hat\bX$.

\subsection{Formal classification of reversible parabolic diffeomorphisms}

\begin{thm}[\!\!\cite{stolo-klimes}, formal classification]\label{thm:1} 
 Let $(\phi,\tau)$ be a pair of a reversible map $\phi$ and its reversing involution $\tau$ as in \re{eq:reversible} with a first integral $H$ be as above satisfying Assumptions~\ref{assumptions}.
	
	There exists a formal transformation $\xi\mapsto\hat\Psi(\xi)\in\widehat{\Diff}_{\id}(\cc^2,0)$ and a formal diffeomorphism $\hat G\in\hatDiff_{\id}(\cc,0)$,
	such that
	\[\hat\Psi\circ\phi=\hat\phi_{\rm nf}\circ\hat\Psi, \qquad  \hat\Psi\circ\tau=\sigma\,\hat\Psi, \qquad \hat G(H)=h\circ\hat\Psi, \]
	where
	\begin{equation*}
		\hat\phi_{\rm nf}(\xi)=\Lambda\cdot\exp(\tfrac{1}{p}\hat{\bX}_{\rm nf})(\xi),\quad
		\text{with $\sigma,\Lambda$ as in \eqref{eq:sigmaLambda}, \quad and \quad $h=\xi_1\xi_2.$}
	\end{equation*}
	Here  $\hat{\bX}_{\rm nf}(\xi)=\Lambda^*\hat{\bX}_{\rm nf}(\xi)=-\sigma^*\hat{\bf X}_{\rm nf}(\xi)$ is one of the following vector fields:
	\[\hat{\bX}_{\rm nf}(\xi)=\begin{cases}
		\textnormal{(o) }\ 0,&\\[6pt]
		\begin{aligned}
			&	\textnormal{(a) }\ c\,h^s\big(\xi_1\tdd{\xi_1}-\xi_2\tdd{\xi_2}\big), \\[3pt]
			&	\textnormal{(b) }\ c\,h^s\frac{P(u,h)}{1+c\,\hat\mu(h)P(u,h)}\big(\xi_1\tdd{\xi_1}-\xi_2\tdd{\xi_2}\big), 	
		\end{aligned}\ \ \Bigg\} &
		\text{if}\ \Lambda=\left(\begin{smallmatrix}\lambda & 0\\[3pt]0&\lambda^{-1}\end{smallmatrix}\right)\!, \ p\geq 1, \\[24pt]
		\textnormal{(c) }\ c\,h^s P(u, h)\big(\xi_1\tdd{\xi_1}-\xi_2\tdd{\xi_2}\big),
		& \text{if}\ \Lambda=-\sigma, \ p=2,
	\end{cases}\]
	where $c\neq0$, and
	\begin{itemize}
		\item[\textnormal{(b)}] $P(u,h)$ is polynomial in $u(\xi):=\xi_1^p+\xi_2^p$ of order $k\geq 1$,
		\[P(u,h)=u^k+ P_{k-1}(h)u^{k-1}+\ldots+P_0(h),\quad P(u,0)=u^k,\]
		and $\hat\mu(h)=\sum_{n=0}^{+\infty}\mu_nh^n$ is a formal power series,
		\item[\textnormal{(c)}]
		$P(u, h)$ is an odd polynomial in $u=\xi_1+\xi_2$ of order $2\tilde k+1\geq 0$,
		\[P(u, h)=u^{2\tilde k+1}+ P_{2\tilde k-1}(h)u^{2\tilde k-1}+\ldots+ P_1(h)u,\quad P(u, 0)=u^{2\tilde k+1}.\]
	\end{itemize}
\end{thm}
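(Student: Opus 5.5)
We reduce the classification of $(\phi,\tau)$ to that of the formal infinitesimal generator $\hat\bX$ of $\phi^{\circ p}$, as indicated before the statement. First we normalize the first integral: since $H$ is Morse with $H=\xi_1\xi_2+\hot$ and $H\circ\tau=H$, the equivariant Morse lemma gives an analytic $\Psi_0$ tangent to $\id$. This $\Psi_0$ conjugates $\tau$ to its linear part $\sigma$ (a holomorphic involution is linearizable by averaging, $\frac12(\xi+\sigma\tau(\xi))$). It also makes $H\circ\Psi_0^{-1}$ a function of $h=\xi_1\xi_2$ alone, which we absorb into $\hat G$. In these coordinates $\phi$ and $\hat\bX$ preserve $h$, and $\sigma^*\hat\bX=-\hat\bX$. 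Because $\phi$ commutes with $\phi^{\circ p}$, we also get $\Lambda^*\hat\bX=\hat\bX$ once the linear part is normalized.

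Next we study vector fields tangent to the fibres of $h$. A formal vector field with first integral $h=\xi_1\xi_2$ has the form $\hat\bX=\hat f(\xi)\,(\xi_1\tdd{\xi_1}-\xi_2\tdd{\xi_2})$ up to multiples of a field vanishing on $h$, and the latter reduce to this form as well. Reversibility by $\sigma$ forces $\hat f\circ\sigma=\hat f$. We then normalize $\hat f$ by formal changes of variables of the shape $\exp(\hat g\,(\xi_1\tdd{\xi_1}-\xi_2\tdd{\xi_2}))$, which commute with $h$. The key operation is the homological equation for $\hat g$. For $Y=\xi_1\tdd{\xi_1}-\xi_2\tdd{\xi_2}$, conjugating $fY$ by the flow of $gY$ changes $f$ by $\{g,f\}$-type terms $gY(f)-fY(g)$.

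Monomials $\xi_1^a\xi_2^b$ are eigenvectors of $Y$ with eigenvalue $a-b$. Since $\Lambda$-invariance forces $a\equiv b \pmod p$ in the diagonal case, $\hat f$ decomposes into $h^n$ times functions of $u=\xi_1^p+\xi_2^p$ after symmetrization by $\sigma$. We split into cases according to $\hat f$. If $\hat f\equiv0$ we are in case (o). If $\hat f=c h^s+\dots$ depends only on $h$ to leading order, we solve for $\hat g$ inductively in the filtration by powers of $h$ and degree to kill all non-$h$ terms, which gives case (a). Otherwise let $h^sP$ be the first term that is not a function of $h$, of order $k$ in $u$. We then perform a Weierstrass-type division of $\hat f$ by $P$ in $u$ over $\cc[[h]]$ to bring $\hat f$ to the form $c h^s P/(1+c\hat\mu(h)P)$. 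This is the analogue of the formal normal form $x^{k+1}/(1+\mu x^k)$ of parabolic germs in one variable, now with parameter $h$. In the case $\Lambda=-\sigma$, $p=2$, the invariance under $-\sigma$ makes $\hat f$ odd in $u=\xi_1+\xi_2$. The residue term $\hat\mu$ can then be eliminated by oddness, which leaves $P$ odd and gives (c).

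Finally we recover $\phi$. With $\hat\bX$ normalized to $\hat\bX_{\rm nf}$, the map $\Lambda\cdot\exp(\frac1p\hat\bX_{\rm nf})$ has $p$-th power $\exp(\hat\bX_{\rm nf})$. The map $\phi\circ\hat\phi_{\rm nf}^{-1}$ commutes with that flow, is $p$-periodic in its linear part, and is reversed suitably. We show it is the time-$t$ map of a field in the centralizer, which can be absorbed by one more $\sigma$-compatible change of coordinates.

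The main obstacle is the case-(b) step. There one must solve the homological equation in the ring $\cc[[h]][u]$ modulo $P$, keeping $\sigma$- and $\Lambda$-equivariance, and identify exactly the obstruction $\hat\mu(h)$. We would try a parametric version of the one-dimensional Takens/Ilyashenko argument, treating $h$ as a formal parameter and performing division by $P$ in $u$ order by order in $h$.
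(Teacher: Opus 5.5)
Your reduction to the formal generator is the one the paper indicates: $\hat{\bf X}=\hat f\,Y$ with $Y=\xi_1\tdd{\xi_1}-\xi_2\tdd{\xi_2}$ and $\hat f$ invariant under $\sigma,\Lambda$. However, your normalization step has genuine gaps.

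First, a fixable one: $\phi^*\hat{\bf X}=\hat{\bf X}$ does not imply $\Lambda^*\hat{\bf X}=\hat{\bf X}$. You must first linearize the dihedral group generated by $\sigma$ and $\chi=\phi\circ\exp(-\frac1p\hat{\bf X})$, which satisfies $\chi^{\circ p}=\id$, $\sigma\chi\sigma=\chi^{-1}$ and $H\circ\chi=H$. Then redo the equivariant Morse lemma. After that $\phi=\Lambda\exp(\frac1p\hat{\bf X})$, every $(\sigma,\Lambda)$-equivariant normalization of $\hat{\bf X}$ normalizes $\phi$, and your unsupported final step becomes unnecessary.

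The serious gap is that after the Morse step you only use changes $\exp(\hat gY)$, i.e.\ maps with $h\circ\Psi=h$. These cannot change the $\cc[[h]]$-part of $h^s/\hat f$. In case (a), write $\hat f=h^se$ with $e$ a unit and $1/e=\kappa(h)+Y(R)$; this is possible since $Y(\xi_1^a\xi_2^b)=(a-b)\xi_1^a\xi_2^b$. A tangent-to-identity $\Psi$ with $h\circ\Psi=h$ has the form $(\xi_1e^{g},\xi_2e^{-g})$, so $\Psi^*\hat{\bf X}=h^s\tilde e\,Y$. Evaluate both sides on $\xi_1\circ\Psi$ and use $D\Psi\cdot Y=(1+Y(g))\,Y\circ\Psi$. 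This gives
\[1/\tilde e=(1+Y(g))\,(1/e)\circ\Psi=\kappa(h)+Y\big(\kappa g+R\circ\Psi\big).\]
Hence $\hat f=ch^s(1+h)$ is never brought to $ch^s$ by your changes. The missing ingredient is the rescalings $\xi\mapsto\beta(h)\xi$ with $\beta(0)=1$. They commute with $\sigma$, $\Lambda$ and the flow of $Y$, act by $\hat f\mapsto\hat f(\beta(h)\xi)$, and replace $h$ by $h\beta(h)^2$. They are exactly where a nontrivial $\hat G$ comes from: once $\hat f=f(h)$, choose $\beta$ with $f(h\beta^2)=ch^s$.

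Your case distinction is also wrong. For $p\ge2$, $\hat f=h+u^2$ is ``$ch^s+\dots$ to leading order''. Yet its zero set, $\Fix(\phi^{\circ p})$, is not $\{h=0\}$, while every admissible $\hat\Psi$ (with $h\circ\hat\Psi=\hat G(h)$) preserves $\{h=0\}$. Accordingly your induction breaks: the leading homological operator $\hat g\mapsto-hY(\hat g)$ only produces multiples of $h$, so it cannot remove $\xi_1^{2p}+\xi_2^{2p}$. This $\hat f$ is of type (b) with $s=0$ and $P=u^2+h$.

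The right criterion is to write $\hat f=h^sg$ in $\cc[[h,u]]$ with $s$ maximal. Case (a) holds if and only if $g(0)\neq0$; otherwise Weierstrass preparation gives $g=e\,P$. Dividing $\hat f$ by $P$ is then vacuous. You must divide the reciprocal instead: $1/e=QP+R$ with $\deg_uR<\deg_uP$, so that $h^s/\hat f=Q+R/P$. The normal forms mean exactly $R\equiv1/c$ and $Q=\hat\mu(h)$. In (c), $Q$ is odd, so its $\cc[[h]]$-part vanishes; that is the real reason there is no $\hat\mu$.

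Showing that an admissible change achieves this is the whole content of (b), and your proposal leaves it open. It is also not a parametric one-variable problem. The fibres $\{h=t\}$ are annuli on which $u$ is $2p$-to-one. In the diagonal case, writing $\hat g=(\xi_1^p-\xi_2^p)G$ with $G\in\cc[[h,u]]$, the homological operator becomes, up to a factor $p$,
\[G\mapsto(u^2-4h^p)(G\hat f_u-\hat fG_u)-u\hat fG .\]
This degenerates along $u^2=4h^p$ and is not the one-variable operator $G\mapsto G\hat f_u-\hat fG_u$.
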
	
\begin{rem}

	\begin{itemize}
		
		\item The case \textnormal{(o)} happens if and only if $\phi^{\circ p}=\id$, and there exists a normalizing transformation $\hat\Psi$ which is convergent.
		
		\item In  cases \textnormal{(a),\,(b),\,(c)} the formal normalizing transformation $\hat\Psi\in\widehat{\Diff}_{\id}(\cc^2,0)$ is unique.
		Furthermore, in the cases \textnormal{(b),\,(c)} $h\circ\hat\Psi$ is convergent.
		
		\item The formal equivalence class of $(\phi,\tau)$ with respect to conjugation by the group $\widehat{\Diff}_{\id}(\cc^2,0)$ contains a unique representative in the above formal normal form $(\hat\phi_{\rm nf},\hat\tau_{\rm nf})$.
		
		\item In the formal equivalence class of $(\phi,\tau)$ with respect to conjugation in the full group $\widehat{\Diff}(\cc^2,0)$ the above formal normal form $(\hat\phi_{\rm nf},\hat\tau_{\rm nf})$ and its infinitesimal generator $\hat{\bf X}_{\rm nf}$ are determined uniquely up to the action of scalar transformation
		$\xi\mapsto\zeta\cdot\xi$, $\zeta\in\cc^*$, and also of $\xi\mapsto\sigma\xi$ in case $p\in\{1,2\}$ when $\sigma\Lambda=\Lambda\sigma$, by which the constant $c\neq 0$ can be further normalized.
		
		\item The group $\cL Z(\hat\phi_{\rm nf},\sigma)$ of formal diffeomorphisms commuting with $(\hat\phi_{\rm nf},\sigma)$ is the same as the group
		$\cL Z^{\sigma,\Lambda}(\hat{\bf X}_{\rm nf})$ of formal $(\sigma,\Lambda)$-equivariant diffeomorphisms preserving $\hat{\bf X}_{\rm nf}$,
		$\cL Z(\hat\phi_{\rm nf},\sigma)=\cL Z^{\sigma,\Lambda}(\hat{\bf X}_{\rm nf})$.
		In the cases \textnormal{(a),\,(b),\,(c)} it is identified with a subgroup of $\zz_{2kp+4s}$ acting on $(\cc^2,0)$ by $\xi\mapsto e^{\frac{\pi i r}{kp+2s}}\sigma^r\xi,\ r\in\zz_{2kp+4s}$, where
		\[0<kp+2s:=\begin{cases}
			\textnormal{(a)}\ 2s,\\
			\textnormal{(b)}\ kp+2s,\\
			\textnormal{(c)}\ 2\tilde k+1+2s.
		\end{cases}\]
		is the multiplicity of the fixed point divisor $\Fix(\phi^{\circ p})$ at the origin.

	\end{itemize}
\end{rem}

\begin{remark}
		The variables $h=\xi_1\xi_2$ and $u=\xi_1^p+\xi_2^p$, resp. $\tilde u=(\xi_1+\xi_2)^2$, are basic $(\sigma,\Lambda)$-invariant functions: any formal/analytic $(\sigma,\Lambda)$-invariant function can be written as a formal/analytic function of $(h,u)$, resp. $(h,\tilde u)$.
		
\end{remark}

In the case (a)	of Theorem~\ref{thm:1}, there exist analytic germs that are formally but not analytically equivalent to the normal form. 
In fact, there are topological obstructions to convergence.

The cases (b) and (c) of Theorem~\ref{thm:1} carry close analogy with the Birkhoff--\'Ecalle--Voronin theory of parabolic diffeomorphisms in dimension 1.
While the formal normalizing transformation is generically divergent, the obstructions to convergence are of a purely analytic nature and can be expressed in terms of
an infinite-dimensional functional modulus  (Theorem~\ref{thm:analytic} below).


\begin{remark}
	The formal classification of Theorem~\ref{thm:1} is quite similar to the study of 1-parameter families of holomorphic germs $\phi_{\epsilon}(z)$
	unfolding a parabolic germ $\phi_0(z)=\lambda z+\hot(z)$, $\lambda^{p}=1$.
	The study of such families in the finite codimension case $s=0$ was carried independently by C.~Christopher, P.~Marde\v{s}i\'c, R.~Roussarie \& C.~Rousseau \cite{
		Rousseau-Christopher, Rousseau10, 
		Rousseau} and by J.~Ribon \cite{Ribon-f, Ribon-a, Ribon-c}.\footnote{Prior to that, this was investigated also by J.~Martinet \cite{Martinet}, P.~Lavaurs \cite{Lavaurs}, 
		and A.~Glutsyuk \cite{Glutsyuk}.}
	It leads to a modulus of analytic classification that ``unfolds'' the Birkhoff--\'Ecalle--Voronin modulus.
	
\end{remark}

Since the formal normal form $(\hat\phi_{\rm nf},\sigma)$ of Theorem~\ref{thm:1} in the case (b) is a priori purely formal (due to the formal invariant $\hat\mu(h)$), we introduce instead a larger \emph{model class} represented by an analytic model.

\begin{def}\label{def:model}
	Let us introduce a \emph{model} $(\phi_{\rm mod},\sigma)$ for $(\phi,\tau)$, as $\phi_{\rm mod}=\Lambda\exp\big(\tfrac{1}{p}{\bf X}_{\rm mod} \big)$, where
	\begin{equation}\label{eq:Xmodel}
		{\bf X}_{\rm mod}(\xi)=\begin{cases}
			\text{(0) }\ 0,\\
			\text{(a) }\ c\,h^s\big(\xi_1\tdd{\xi_1}-\xi_2\tdd{\xi_2}\big), \\[6pt]
			\text{(b) }\ c\,h^sP(u,h)\big(\xi_1\tdd{\xi_1}-\xi_2\tdd{\xi_2}\big), \\[6pt]
			\text{(c) }\ c\,h^s P(u, h)\big(\xi_1\tdd{\xi_1}-\xi_2\tdd{\xi_2}\big)
		\end{cases}
	\end{equation}
	with the same $c$ and $P(u,h)$ as in the formal normal form.
	The {\it model class} of $\phi_{\rm mod}$ is the set of all analytic $\phi$ with the same model, i.e. it is the union of formal classes with over all invariants $\hat{\mu}(h)$.
\end{def}

When $(\phi,\tau)$ is of formal type \textnormal{(b)} or \textnormal{(c)}, then
we can show that up to an analytic tangent-to-identity change of coordinates one can assume that $\tau(\xi)=\sigma\xi$,  $h\circ\phi(\xi)=h(\xi)$ and
\begin{equation}\label{eq:prenormalphi}
	\xi_j\circ\phi(\xi)=\xi_j\circ\phi_{\rm mod}(\xi)\mod h^{s}P(u,h)^2\xi_j, \quad j=1,2,
\end{equation}
is in so called \emph{prenormal form}.

\subsection{Classification on sectorial domains}

\begin{defn}\label{def:sector}
	A \emph{cuspidal sector} with vertex at $h=0$ is a simply connected planar domain $S$ bounded by two real analytic curves, each of which has an asymptotic tangent at the vertex, and by an arc of a fixed radius (see Figure~\ref{figure:sectors}).
	The \emph{angular opening} of the cuspidal sector is the angle between the tangent rays of the two bounding curves at the vertex.	
	
	\emph{We shall consider the vertex as included in the cuspidal sector, $0\in S$.} That way the limit situation at $h=0$ will be part of our description.
\end{defn}

\emph{Let us stress that a covering of a neighborhood of 0 by a collection of cuspidal sectors of positive angular openings may not contain any finite sub-covering.}

The following is our analogy of Theorem~\ref{thm:kimura}.

\begin{thm}[\!\!\cite{stolo-klimes},``sectorial'' equivalence]\label{thm:sectorial}
	Let $(\phi,\sigma)$ of formal type \textnormal{(b)} or \textnormal{(c)} be in the prenormal form \eqref{eq:prenormalphi},
	and let $(\phi_{\rm mod},\sigma)$ be its model.
	There exists a countable collection of cuspidal sectors\footnote{See Figure~\ref{figure:sectors} and Definition~\ref{def:sector}.}
	covering a disc $\{|h(\xi)|<\delta_2\}$ for some $\delta_2>0$, and for each given sector $S$ there exists a covering,  by a $(\sigma,\Lambda)$-invariant family\footnote{If $\Omega_{S}$ is a domain in the family, then its images $\sigma(\Omega_S)$ and $\Lambda(\Omega_S)$ are also in the family.} 
	of $4kp$ ``Lavaurs domains'' $\{\Omega_{S}^j\}_{j=1,\ldots, 4kp}$,
   of the set $B_S\setminus\Fix(\phi_{\rm mod}^{\circ p})$ with
	\begin{equation}\label{eq:BS}
		B_S=\{\xi\in\cc^2: |\xi|<\delta_1,\ h(\xi)\in S\}
	\end{equation}
	(see Figure~\ref{figure:sectors}), and a family of bounded analytic transformations $\{\Psi_{\Omega_{S}^j}\}_{j=1,\ldots, 4kp}$,
	such that
	\[\Psi_{\Lambda(\Omega_{S}^j)}\circ\phi=\phi_{\rm mod}\circ\Psi_{\Omega_{S}^j},\qquad \Psi_{\sigma(\Omega_{S}^j)}\circ\sigma=\sigma\Psi_{\Omega_{S}^j},\qquad h\circ\Psi_{\Omega_{S}^j}=h,\]
	for all $j$ and $S$.
	We call the family $\{\Psi_{\Omega_{S}^j}\}_{j=1,\ldots, 4kp}$ a \emph{normalizing cochain}.
	Such normalizing cochain is unique up to left composition with cochains of flow maps
	\begin{equation}\label{eq:cochainofflowmaps}
		\Big\{\exp\big(h^{-s}C_{\Omega^{j}_{S}}(h){\bf X}_{\rm mod}\big)\Big\}_{j=1,\ldots 4kp},
	\end{equation}
	where the $C_{\Omega^{j}_{S}}(h)$ are bounded analytic functions on $S$.
\end{thm}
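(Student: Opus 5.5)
The plan is to reduce the sectorial equivalence to a parametrized version, in the parameter $h$, of the one-dimensional Birkhoff--\'Ecalle--Voronin/Kimura sectorial normalization of parabolic germs. Since $h\circ\phi=h$ in the prenormal form \eqref{eq:prenormalphi}, $\phi$ preserves each level curve $\{h=\epsilon\}$. On it $\phi^{\circ p}$ is the time-one map of a perturbation of ${\bf X}_{\rm mod}$ restricted to the curve. In the coordinate $\xi_1$ on $\{\xi_1\xi_2=\epsilon\}$, this restriction is $\dot\xi_1=c\,\epsilon^s P(u,\epsilon)\xi_1$. The fixed points of $\phi^{\circ p}_{\rm mod}$ on the level curve are the $kp$ (resp. $2\tilde k+1$) roots of $P(u,\epsilon)=0$ in $u$, lifted to $\xi$. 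So we have a one-parameter unfolding of a parabolic point of multiplicity $kp+1$ (together with the $\Lambda$-symmetry). This is exactly the setting of the Christopher--Marde\v si\'c--Roussarie--Rousseau and Ribon unfolding theory mentioned in the remark above.

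First, I would introduce the rectifying (Fatou) coordinate of the model. This is a time function $t_{\rm mod}=\int \frac{d\xi_1}{c\,h^sP(u,h)\xi_1}$, multivalued on $B_S\setminus\Fix(\phi^{\circ p}_{\rm mod})$, in which $\phi_{\rm mod}^{\circ p}$ becomes $t\mapsto t+1$. For $\epsilon$ restricted to a cuspidal sector $S$, I would construct the Lavaurs domains $\Omega_S^j$. They are regions in the $\xi_1$-plane, bounded by curves transverse to the real trajectories of $e^{i\theta}{\bf X}_{\rm mod}$ for a suitable direction $\theta$, that connect pairs of fixed points. Their images in $t$ must contain horizontal strips of width larger than $1$. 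The point of restricting $\epsilon$ to a cuspidal sector is to keep the imaginary parts of the periods $2\pi i/(c\,\epsilon^sP'(\cdot))$ at the fixed points away from the real direction uniformly, so that the domains exist uniformly down to $\epsilon=0$. Countably many sectors are needed because the resonance directions accumulate as $\epsilon\to0$, owing to the factor $\epsilon^s$. Counting pairs of petals over the $kp$ fixed points, doubled by the $\sigma$-symmetry, gives $4kp$ domains, which I would choose to be $(\sigma,\Lambda)$-invariant.

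Second, on each $\Omega_S^j$ I would solve the conjugacy equation for the Fatou coordinate, $t\circ\phi^{\circ p}=t+1$, in the form $t=t_{\rm mod}+g$. Here $g$ satisfies the difference equation $g\circ\phi^{\circ p}-g=\delta$. By \eqref{eq:prenormalphi}, the error $\delta$ is $O(h^sP^2)$ relative to the model step, hence integrable along orbits in $t$. I would solve it by the standard summation $g=-\sum_{n\ge0}\delta\circ\phi^{\circ pn}$ in attracting directions, and the symmetric sum in repelling ones. Uniform convergence in $\epsilon\in S$ follows from quadratic decay of $\delta$ in $t$ on the strip, with analytic dependence on $\epsilon$ inside $S$ and continuity at the vertex. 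Then I define $\Psi_{\Omega}$ as the map that keeps $h$ and sends $\xi$ to the point of the same $h$-level whose model time is $t$. The relation with $\phi$ itself, rather than $\phi^{\circ p}$, and with $\sigma$ would be obtained by averaging. Because $\sigma$ reverses $\phi_{\rm mod}$ and conjugates $t_{\rm mod}$ to $-t_{\rm mod}$ up to a constant, I would replace $g$ by its $(\sigma,\Lambda)$-symmetrization, using uniqueness up to constants on each domain. The label shifts $\Omega\mapsto\Lambda(\Omega)$ and $\Omega\mapsto\sigma(\Omega)$ record this.

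Finally, I would prove uniqueness. The ratio of two normalizing cochains on a domain commutes with $\phi_{\rm mod}$ and preserves $h$. In Fatou coordinates it is therefore a map $t\mapsto t+\kappa(t,h)$ with $\kappa$ $1$-periodic and bounded on a strip of width exceeding $1$, hence constant in $t$. Rewriting $t$-translation by $C(h)$ as a flow map gives $\exp(h^{-s}C(h){\bf X}_{\rm mod})$, taking into account the normalization of $t_{\rm mod}$ by $h^{-s}$. The main obstacle I expect is the first step: constructing Lavaurs domains with uniform control as $h\to0$ in the cuspidal sectors, where fixed points merge and the $h^s$ factor degenerates. I would first treat $s=0$ by the known unfolding results, then handle $h^s$ by a rescaling of time.
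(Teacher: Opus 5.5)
The survey gives no proof of this theorem; it only refers to \cite{stolo-klimes}, so I judge your plan on its own terms. The outer shell is reasonable: a leafwise model time $t_{\rm mod}$, a correction $g$ obtained from a difference equation, transport of the conjugacy by $\Lambda$ and $\sigma$, and uniqueness via $1$-periodic functions on a strip. The analytic core, your second step, fails on two counts.

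\emph{The defect is not quadratically small.} By \eqref{eq:prenormalphi}, $\phi-\phi_{\rm mod}=O(h^sP^2\xi)$, while $dt_{\rm mod}=d\xi_1/(c\,h^sP\,\xi_1)$. Hence $\delta=t_{\rm mod}\circ\phi^{\circ p}-t_{\rm mod}-1=O(P)$, not $O(P^2)$. At the vertex ($s=0$, on $\{\xi_2=0\}$) one has $P=\xi_1^{kp}$ and $t_{\rm mod}\sim -1/(ckp\,\xi_1^{kp})$, so $\delta\sim \mathrm{const}/t$. The constant is, up to a nonzero factor, the residue-type invariant $\hat\mu(0)$, which the prenormal form does not remove. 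Consequently $\sum_n\delta\circ\phi^{\circ pn}$ diverges logarithmically at $h=0$ and is not uniformly bounded as $h\to0$. The correction really contains a logarithmic part of the form $\mu\,h^{-s}\log\xi_1$ (compare the times of $\hat{\bf X}_{\rm nf}$ and ${\bf X}_{\rm mod}$), and you must show that $\Psi_{\Omega}$ is bounded despite it. That is precisely why the statement conjugates to the model, and only by bounded maps.

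\emph{Summation along orbits needs invariant petals, and your domains are gates.} Forward or backward summation presupposes petals on which orbits stay and converge to a fixed point. The domains of your first step connect pairs of fixed points and are spanned by trajectories of $e^{i\theta}{\bf X}_{\rm mod}$ transverse to real time. In model time such a domain is a union of lines transverse to $\mathbb R$, while $\phi^{\circ p}$ is close to $t\mapsto t+1$. So orbits cross the domain and leave it, and $g=-\sum_{n\ge0}\delta\circ\phi^{\circ pn}$ is not even defined there.

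Gates cannot be avoided. The sectors must cover the whole disc $\{|h|<\delta_2\}$. Already for $p=k=1$, $s=0$, $P=u+P_0(h)$, the eigenvalues of ${\bf X}_{\rm mod}$ at the two fixed points of a leaf are $\pm c\sqrt{P_0(h)^2-4h}$. These are purely imaginary along a curve ending at $h=0$. For such $h$ there are no petals of uniform size, and orbits are carried through the neck between the two fixed points.

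\emph{The geometry in your first step is misidentified.} For $\epsilon\neq0$ the leaf $\{h=\epsilon\}\cap\{|\xi|<\delta_1\}$ is an annulus in $\xi_1$ containing $2kp$ fixed points, not $kp$. Each of the $k$ roots $u_0$ of $P(\cdot,\epsilon)$ gives the $2p$ solutions of $\xi_1^{2p}-u_0\xi_1^p+\epsilon^p=0$; in case (c), $2\tilde k+1$ roots give two points each. As $\epsilon\to0$ these fixed points collapse into the node of $\{\xi_1\xi_2=0\}$, where the annulus pinches into two discs, each carrying a parabolic point of multiplicity $kp+1$ (for $s=0$). This is not a one-parameter unfolding of a single parabolic point on a fixed disc. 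So the Christopher--Marde\v{s}i\'c--Roussarie--Rousseau and Ribon results cannot simply be quoted to produce the $2kp$ outer and $2kp$ inner domains with uniform control.

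The rescaling $t=h^{-s}\tau$ also does not reduce $s>0$ to $s=0$. It turns the problem into a small-step one whose step direction $\arg h^s$ rotates with $h$.

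\emph{What is missing.} You need (i) an actual construction of the $(\sigma,\Lambda)$-invariant gate domains on the pinching annuli, uniform for $h\in S$ up to the vertex. You also need (ii) a method for solving the conjugacy equation on non-invariant domains containing a fundamental strip of width larger than $1$, for example a Cauchy--Heine type integral for the difference equation, or uniformization of the quotient cylinder with parameters. Its estimates must tolerate the logarithmic term above.

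Your uniqueness argument is fine in outline, provided boundedness of $\kappa$ at both ends of the strip is derived from the boundedness of the $\Psi_\Omega$.
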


The form of the domains $\Omega_S^j$ in the covering of Theorem~\ref{thm:sectorial} is determined by the dynamics of the model vector field
${\bf X}_{\rm mod}$ \eqref{eq:Xmodel}.
The set $B_S$ \eqref{eq:BS} has two ``essential'' boundary components: ``outer'' one at $\{|\xi_1|=\delta_1\}$ and ``inner'' one $\{|\xi_2|=\delta_1\}$, and the $4kp$ domains $\Omega_{S}^j$ are correspondingly grouped into two sets: $2kp$ cyclically ordered \emph{outer domains} (touching the ``outer'' boundary) and $2kp$ cyclically ordered \emph{inner domains} (touching the ``inner'' boundary), see Figure~\ref{figure:sectors}.

\begin{figure}[h!t]
	\centering
	\includegraphics[width=0.99\textwidth]{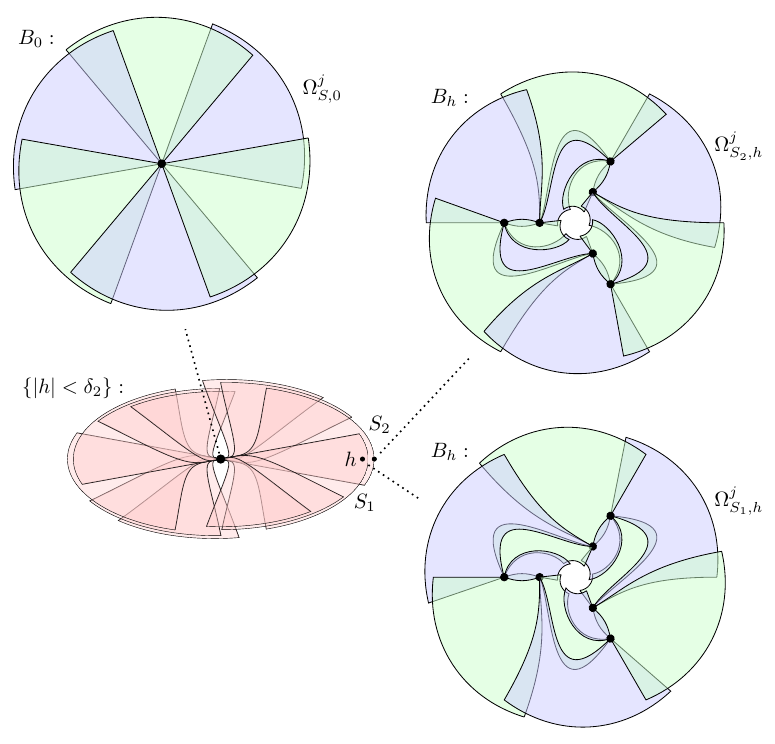}
	\caption{ Example of the domains of Theorem~\ref{thm:sectorial} in the case $p=3$, $k=1$, $s=0$,
		for a model ${\bf X}_{\rm mod}=i(u^3+h)(\xi_1\dd{\xi_1}-\xi_2\dd{\xi_2})$.
		In the center of the figure: a covering of a small disc $\{|h|<\delta_2\}$ by a collection of cuspidal sectors $S$ (in pink).
		For each sector $S$ and $h\in S$, the leaf $B_h=\{h={\rm const}\}\cap\{|\xi|<\delta_1\}$, which in the coordinate $\xi_1$ has the form of an annulus,
		is covered by $4kp$ Lavaurs domains $\Omega_{S,h}^j=\Omega_S^j\cap B_h$ attached to the fixed points $\Fix(\phi^{\circ p})\cap B_h$.
		When $h$ belongs to several sectors $S$ the associated coverings differ.
		The zero level set $B_0$ consists of two irreducible components, the figure shows the covering of only one of them.}
	\label{figure:sectors}
\end{figure}

\subsection{The moduli space for the analytic classification}

We express the modulus of the analytic classification as a countable collection of ``cocycles'' of transition maps on certain intersections of the covering.
Namely, to each cuspidal sector $S$ in the $h$-plane and its associated normalizing cochain $\{\Psi_{\Omega^j_{S}}\}_{j=1,\ldots 4kp}$ on the $4kp$ Lavaurs domains $\{\Omega_S^j\}$, we  associate  a set of \emph{transition maps} on the intersections of two subsequent outer/inner domains:
\[ \left\{\psi_S^{j,i}:=\Psi_{\Omega^j_{S}}\circ\Psi_{\Omega^i_{S}}^{\circ(-1)}\right\}_{i,j},\quad \text{defined on } \Omega^i_S\cap\Omega^j_S, \]
preserving ${\bf X}_{\rm mod}$ and possessing a $(\sigma,\Lambda)$-equivariance property.
The equivalence class of the set of transition maps
modulo conjugation by cochains of flow maps \eqref{eq:cochainofflowmaps}
\[\psi_S^{j,i}\simeq \exp(C_{\Omega^j_{S}}(h){\bf X}_{\rm mod})\circ\psi_S^{j,i}\circ\exp(-C_{\Omega^i_{S}}(h){\bf X}_{\rm mod}),\]
is then called a \emph{cocycle}.


\begin{thm}[\!\!\cite{stolo-klimes}, analytic classification]\label{thm:analytic}
	Let $\phi,\phi'=\Lambda\xi+\hot$ be two analytic $\sigma$-reversible germs of formal type \textnormal{(b)} or  \textnormal{(c)} of Theorem~\ref{thm:1} in the prenormal form \eqref{eq:prenormalphi}, both with the same model $\phi_{\rm mod}$ \eqref{eq:Xmodel}. The following are equivalent:
	\begin{enumerate}
		\item $(\phi,\sigma)$ and $(\phi',\sigma)$ are analytically conjugated by an element of $\Diff_{\id}(\cc^2,0)$.
		\item $(\phi,\sigma)$ and $(\phi',\sigma)$ are analytically conjugated by an element of
		\[\Diff_{\id}^{h}(\cc^2,0)=\{\psi\in \Diff_{\id}(\cc^2,0),\ h\circ\psi=h\}.\]
		\item For every cuspidal sector $S$ of Theorem~\ref{thm:sectorial} their associated cocycles $\{\psi_S^{j,i}\}$, $\{{\psi'}_S^{j,i}\}$ agree.
		\item For one cuspidal sector $S$ of Theorem~\ref{thm:sectorial} their associated cocycles $\{\psi_S^{j,i}\}$, $\{{\psi'}_S^{j,i}\}$ agree.
	\end{enumerate}
\end{thm}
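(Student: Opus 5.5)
The plan is to prove the cycle of implications (1)$\Rightarrow$(2)$\Rightarrow$(3)$\Rightarrow$(4)$\Rightarrow$(2)$\Rightarrow$(1). Two of these are free: (2)$\Rightarrow$(1) is trivial, and (3)$\Rightarrow$(4) is immediate.

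\textbf{Step (1)$\Rightarrow$(2).} Let $\psi\in\Diff_{\id}(\cc^2,0)$ conjugate $(\phi,\sigma)$ to $(\phi',\sigma)$. Both germs are in prenormal form, so $h$ is a first integral of each. Hence $h\circ\psi$ is an analytic first integral of $(\phi,\sigma)$ tangent to $h$.

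In cases (b),(c), any formal $(\phi,\sigma)$-invariant function is a formal function of $h$. This follows from the formal normal form of Theorem~\ref{thm:1}, because $\hat\bX_{\rm nf}$ is a nonzero multiple of $\xi_1\partial_{\xi_1}-\xi_2\partial_{\xi_2}$, whose first integrals are functions of $h$. Thus $h\circ\psi=g(h)$ with $g$ analytic and $g'(0)=1$, using the convergence of $h\circ\hat\Psi$.

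Next, compare the formal invariants of the two germs. Since both are in prenormal form with the same model, the reparametrization $g$ must preserve the formal invariants $c\,h^sP$ and $\hat\mu$ in the sense of Theorem~\ref{thm:1}. The uniqueness statement there (the centralizer is a finite group) then forces $g=\id$ up to composing $\psi$ with a flow map of ${\bf X}_{\rm mod}$. I expect this to need a short argument, for instance correcting $\psi$ by the time-$t(h)$ flow of an $h$-preserving vector field. I would first try to show directly that $g$ commutes with the normal form data and is therefore the identity.

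\textbf{Step (2)$\Rightarrow$(3).} Take $\psi\in\Diff^h_{\id}$ with $\psi\circ\phi=\phi'\circ\psi$. For any sector $S$, if $\{\Psi'_{\Omega^j_S}\}$ normalizes $\phi'$, then $\{\Psi'_{\Omega^j_S}\circ\psi\}$ is a normalizing cochain for $\phi$ on the same Lavaurs domains. This uses that $h\circ\psi=h$ and $\psi$ is close to the identity, so the domains are preserved after slight shrinking. The transition maps of the two cochains coincide, because $\psi$ cancels in $\Psi'_j\psi(\Psi'_i\psi)^{-1}$. By the uniqueness part of Theorem~\ref{thm:sectorial}, any other choice changes the transition maps only by flow-map cochains, so the cocycles agree.

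\textbf{Step (4)$\Rightarrow$(2).} This is the main step. Assume the cocycles agree on one sector $S$. After modifying the cochain of $\phi'$ by flow maps \eqref{eq:cochainofflowmaps}, the transition maps are literally equal. Then $\psi_j:=\Psi'^{-1}_{\Omega^j_S}\circ\Psi_{\Omega^j_S}$ agree on all overlaps and glue to a bounded analytic map on $B_S\setminus\Fix(\phi^{\circ p}_{\rm mod})$. This map satisfies $h\circ\psi=h$, conjugates $\phi$ to $\phi'$, and commutes with $\sigma$. By Riemann extension it extends across the fixed-point divisor to $B_S$.

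The difficulty is that $S$ is only a cuspidal sector in $h$, not a full neighborhood. To pass to a neighborhood of $0$, I would show that the conjugacy on the fiber $B_h$ is unique up to the centralizer of the model, so that its restriction to each leaf is determined by its restriction to any neighborhood of the zero level.

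Concretely, I would do the following. On each leaf $h=\text{const}$ the conjugacy is determined by its germ near $\Fix(\phi^{\circ p})$. The limit leaf $h=0$ lies in $S$, by the convention $0\in S$ of Definition~\ref{def:sector}. Let $\hat\Psi,\hat\Psi'$ be the unique formal normalizations. The glued map agrees with the formal conjugacy $\hat\Psi'^{-1}\circ\hat\Psi$ as an asymptotic expansion in $h$ uniformly on $S$, which is Gevrey-type control from the sectorial construction. By uniqueness of the formal conjugacy (up to the finite centralizer), the glued map on $B_S$ is the restriction of one formal object.

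Finally, cover the disc by countably many sectors. Analytic continuation in $h$ across overlapping sectors, each giving a bounded solution with the same asymptotics, yields bounded conjugacies that coincide on overlaps by the uniqueness of the formal conjugacy. Hence they define a single bounded analytic map on a punctured neighborhood in $h$, which extends by Riemann's theorem. I expect this globalization from one cuspidal sector to all of $\{|h|<\delta_2\}$ to be the main obstacle. It is where the absence of finite subcovers matters, and it may require a uniform-in-$h$ estimate on the Lavaurs domains from Theorem~\ref{thm:sectorial}.
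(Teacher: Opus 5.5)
\textbf{The gap is in (4)$\Rightarrow$(2), at the globalization step you flag yourself.}

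Gluing the maps $(\Psi'_{\Omega^j_S})^{-1}\circ\Psi_{\Omega^j_S}$ produces a conjugacy only over $B_S$. To leave $S$, you assume that every other sector $S'$ carries a bounded $h$-preserving conjugacy with the same asymptotics. Hypothesis (4) gives nothing of the kind. By your own (2)$\Rightarrow$(3) argument run over $B_{S'}$, such a conjugacy exists only if (and, by gluing, if) the cocycles agree on $S'$. So you are in effect assuming (3).

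Asymptotics cannot substitute for this. On each Lavaurs domain of $S'$, the maps $(\Psi'_{\Omega^j_{S'}})^{-1}\circ\Psi_{\Omega^j_{S'}}$ all have the expansion $(\hat\Psi')^{-1}\circ\hat\Psi$. Yet they fail to glue whenever the cocycles differ; that Stokes phenomenon is exactly what the modulus measures. For the same reason, uniqueness of the formal conjugacy does not give coincidence on overlaps, since there are bounded functions flat at $h=0$ on cuspidal sectors. It also gives no content to ``the glued map is the restriction of one formal object'': that would amount to convergence of $(\hat\Psi')^{-1}\circ\hat\Psi$, which is the conclusion. Finally, $\psi_S$ cannot simply be continued in $h$. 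The interior of $B_S=\{|\xi|<\delta_1\}\cap h^{-1}(S)$ is pseudoconvex, so nothing forces an extension.

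\textbf{What is missing is a way to transport the agreement from $S$ to a sector $S'$ overlapping it.} Here is one route.
\begin{enumerate}
\item Over $S\cap S'$, $\psi_S$ turns a normalizing cochain of $\phi'$ on the $S'$-domains into one for $\phi$. By the uniqueness clause of Theorem~\ref{thm:sectorial}, and because transition maps preserve ${\bf X}_{\rm mod}$, this gives ${\psi'}^{j,i}_{S'}=\psi^{j,i}_{S'}\circ\exp\big(h^{-s}(C_j-C_i){\bf X}_{\rm mod}\big)$. Here the $C_j$ are defined only for $h\in S\cap S'$.
\item The discrepancy $(\psi^{j,i}_{S'})^{-1}\circ{\psi'}^{j,i}_{S'}$ is, however, defined for all $h\in S'$ and commutes with the flow of ${\bf X}_{\rm mod}$. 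In a leafwise Fatou coordinate it is a translation whose derivative along the leaf vanishes over the open set $S\cap S'$, hence everywhere. So it is a time-$D_{ji}(h)$ flow map on all of $S'$.
\item Solve $C_j-C_i=h^sD_{ji}$ with bounded, $\sigma$-compatible $C_j$ on $S'$; the cycle conditions hold on $S\cap S'$, hence on $S'$. This gives agreement on $S'$, and chaining through the collection gives (3).
\end{enumerate}

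The resulting sectorial conjugacies coincide on overlaps because the sectorial centralizer is trivial, not because of formal uniqueness. Two of them differ by a flow cochain compatible with transition maps that commute with the flow, so all the times agree. Since $\sigma^*{\bf X}_{\rm mod}=-{\bf X}_{\rm mod}$, $\sigma$-equivariance forces $C_{\sigma(\Omega)}=-C_\Omega$, hence all times vanish. You then still need bounds that are uniform over the countable family in order to apply Riemann extension across $\{h=0\}$.

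\textbf{A smaller point in (1)$\Rightarrow$(2).} Composing $\psi$ with flows of ${\bf X}_{\rm mod}$, or with any $h$-preserving map, leaves $h\circ\psi=g(h)$ unchanged, so it cannot repair $g\neq\id$. Since $\psi=(\hat\Psi')^{-1}\circ\hat\Psi$ by uniqueness of the tangent-to-identity normalization, you get $g=(\hat G')^{-1}\circ\hat G$. What you actually need to show is that the prenormal form pins down the canonical first integral $h\circ\hat\Psi$.
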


In order to obtain the modulus of analytic equivalence with respect to conjugation by general transformations in $\Diff(\cc^2,0)$, one has to further consider the action
on the cocycles of the group
\[\cL Z^{\sigma,\Lambda}({\bf X}_{\rm mod})=\{\psi\in\Diff(\cc^2,0):\ \psi=\sigma\psi\circ\sigma=\Lambda^{-1}\psi\circ\Lambda,\ \psi^*{\bf X}_{\rm mod}={\bf X}_{\rm mod}\},\]
which by Theorem~\ref{thm:1} is a subgroup of the group $\{\xi\mapsto e^{\frac{\pi i r}{kp+2s}}\sigma^r\xi,\ r\in\zz_{2kp+4s}\}$.

\begin{remark}
	The restriction of $\phi$ to either irreducible component of the zero level set $\{h(\xi)=0\}$	 is a parabolic diffeomorphism of $(\cc,0)$ whose   Birkhoff--\'Ecalle--Voronin modulus (see Appendix \ref{ecalle-voronin}) agrees with the corresponding restriction of the classifying  cocycle $\{\psi_S^{j,i}\}$  for each of the cuspidal sectors $S$ in the $h$-plane. In particular, this implies that the moduli space is indeed infinite-dimensional. 
\end{remark}

Additionally, we show that, on each of the cuspidal sectors $S$ of Theorem~\ref{thm:sectorial} there exists  a canonical sectorial realization $\mu_S(h)$ of the formal invariant $\hat{\mu}(h)$ of Theorem~\ref{thm:1}, and therefore also a {\bf sectorial normal form} $(\phi_{{\rm nf},S},\sigma)$,
\[\phi_{{\rm nf},S}(\xi)=\Lambda\exp\big(\tfrac1p{\bf X}_{{\rm nf},S}\big)(\xi),\qquad {\bf X}_{{\rm nf},S}=\tfrac{c\,h^sP(u,h)}{1+c\,\mu_S(h)P(u,h)}\big(\xi_1\tdd{\xi_1}-\xi_2\tdd{\xi_2}\big).\]
Theorem~\ref{thm:sectorial} can then be reformulated with the sectorial normal form $\phi_{{\rm nf},S}$ in place of the model $\phi_{\rm mod}$.
Both the sectorial normal form $\phi_{{\rm nf},S}$ and the canonical normalizing cochains are asymptotic to their formal counterparts $\hat\phi_{\rm nf}$ and $\hat\Psi$ of Theorem~\ref{thm:1}, and can be interpreted as their ``sums'' of a sort. 

\subsection{Antiholomorphic parabolic reversible diffeomorphisms} 
\label{sec:1-anti}

Let $(\chi,\tau)$ be a pair of an \emph{antiholomorphic diffeomorphism} $\chi$, i.e. $\big(\xi\mapsto\ov{\chi(\xi)}\big)\in\Diff(\cc^2,0)$,
and of a \emph{holomorphic reflection} $\tau\in\Diff(\cc^2,0)$ such that
\begin{equation}\label{eq:reversible2}
\tau^{\circ 2}=\id,\qquad  \tau\circ\chi\circ\tau=\chi^{\circ(-1)}.
\end{equation}
Then $\rho=\tau\circ\chi$ is an \emph{antiholomorphic involution} reversing $\chi$,
\[\rho^{\circ 2}=\id,\qquad  \rho\circ\chi\circ\rho=\chi^{\circ(-1)},\]
and the problem of classification of pairs $(\chi,\tau)$ with respect to holomorphic conjugation is equivalent to that  of pairs $(\tau,\rho)$, or of \emph{Moser--Webster triple involutions}
\begin{equation}\label{eq:MWtriple}
(\tau_1,\tau_2,\rho)=(\tau,\tau\circ\chi^{\circ 2},\tau\circ\chi),
\end{equation}
where two holomorphic reflections $\tau_1,\tau_2$ are intertwined by a third antiholomorphic involution $\rho$:
\begin{equation}\label{eq:intertwining1}
\tau_1\circ\rho=\rho\circ\tau_2.
\end{equation}

We shall assume that the reversible holomorphic diffeomorphism $(\phi,\tau)=(\chi^{\circ 2},\tau)$ satisfies Assumptions~\ref{assumptions}:
$\chi^{\circ 2p}\in\Diff_{\id}(\cc^2,0)$ for some $p\geq 1$, $\tau$ is a holomorphic reflection, and they have a first integral $H=H\circ\chi^{\circ 2}=H\circ\tau$ of Morse type.
Then up to a linear change of coordinates, $(\chi,\tau)$ and $H$ take the form:
\begin{equation}\label{eq:taurho}
\chi(\xi)=\Lambda^{\frac12}\sigma\ov\xi+\hot(\ov\xi),\  \tau(\xi)=\sigma\xi+\hot(\xi), \ H(\xi)=\xi_1\xi_2+\hot(\xi),
\end{equation}
where
\[\sigma=\left(\begin{smallmatrix}0& 1\\[3pt] 1&0\end{smallmatrix}\right),\qquad
\Lambda^{\frac12}=\left(\begin{smallmatrix}\lambda^{\frac12}& 0\\[3pt] 0&\lambda^{-\frac12}\end{smallmatrix}\right)=\ov\Lambda^{-1}.\]

Since $\phi=\chi^{\circ 2}$ is a holomorphic diffeomorphism of $(\cc^2,0)$ reversed by both $\tau$ and $\rho$,
the classification of pairs $(\chi,\tau)$ is a priori a refinement of that of holomorphic pairs $(\phi,\tau)$ with an additional antiholomorphic symmetry.

\begin{thm}\label{thm:antiholomorphicclassification}
 Let two pairs $(\chi,\tau),\ (\chi',\tau')$ of the form \eqref{eq:reversible2} with  $(\chi^{\circ2},\tau)$,  $(\chi'^{\circ2},\tau')$  satisfying
Assumptions~\ref{assumptions}. 
\begin{enumerate}
	\item The two pairs are analytically (resp. formally) conjugated by a tangent-to-identity transformation if and only if the new two pairs $(\chi^{\circ 2},\tau)$, $({\chi'}^{\circ 2},\tau')$ are.
	\item  The two pairs are  analytically
(resp. formally) conjugated by a general transformation if and only if $(\chi^{\circ 2},\tau)$, $({\chi'}^{\circ 2},\tau')$ are analytically conjugated by a transformation with real linear part.
\end{enumerate}
\end{thm}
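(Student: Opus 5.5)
The plan is to reduce everything to the uniqueness statements in Theorem~\ref{thm:1} (and their analytic counterpart, Theorem~\ref{thm:analytic}), by showing that the antiholomorphic symmetry is essentially automatic once $\phi=\chi^{\circ2}$ and $\tau$ are normalized.

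The ``only if'' directions of both parts are trivial. If $\psi\circ\chi=\chi'\circ\psi$ and $\psi\circ\tau=\tau'\circ\psi$, then $\psi$ conjugates $\chi^{\circ 2}$ to ${\chi'}^{\circ2}$. In part (2), the linear part of $\psi$ must commute with the linear anti-holomorphic maps $\Lambda^{1/2}\sigma\ov\xi$. This forces the linear part of $\psi$ to be real in the sense of commuting with the antilinear symmetry, which is what ``real linear part'' should mean here.

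For ``if'' in part (1), I first normalize $(\phi,\tau)=(\chi^{\circ2},\tau)$ by the unique tangent-to-identity $\hat\Psi$ of Theorem~\ref{thm:1}. In the resulting coordinates, $\chi$ is an antiholomorphic map that commutes with $\phi$, since $\chi\circ\chi^{\circ 2}=\chi^{\circ2}\circ\chi$. Moreover $\chi$ satisfies $\sigma\chi\sigma=\chi^{-1}$.

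Set $\check\chi(\xi)=\chi(\ov\xi)$; this is holomorphic. The relation $\chi\circ\phi=\phi\circ\chi$ then says that $\check\chi$ conjugates $\bar\phi$ (the map with conjugated coefficients) to $\phi$. The equality $\phi=\chi\circ\chi$ then strongly restricts $\chi$.

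Write $\chi=\phi_{\rm nf}^{1/2}\circ\kappa$, where $\phi_{\rm nf}^{1/2}:=\Lambda^{1/2}\exp(\frac1{2p}\hat{\bf X}_{\rm nf})$ composed with the standard antilinear map $\xi\mapsto\sigma\ov\xi$. Then $\kappa$ is a formal holomorphic map tangent to identity that commutes with $\phi_{\rm nf}$ and $\sigma$. By the remark after Theorem~\ref{thm:1}, the centralizer $\mathcal Z(\hat\phi_{\rm nf},\sigma)$ contains no nontrivial tangent-to-identity elements in cases (a), (b), (c). Hence $\kappa=\id$, so $\chi$ is itself in normal form.

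Case (o), where $\phi^{\circ p}=\id$, has to be treated separately. There the centralizer is large, and I would instead average over the finite group generated by $\chi$ and $\tau$ to linearize simultaneously, in the style of Cartan.

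Consequently, two pairs whose squares are conjugate by a tangent-to-identity map have identical normal forms including $\chi$. Uniqueness of $\hat\Psi$ then gives a conjugacy of the pairs $(\chi,\tau)$. For convergence, the analytic conjugacy $\psi$ between $(\phi,\tau)$ and $(\phi',\tau')$ formally equals $\hat\Psi'^{-1}\circ\hat\Psi$, which is unique. Since $\hat\Psi'^{-1}\circ\hat\Psi$ intertwines $\chi$ and $\chi'$ formally, and the identity $\psi\circ\chi=\chi'\circ\psi$ holds for Taylor series, it holds analytically.

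Part (2) follows by composing with a linear map that commutes with the antilinear normalization to reduce to part (1). Since $\chi$ has the prescribed linear part \eqref{eq:taurho}, this linear map must have real linear part. The subgroup $\{e^{\pi i r/(kp+2s)}\sigma^r\}$ must be intersected with this constraint.

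I expect the main obstacle to be the claim $\kappa=\id$. This needs care about how the antiholomorphic square root interacts with the formal invariant $\hat\mu(h)$ and with the constant $c$ under conjugation. For example, $\chi$ conjugating $\bar\phi$ to $\phi$ forces $\bar c=\pm c$ and a reality condition on $\hat\mu$. I would first verify this condition directly from the explicit vector fields in Theorem~\ref{thm:1}.
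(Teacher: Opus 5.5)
Your proof has a gap in part (1), and an unchecked step in part (2).

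\textbf{The gap in part (1).} It is the unproved sentence ``Then $\kappa$ is a formal holomorphic map tangent to identity that commutes with $\phi_{\rm nf}$ and $\sigma$.'' In cases (a)--(c) this is equivalent to your model $\phi_{\rm nf}^{1/2}$ being itself an antiholomorphic square root of $\hat\phi_{\rm nf}$ reversed by $\sigma$.

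You have also misplaced the difficulty. Once $\kappa\in\cL Z(\hat\phi_{\rm nf},\sigma)$, the remark after Theorem~\ref{thm:1} gives $\kappa=\id$ at once; the real issue is showing $\kappa$ lies in that centralizer.

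That membership requires a reality condition on $\hat{\bf X}_{\rm nf}$, which the paper records as $\hat{\bf X}_{\rm nf}=-\ov{(\Lambda^{\frac12})^*\hat{\bf X}_{\rm nf}}$; in case (a) it reads $c\in i\rr$. This condition fails for general $c,P,\hat\mu$, so it cannot be ``verified directly from the explicit vector fields.'' It holds only because $\chi$ exists. The paper presents it, together with $\hat\chi_{\rm nf}$, as a consequence of the theorem, not as an ingredient of its proof.

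Moreover, your model has its factors in the wrong order. The paper's is $\exp(\tfrac1{2p}\hat{\bf X}_{\rm nf})\circ(\Lambda^{\frac12}\sigma\ov\xi)$. Your $\Lambda^{\frac12}\circ\exp(\tfrac1{2p}\hat{\bf X}_{\rm nf})\circ(\sigma\ov\xi)$ can square to $\hat\phi_{\rm nf}$ only if $\hat{\bf X}_{\rm nf}$ is $\Lambda^{\frac12}$-invariant. That fails in case (b) when $(\lambda^{\frac12})^p=-1$ (always the case for even $p$) and $P(u,h)$ is not even in $u$. There your $\kappa$ is not in the centralizer and cannot be $\id$.

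\textbf{How to close it.} Avoid a model altogether. Let $\Psi$ be tangent to the identity (formal or analytic) with $\Psi\circ\phi\circ\Psi^{-1}=\phi'$ and $\Psi\circ\tau\circ\Psi^{-1}=\tau'$. Put $\chi''=\Psi\circ\chi\circ\Psi^{-1}$ and $\kappa={\chi'}^{-1}\circ\chi''$. Then $\kappa$ is holomorphic and tangent to $\id$, because $\chi''$ and $\chi'$ have the same linear part. Both $\chi''$ and $\chi'$ are square roots of $\phi'$ reversed by $\tau'$, so
\[
\kappa\circ\phi'=\chi'\circ\chi''=\phi'\circ\kappa,\qquad \tau'\circ\kappa\circ\tau'=\chi'\circ{\chi''}^{-1}={\chi'}^{-1}\circ\phi'\circ{\chi''}^{-1}=\kappa .
\]
Conjugating by the normalizing map of $(\phi',\tau')$ puts $\kappa$ in the tangent-to-identity part of $\cL Z(\hat\phi_{\rm nf},\sigma)$, which is trivial in cases (a)--(c). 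Hence $\Psi\circ\chi\circ\Psi^{-1}=\chi'$. This works in both the formal and analytic categories, so your Taylor-series argument is unnecessary. Case (o) is fine by your averaging.

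\textbf{Part (2).} The ``if'' direction needs the converse of what you wrote. You must show that the real linear part $L$ of $\psi$ carries $\Lambda^{\frac12}\sigma\ov\xi$ to the linear part of $\chi'$, so that $L^{-1}\circ\psi$ falls under (1). This is automatic when $L$ is a real multiple of $\id$, but not in general. When $\Lambda=-\id$, the real map $L=\sigma$ preserves the linear parts of $\phi$ and $\tau$, yet sends $\Lambda^{\frac12}\sigma\ov\xi$ to its negative. That case must be treated separately.
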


So by virtue of Theorem~\ref{thm:antiholomorphicclassification}, the formal normal form $(\hat\phi_{\rm nf},\hat\tau_{\rm nf})$ of Theorem~\ref{thm:1} for $(\chi^{\circ 2},\tau)$, provides in fact also a formal normal form $(\hat\chi_{\rm nf},\hat\tau_{\rm nf})$, namely
\begin{equation}\label{eq:chinormalform1}
\hat\chi_{\rm nf}(\xi)=\sigma\hat\rho_{\rm nf},\qquad
\hat\rho_{\rm nf}=\exp(-\tfrac{1}{2p}\hat{\bf X}_{\rm nf})(\Lambda^{-\frac12}\bar\xi), \qquad
\hat\tau_{\rm nf}(\xi)=\sigma\xi,
\end{equation}
where $\hat{\bf X}_{\rm nf}$ is as in Theorem~\ref{thm:1}\,(o)--(b), and satisfies $\hat{\bf X}_{\rm nf}=-\ov{(\Lambda^{\frac12})^*\hat{\bf X}_{\rm nf}}$. 

However it is sometimes more convenient to linearize the antiholomorphic involution $\rho=\tau\circ\chi$ instead of $\tau$,
which leads to a more symmetric formal normal form for the Moser--Webster triple \eqref{eq:MWtriple} (see \cite[sect. 1.3]{stolo-klimes}).

%

\section{Parabolic $n$-submanifolds 
}\label{gamma=.5}
\setcounter{thm}{0}\setcounter{equation}{0}
We now turn to a real analytic submanifold $M$ with the parabolic CR singularity. Assume further that its CR singularity has the maximum of dimension $n-1$ in $M$. Then in suitable holomorphic coordinates, the CR singular set is  given by
\begin{equation}\label{dim=n-1}
z_n=(z_1+\bar z_1)^2p(z_1,\bar z_1,x_2,\dots, x_{n-1}),\quad  y_\alpha=(z_1+\bar z_1)q(z_1,\bar z_1,x_2,\dots, x_{n-1}),
\end{equation} where $p(0)=1$ and $q_2,\dots q_{n-1}$ are real valued.
Such a real analytic submanifold occurs naturally, when $M$ has a non-degenerate CR singularity at the origin and $$\RE(dz_1\wedge dz_2\wedge\cdots\wedge dz_n)|_{M}=0.
$$ 
When $n=2$, such a real surface is called \emph{real Lagrangian} introduced by Webster~\ci{We92} and he showed that $M$ is always formally equivalent to $Q_{1/2}\colon z_2=(z_1+\bar z_1)^2$ under a formal holomorphic symplectic transformation of $\cc^2$ that preserves the holomorphic $2$-form $dz_1\wedge dz_2$.  It was observed by Moser that the linearized equation for this normal form problem has divergent solutions. Based on this observation, Gong~\ci{Go96} showed the existence    of real analytic Lagrangian surfaces with CR singularity that are not equivalent to $Q_{1/2}$ even under the larger group of biholomorphisms. It turns out the holomorphic classification of real analytic Lagrangian surfaces that are formally equivalent to $Q_{1/2}$ has an infinite dimensional moduli space. This classification was achieved by Ahern-Gong~\cite{AG09} and it relies on an important work of Voronin~\cite{V93}, which we now describe.

\subsection{A basic theorem of Voronin}
\begin{thm}[\!\!\cite{V93}]
Let $f$ be a local biholomorphism of $\cc^n$ fixing the origin such that the set of fixed points $f$ is a smooth complex hypersurface in $\cc^n$ and the linear part of $f$ is not diagonalizable. Then there exists a formal biholomorphic map $\Phi_0$ such that
$
\Phi_0^{-1}\sigma\Phi_0$ is the linear map $\hat f(x,y,z)=(x,y+4x,z)
$
where $(x,y,z)$ are the coordinates of $\cc\times\cc\times\cc^{n-2}$. 
\end{thm}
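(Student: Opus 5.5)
The plan is to reduce the statement (where $\sigma$ should read $f$) to a formal normal form problem for a vector field vanishing on a hypersurface, via the formal logarithm. I have not settled the key straightening step (step three below) and flag it as the main risk.

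\emph{Linear algebra.} Choose holomorphic coordinates $(x,y,z)$ so that $\Fix(f)=\{x=0\}$. Then
$$f(x,y,z)=(x,y,z)+x\,g(x,y,z)$$
for some holomorphic $g$. The linear part $L=f'(0)$ is the identity on the hyperplane $\{x=0\}$, so its only possible eigenvalue other than $1$ is the one in the transverse direction. Non-diagonalizability forces that eigenvalue to be $1$ as well. Hence $L=\id+N$ with $N^2=0$ and $\operatorname{rank}N=1$, and $N$ sends the $x$-direction into $\{x=0\}$ (not into $\{0\}$). A further linear change inside $\{x=0\}$ then gives $L(x,y,z)=(x,y+4x,z)$.

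\emph{Formal logarithm.} Since $f$ is unipotent, it has a unique formal infinitesimal generator $\hat X$ with $\exp(\hat X)=f$, as recalled before \rt{thm:1} (cf.\ \cite{ilyaskenko-yakovenko}). It is given by
$$\hat X(h)=\sum_{m\ge1}\tfrac{(-1)^{m+1}}{m}(f^*-I)^m h.$$
For any $h$, $(f^*-I)h=h\circ f-h$ is divisible by $x$, because $f-\id=xg$. The ideal $(x)$ is preserved by $f^*$, since $x\circ f=x+xg_1$, so every iterate $(f^*-I)^m h$ is divisible by $x$ as well. Hence $\hat X=x\hat V$ with $\hat V$ a formal vector field, and comparing linear parts gives $\hat V(0)=4\tdd{y}$. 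It therefore suffices to find a formal change of coordinates conjugating $x\hat V$ to the linear field $4x\tdd{y}$, whose time-one map is the model $\hat f$.

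\emph{Straightening.} I would look for formal functions $x',y',z'$ with linear parts $x,y,z$ such that
$$\hat X(x')=0,\qquad \hat X(z')=0,\qquad \hat X(y')=4x'.$$
Because $\hat V(0)\neq0$, the formal flow-box theorem gives $n-1$ independent formal first integrals $(u,w)$ of $\hat V$, hence of $\hat X$, with linear parts $x$ and $z$, together with a function $t$ satisfying $\hat V(t)=1$. In the coordinates $(u,t,w)$ we have $\hat X=x(u,t,w)\,\tdd{t}$. The remaining task is a time change: find $y'$ with
$$x(u,t,w)\,\partial_t y'=4\,x'(u,w)$$
for some first integral $x'$ with linear part $u$. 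This requires $x$ to factor as $x'(u,w)$ times a unit. I expect this factorization, i.e.\ the $\hat V$-invariance of $\Fix(f)$ to all orders, to be the main obstacle. I would attack it by degree-by-degree induction, solving the homological equations $\hat X_{\rm lin}(\cdot)=\cdots$ for the operator $4x\tdd{y}$ and using divisibility by $x$ at each order to show the obstruction terms lie in the image.
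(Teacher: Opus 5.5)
\textbf{This is a genuine gap.} Your linear algebra and the factorization $\hat X=x\hat V$ are fine. You also correctly isolated the crux: factoring $x$ as $x'(u,w)$ times a unit is equivalent to $\hat V$ being tangent to $\{x=0\}$. However, no degree-by-degree argument can establish that step. Under the hypotheses as transcribed, where only $f'(0)$ is assumed non-diagonalizable, the tangency is false in general, and so is the conclusion.

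Write $x\circ f=x(1+g_1)$. On $(x)/(x^2)$ the operator $f^*-I$ acts as multiplication by $g_1$, so your series gives $\hat X(x)\equiv x\log(1+g_1)$ modulo $(x^2)$, that is,
\[
\hat V(x)\big|_{x=0}=\log\bigl(1+g_1(0,y,z)\bigr)=\log\det f'(0,y,z).
\]
Non-diagonalizability at the origin only gives $g_1(0)=0$, not $g_1\equiv 0$ on $\{x=0\}$.

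Concretely, take $f(x,y)=(x(1+y),\,y+4x)$ on $\cc^2$. Then $\Fix(f)=\{x=0\}$ and $f'(0)$ is a Jordan block, yet $f$ is not formally conjugate to $\hat f$. Suppose $\Phi^{-1}f\Phi=\hat f$, and set $u=x\circ\Phi^{-1}$, $v=y\circ\Phi^{-1}$.
\begin{itemize}
\item From $\Phi^{-1}\circ f=\hat f\circ\Phi^{-1}$ we get $u\circ f=u$ and $v\circ f-v=4u$.
\item Since $f\equiv\id$ modulo $(x)$, the second relation forces $u\in(x)$. So $u=xe$ with $e(0)\neq 0$.
\item Dividing $u\circ f=u$ by $x$ gives $(1+y)\,e\circ f=e$.
\item Setting $x=0$, where $f$ is the identity, gives $y\,e(0,y)=0$, a contradiction.
\end{itemize}
The same computation shows in general that formal conjugacy to $\hat f$ forces $g_1\in(x)$. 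So your factorization is an extra hypothesis, not a lemma.

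\textbf{The missing hypothesis and how your plan then closes.} You need $x\circ f-x\in(x^2)$. Equivalently, $\det f'=1$ along $\Fix(f)$, or $f'(p)$ is non-diagonalizable at every fixed point $p$ near $0$. This condition is built into the form \eqref{fxyz} of Theorem~\ref{Vo}, which is the version of Voronin's result the paper actually uses. The paper itself gives no proof of the formal statement, only the citation. The condition holds, for instance, for $\sigma=\tau_2\tau_1$ when both involutions fix the same hypersurface, since then $\det\sigma'=1$ there.

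Once you assume it, the step is immediate:
\begin{itemize}
\item $(f^*-I)x\in(x^2)$ and $f^*$ preserves $(x^2)$, so $\hat X(x)\in(x^2)$. Hence $\hat V$ is tangent to $\{x=0\}$.
\item Build the flow box $\Psi(u,t,w)=\exp(t\hat V)(u,0,w)$ from the transversal $\{y=0\}$. Tangency gives $x\circ\Psi=u\,e$ with $e$ a unit, so in these coordinates $\hat X=ue\,\partial_t$.
\item Set $x'=u$, $z'=w$ and $y'=\int_0^t 4e^{-1}\,d\tau$. These are formal coordinates with $\hat X(x')=\hat X(z')=0$ and $\hat X(y')=4x'$.
\end{itemize}
Hence $\hat X=4x'\partial_{y'}$, and $f=\exp\hat X$ becomes $\hat f$.
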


In fact, Voronin showed  a much stronger result holds, where the formal biholomorphic map $\Phi_0$ is replaced by a semi-formal map $\Phi$ of which the formal power series expansion is $\Phi_0$. Let us describe the semi-formal maps. 
Let $x,y\in\cc$ and $\zeta=(\zeta_2,\ldots,
\zeta_{n-1})\in\cc^{n-2}$.
A power series
 $h(x,y,\zeta)=\sum_{k\geq0}h_j(y,\zeta)x^j$ is called  {\it semi-formal\,} in $x$,
 if  all $h_j$ are
holomorphic in $(y,\zeta)$  on some fixed neighborhood $W$ of the
origin of $\cc^{n-1}$. A map $H=(H_1,\dots, H_n)$ is semi-formal if each $H_j$ is semi-formal.

We say that $S=V\times W$ is a sectorial domain, if $V$ is a
sector of the form $V_{\alpha,\beta,\epsilon}=\{x\in\cc\colon \arg
x\in(\alpha,\beta),0<|x|<\epsilon\}$ and $W$ is a neighborhood of
the origin in $\cc^{n-1}$, where $0<\beta-\alpha<2\pi$.
 A semi-formal power series
$G=\sum_{k=0}^\infty G_k(y,\zeta)x^k$ is called an {\it asymptotic
expansion} of a  holomorphic function $g$ on $V\times W$, denoted
by $g\sim G$ on $V\times W$,
 if there is a possibly smaller neighborhood $\widetilde W$ of $0\in\cc^{n-1}$ such that
 for each fixed  $N$
$$
\lim_{V\ni
x\to0}|x|^{-N}\bigl|g(x,y,\zeta)-\sum_{k=0}^NG_k(y,\zeta)x^k\bigr|=0$$
 holds  uniformly for $(y,\zeta)\in \widetilde W$.
 Analogously, we say
that a semi-formal map $\Phi$ is asymptotic to a holomorphic map
$H$ on $V\times W$, if each component of $\Phi$ is asymptotic to
the corresponding
 component of $H$.

The following is a fundamental result of Voronin. 
\begin{thm}[\!\!\cite{V93}]
\label{Vo}Let $(x,y,z)$ be the
coordinates of $\cc^n$. Let
 $f$ be a holomorphic map on $\cc^n$ of the form
\begin{equation}
\label{fxyz}
(x,y,z)\to (x+x^2p(x,y,z), y+x+xq(x,y,z),z+xs(x,y,z))
\end{equation}
where $q(0)=0=s(0)$.
Let $\alpha<\beta<\alpha+\pi$. There exist  $r$ depending  on
$\alpha,\beta$, and   a holomorphic map $B$ defined on
$\{x\colon\alpha<\arg x<\beta, |x|<r\}\times\Delta_r^{n-1}$ such that
 on $\{x\colon\alpha<\arg x<\beta, |x|<r\}\times\Delta_r^{n-1}$,
$B^{-1}fB=\hat f$ and $B$ admits the asymptotic expansion $\Phi$
which preserves $x=0$ and satisfies  $\Phi'(0)=\id$ and $\Phi|_{y=0}=\id$.
\end{thm}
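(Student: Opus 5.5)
We prove Theorem~\ref{Vo} by first solving the conjugacy formally, with $x$-coefficients holomorphic in $(y,z)$, and then realizing that series on the sector as the fixed point of a contraction built from $\hat f$. Conjugating by the scaling $(x,y,z)\mapsto(\epsilon x, y,z)$ (with $x\mapsto \epsilon^{-1}x$ on the target appropriately) we may assume $p,q,s$ are small, and we write $f=\hat f+F$ with $F=(x^2p,\,xq,\,xs)$. Since $F$ contains the factor $x$, the fixed-point set is exactly $\{x=0\}$ and $f$ is tangent to $\hat f$ along it.

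\textbf{Step 1: the semi-formal map $\Phi=\id+\sum_{k\ge1}\Phi_k(y,z)x^k$.} We want $f\circ\Phi=\Phi\circ\hat f$. Expanding in powers of $x$, the order-$k$ coefficient gives a difference equation in $y$ of the form
\[
\Phi_k(y+4\cdot,\dots)\text{-type shift}:\qquad \Phi_k(y,z)-\Phi_k(y,z)\ \text{paired with}\ \partial_y\Phi_{k-1}\cdot(\text{shift})=R_k(y,z),
\]
where $R_k$ depends only on $\Phi_j$ with $j<k$. Because the linear part of $\hat f$ shifts $y$ by a multiple of $x$, each step reduces to solving $\partial_y\Phi_k=$ (known) together with an algebraic equation. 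We solve it by integration in $y$, fixing the constant of integration by the normalization $\Phi|_{y=0}=\id$. The coefficients lose at most a fixed fraction of the radius at each step, and the argument is set up so that they stay holomorphic on a fixed polydisc $W$. This is precisely the uniqueness/normalization statement. The same computation also yields Voronin's formal theorem quoted just before.

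\textbf{Step 2: sectorial realization.} On $V_{\alpha,\beta,r}\times\Delta_r^{n-1}$ with $\beta-\alpha<\pi$, we use the orbits of $\hat f$. Along an orbit, $x$ is constant and $y$ moves by $4x$, so on a half-plane in $y/x$ the iterates escape in one direction, chosen according to the sector. We seek $B=\Phi_N+E$, where $\Phi_N$ is the truncation of $\Phi$ to order $N$. The remainder then satisfies $f\circ(\Phi_N+E)-(\Phi_N+E)\circ\hat f=O(x^{N+1})$, and $E$ solves
\[
E=\mathcal T(E):=\sum_{m\ge0}(\text{error terms evaluated along }\hat f^{m}).
\]
We need to show $\mathcal T$ is a contraction on functions bounded by $C|x|^{N}$. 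This is done by rescaling $y=x\eta$, after which orbit sums become Riemann sums in $\eta$, with the $y$-domain of size $r$ corresponding to roughly $r/|x|$ steps. The asymptotic expansion $B\sim\Phi$ then follows by comparing the solutions for different $N$: by uniqueness they differ by $O(x^N)$.

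\textbf{Main obstacle.} The hard part is Step 2. An orbit of $\hat f$ leaves $\Delta_r$ after only about $r/|x|$ steps, so the sum has no infinite tail to exploit. One must instead solve a boundary-value problem, prescribing $E$ on the exit boundary $\{y=\pm r\}$ (or on $y=0$) and summing backwards. Each step contributes $O(|x|^{N+1})$, and over $\sim 1/|x|$ steps this accumulates to $O(|x|^N)$. The derivative losses from composition can be controlled by Cauchy estimates on a shrinking polydisc, and the opening condition $\beta-\alpha<\pi$ guarantees a consistent exit direction. Making these estimates uniform in $N$, at the cost of shrinking $r$ with $N$, is the delicate point.
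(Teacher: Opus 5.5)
The paper does not prove this statement; it refers to \cite{V93} for $n=2$ and to the appendix of \cite{AG09} for $n>2$. Your proposal therefore has to stand on its own, and it has a genuine gap exactly where the theorem's content lies: Step~2 never produces a holomorphic solution. After the semi-formal preparation, the linearized conjugacy equation is a triangular system of difference equations $u(x,y+x,z)-u(x,y,z)=g(x,y,z)$. These are in the \emph{complex} variable $y$, with complex step $x$, and must be solved for $y$ in a disc. You rightly note that summation along orbits is unavailable, since orbits leave $|y|<r$ after about $r/|x|$ steps. Your substitute is to prescribe $E$ on the ``exit boundary $\{y=\pm r\}$'' and sum backwards, but this treats $y$ as a real variable. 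In $\cc$ the exit set is an arc of $|y|=r$, and the number of steps needed to reach it is an integer-valued function of $y$ that jumps. The backward sum is therefore discontinuous across the curves where that number changes. It is holomorphic only if the data prescribed near the exit arc already solve the difference equation there, which is circular. So $\mathcal T$ is never actually defined, and the deferred ``uniform estimates'' are not the real difficulty.

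What is missing is a holomorphic right inverse of $u\mapsto u\circ\hat f-u$ on the disc, holomorphic in $x$ on the sector. One construction splits $|t|=r$ into two arcs: $A_+$, through which the forward half-lines $y+\rr_+x$ leave, and $A_-$, through which the backward ones leave. Then put
\[
u(y)=\frac{1}{2\pi i}\int_{A_+}g(t)K_+(t-y)\,dt+\frac{1}{2\pi i}\int_{A_-}g(t)K_-(t-y)\,dt,\qquad K_+(w)=-\tfrac1x\psi\bigl(\tfrac wx+1\bigr),\quad K_-(w)=-\tfrac1x\psi\bigl(-\tfrac wx\bigr),
\]
with $\psi$ the digamma function. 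Since $K_\pm(w-x)-K_\pm(w)=1/w$, Cauchy's formula gives $u(y+x)-u(y)=g(y)$. The poles of $K_+(t-y)$ lie on the backward half-orbit of $y$, and those of $K_-(t-y)$ on the forward half-orbit, so they stay off $A_+$ and $A_-$ respectively. The resulting bound is $\sup_{|y|\le r'}|u|\le C|x|^{-1}\log(1/|x|)\sup_{|t|=r}|g|$. For $\arg x\in[\alpha',\beta']$ and $|y|\le r'$, such arcs exist precisely when $(\beta'-\alpha')+2\arcsin(r'/r)<\pi$. This is where $\beta-\alpha<\pi$ and the dependence of $r$ on $\alpha,\beta$ come from.

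Even granting this operator, your nonlinear step needs repair in two places:
\begin{itemize}
\item[(i)] The inverse loses a fixed proportion of the $y$-radius, so a plain contraction on a fixed polydisc does not close. The iteration must be organized so that these losses do not accumulate.
\item[(ii)] Sectorial solutions are not unique. For example, $(x,\,y+b(x,z),\,z)$ with $b=O(x^N)$ commutes with $\hat f$ and is not flat. So ``$B\sim\Phi$ by uniqueness'' needs an explicit normalization, with estimates on a $(y,z)$-domain independent of $N$, as the paper's definition of asymptotic expansion requires.
\end{itemize}

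A smaller, fixable point concerns Step~1: the ansatz $\Phi=\id+\sum_{k\ge1}\Phi_kx^k$ is too narrow. For $f=(x,\,y+x(1+y))$ one needs $\Phi|_{x=0}=(0,e^y-1)$. In general the leading coefficients solve a nonlinear ODE system in $y$, and the later ones solve linear ODEs in $y$. That is why all of them live on one fixed polydisc with no loss of radius at all, contrary to your description of losing a fixed fraction at each step.
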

Voronin gave a proof for $n=2$. The proof was adapted for case $n>2$ in the appendix of~\cite{AG09}.
 
We now describe applications of Voronin's   moduli space that
are relevant to  the above-mentioned two classification problems.

\subsection{A moduli space of parabolic submanifolds} 
We now study real analytic $n$-manifold $M$ in $\cc^n$ with Bishop invariant $\gamma=1/2$. We also assume that $M_{CRs}$ has maximum dimension $n-1$.
Put \begin{gather*}
\hat\tau_1\colon(x,y,\zeta)\to(-x,y+2x,\zeta),
  \quad\rho\colon(x,y,\zeta)\to(\ov x,-\ov y,\ov \zeta),
\\
\hat\tau_2=\rho\hat\tau_1\rho\colon(x,y,\zeta)\to(-x,y-2x,\zeta),\\
\quad\hat\sigma=\hat\tau_2\hat\tau_1\colon(x,y,\zeta)\to(x,y+4x,\zeta). 
\end{gather*}

Let $V_1=V_{\epsilon,\delta}=V_{-\epsilon,\frac{\pi}{2}+\epsilon,\delta}$.  Put
$V_j=\sqrt{-1}^{1-j}V_1$, $\Delta_\delta=\{t\in\cc\colon
|t|<\delta\}$. Assume that $0<\epsilon<\frac{\pi}{4}$. In particular, $S_{j\, j+1}=(V_{j}\cap
V_{j+1})\times \Delta_\delta^{n-1}$ are disjoint for $j=1,2,3,4$.
Let $\mathcal H$ be the set of $H=\{H_{1\, 2},
H_{2\, 3},H_{3\, 4}, H_{4\, 1}\}$ satisfying the following: $H_{j\,
j+1}$ is defined on $S_{j\, j+1}$ and 
\begin{gather} 
 H_{1\,2}^{-1}=\rho H_{1\,2}\rho,  
\ H_{4\,1}^{-1}=\rho H_{2\,3}\rho, \ H_{3\,4}=\hat\tau_j
H_{1\,2}\hat\tau_j,
\\ H_{j\,j+1}\sim \id, \quad \text{on $S_{j\, j+1}$}, 
\end{gather}
 in which
the positive numbers $\epsilon$ and $\delta$
 depend on $H$. Note that we define $H_{4\,5
}=H_{4\, 1}$.
By an abuse of notation,  we say that an identity holds on a sectorial
domain such as $V_{\alpha,\beta,\epsilon}\times W$
means that it holds on   $V_{\alpha+\delta,\beta-\delta,\epsilon'}
\times\Delta_{\epsilon'}^{n-1}$ for any $\delta>0$ and some $\epsilon'$ dependent of $\delta$.

We say that $H,\widetilde H$ are {\em equivalent} and write
$\widetilde H\sim H$, if there exist a semiformal map $\Psi$ and
biholomorphic maps $G_j=G_{j+4}$,
 defined on  $S_j'\equiv  \sqrt{-1}^{1-j}V_{\epsilon',\delta'}\times\Delta_{\delta'}^{n-1}
 $ (for some positive $\epsilon',\delta'$) or on $S_{j+2}'$
  and satisfying
\begin{gather}
\label{h12e-}
\begin{array}{l}
\widetilde H_{j\,j+1}=G_j^{-1}H_{j\,j+1}G_{j+1},\ j=1,\dots,4;
\quad \text{or}
\vspace{.75ex}\\
\widetilde H_{j+2\,j+3}=G_j^{-1}H_{j\,j+1}G_{j+1},\ j=1,\dots,4;
\end{array}
\\
G_2=\rho G_1\rho, \quad G_4=\rho G_3\rho, \quad
G_{j+2}=\hat\tau_kG_j\hat\tau_k; \label{g2rg-}
\\
G_j\sim\Psi, \quad \text{on $S_j'$ or on
$S_{j+2}'$};\label{h12e+-}
\\
\Psi\colon (x,y,\zeta)\to (a(x,\zeta)x,ya(x,\zeta)+b(x,\zeta),c(x,\zeta)),
\label{h12e++-} 
\end{gather}
where $a,b,c$ are semi-formal in $x$,
$a(0)\neq0, b(0)=0, c(0)=0$, and $\zeta\to c(0,\zeta)$ is biholomorphic. Such a mapping $\Phi$ is called \emph{semi-affine} 
by Voronin.
Note that $\Psi=\rho\Psi\rho=\hat\tau_j\Psi\hat\tau_j$. In particular,
$a(0)$ is real. The first case in \re{h12e-} is for $a(0)>0$ and its second case is for $a(0)<0$.

\subsection{Parabolic submanifolds  under volume-preserving maps}
 A formal map of $\cc^n$ that preserves  $\omega=dz_1\wedge \cdots \wedge dz_n$ is called unimodular.
Let ${\mathcal M}^{par}$ be the set of real analytic $n$-manifolds $M$ in
$\cc^n$,
 of which  complex tangents
form a germ of real analytic set of dimension $n-1$ at the origin,
while the origin is a parabolic complex tangent of $M$.  Denote by
${\mathcal M}^{par}_\omega$   the set of $M\in{\mathcal M}^{par}$ satisfying $\RE
\omega|_M=0$.   Let $\mathcal
H_{\hat\omega}$ be the set of $H\in\mathcal H$ satisfying the
additional condition
$$
H_{j\,j+1}^*{\hat\omega}={\hat\omega}, \quad
{\hat\omega}:=xdx\wedge dy\wedge d\zeta_{2}\wedge\cdots\wedge d\zeta_{n-1}.
$$
For $H,\widetilde H\in \mathcal H_{\hat\omega}$, we denote
$\widetilde H\sim H$, if there are $G_j,\Psi$ satisfying
\re{h12e-}-\re{h12e++-} and
$G_j^*{\hat\omega}={\hat\omega}.$
 Note that $\Psi^*{\hat\omega}={\hat\omega}$.
  Denote by $\mathcal H/{\, \sim}$ and $\mathcal H_{\hat\omega}/{\,
\sim}$ the corresponding sets of equivalence classes.

\

 Denote by ${\mathcal M}^{par}/{\,\sim}$ the set of
holomorphic equivalence classes in ${\mathcal M}^{par}$, and by $\mathcal
M_\omega/{\,\sim}$ the set of equivalence classes in $M_\omega$
under unimodular holomorphic maps.

The following theorem solves the two classification problems
mentioned early in this section.
\begin{thm}[\!\!\cite{AG09}] Each $M\in{\mathcal M}^{par}$
is formally biholomorphic  to
 $$
 Q\colon z_n=(z_1+z_1)^2, \quad\IM z_2
=\cdots=\IM z_{n-1}=0
$$
and each $M\in{\mathcal M}^{par}_\omega$ is equivalent to $Q$ under some
formal map preserving $\omega$. There are one-to-one correspondence between
${\mathcal M}^{par}/{\,\sim}$ and $\mathcal H/{\,\sim}$ and one-to-one
correspondence between ${\mathcal M}^{par}_\omega/{\,\sim}$ and $\mathcal
H_{\hat\omega}/{\,\sim}$; moreover, $\mathcal H/{\,\sim}$ and
$\mathcal H_{\hat\omega}/{\,\sim}$ are of infinite dimension. 
\end{thm}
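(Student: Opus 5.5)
We reduce everything to the Moser--Webster triple via Theorem~\ref{thm:MW}, and then to the classification of the billiard map $\sigma=\tau_2\tau_1$ via Voronin's Theorem~\ref{Vo}.

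\emph{Step 1: the involutions.} For $M\in\mathcal M^{par}$ the origin is parabolic with $\gamma=1/2$. Because $M_{CRs}$ has dimension $n-1$, the first two items of \re{dim=n-1} force $\delta_n=0$ in \re{Fn}. The triple $\{\tau_1,\tau_2,\rho\}$ then has linear parts $\hat\tau_1,\hat\tau_2$, with $\rho(x,y,\zeta)=(\bar x,-\bar y,\bar\zeta)$ after a linear change.

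\emph{Step 2: $\Fix(\sigma)$ is a smooth hypersurface.} The complex tangents of $M$ correspond to points of $\mathcal M$ where the two preimages under $\pi_1$ (or $\pi_2$) coincide, i.e. to $\Fix(\tau_1)\cap\Fix(\tau_2)$. Each $\Fix(\tau_j)$ is a smooth hypersurface, since $\tau_j$ is a reflection. Maximality of $\dim M_{CRs}$ forces these two hypersurfaces to coincide, so $\Fix(\sigma)\supset\Fix(\tau_1)=\{x=0\}$ after a change of coordinates. The linear part $(x,y+4x,\zeta)$ is not diagonalizable, so $\Fix(\sigma)$ is exactly this hypersurface.

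\emph{Step 3: formal normalization.} Voronin's theorem gives a formal $\Phi_0$ conjugating $\sigma$ to $\hat\sigma$. We must upgrade this to a simultaneous conjugation of $\tau_1,\rho$ to $\hat\tau_1,\rho$. Since $\tau_1$ reverses $\sigma$, the map $\Phi_0^{-1}\tau_1\Phi_0$ is a formal involution reversing $\hat\sigma$ and fixing $\{x=0\}$ pointwise. We solve for the remaining freedom in the centralizer of $\hat\sigma$. The semi-affine maps \re{h12e++-} describe this centralizer, and they commute with $\hat\tau_j$ and $\rho$. Averaging, i.e. replacing $\Phi$ by a square-root of $\Phi\,\tau\Phi^{-1}\tau$ inside that centralizer, makes $\tau_1$ and $\rho$ linear as well. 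By Theorem~\ref{thm:MW}, $M$ is then formally equivalent to $Q$.

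For $\omega$-preserving maps, pulling $dz_1\wedge\cdots\wedge dz_n$ back to $\mathcal M$ gives the form $\hat\omega=x\,dx\wedge dy\wedge d\zeta$ up to a unit. The condition $\RE\omega|_M=0$ matches the symmetry under $\rho$. The correction map can be chosen unimodular by solving a divergence-type equation order by order, following Webster's symplectic argument for $n=2$.

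\emph{Step 4: the correspondence with $\mathcal H$.} Apply Theorem~\ref{Vo} on the four sectors $V_j\times\Delta^{n-1}$ to get sectorial normalizations $B_j\sim\Phi$. After the symmetrization of Step~3, we may take $B_2=\rho B_1\rho$ and $B_{j+2}=\hat\tau_kB_j\hat\tau_k$. Put $H_{j\,j+1}=B_j^{-1}B_{j+1}$ on $S_{j\,j+1}$. Each $H_{j\,j+1}$ commutes with $\hat\sigma$ and is asymptotic to $\id$, and it inherits the symmetries defining $\mathcal H$.

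Well-definedness: two normalizations differ by $\hat\sigma$-commuting maps asymptotic to a semi-affine $\Psi$, and this produces exactly the relation \re{h12e-}--\re{h12e++-}.

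Injectivity: if the cocycles agree, the sectorial conjugacies glue into a map that is holomorphic on a punctured neighbourhood of $\{x=0\}$ and bounded there. It therefore extends holomorphically and gives the equivalence of the triples, hence of the manifolds by Theorem~\ref{thm:MW}.

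Surjectivity: this is the main obstacle. Given $H\in\mathcal H$, we must construct $\sigma$, together with involutions, realizing it. I would use the orbit space of $\hat\sigma$ on each sector. The sectors glued by the $H_{j\,j+1}$ form a complex manifold, and I would prove this manifold is biholomorphic to a neighbourhood of $\{x=0\}$ by a Cartan-type splitting $H_{j\,j+1}=G_j^{-1}G_{j+1}$ with $G_j\sim\id$. For this I would use a Cauchy-integral/Newton iteration in the variable $x$ with flat estimates, as in Voronin's appendix adapted in \cite{AG09}. The symmetries of $H$ pass to $\tau_1,\rho$ after averaging the splitting, as in Step~3.

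\emph{Step 5: infinite dimensionality.} Choose $H_{1\,2}=\exp(\epsilon\, e^{-c/x^{k}}X)$ with $X$ a vector field commuting with $\hat\sigma$, and define the other $H_{j\,j+1}$ by the required symmetries. By comparing leading flat terms modulo the semi-affine action, infinitely many independent parameters give inequivalent classes.
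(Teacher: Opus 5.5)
Your overall architecture is the route the paper points to for Ahern--Gong: the Moser--Webster triple, then Voronin's sectorial normalization of $\sigma$ on four sectors, then a symmetric cocycle modulo $\hat\sigma$-commuting sectorial maps. Steps 1--2 are fine.

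\textbf{The genuine gap is Step~5.} Your infinite-dimensionality argument does not work as stated.

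If $X$ is a holomorphic vector field commuting with $\hat\sigma$, then $X=x\alpha\,\partial_x+(y\alpha+\beta)\,\partial_y+\gamma\cdot\partial_\zeta$, with $\alpha,\beta,\gamma$ depending only on $(x,\zeta)$. Hence $\exp(\epsilon e^{-c/x^k}X)$ is a semi-affine map with flat coefficients, and such cocycles can simply be trivial. Already for $X=\partial_y$ you only need $g_1-g_2=\epsilon e^{-c/x^k}$ on $V_1\cap V_2$, plus the $\hat\tau_1$-transformed relation on $V_3\cap V_4$. A symmetrized Cauchy--Heine integral gives $g_j$ on $V_j$ with a common asymptotic expansion. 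The maps $G_j\colon y\mapsto y+g_j$ commute with $\hat\sigma$, so $H\sim\id$.

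``Comparing leading flat terms modulo the semi-affine action'' misreads the equivalence relation. Only $\Psi$ is semi-affine; the $G_j$ are sectorial $\hat\sigma$-commuting maps that are merely asymptotic to $\Psi$, and they absorb flat terms.

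Nontrivial classes must come from the non-semi-affine part of the sectorial centralizer. A $\hat\sigma$-invariant function on $S_{1\,2}$ expands as $\sum_k h_k(x,\zeta)e^{\pi i k y/(2x)}$, and boundedness for $|y|<\delta$ forces $|h_k|\le Ce^{-c|k|/|x|}$ for $k\neq 0$. The $G_j$ satisfy bounds of the same kind on the wider sectors $S_j'$. What you actually need to prove is that suitable exponentially small modes on the thin sectors cannot be removed by $H_{j\,j+1}\mapsto G_j^{-1}H_{j\,j+1}G_{j+1}$ with such $G_j$, and that this leaves infinitely many independent parameters. At first order in $\epsilon$ this is a Ramis--Sibuya type statement. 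Nothing in Step~5 supplies it.

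\textbf{Further points needing repair.}

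In Step~3, the claim that semi-affine maps commute with $\hat\tau_j$ and $\rho$ is false: commuting with $\hat\tau_1$ forces $a,b,c$ to be even in $x$. If the claim were true, your averaging could not change $\Phi_0^{-1}\tau_1\Phi_0$ at all. The correct mechanism is as follows. The map $C=(\Phi_0^{-1}\tau_1\Phi_0)\hat\tau_1$ lies in the formal centralizer of $\hat\sigma$ and satisfies $\hat\tau_1C\hat\tau_1=C^{-1}$. Its square root $\Psi$ in the tangent-to-identity semi-affine group then satisfies $\Psi\hat\tau_1\Psi^{-1}=\Phi_0^{-1}\tau_1\Phi_0$. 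You then repeat the argument for $\rho$ inside the subgroup commuting with $\hat\tau_1$ and $\hat\tau_2$.

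To put $\sigma$ in the form \re{fxyz} you also need $x\circ\sigma-x=O(x^2)$. This holds because $D\sigma$ at points of $\Fix(\sigma)$ is a product of two reflections with the same mirror.

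In Step~4, recall that Step~3 is only formal. Moreover, $\tau_1$ and $\rho$ cannot both be normalized holomorphically, since that would linearize $\sigma$. So $B_{j+2}=\hat\tau_kB_j\hat\tau_k$ does not conjugate $\sigma$ to $\hat\sigma$. You must take $B_{j+2}=\tau_1B_j\hat\tau_1$ and $B_2=\rho B_1\rho$, with the actual involutions on the left and the models on the right. You must also choose $B_1$ asymptotic to a formal normalizer of the whole triple, obtained by composing Voronin's $B$ with a Borel--Ritt realization of a semi-affine map on $S_1$. Otherwise the $H_{j\,j+1}$ are asymptotic only to elements of the formal centralizer, not to $\id$.

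Finally, surjectivity remains a sketch. Its real content is a nonlinear sectorial splitting with parameters on a fixed polydisc. Once you have such a splitting, no averaging is needed: $\tau_1:=G_{j+2}\hat\tau_1G_j^{-1}$ and the analogous $\rho$ are automatically consistent by the symmetry relations defining $\mathcal H$.
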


\subsection{The original Voronin moduli space}
The Voronin's moduli space for the classification problem is more involved.    Suppose that $S_j=V_j\times\Delta_{\delta}^{n-1}$ are sectorial domains covering $\Delta_{\delta}\times\Delta_\delta^{n-1}$. Suppose that there are   normalized sectorial normalizing maps $H_j\sim\Phi$ on $S_j$ such that $H_j^{-1}f H_j=\hat f$ where $f$ has the form \re{fxyz} and $\hat f$ is the linear part of $f$.  Take an affine semi-formal biholomorphism $G(x,y,\zeta)=(xa(x,\zeta),ya(x,\zeta)+b(x,\zeta),c(x,\zeta))$ of the form \re{h12e++-}. Note that $G$ commutes with the linear map $\hat\sigma$. Thus $\tilde H_j= H_j G$ are asymptotic to the same map $\Phi G$, while $\tilde H_j^{-1}f\tilde H_j=\hat f$ holds on the new sectorial domain $\tilde S_j=G'(0)^{-1}S_j$. In such a way, one gets a large family  sectorial domains on which $f$ are normalized. The projection of $\tilde S_j$ onto the $x$-plane is a rotation $a(0)V_j$ of $V_j$.  To deal with this kind of rotation, Voronin introduced the \emph{narrowing} (refinement) of any finite covering by sectorial domains $\{S_j\}$ and refinement of transitions $\{H_{jk}\}$ and their equivalent relations ( p. 199) as follows. 

  Let $S_1,\dots, S_N$ be sectorial domains covering $\Del_\del^*\times\Delta_\delta^{n-1}$ and let $H_{j,j+1}\sim I$ be semi-formal transformation on $S_{j,j+1}$. Define $\{S_j, H_{j,j+1}\}_{j=-\infty}^{\infty}$ such that it has period $N$, i.e.   $V_{j+N}=V_j$ and $H_{j+N,j+N+1}=H_{j,j+1}$ for all $j\in\zz$. Let $\mathcal V$ be the set of all the such pairs $\{S_j,H_{j,j+1}\}$ that has a finite period. One says that
$\{\hat S_\alpha,\hat H_{\alpha,\alpha+1}\}_{\all=-\infty}^{\infty}$ that has period $kN$ is a narrowing of $\{V_j,H_{j,j+1}\}$, if  $\hat S_{\all}\subset V_{j}$ and $\hat H_{\all,\all+1}=H_{j,j+1}$ for some $j$. Write $\{S_j, H_{j,j+1}\}\sim \{S'_j, H'_{j,j+1}\}$ if they are in $\mathcal V$ and have narrowing  $\{\hat S_\all, \hat H_{\all,\all+1}\}$ and $\{\hat S'_\all,\hat H'_{\all,\all+1}\}$ satisfying
$\hat H'_{\all',\all'+1}=G_{\all}^{-1}\hat H_{\all,\all+1}G_{\all+1}$ for all $\alpha$, where all $G_\all$ are asymptotic to the same affine semi-formal map $\Psi$ of the form \re{h12e++-}.

\begin{thm}[\!\!\cite{V93}]
 The  holomorphic equivalent classification of holomorphic mappings \rea{fxyz} is given by $\mathcal V/{\,\sim}$.
\end{thm}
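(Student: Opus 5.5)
We prove the statement by building a map from holomorphic conjugacy classes of maps of the form \rea{fxyz} to $\mathcal V/{\,\sim}$, and then showing that this map is injective and surjective.

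\textbf{The invariant.} Given $f$ of the form \rea{fxyz}, cover $\Delta_\delta^*\times\Delta_\delta^{n-1}$ by finitely many sectorial domains $S_j=V_j\times\Delta^{n-1}_\delta$. Each $V_j$ is a sector of opening less than $\pi$, and consecutive sectors overlap. By \rt{Vo} there are sectorial normalizations $B_j$ on $S_j$ with $B_j^{-1}fB_j=\hat f$, all asymptotic to the same formal map $\Phi$ (the one given by the formal theorem of Voronin). Set $H_{j,j+1}=B_j^{-1}B_{j+1}$ on $S_j\cap S_{j+1}$. Then $H_{j,j+1}$ commutes with $\hat f$ and $H_{j,j+1}\sim \id$, so $\{S_j,H_{j,j+1}\}$, extended periodically, lies in $\mathcal V$.

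\textbf{Well-definedness.} The freedom in each $B_j$ is right composition with a sectorial centralizer of $\hat f$ asymptotic to a formal centralizer. We must show that formal centralizers of $\hat f$ that preserve the normalizations ($x=0$ preserved, linear part, restriction to $y=0$) are exactly the semi-affine maps $\Psi$ of the form \re{h12e++-}. This is a direct computation: solve $\Psi\hat f=\hat f\Psi$ coefficientwise in $x$. The first component forces $a(x,\zeta)x$. The second forces $y$-linearity with the same $a$. The remaining components force $c$ to be independent of $y$.

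Changing the covering to a refinement, or rotating the sectors by $a(0)$, is absorbed by the narrowing relation. Conjugating $f$ by a biholomorphism $g$ replaces $B_j$ by $gB_j$ and leaves the $H_{j,j+1}$ unchanged up to this relation. So the class in $\mathcal V/{\,\sim}$ is an invariant.

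\textbf{Injectivity.} Suppose $f$ and $\tilde f$ give equivalent cocycles. After narrowing and absorbing the maps $G_\alpha$, we may assume $H_{j,j+1}=\tilde H_{j,j+1}$. Then the maps $\tilde B_jB_j^{-1}$ agree on overlaps, so they glue to a bounded holomorphic map on $\Delta^*_\delta\times\Delta^{n-1}_\delta$. By Riemann extension it is holomorphic at $x=0$, and it conjugates $f$ to $\tilde f$.

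\textbf{Surjectivity (main obstacle).} Given $\{S_j,H_{j,j+1}\}$, we must realize it. We solve a nonlinear multiplicative Cousin problem on the quotient by $\hat f$. Because $\hat f$ acts by $y\mapsto y+4x$, the orbit space over each sectorial piece is modeled on $(x,\;e^{2\pi i y/(4x)},\;\zeta)$. This lets us glue copies of $S_j$ via the $H_{j,j+1}$ into an abstract complex manifold carrying an automorphism induced by $\hat f$.

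The flatness $H_{j,j+1}\sim\id$ gives exponentially small cocycles, of size $e^{-c/|x|}$. The plan is:

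1. Use a Newton iteration or Cartan-type lemma (as in Voronin's $n=2$ argument, extended as in the appendix of \cite{AG09}) to find sectorial maps $K_j\sim\id$ with $K_jK_{j+1}^{-1}=H_{j,j+1}$ on the overlaps, with $K_j$ commuting with $\hat f$ modulo the gluing.
2. Set $f=K_j\hat fK_j^{-1}$. The pieces agree on overlaps, so $f$ is well defined, and it extends holomorphically across $x=0$ by boundedness.
3. Check that $f$ has the form \rea{fxyz}.

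Controlling the small-divisor-free but sector-dependent estimates uniformly in $(y,\zeta)$ in step 1 is where the real work lies.
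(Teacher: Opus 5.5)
Your construction of the invariant is fine. So is the centralizer computation: the formal centralizers of $\hat f$ are exactly the semi-affine maps, so the normalized formal normalization $\Phi$ is unique and all $B_j$ share it. The injectivity argument, gluing $\tilde B_jB_j^{-1}$ and applying Riemann extension, is also fine. The gap is in the realization half, which is needed for $\mathcal V/{\sim}$ to \emph{be} the moduli space.

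Step 1 asks for $K_j\sim\id$, and this cannot hold for any nontrivial class. If all $K_j\sim\id$, the glued map $f=K_j\hat fK_j^{-1}$ is holomorphic at $0$. Its expansion in $x$ then coincides with its sectorial asymptotic expansion, which is $\hat f$. Hence $f=\hat f$, each $K_j$ commutes with $\hat f$, and $H_{j,j+1}=K_j^{-1}K_{j+1}$ is equivalent to the trivial cocycle (take $G_j=K_j$, $\Psi=\id$). What you must produce instead are sectorial $K_j$ with a \emph{common}, generally divergent, semi-formal expansion $\hat K$. That $\hat K$ is then the formal normalization of the realized map. Even the linearized additive problem, solved by Cauchy--Heine integrals in $x$, yields solutions with a common nonzero expansion.

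There is also an ordering error. The maps $K_j\hat fK_j^{-1}$ agree on overlaps only if $K_j^{-1}K_{j+1}=H_{j,j+1}$, which commutes with $\hat f$. With your relation $K_jK_{j+1}^{-1}=H_{j,j+1}$ they do not agree, and ``$K_j$ commuting with $\hat f$ modulo the gluing'' has no content.

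Even after these corrections, the existence of such $K_j$ is only announced, and it is the actual content of Voronin's realization. It is not a parametrized one-variable Malgrange--Sibuya problem. The first component of $H_{j,j+1}$ is $x$ plus terms in $e^{2\pi i k y/(4x)}$, so the transitions move the base variable. You would have to show, for instance, that the manifold glued from the $S_j$ by the $H_{j,j+1}$ is biholomorphic to a punctured neighbourhood of $0$ by maps with a common asymptotic expansion, uniformly in $(y,\zeta)$. The appendix of \cite{AG09} extends the sectorial normalization \rt{Vo} to $n>2$; it does not address this Cousin problem.

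Finally, your a priori bound $H_{j,j+1}-\id=O(e^{-c/|x|})$ holds only for the $y$-dependent Fourier modes; the $y$-independent part is merely flat. For example, $(x+\epsilon(x),\,y(1+\epsilon(x)/x),\,\zeta)$, with $\epsilon$ a suitably rotated $e^{-x^{-1/2}}$, commutes with $\hat f$ and is $\sim\id$ on a sector, yet it is not $O(e^{-c/|x|})$. Any Newton scheme built on exponential smallness must first split off and trivialize this semi-affine part.
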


\begin{rem}In~\cite[p.~9]{AG09}, it was stated {\em mistakenly} that when $M\subset\cc^n$ has a non-degenerate complex tangent at the origin, there is a quadratic change of coordinates that transforms $M$ into 
$$
z_n=az_1\bar z_1+bz_1^2+c\bar z_1^2+O(2), \quad y_\alpha=O(2),\quad 1<\all<n.
$$
This is correct for $n=2$. When $n>2$, the correct assertion is the following: $M$ can be transformed into
\begin{align*}
z_n&=z_1\bar z_1+\gamma(z_1^2+\bar z_1^2)+O(3),\  y_\alpha=O(2),\quad 1<\all<n, \ \gamma\in[0,\infty)\setminus\{1/2\};\quad \text{or}
\\
z_n&=z_1^2+\bar z_1^2+O(3),  y_\alpha=O(2),\ 1<\all<n, \ \gamma=\infty;\quad
\text{or}\\
z_n &=z_1\bar z_1+\frac{1}{2}(z_1^2+\bar z_1^2)+i\delta_n x_2(z_1 + \overline z_1)+O(3),  y_\alpha=O(2),\  1<\all<n, \ \gamma=1/2.
\end{align*}
Here $\delta_n=0,1$. When $\delta_n=1$ occurs,  the set of complex tangent points of $M$ has codimension $2$. Therefore, this mistake does not affect other results in the paper since the authors assumed that the set of complex tangent points of $M$ has codimension one. Finally, we should mention that the Moser-Webster involutions $\{\tau_1,\tau_2,\rho\}$ still apply for the case $\delta_n=1$.
\end{rem}

\section{Codimension-2 real submanifolds with CR singularity}\label{sect:highcod}
\setcounter{thm}{0}\setcounter{equation}{0}

In the previous sections, we discuss CR singular $(n+1)$-submanifolds in $\mathbb C^{n+1}$ with $n\geq1$.  
Recall the CR singularity cannot occur for codimension-1 submanifolds. This leads naturally to the case of codimension-2 CR singular submanifolds in $\cc^{n+1}$ for $n>1$, which was initially studied by Dolbeault-Tomassini-Zaitsev~\cite{DTZ05, DTZ10, DTZ11}.
The formal normal forms for codimension-two real submanifolds  in $\cc^{n+1}$ have been studied
by Coffman~\ci{Co09}, Huang-Yin~\cite{HY09b, HY16, HY17}, Burcea~\cite{Bu13}, Lebl-Noell-Ravisankar~\cite{LNR17} and Fang-Huang~\ci{huang-fang-gafa}. For exposition, we follow the latter.


To be more detailed, we assume that $p\in M$ is a CR singular point
of 
a smooth real submanifold $M$ in $\mathbb C^{n+1}$. We write $(z_1,\cdots,z_n,w)$ for the coordinates of
${\mathbb C}^{n+1}$. After a holomorphic change of coordinates, we
assume that $p=0,\ T^{(1,0)}_pM=\{w=0\}$. Then $M$ near $p=0$ is the
graph of a function of the form:
\begin{equation}\label{001}
	w=F(z,\-{z})=q^{(2)}(z,\ov{z})+o(|z|^2),
\end{equation}
where $q^{(2)}(z,\ov{z})$ is a polynomial of degree two in
$(z,\ov{z})$. In the classical  case, namely,  $n+1=2$, after a
holomorphic change of variables, we can always make
$q^{(2)}(z,\ov{z})$-real-valued. However, this is no longer the case
for $n+1\ge 3$. Indeed, after a simple holomorphic change of
coordinates, if needed, we can  write
\begin{equation} \label {0011}
	q^{(2)}(z,\ov{z})=2\RE{(z\cdot \mathcal A\cdot z^t)}+z\cdot \mathcal B\cdot \ov{z}^t
\end{equation}
with $\mathcal A$ and $ \mathcal B$ being two $(n\times
n)$-matrices. Suppose that $z=\tilde{z}\cdot
P+\vec{a}\tilde{w}+O(|(z,w)|^2);\ w=\mu \tilde{w}+z\cdot b^t+
O(|(z,w)|^2)$  is a holomorphic transformation preserving the form
as in (\ref{001}) and (\ref{0011}). Then $b=0,\mu\not =0$, and $P$
is an $(n\times n)$-invertible matrix. Moreover, if $(M,0)$ is
defined in the new coordinates by
\begin{equation}\label{002}
	\tilde{w}=\tilde{q}^{(2)}(\tilde{z},\ov{\tilde{z}})+o(|\tilde{z}|^2),
\end{equation}
with $\tilde{q}^{(2)}(\tilde{z},\ov{\tilde{z}})=2\RE{\tilde{z}\cdot \widetilde{\mathcal A}\cdot
	\tilde{z}^t}+\tilde{z}\cdot \widetilde{\mathcal B}\cdot \ov{\tilde{z}}^t.$ Then

\begin{equation}\label{00111}
	\widetilde{\mathcal B}=\frac{1}{\mu} P\cdot \mathcal B\cdot \ov{P}^t,\ \
	\widetilde{\mathcal A}=\frac{1}{\ov\mu} P\cdot\mathcal A\cdot {P}^t.
\end{equation}

When there do not exist
a $\mu\not =0$ and an invertible $P$ such that $\frac{1}{\mu} P\cdot
\mathcal B\cdot \ov{P}^t$ is Hermitian, one can never make
$\tilde{q}^{(2)}(z,\ov{z})$ real-valued. Also notice that the
non-degeneracy of the matrix ${\mathcal B}$ is a holomorphic
invariant property.
More general, we make   the following definition: 
\begin{defn}
	\begin{enumerate} 
\item A smooth real-codimension two submanifold $M\subset\mathbb C^{n+1}$ is {\it non-minimal} at CR points in the sense of Tumanov~\cite{Tum88}, if each CR point of $M$ is contained in a CR smooth submanifold of $\mathbb C^n$ that is contained in $M$ and has CR dimension $n-1$.
	\item  $M$ is said to have a {\it non-degenerate CR singularity} at $p$ if  there is a
	holomorphic change of variables such that in the new coordinates,
	$p=0$, $M$ is defined by an equation of the form as in (\ref{001})
	and (\ref{0011}) with $\det{\mathcal B}\not =0$. If there is a
	holomorphic change of variables such that ${\mathcal B}$ is a
	definite Hermitian matrix, we call $p$ a {\it definite  CR singular point}
	of $M$.
	\item Let $M$ be a real-codimension two real submanifold with
	$p\in M$ a CR singular point. We say that $M$ is {\it quadratically
	flattenable} if there is a change of coordinates such that in the new
	coordinates, $p=0$, $M$ near $p=0$ is defined by an equation of the
	form as in (\ref{001}) with $q^{(2)}(z,\ov{z})$ real-valued. One
	says that $M$ can be {\it holomorphically flattened} at $p$ if there is a
	holomorphic change of variables such that in the new coordinates,
	$p=0$, $M$ is defined by an equation of the form as in (\ref{001})
	with $\IM\left(F(z,\-{z})\right)\equiv 0.$
\end{enumerate}
\end{defn}
In  Dolbeault-Tomassini-Zaitsev~\cite{DTZ10, DTZ11} and Huang-Yin~\cite{HY16, HY17} the starting point is to define a generalized
notion of the  Bishop non-degeneracy and generalized Bishop
invariants at a CR singular point $p$. For that purpose, one needs
to assume that $M$ near $p$ is quadratically flattenable. However,
in the setting considered in Dolbeault-Tomassini-Zaitsev~\cite{DTZ10, DTZ11} and Huang-Yin~\cite{HY16, HY17}, $M$ is always CR
non-minimal at its CR points. This raises a natural question by Zaitsev (see~\cite{huang-fang-gafa})
to understand the implication of CR non-minimality to the quadratic
flattenability of $M$  near a CR singular point.

\begin{thm}[\!\!\cite{huang-fang-gafa}]
\label{003} Let $M$ be a real codimension-two smooth submanifold in
	${\mathbb C}^{n+1}$ with $p\in M$ a  non-degenerate  CR singular
	point.
	Assume that $M$ is CR non-minimal at its CR points near $p$. Then
	$M$ is quadratically flattenable.
\end{thm}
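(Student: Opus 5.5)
The plan is to extract the quadratic information from the non-minimality hypothesis by computing the Levi form of $M$ at CR points approaching $p$. Put $p=0$ and write $M$ as in \eqref{001}--\eqref{0011} with $\det\mathcal B\neq0$. By \eqref{00111}, $\tilde q^{(2)}$ is real-valued precisely when $\widetilde{\mathcal B}$ is Hermitian; the term $2\RE(\tilde z\widetilde{\mathcal A}\tilde z^t)$ is real automatically. Moreover, $P\mathcal B\ov P^t/\mu$ is Hermitian if and only if $\mathcal B/\mu$ is. So it suffices to show:
\begin{quote}
there is $\theta\in\rr$ such that $e^{-i\theta}\mathcal B$ is Hermitian, equivalently the complex quadratic form $Q(v)=v\mathcal B\ov v^t$ takes values in the real line $e^{i\theta}\rr$ for all $v\in\cc^n$.
\end{quote}
We would then take $\mu=e^{i\theta}$ and $P=\id$.

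\emph{Step 1: Levi form at CR points.} The complex tangent space at a point of $M$ over $z$ consists of the $v$ with $\bar\partial_{z}F(z,\ov z)\cdot\ov v=0$. At leading order this is the hyperplane $H_z=\{v: (z\mathcal B+2\ov z\,\ov{\mathcal A})\cdot\ov v^t\approx 0\}$, where the bar on $\mathcal A$ accounts for conjugation. Hence CR points are those where this covector is nonzero, and $M$ has CR dimension $n-1$ and CR codimension $2$ there. With the defining functions $\RE(w-F)$ and $\IM(w-F)$, the vector-valued Levi form at such a point is
\[
L_z(v)=-\partial\bar\partial F(v,\ov v)=-Q(v)+O(|z|)|v|^2\in\cc\cong\rr^2,\qquad v\in H_z.
\]
Non-minimality means that through each CR point there is a CR hypersurface $N\subset M$ of CR dimension $n-1$. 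Then $H_z=T^{1,0}N$, so $L_z(H_z)$ lies in the image of $T N$ in $TM/(H\oplus \ov H)$, which is a real line $\ell_z$. Thus
\[
Q(v)\in\ell_z+O(|z|)\quad\text{for } v\in H_z,\ |v|=1.
\]

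\emph{Step 2: passage to the limit.} Let $z\to0$ along a ray $z=t z_0$. Then $H_{tz_0}$ converges to the hyperplane $H^{z_0}$ annihilated by the covector $z_0\mathcal B+2\ov z_0\ov{\mathcal A}$, and a subsequence of $\ell_{tz_0}$ converges to a line $\ell^{z_0}$. This gives $Q(H^{z_0})\subset\ell^{z_0}$. Here we use $\det\mathcal B\neq0$: the real-linear map $z\mapsto z\mathcal B+2\ov z\,\ov{\mathcal A}$ is nonzero on an open dense cone of directions, so CR points are dense near $0$. Its image is a real-linear image of $\rr^{2n}$. If this map is invertible, the hyperplanes $H^{z_0}$ range over all complex hyperplanes. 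Otherwise they range over an open set of them, which still suffices for what follows.

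\emph{Step 3: gluing the lines.} For $n\ge3$, two hyperplanes $H^{z_0},H^{z_1}$ meet in a subspace of dimension at least $n-2\ge1$. Since $Q$ is nondegenerate, it cannot vanish identically on a generic such intersection (for $n-2\geq 1$, generic subspaces avoid the null cone in the needed sense). So $\ell^{z_0}=\ell^{z_1}=:e^{i\theta}\rr$ for generic pairs, and hence for all of them by continuity. Since these hyperplanes cover an open set of $\cc^n$, polarization gives $Q(\cc^n)\subset e^{i\theta}\rr$, i.e. $e^{-i\theta}\mathcal B$ is Hermitian.

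I expect the main obstacle to be Step 3 in the case $n=2$, where $H^{z_0}$ is a complex line and two such lines meet only at $0$. There I would try to use the $O(|z|)$ term more carefully, i.e. a first-order expansion of $\ell_z$ in $z$, together with the integrability of the foliation by the CR hypersurfaces $N$. The integrability should force $\ell_z$ to vary compatibly, so that the function $v\mapsto \arg Q(v)$ is constant on the open cone swept by the $H^{z_0}$. The degenerate situations where $Q$ vanishes on some $H^{z_0}$ would be handled separately using $\det\mathcal B\ne0$.
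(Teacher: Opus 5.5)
Your reduction to showing that $e^{-i\theta}\mathcal B$ is Hermitian for some $\theta$ is correct, and the Levi-form computation in Step 1 and the limiting argument in Step 2 are sound. The genuine gap is the case $n=2$, i.e.\ submanifolds of $\cc^3$, and it is not in Step 3. For $n=2$, Steps 1--2 already carry no information. There $H_z$ and $H^{z_0}$ are complex lines, and for every $\mathcal B$ one has $Q(\lambda u)=|\lambda|^2Q(u)\in\rr\,Q(u)$. So the conclusion ``the Levi form takes values in a real line'' holds automatically at every CR point of every codimension-two submanifold of $\cc^3$.

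Take for instance $w=|z_1|^2+i|z_2|^2$. Here $\mathcal B=\mathrm{diag}(1,i)$ is not a multiple of a Hermitian matrix, and its Levi form trivially lies in a line. Yet with $X=-i\bar z_2\partial_{z_1}-\bar z_1\partial_{z_2}$ one gets $[X,\bar X]=-iT$, where $T=z_1\partial_{z_1}+\bar z_1\partial_{\bar z_1}-z_2\partial_{z_2}-\bar z_2\partial_{\bar z_2}$, and $[T,X]=2i\bar z_2\partial_{z_1}-2\bar z_1\partial_{z_2}$. The fields $X,\bar X,T,[T,X]$ span $\cc TM$ wherever $z_1z_2\neq0$, so this quadric is minimal. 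For $n=2$ the obstruction is therefore visible only through brackets of length at least three, and any proof must use them.

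Your suggested remedy aims at the wrong quantity. The $O(|z|)$ corrections to $\partial\bar\partial F$ come from the cubic terms of $F$, which are arbitrary. What is needed is the variation of $\ell_z$ along the CR directions, which at leading order is determined by $(\mathcal A,\mathcal B)$ alone. A workable route is to push your Step 2 to all orders. Under $(z,w)\mapsto(tz,t^2w)$ the rescaled submanifolds converge in $C^\infty$ to the quadric $w=q^{(2)}(z,\bar z)$. Non-minimality at a CR point forces all iterated brackets of CR vector fields to lie in a hyperplane of the tangent space. For each bracket length this is a closed condition on jets, so it passes to the quadric at its CR points. One must then prove that the quadric in $\cc^3$ is of finite type at some CR point unless $e^{-i\theta}\mathcal B$ is Hermitian for some $\theta$. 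That computation, for arbitrary symmetric $\mathcal A$ and invertible $\mathcal B$, is the heart of the case $n=2$ and is absent from your proposal. The sentence ``the integrability should force $\ell_z$ to vary compatibly'' is a hope, not an argument.

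For $n\ge3$ your argument can be completed, but two steps need repair. First, when $c(z)=z\mathcal B+2\bar z\bar{\mathcal A}$ is singular, the hyperplanes $H^{z_0}$ need not form an open family. For $\mathcal B=I$, $\mathcal A=\tfrac12 I$ one has $c(z)=2\RE z$, and the $H^{z_0}$ are only the complexified real hyperplanes of $\rr^n$. What does hold is that $\ker c$ is the fixed set of the antilinear map $z\mapsto-2\bar z\bar{\mathcal A}\mathcal B^{-1}$. It is therefore totally real of dimension at most $n<2n-2$, so every vector lies in some $H^{z_0}$. The gluing must then be carried out inside this possibly thin family.

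Second, $\ell^{z_0}$ is only a subsequential limit, so ``by continuity'' is not available. It is cleaner to set $\ell(H)=\rr\,Q(H)$ whenever $Q|_H\not\equiv0$. When $c$ is invertible, take any two such $H,H'$. Pick $v\in H$, $v'\in H'$ with $Q(v)Q(v')\neq0$, and a hyperplane $H''$ containing both, which is possible since $n\ge3$. Then $\ell(H)=\ell(H'')=\ell(H')$. In the singular case, whether such an $H''$ can be found inside the family must be checked separately.
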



\medskip

Let $M\subset {\mathbb C}^{n+1}$ be a codimension-two real
submanifold with $0\in M$ a non-degenerate CR singular point, defined by
 (\ref{001}) (\ref{0011}) with ${\mathcal B}$ a
Hermitian matrix. When ${\mathcal B}$ is  definite, then by the
classical Takagi theorem, we can further make
$q^{(2)}(z,\-{z})=\sum_{j=1}^{n}\left(|z_j|^2+\lambda_j(z_j^2+\-{z_j^2})\right),$
where $0\leq \lambda_1\leq\cdots, \lambda_n<\infty.$ The set
$\{\lambda_1,\cdots,\lambda_n\}$ is called the set of generalized
Bishop invariants of $M$ at the CR singular point. $\lambda_j$ is
called an elliptic, parabolic or hyperbolic Bishop invariant, if
$0\leq \lambda_j<\frac{1}{2}$,  $ \lambda_j=\frac{1}{2}$ or
$\lambda_j>\frac{1}{2}$. This terminology coincides with the
classical definition of Bishop~\cite{Bi65} when $n+1=2$.
However, when ${\mathcal B}$ is not a definite matrix, we cannot,
in general, simultaneously diagonalize ${\mathcal A}$ and ${\mathcal
	B}$. In the case of $n+1=3$, Coffman~\cite{Co09} gave a list of the forms that
the pair $\{{\mathcal A},{\mathcal B}\}$ can be transformed to.  Two cases in his list
are geometrically quite special, in which the corresponding
quadratic term takes one of the following forms after a holomorphic
change of coordinates:
\begin{equation}
	\label{P1}
	q^{(2)}=|z_1|^2+|z_2|^2+\frac{1}{2}(z_1^2+\-{z_1^2})+\frac{1}{2}(z_2^2+\-{z_2^2});\
	\hbox{or}
\end{equation}
\begin{equation} \label{M1}
	q^{(2)}=|z_1|^2-|z_2|^2+\lambda(z_1^2+\-{z_1^2})+\lambda(z_2^2+\-{z_2^2}),\
	\ \lambda\ge \frac{1}{2}.
\end{equation}
In the case of (\ref{P1}), the two generalized Bishop invariants of
the CR singular point at the origin are both  parabolic.  Consequently, the set of CR singular points may have
real dimension $n=2$, which does create a lot of problems for the
geometric studies of $M$ near $0$.

To explain the speciality of (\ref{M1}), we recall a definition from~\cite{HY16}. Let $(M,p)$ be a codimension two real submanifold in
${\mathbb C}^{n+1}$ with $p\in M$ a CR singular point. We say
$(M,p)$ possesses an elliptic complex tangent direction if there is
an affine complex plane ${\mathcal H}$ that passes through $p$ and
is transversal to the complex tangent space of $M$ at $p$ such that
$M\cap {\mathcal H}$ is an elliptic Bishop surface inside ${\mathcal
	H}$ in the classical sense (see Section 2). Now,
a simple  algebraic computation  shows that a codimension two real
submanifold $M\subset {\mathbb C}^{n+1}$ with a non-degenerate
quadratically flattenable CR singular point at $p$ has no elliptic
directions at $p$ if and only if $n+1=3$ and after a holomorphic
change of variables sending $p$ to $0$,  $M$ near $p=0$ is defined
by an equation of the form as in (\ref{001}) with $q^{(2)}$ being
given by (\ref{M1}). 


The papers~\cite{HY16, HY17, huang-fang-gafa}
study the holomorphically
flattening problem near a CR singular point when  $M$ is real
analytic. 

Note that  a real surface in $\mathbb C^2$ at an elliptic or non-exceptional hyperbolic complex tangent can always be formally holomorphically flattened, as shown by Moser and Webster~\cite{MW83} and Moser~\cite{moser-zero}. Recall that an elliptic real analytic n-submanifold in $\mathbb C^n$ with non vanishing can be holomorphically flattened as a consequence of Moser-Webster normal form  \rt{NFofM}. An elliptic real analytic n-submanifold in $\mathbb C^n$ with a vanishing Bishop invariant in $\mathbb C^n$ holomorphically flattened by Huang and Krantz \cite{HK95} for $n=2$, and by Huang \cite{H98} for $n\geq 2$. There are real analytic hyperbolic surfaces in $\mathbb C^2$ that cannot be holomorphically flattened by \cite{MW83}; see \rp{non-flat-hyper}.

 Noticeably, when $n+1=3$, Huang and Yin~\cite{HY09b} showed that a real analytic submanifold defined by $w=|z_1|^2+|z_2|^2+\RE\{\sum a_{ij}z_1^iz_2^j\}+\sum_{i\geq 2,j\geq 2}b_{i\bar j}z_1^i\bar z_2^j$ can be formally holomorphically flattened if and only if $b_{i\bar j}=\overline{ b_{j\bar i}}$ for all $i,j$.
With non-minimal CR singularity, we have the following.
\begin{thm}[\!\!\cite{huang-fang-gafa}]
\label{005} Let $M$ be a real analytic real-codimension two submanifold in
	${\mathbb C}^{n+1}$ with $n\ge 2$ and with  $p\in M$ a
	non-degenerate CR singular point.
	Assume that $M$ is CR non-minimal at its CR points near $p$. Then
	$(M,p)$ can be holomorphically  flattened if $M$ has an elliptic
	direction at $p$. More precisely, $(M,p)$ can be holomorphically
	flattened if either $n+1\ge 4$ or  $n+1=3$ but $(M,p)$ is not
	holomorphically equivalent to a submanifold $(M',0)$ whose quadratic
	term takes the form  in (\ref{M1}).
\end{thm}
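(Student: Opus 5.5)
The natural route is to turn the CR non-minimality hypothesis into a holomorphic foliation and then read off a real-valued defining function.

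\emph{Step 1 (quadratic normalization).} By Theorem~\ref{003}, $M$ is quadratically flattenable. So we may assume $M$ is given by \eqref{001}, with $q^{(2)}$ real-valued and $\mathcal B$ Hermitian and non-degenerate. The elliptic-direction hypothesis (equivalently, $n+1\geq 4$, or $n+1=3$ with $q^{(2)}$ not of the form \eqref{M1}) lets us choose a complex line, say the $z_1$-direction, such that slicing $M$ by $\{z'=c\}$ gives, for small $c$, real surfaces with an elliptic complex tangent, of Bishop type with $0\le\gamma<1/2$.

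\emph{Step 2 (Levi-flat hypersurfaces from non-minimality).} At CR points, $M$ is foliated by CR submanifolds of real codimension one in $M$ with CR dimension $n-1$. I would show that near the CR singular point these leaves are level sets of a real-analytic function, and that the leaves extend to complex hypersurfaces in $\mathbb C^{n+1}$. In particular, I would aim for a real-analytic function $\rho$ on $M$ with $d\rho\ne0$ and $\rho=x_w+O(3)$ (with $x_w=\RE w$) whose level sets are the leaves. Real-analyticity of the leaves follows from Tumanov's theorem together with analyticity of $M$. Extending the foliation across the CR singular set is the delicate part. My plan there is to use the elliptic slices from Step 1: each elliptic Bishop surface bounds a Bishop family of analytic discs, and the discs attached to $M$ should lie in the leaves, since a leaf is the union of CR orbits. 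Following Huang and Krantz~\cite{HK95} and Huang~\cite{H98}, these discs fill a real-analytic Levi-flat hypersurface whose leaves are complex hypersurfaces $\{\Phi(z,w)=t\}$, $t\in\mathbb R$, with $\Phi$ holomorphic and $\Phi=w+O(2)$.

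\emph{Step 3 (flattening).} Change coordinates by $\tilde w=\Phi(z,w)$, $\tilde z=z$. Each leaf of $M$ lies in $\{\IM\tilde w=0\}$, so $\IM F\equiv0$ in the new coordinates. This is precisely holomorphic flattening.

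The main obstacle is Step 2: proving that the holomorphic function $\Phi$ exists and is real-valued on $M$, and is not merely defined on the CR part. I would first handle the case $n+1\geq4$ or the elliptic direction via Moser--Webster/Huang families of discs and Hartogs-type extension of the invariant function built from those discs. The analytic-disc families give a real-analytic first integral $\rho$ of the leaves near $0$. Its holomorphic extension should follow from Segre-variety arguments on the complexification, in analogy with Proposition~\ref{firt-int}: a first integral that is invariant under the relevant deck/Segre structure becomes the real part of a holomorphic function of $w$. Case \eqref{M1} is excluded because no elliptic slices exist there, so no disc family is available.
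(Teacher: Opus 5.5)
\textbf{There is a genuine gap.} Steps 1 and 3 are fine, but Step 2 is not a step of the argument; it is the theorem itself. A holomorphic $\Phi=w+O(2)$ that is real-valued on $M$ is precisely a holomorphic flattening, and the proposal gives no mechanism for producing one. In particular, CR non-minimality is a hypothesis only at CR points, and you never use it to say anything at $p$ beyond the appeal to Theorem~\ref{003}.

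The slicing idea fails at exactly the crucial point. The Bishop discs of $M_c=M\cap\{z'=c\}$ are determined by the surface $M_c$ alone. Their union over $c$ is a real hypersurface foliated by complex \emph{curves}, and for $n\ge2$ such a hypersurface need not be Levi-flat (e.g.\ $\IM w=|z_2|^2$ in $\mathbb C^3$ is foliated by $z_1$-lines). To obtain complex \emph{hypersurfaces} you would have to show that the slice-wise flattening functions $h_c(z_1,w)$ can be chosen holomorphic in $c$. These functions are unique only up to $h_c\mapsto\varphi_c\circ h_c$ with $\varphi_c$ real, and they come from normalizations of $M_c$ that depend on $(c,\overline c)$, not on $c$ alone. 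Making them holomorphic in $c$ is where non-minimality has to do its work. Your sentence ``the discs lie in the leaves, since a leaf is a union of CR orbits'' simply assumes this. The leaves are only defined near CR points, and nothing shows that the boundary circle of a Bishop disc of $M_c$ lies in a single CR orbit of $M$. That is true a posteriori for flat $M$, but it cannot be used to construct the leaves.

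Several supporting claims also break down:
\begin{itemize}
\item Proposition~\ref{firt-int} rests on the Moser--Webster deck involutions. For a codimension-two $M\subset\mathbb C^{n+1}$ with $n\ge2$, the projections of the complexification $\mathcal M\cong\mathbb C^{2n}$ to $\mathbb C^{n+1}$ have $(n-1)$-dimensional fibres. Such involutions therefore exist only slice by slice, which brings you back to the holomorphy-in-$c$ problem.
\item Hartogs extension is not available. Your local data near CR points live on neighbourhoods of $M\setminus\{p\}$, and these need not contain any punctured ball about $p$.
\item Your normalization $\rho=x_w+O(3)$ gives $\rho|_M=q^{(2)}(z,\overline z)+O(3)$, so $d\rho=0$ at $p$. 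The orbit foliation is necessarily singular there (in the definite case the leaves shrink to $p$), and that singularity is exactly the difficulty.
\item The elliptic slices may have vanishing Bishop invariant; for $q^{(2)}=\sum|z_j|^2$ every direction gives $\gamma=0$. There the involutions do not exist, and the invariant $\gamma(c)$ of $M_c$ can vary with $c$. So even the slice-wise theory would have to be handled uniformly in a degenerating family.
\end{itemize}

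Looking to the elliptic slices for convergence is reasonable. What is missing is an actual argument that carries non-minimality down to $p$. For instance, you could first prove that non-minimality forces a formal $\Phi$ with $\IM\Phi|_M\equiv0$. Then you would need a convergence argument along the elliptic direction that reconciles the uniqueness of the slice-wise flattening functions, which depend on $(c,\overline c)$, with the holomorphy of $\Phi$ in $c$.
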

The result was proved in~\cite{HY16} when the $\cL B$ in \re{0011} is a positive (or negative)  definite hermitian matrix. 
The above theorem  has an
immediate application to the  study of the precise description of
the local hull of holomorphy of $M$.

\begin{cor}[\!\!\cite{huang-fang-gafa}] \label{44.44}
	Let $M$ be a real analytic real-codimension two submanifold in
	${\mathbb C}^{n+1}$ with $n\ge 2$ and with  $p\in M$ a
	non-degenerate CR singular point.
	Assume that $M$ is CR non-minimal at its CR points near $p$. Assume
	that either $n+1\ge 4$ or  $n+1=3$ but $(M,p)$ is not
	holomorphically equivalent to a submanifold $(M',0)$ whose quadratic
	term takes the form  in (\ref{M1}). Then there is  a real analytic
	Levi-flat hypersurface $\widehat{M}$, which has $M$ near $p$ as part of
	its real analytic boundary and is foliated by complex hypersurfaces
	shrinking down to $p$ along the normal direction of $M$ in $\widehat{M}$
	at $p$. Moreover, when $p$ is a definite CR singular point, then
	there is a small $\epsilon_0>0$ such that for any
	$0<\epsilon<<\epsilon_0$, $\widehat{M}\cap B_p(\epsilon)$ is a connected
	open
	piece containing the origin  of  the hull of holomorphy of $M\cap B_p(\epsilon_0)$. Here
	$B_p(\epsilon)$ denotes the ball  centered at $p$ of radius
	$\epsilon$.
	
\end{cor}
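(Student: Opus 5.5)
We derive the corollary from Theorem~\ref{005}, and then build the Levi-flat hypersurface from the flattened defining equation, in the spirit of Bishop discs and the Moser--Webster remark after Theorem~\ref{NFofM}. Under the stated hypotheses, Theorem~\ref{005} gives holomorphic coordinates with $p=0$ in which $M$ is defined by $w=F(z,\ov z)$, with $F$ real-valued. Since $M$ has an elliptic direction, we may also arrange $F=q^{(2)}(z,\ov z)+O(3)$, where $q^{(2)}$ is real-valued and nondegenerate. Write $w=u+iv$. Then $M$ lies in the real hyperplane $\{v=0\}$, and for each real $t$ the slice $M_t=M\cap\{w=t\}$ is the real hypersurface $\{F(z,\ov z)=t\}$ in the complex hyperplane $\{w=t\}\cong\cc^n$.

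We define $\widehat M=\bigcup_{t}\widehat M_t\times\{t\}$, where $\widehat M_t$ is the region in $\{w=t\}$ bounded by $M_t$. This set lies in $\{v=0\}$ and is foliated by the complex hypersurfaces $\{w=t\}\cap\widehat M$. It is therefore Levi-flat, and its real analytic boundary contains $M$.

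The main work is checking that $\widehat M_t$ is a well-defined domain that shrinks to $p$ along the normal direction. In the definite case we take $\mathcal B>0$. The sublevel set $\{F<t\}$ near $0$ is then a small strongly pseudoconvex domain when every $\lambda_j<1/2$. In general the quadratic part need only be elliptic in some direction, so we take the connected component of $\{F\le t\}$ that shrinks to $0$ as $t\to0^+$. This is where non-degeneracy is needed: we apply the Morse lemma to the real function $F$ at its nondegenerate critical point $0$. Real analyticity of $\widehat M$ up to its boundary $M$ follows because $\widehat M=\{(z,u):F(z,\ov z)\le u\}$ locally, with $dF\neq0$ away from the origin. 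The leaves $\{w=t\}$ shrink to $p$ along the $u$-direction, which is the normal direction of $M$ in $\widehat M$ at $p$.

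For the hull of holomorphy statement in the definite case, we proceed in two steps.

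\emph{Inclusion of $\widehat M$ in the hull.} Each leaf $\widehat M_t$ is a relatively compact domain in $\{w=t\}$ with boundary $M_t\subset M$. By the maximum principle on complex hypersurfaces, every holomorphic function near $M$ extends to $\widehat M$. Hence $\widehat M\cap B_p(\epsilon)$ lies in the hull of $M\cap B_p(\epsilon_0)$.

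\emph{Reverse inclusion.} We expect this step to be the main obstacle. The plan is to show that $\widehat M$, together with $M$, is holomorphically convex near $p$, using plurisubharmonic barriers. The function $v^2$ cuts the hull down to $\{v=0\}$, since the function $e^{-iw}$ (or $w$ itself) has modulus controlled by $v$ on $M$. The function $F(z,\ov z)-u$, or a strictly plurisubharmonic modification of it that is available because $\mathcal B$ is definite, restricts points to $\{F\le u\}$. Finally, a small-ball argument with peak functions $e^{\epsilon w}$ localizes near $p$ for $0<\epsilon\ll\epsilon_0$. To make the barrier argument rigorous, we may approximate $F$ by its quadratic part and use Hörmander's or Rossi's local maximum principle; here we need the hull to be taken with respect to the fixed neighborhood $B_p(\epsilon_0)$.
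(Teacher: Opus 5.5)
Your outline matches the route implicit in the survey, which gives no details beyond calling this an immediate application of Theorem~\ref{005}: flatten by Theorem~\ref{005}, take the part of the Levi-flat hyperplane $\{v=0\}$ lying over the graph $u=F(z,\ov z)$, and cut the hull down with $e^{\pm iw}$ and the plurisubharmonic barrier $F(z,\ov z)-u$. Those barriers are the easy part.

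\textbf{The gap is in how you build and use the leaves.} Flattening only gives $\det\mathcal B\neq0$, and $\mathcal B>0$ in the definite case. It does not make $q^{(2)}$ a definite, or even nondegenerate, real quadratic form.
\begin{itemize}
\item A generalized Bishop invariant $\lambda_j>1/2$ contributes $(1+2\lambda_j)x_j^2+(1-2\lambda_j)y_j^2$, which is indefinite.
\item $\lambda_j=1/2$ contributes $2x_j^2$, which is degenerate. For example, (\ref{P1}) becomes $2x_1^2+2x_2^2$, and this case is covered by the corollary, including its definite part.
\end{itemize}
In the degenerate case the Morse lemma is unavailable. In the indefinite case no component of $\{F\le t\}$ shrinks to $0$: in Morse coordinates $F=|a|^2-|b|^2$, the set $\{F\le t\}$ contains the $b$-subspace of the chart and is connected. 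So your ``in general'' step is false. The leaves $\{F<t\}\cap B$ are relatively compact and shrink to $p$ only when every $\lambda_j<1/2$.

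A definite $\mathcal B$ allows all $\lambda_j>1/2$, and such $M$ still has elliptic directions. So your hull argument covers only the totally elliptic subcase. Separately, the natural $\widehat M$ near $p$ is just $\{v=0,\ u>F(z,\ov z)\}$. It is real analytic up to $M$ because $M$ is a graph over $\cc^n$ inside $\{v=0\}$, so no condition on $dF$ is needed.

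\textbf{The missing idea is to fill $\widehat M$ by analytic discs in an elliptic direction inside each leaf, not by the leaf itself.}
\begin{itemize}
\item Take $e\in\cc^n$ with $\zeta\mapsto q^{(2)}(\zeta e)$ positive definite.
\item For $(z_0,t)\in\widehat M\cap B_p(\epsilon)$, the function $\zeta\mapsto F(z_0+\zeta e)$ is strictly convex near $0$. Hence $\{\zeta: F(z_0+\zeta e)<t\}$ is a convex disc containing $0$, of radius $O(|z_0|+|t|^{1/2})$, with boundary in $M_t$.
\item As $t$ decreases to $\min_\zeta F(z_0+\zeta e)$, these discs shrink to a point of $M$.
\item The Kontinuit\"atssatz then puts $\widehat M\cap B_p(\epsilon)$ into the hull of $M\cap B_p(\epsilon_0)$ once $\epsilon\ll\epsilon_0^2$. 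The maximum principle, which you invoked, cannot do this: it bounds functions already defined on a leaf but extends nothing.
\end{itemize}

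With this replacement your reverse inclusion works as written. Since $|e^{\mp iw}|=e^{\pm v}$ equals $1$ on $M$, the hull lies in $\{v=0\}$. Since $F(z,\ov z)-\RE w$ is plurisubharmonic (because $\mathcal B>0$) and vanishes on $M$, the hull lies in $\{u\ge F\}$.
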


Theorem \ref{005} is contained in Huang-Yin [HY3] when $p$ is a
definite CR singular point with one of the generalized Bishop
invariants elliptic.

\section{Low dimension real submanifolds with CR singularity}\label{sect:lowdim}

\setcounter{thm}{0}\setcounter{equation}{0}

In this section, we discuss $m$-submanifolds in $\cc^n$ with CR singularity and $m<n$.
Coffman~\cite{Co06} showed that  any
$m$ dimensional real analytic submanifold in $\cc^n$ of  one-dimensional complex tangent
space at  a  CR singularity satisfying certain non-degeneracy conditions
is locally holomorphically equivalent to a unique algebraic submanifold,
provided $2(n+1)/3\leq m<n$.
Specifically, he proved the following.
\begin{thm}[\!\!\cite{Co06}]\label{thm-co-06}
Given $2(n+1)/3\leq m<n$, let $M^m$ be a real analytic $m$-submanifold in $\cc^n$ with a CR singularity at $0$ with $\dim T_0^{(1,0)}M=1$. Suppose that $M$ is a third order perturbation of $Q\subset\cc^n$ defined by
\begin{gather}
y_s=0,  \quad 2\leq s\leq m-1,\\
z_t=\bar z_1(x_{2(t-m+2)}+ix_{2(t-m+2)+1}), \quad t=m,\dots, n-2,\\
z_{n-1}=\bar z_1^2, \quad  z_n=\bar z_1(z_1+x_2+ix_3).
\end{gather}
Then $M$ is locally biholomorphic to $Q$.
\end{thm}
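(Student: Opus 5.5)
We plan to prove Theorem~\ref{thm-co-06} by reducing it to a linear problem and then applying a Newton-type iteration with no small divisors. The key feature of $Q$ is that when $2(n+1)/3\le m<n$, the dimension count is favorable. The real submanifold $M$ is a graph over $(z_1,x_2,\dots,x_{m-1})\in\cc\times\rr^{m-2}$ with $n-m+1$ complex defining functions. The model $Q$ is rigid enough that the holomorphic normalizing maps are determined by their action on a finite-dimensional space of quadratic data. We expect no resonances.

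\emph{Step 1: parametrize and set up the homological equation.} Write $M$ as
\[
y_s=f_s,\qquad z_t=\bar z_1(x_{2(t-m+2)}+ix_{2(t-m+2)+1})+f_t,\qquad z_{n-1}=\bar z_1^2+f_{n-1},\qquad z_n=\bar z_1(z_1+x_2+ix_3)+f_n,
\]
with $f=O(3)$ as functions of $(z_1,\bar z_1,x')$. A holomorphic map $\Phi=\id+\phi$ with $\phi=O(2)$ sends $M$ to a submanifold $\tilde M$ of the same form. The linearized relation is
\[
\tilde f-f=L(\phi),
\]
where $L(\phi)$ is obtained by substituting the parametrization of $Q$ into $\phi$ and subtracting the induced tangential correction. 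We decompose by weighted degree: weight $1$ for $z_1$, $\bar z_1$ and $x'$, with $z_t,z_{n-1},z_n$ of weight $2$.

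\emph{Step 2: solvability of the homological equation.} We will show that for every homogeneous $f$ of degree $d\ge3$ there is a homogeneous holomorphic $\phi$ of degree $d$, unique modulo the Lie algebra of automorphisms of $Q$, with $L(\phi)=f$. This gives formal equivalence to $Q$ itself, with no normal-form terms remaining. The idea is that the holomorphic functions restricted to $Q$ already generate every monomial. Indeed, $\bar z_1^2=z_{n-1}|_Q$, and $\bar z_1 z_1$, $\bar z_1 x_j$ are recovered from $z_n$ and $z_t$ together with the real coordinates $x_s=z_s|_Q$. So every monomial $z_1^a\bar z_1^b x^P$ with $b\ge1$ is, modulo lower-order corrections, the restriction of a holomorphic polynomial of the same degree.

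The inequality $2(n+1)/3\le m$ is exactly what guarantees enough coordinates $z_t$ to absorb the products $\bar z_1 x_j$ for all $j=2,\dots,m-1$. The index range $t=m,\dots,n-2$ covers the $2(n-m-1)$ real directions $x_4,\dots$; combined with $z_n$ covering $x_2,x_3$, the count closes precisely when $3m\ge 2n+2$. The real parts (the $y_s$ equations) are handled by adjusting the real-valued $x_s$ components with $\RE$ of holomorphic functions. The key estimate is that the solution operator is bounded on weighted-degree pieces with a uniform constant, $\|\phi_d\|\le C\|f_d\|$, because it is given by an explicit division/substitution rather than an inversion with divisors.

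\emph{Step 3: convergence.} Since there are no small divisors, we expect to obtain convergence by one of two routes. The first is a majorant argument on the recursively defined formal map, as in the proofs of Poincaré's linearization theorem. The second is to realize $Q$ algebraically, $\phi$ directly solving a holomorphic implicit system. We will try the direct route first. The complexification of $Q$ expresses $\bar z_1$, hence all of $w$, as holomorphic functions of $z$; the equations $z_{n-1}=w_1^2$ and $z_n=w_1(z_1+\cdots)$ determine $w_1$ analytically from $z$ in the non-degenerate range. One then looks for a biholomorphism straightening the complexified $\mathcal M$ to that of $Q$ via the implicit function theorem.

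We expect the main obstacle to be Step 2. The obstacle is verifying surjectivity with a uniform bound at every degree, and checking that the dimension condition $m\ge 2(n+1)/3$ is exactly what makes the linear operator onto, so that no invariant terms survive. This requires careful counting of monomials $\bar z_1^b z_1^a x^P$ versus holomorphic monomials restricted to $Q$, and careful treatment of the reality constraints on the $y_s$ components.
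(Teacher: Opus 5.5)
Your overall plan is the right shape: remove all terms of degree $\ge 3$ formally, then use the absence of small divisors to get convergence. But there is a genuine gap exactly at the step you flag, and the heuristic you give for it is false.

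On $Q$ one has $z_s|_Q=x_s$, $z_{n-1}|_Q=\bar z_1^2$, $z_n|_Q=\bar z_1\ell_1$ and $z_t|_Q=\bar z_1\ell_t$, where $\ell_1=z_1+x_2+ix_3$ and $\ell_t=x_{2(t-m+2)}+ix_{2(t-m+2)+1}$. So the part of $\bar z_1$-degree one of restrictions of holomorphic polynomials is only $\bar z_1\cdot I$, where $I\subset\cc[z_1,x_2,\dots,x_{m-1}]$ is the ideal generated by the $n-m$ linear forms $\ell_1,\ell_t$. Already in degree $3$, neither $\bar z_1^3$ nor $\bar z_1x_2^2$ is a restriction.

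You also misread the dimension count. The inequality $3m\ge 2n+2$ is what is needed for the forms $\ell_1,\ell_t$ to use distinct variables among $x_2,\dots,x_{m-1}$, i.e. for $Q$ to be defined at all. When $3m>2n+2$, the variables $x_{2(n-m)+2},\dots,x_{m-1}$ do not occur in $Q$, so nothing ``absorbs'' $\bar z_1x_{m-1}$.

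Surjectivity of the homological operator therefore has to come from the tangential part, $-\partial_{z_1}E_0\,\phi_1|_Q-\partial_{\bar z_1}E_0\,\overline{\phi_1|_Q}-\sum_s\partial_{x_s}E_0\,\RE\phi_s|_Q$. This is coupled with the constraint that the $y_s$-components change only by $\IM\phi_s|_Q$. For example, $\bar z_1^3$ in the $z_n$-component can be produced by $\phi_1=cz_{n-1}$. But that choice creates $2\bar c\,\bar z_1z_1^2$ in the $z_{n-1}$-component, and $z_1^2\notin I$, so this is again not a restriction. It can instead be produced by $\bar z_1(\RE\phi_2+i\RE\phi_3)|_Q$, with $\phi_2,\phi_3$ combinations of $z_{n-1}$ and $z_1^2$ chosen to be real on $Q$ so the $y$-equations are not spoiled. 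Showing that this coupled system is solvable in every degree $\ge 3$, with estimates, is the real content of the formal part of Coffman's theorem. Your proposal asserts it, together with the bound $\|\phi_d\|\le C\|f_d\|$, without an argument.

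Step 3 does not work as written either. The direct route rests on a false claim. On the complexification of $Q$ one has $w_s=z_s$ for $2\le s\le m-1$, $z_{n-1}=w_1^2$ and $z_n=w_1(z_1+z_2+iz_3)$. Hence $w_1=z_n/(z_1+z_2+iz_3)$ is not holomorphic in $z$ near $0$. The projection $(z,w)\mapsto z$ maps the complexification onto a singular variety of Whitney-umbrella (cross-cap) type, and it is not injective over the locus where all the forms $\ell$ vanish. So there is no implicit-function setup of the kind you describe.

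The majorant route is not automatic. If you write the problem as $\Phi(M)=Q$, the unknown is evaluated at $(z_1,x+if',E)$. The recursion then contains terms $\partial\phi\cdot(E-E_0)$ that lose a factor of the degree, so a Poincar\'e-type majorant estimate does not close even though there are no small divisors. To make majorants work you would have to rewrite the equation as $\Phi^{-1}\circ(\text{parametrization of }Q)=(\text{parametrization of }M)\circ\psi$, so that the unknown is composed only with the weighted homogeneous model. You would then still have to prove the degree-uniform bound for a right inverse of the homological operator. The standard remedy, which to my recollection is what Coffman's proof uses, is a rapidly convergent (Newton/KAM-type) iteration on shrinking domains.
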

The model $Q$ arrives from two non-degeneracy conditions (3) and (4) in ~\cite{Co06}. A special case for $m=4$ and $n=5$, the formal normalization was achieved early by  Beloshapka~\cite{Be97} and Coffman~\cite{Co97}. See also  Harris~\cite{Ha81} and Coffman~\cite{Co04} for $m=2$ and $n\geq 3$ for formal normal forms. 
Coffman~\ci{Co10} further developed \rt{thm-co-06} into unfolding CR singularity by studying parameterized families of CR singular real analytic submanifolds. The reader is referred to~\ci{Co10} for the results and references therein.

\section{Real manifolds with maximum CR singularity}\label{sect:maxCR}
\setcounter{thm}{0}\setcounter{equation}{0}

Consider an  $n$-dimensional real analytic  submanifold $M$ in $\cc^n$ that has a CR singularity. A CR singularity is called {\it maximum} in~\ci{GS16}, if the real tangent space $T_0M$ is actually a complex subspace of $T_0\cc^n$. Thus  $p=n/2$ must be an integer.  
   We are interested in
the normal form problem, the rigidity property,  and  the local analytic geometry  of such real analytic manifolds.

In  suitable holomorphic coordinates, a $2p$-dimensional real analytic submanifold $M$ in $\cc^{2p}$
 that has a complex tangent space of maximum dimension at the origin  is given by
\eq{mzpjintr}
M\colon z_{p+j}=E_j(z',\ov z'),
\quad 1\leq j\leq p,
\eeq
where $z'=(z_1,\ldots, z_p)$, $z=(z',z'')$,  and
$$
E_j(z',\ov z')=h_j(z',\ov z')+q_j(\ov z') + O(|(z',\ov z')|^3). 
$$
Moreover,    $h_j(z',\ov z')$ are homogeneous quadratic polynomials in $z',\ov z'$ without holomorphic or anti-holomorphic terms,
  and  $q_j(\ov z')$ are homogeneous quadratic polynomials in $\ov z'$.

To study $M$, we consider its complexification in $\cc^{2p}\times\cc^{2p}$ defined by
\begin{equation}
{\mathcal M}\colon
\begin{cases}
z_{p+i} = E_{i}(z',w'), & i=1,\ldots, p,
\\
w_{p+i} = \ov{ E_i}(w',z'),& i=1,\ldots, p.\\
\end{cases}
\nonumber
\end{equation}
 It is a complex submanifold of complex
dimension $2p$ with coordinates $(z',w')\in\cc^{2p}$.  Let $\pi_1,\pi_2$ be the restrictions of the projections $(z,w)\to z$
and   $(z,w)\to w$ to $\cL M$, respectively. 

  Our basic assumption is the following condition.

\medskip
\noindent
{\bf Condition B.}  $q(z')=(q_1(z'),\ldots, q_p(z'))$ satisfies $q^{-1}(0)=\{0\}.$
\medskip

When $p=1$, condition B corresponds to 
 the non-vanishing of  the Bishop invariant $\gaa$.
When $M$ is a {\it quadric}, i.e. all $E_j$ in \re{mzpjintr} are   quadratic  polynomials, our basic condition~B
is equivalent to $\pi_1$ being a $2^p$-to-1 branched covering, in which case $\pi_2$ is also a $2^p$-to-$1$ branched covering. We first recall the following.

\begin{exmp}Let $p>1$. Let $N_{\gaa,\e}$ be a perturbation
 of $Q_\gaa$ defined by
$$
z_{p+j}=z_j\ov z_j+\gaa_j\ov z_j^2+\e_{j-1} \ov z_{j-1}^3,\quad \epsilon_j\neq0,
\quad 1\leq j\leq p.
$$  
Then the identity map of $\cL M$ is the only deck transformation of  branched coverings $\pi_1,\pi_2$.
\end{exmp}

This example shows an essential difference between real analytic submanifolds  with maximum CR singularity and the ones  with minimum CR singularity.
Thus we introduce the following

{\bf Condition D.}  {\it $M$ satisfies condition $B$ and the branched   covering   $\pi_1$ 
of $\cL M$
admits the maximum $2^p$ deck transformations.}
\medskip

It was proved in that Condition D gives rise to two families of commuting involutions $\{\tau_{i1},\ldots, \tau_{i2^p}\}$
intertwined by the anti-holomorphic
involution $\rho_0\colon(z',w')\to(\ov w',\ov z')$ such that $\tau_{2j}=\rho_0\tau_{1j}\rho_0$ $(1\leq j\leq 2^p)$ are deck
transformations of $\pi_2$.   Further, 
  there is a unique set of $p$ generators for the deck transformations  of $\pi_1$, denoted by $\tau_{11},\ldots, \tau_{1p}$, such  that each $\tau_{1j}$   fixes a hypersurface in $\cL M$ pointwise.  Then
$\tau_1=\tau_{11}\circ\cdots\circ\tau_{1p}$
 is the unique deck transformation of which the fixed-point set   has the smallest dimension $p$.
 Let  $\tau_2=\rho_0\tau_1\rho_0$ and
  $
 \sigma=\tau_1\tau_2.
 $
 Then $\sigma$ is {\it reversible} by $\tau_j$ and $\rho_0$, i.e.
 $\sigma^{-1}=\tau_j\sigma\tau_j^{-1}$ and $\sigma^{-1}=\rho_0\sigma\rho_0$.

 As in the Moser-Webster theory,   the existence of such $2^p$ deck transformations    transfers the normal form problem for the real submanifolds into the normal form problem for the sets
  of involutions $\{\tau_{11},\ldots,\tau_{1p},
  \rho_0\}$.

Next, we make the following assumption.

 \medskip
 \noindent
{\bf Condition J.} {\it   $M$ satisfies condition D and  
   $\sigma'(0)$ is diagonalizable.}


\medskip


Note that the condition excludes the higher dimensional analogous complex tangency of {\it parabolic} type, i.e. of $\gaa=1/2$.

\subsection{Product quadrics}  The basic model for quadric manifolds   satisfying condition J is a product of  3 types of quadrics defined by
\begin{gather}\label{Qgs2}
 Q_{\gamma_e}\subset\cc^2 \colon z_{2}= (z_1+2\gaa_e\ov z_1)^2;\\
 Q_{\gamma_h}\subset\cc^2\colon z_{2}= (z_{1}+2\gaa_{h}\ov z_1)^2, \ 1/2<\gaa_h<\infty;\quad Q_\infty\colon z_2=z_1^2+\ov z_1^2;\\
 Q_{\gamma_s}\subset\cc^4\colon z_{3}= (z_1+2\gamma_s\ov z_{2})^2,
\quad z_{4}=( z_{2}+2
(1-\ov\gamma_{s})  \ov z_{1})^{2}.
\label{Qgs2+}
\end{gather}
Here $\gaa_s\in\cc$ and
\eq{0ge1}
0<\gamma_e<1/2, \quad
1/2<\gamma_h\leq\infty, \quad \RE
\gaa_s\leq1/2, \quad  \IM\gaa_s\geq0, \quad\gaa_s\neq0,1/2.
\eeq
Note that $Q_{\gaa_e}, Q_{\gaa_h}$ are elliptic and hyperbolic Bishop quadrics, respectively. However,  $Q_{\gaa_s}$ 
 is not holomorphically equivalent to a product of two Bishop surfaces and such a complex tangent is called \emph{complex}. 
  A product of the above quadrics
 will be called a {\em product of quadrics}, or a {\it product quadric}.
 We denote by $e_*,h_*, 2s_*$
  the number of elliptic, hyperbolic and complex type, respectively.   
 When $\sigma$ has $2p$ distinct eigenvalues, \cite[Thm.~1.1]{part2} gives  a complete classification of quadratic submanifolds of maximum deck transformations, showing that there are  quadratic manifolds which are not holomorphically equivalent to a product quadric.  

 We now describe   main geometrical and dynamical results for analytic higher order perturbations of product quadric.
We start with  a holomorphic normalization of a real analytic submanifold $M$ with the so-called abelian CR singularity.  This will be achieved by
studying an integrability problem on a general family of commuting biholomorphisms described below. 
 

\subsection{Normal form of commuting biholomorphisms}
\begin{defn}
Let $\cL F=\{F_1,\ldots, F_\ell\}$ be a finite family of germs of biholomorphisms of $\cc^n$
fixing the origin.
 Let $D_m$ be the linear part of $F_m$ at the origin.  We say that the family $\cL F$ is (resp. formally)  {\it completely integrable}, if there is a (resp. formal) biholomorphic mapping $\Phi$ such that
$\{\Phi^{-1}F_m\Phi\colon 1\leq m\leq \ell\}=\{\hat F_m\colon 1\leq m\leq \ell\}$ satisfies
\bppp
\item $\hat F_m(z)=(\mu_{m 1}(z)z_1,\ldots, \mu_{m n}(z)z_n)$ where $\mu_{mj}$ are germs of holomorphic (resp. formal) functions such that $\mu_{m j}\circ D_{m'}=\mu_{m j}$ for
 $1\leq m,m'\leq \ell$  and $ 1\leq j\leq n$.   In particular,  $\hat F_m$ commutes with $D_{m'}$ for all $1\leq m,m'\leq\ell$.
\item  For each $j$ and each $Q\in\nn^n$ with $|Q|>1$,  $\mu_m^Q(0)=\mu_{m j}(0)$ hold for all $m$ if and only if
 $\mu_{m}^Q(z)=\mu_{mj}(z)$ hold  for all $m$.
\eppp
\end{defn}
\begin{defn}\label{small divisors}
	We  say that 
	{\it the family $D$ is of Poincar\'e type}
	if there exist  constants $d>1$ and $c>0$ such that, for each $(j,Q)\in\{1,\ldots, n\} \times\nn^n$
	that satisfies
	$\mu_{m}^{Q}-\mu_{m j}\neq 0$ for some $m$,
	there exists $(i,Q')\in \{1,\ldots, \ell\}\times \nn^n$ such that
	$\mu_{k}^{Q'}=\mu_k^Q$ for all $1\leq k\leq \ell$,
	$\mu_{i}^{Q'}-\mu_{ij}\neq0$,
	and
	\gan
	\max\left(|\mu_{i}^{Q'}|,|\mu_{i}^{-Q'}|\right)>c^{-1}d^{|Q'|}, \quad
	\text{ $Q'-Q\in\nn^n\cup(-\nn^n$)}.
\end{gather*}
\end{defn}
A necessary condition for $\cL F$ to be formally completely integrable is that $F_1,\ldots, F_\ell$ commute pairwise.
We had the following main result.
\begin{thm}[\!\!\cite{GS16}] \label{lFba}
Let $\cL F$ be a family of finitely many
germs of biholomorphisms  at the origin.
If $\cL F$ is formally completely integrable and its linear part $\cL D$  has the Poincar\'e type, then it is holomorphically  completely integrable.
\end{thm}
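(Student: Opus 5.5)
We plan a Newton-type (rapidly convergent) iteration, in the spirit of the Poincaré–Dulac and Brjuno arguments. The Poincaré-type hypothesis replaces any Diophantine condition: small divisors are in fact bounded below exponentially (large), once we pass to an equivalent multi-index.

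\emph{Formal reduction.} First use formal complete integrability to fix a formal $\hat\Phi$ conjugating $\cL F$ to $\{\hat F_m\}$. Then, after a polynomial change of coordinates of high degree, assume each $F_m$ agrees with its normal form up to order $N$. Write $F_m=\hat F_m+f_m$, where the $\hat F_m$ are in the form (i)–(ii) and $f_m$ is of high order.

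\emph{The linearized (cohomological) equation.} Seek $\Phi=\id+\phi$ with
\[
\Phi^{-1}F_m\Phi=\hat F_m+\tilde f_m,
\]
where $\tilde f_m$ is of doubled order. Since the $F_m$ commute, the perturbations satisfy a compatibility (cocycle) condition,
\[
f_m\circ \hat F_k+D\hat F_m\, f_k\approx f_k\circ \hat F_m+D\hat F_k\, f_m,
\]
modulo higher-order terms. Split $\phi$, coefficientwise in the monomials $z^Q e_j$, into the resonant part ($\mu_m^Q=\mu_{mj}$ for all $m$) and the nonresonant part. Condition (ii) says that for resonant pairs $\mu^Q_m(z)\equiv\mu_{mj}(z)$, so resonant terms can be absorbed into the normal form; the $\mu_{mj}$ are updated while staying $D$-invariant. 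For a nonresonant $(j,Q)$, the compatibility condition lets us solve with a single index $m=i$, dividing by $\mu_i^{Q'}-\mu_{ij}$ or, through the inverse map, by the analogous quantity with $\mu_i^{-Q'}$. The Poincaré-type hypothesis supplies $Q'$ in the same $D$-orbit class, with $Q'-Q$ of one sign, such that $\max(|\mu_i^{Q'}|,|\mu_i^{-Q'}|)>c^{-1}d^{|Q'|}$. We solve in whichever direction ($F_i$ or $F_i^{-1}$) makes the divisor large, which gives the bound
\[
|\phi_{j,Q}|\lesssim d^{-|Q'|}\,|f_{i,j,Q}|.
\]
The genuine difficulty is that the $\mu_{mj}$ are functions rather than constants, so the equation is not diagonal. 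I would handle this by grouping monomials into classes with equal $\mu_k^Q(0)$ for all $k$ and using condition (ii) to treat the $z$-dependence of $\mu^Q/\mu_j$ as a unit on each class.

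\emph{Convergence.} Bound the solution on shrinking polydiscs $\Delta_{r_k}$. Because the divisors are bounded below (indeed exponentially large), each step loses only a factor $C(r_k-r_{k+1})^{-\nu}$ from Cauchy estimates and from composition. The quadratic error $\|\tilde f\|\le C\delta^{-\nu}\|f\|^2$ then converges with $\sum_k(r_k-r_{k+1})<r_0/2$, as in the standard KAM/Newton scheme. Alternatively, one can show directly that the unique normalized formal $\hat\Phi$ (with no resonant terms) satisfies majorant estimates: the equation becomes $\hat\phi=\mathcal L^{-1}(\text{nonlinear terms in }\hat\phi)$, where $\mathcal L^{-1}$ is bounded, so the majorant series solves an analytic implicit-function equation and converges.

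The main obstacle is the step where a nonresonant coefficient is solved through the ``good'' index $(i,Q')$ rather than the original $(m,Q)$. This requires showing that commutativity relates the components of $f_m$ for different $m$ and along the shift $Q\to Q'$ with bounded constants, and that the function-valued multipliers $\mu_{mj}(z)$ do not spoil the lower bound after truncation. I would first try the majorant approach, with the $\mu_{mj}$ frozen on each $D$-invariant block, and fall back to the Newton scheme if the cross-terms prove unmanageable.
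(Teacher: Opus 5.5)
Your outline has the right ingredients: grouping modulo invariant monomials, choosing $F_i$ or $F_i^{-1}$ according to which of $|\mu_i^{Q'}|,|\mu_i^{-Q'}|$ is large, and a Newton or majorant scheme. But the step you flag as ``the main obstacle'' is exactly the content of the theorem, and you propose to use condition (ii) in a way that does not work.

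The missing fact is this. If $P\in\nn^n\setminus\{0\}$ and $\mu_k^P(0)=1$ for all $k$, then (ii) applied to $Q=P+e_j$ gives $\mu_m^P(z)\mu_{mj}(z)=\mu_{mj}(z)$, hence $\mu_m^P(z)\equiv1$. Two consequences follow. Every $\cL D$-invariant series $a$ satisfies $a\circ\hat F_m=a$. And $\mu_m^Q(z)=\mu_m^{Q'}(z)$ for all $m$, where $Q'$ is the index supplied by the Poincar\'e condition, because $Q'-Q=\pm P$ for such a $P$.

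That is why $Q'$ appears in the definition. It is not there to change the divisor: at $z=0$ one has $\mu_i^{Q'}-\mu_{ij}=\mu_i^Q-\mu_{ij}$. Nor is it there to relate coefficients of $f_m$ at $Q$ and $Q'$, as you suggest. The point is that the multiplier series $\mu_i^{\pm Q}(z)$ has majorant on $\Delta_r$ bounded a priori only by $|\mu_i^{\pm Q}(0)|(1+Cr)^{|Q|}$, with $|Q|$ unbounded along a class. Since it equals $\mu_i^{\pm Q'}(z)$, the growth factor becomes $(1+Cr)^{|Q'|}$, which $d^{|Q'|}$ dominates once $r$ is small. Calling $\mu^Q/\mu_j$ ``a unit on each class'' gives no uniform bound. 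So your estimate $|\phi_{j,Q}|\le Cd^{-|Q'|}|f_{i,j,Q}|$ is justified only when the normal form is linear.

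Second, your Newton step linearizes $\Phi^{-1}F_m\Phi$ at $\hat F_m$, so the operator is $\phi\mapsto\phi\circ\hat F_m-D\hat F_m\,\phi$. The term $z_j\sum_k\partial_k\mu_{mj}(z)\phi_k$ in $D\hat F_m\phi$ couples components. It cannot be dropped, since $D\hat F_m-D_m=O(|z|)$ would then leave an error of order $N+1$ rather than $2N$, and you do not say how to invert the coupled operator.

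The way around this is what your majorant alternative needs: linearize at $D_m$ while keeping $\hat F_m$ inside the composition. With $F_m=D_m+f_m$ and $\Phi=\id+\phi$, the relation $F_m\circ\Phi=\Phi\circ\hat F_m$ reads
\[
\phi\circ\hat F_m-D_m\phi=f_m\circ\Phi-(\hat F_m-D_m).
\]
By the fact above, the left side maps $a(z)z^Qe_j$ (with $a$ invariant) to $\bigl(\mu_m^{Q'}(z)-\mu_{mj}(0)\bigr)a(z)z^Qe_j$. So it is block-diagonal, with no derivative of $\hat F_m$. For the appropriate $F_i^{\pm1}$, its inverse on nonresonant blocks is bounded uniformly over blocks.

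For $\phi$ without resonant blocks, the resonant part of the equation gives $\hat F_m-D_m=[f_m\circ\Phi]_{\rm res}$. The normalized formal solution is determined order by order by these resonant equations together with the block equations for the chosen $F_i^{\pm1}$. Estimating it through this subsystem therefore suffices, and no cocycle bookkeeping across different $m$ is needed. A majorant or contraction argument then finishes the proof.

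Finally, (ii) does not let you absorb the resonant part of a Newton error into the normal form. A resonant monomial need not be $z_j$ times an invariant; for example $z_2^2z_3e_1$ is resonant for $D=\mathrm{diag}(\la,\la,\la^{-1})$ with $\la$ not a root of unity. Absorbing such a term can take you out of the class defined by (i)--(ii). What controls the resonant part at each step is formal complete integrability together with uniqueness of normalized normal forms, and you invoke these only at the start.
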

Such a formal integrability condition can hold under some geometrical properties.
 For instance, for a single germ of real analytic hyperbolic area-preserving mapping, the result  was due to
Moser~\cite{moser-hyperbolic},
and for a single germ of reversible hyperbolic  holomorphic mapping $\sigma=\tau_1\tau_2$
of which $\tau_1$ fixes a hypersurface,
 this result was due to Moser-Webster~\cite{MW83}.
Such  results for commuting germs of vector fields were obtained in \cite{St00,  stolo-annals} under a   collective small divisors Brjuno-type condition.  

\subsection{Holomorphic normalization for the abelian CR singularity} We first recall the  {\it abelian} CR singularity introduced in~\ci{GS16}.
For a  product quadric $Q$ which    satisfies condition~J. 
   the deck transformations of
 $\pi_1$   are generated by $p$ involutions of which each fixes a hypersurface pointwise. We denote them by $T_{11}, \ldots, T_{1p}$.
 Let $T_{2j}=\rho T_{1j}\rho$.
It turns out that
  each $T_{1j}$ commutes with all $T_{ik}$
   except one,  $T_{2k_j}$ for some $1\leq k_j\leq p$. When we formulate $S_j=T_{1j}T_{2k_j}$ for $1\leq j\leq p$,
  the  $S_1, \ldots, S_p$ commute pairwise. 
 Furthermore, one can find 
 linear coordinates for the product quadrics such that the normal forms of
$S$, $T_{ij}$, $\rho$ are given by
\begin{align}
&S\colon\xi_j'=\mu_j\xi_j,\quad \eta_j'=\mu_j^{-1}\eta_j,\quad 1\leq j\leq p;\label{sxij-mv}\\
\label{rSxi-mv}
&S_j\colon\xi_j'=\mu_j\xi_j,\quad \eta_j'=\mu_j^{-1}\eta_j,\quad \xi_k'=\xi_k,\quad \eta_k'=\eta_k,
\quad k\neq j;\\
\label{rTij-mv}
&T_{ij}\colon\xi_j'=\la_{i j}\eta_j,\quad\eta_j'=\la_{i j}^{-1}\xi_j,\quad\xi_k'=\xi_k,\quad
\eta_k'=\eta_k, \quad k\neq j; 
\\ 
\label{rRho-mv}
& \rho\colon \left\{\begin{array}{ll}
(\xi_e',\eta_e',\xi_h',\eta_h')=
(\ov\eta_e,\ov\xi_e,\ov\xi_h,\ov\eta_h),\vspace{.75ex}
\\
(\xi_{s}', \xi_{s+s_*}',\eta_{s}',\eta_{s+s_*}')=(\ov\xi_{s+s_*},
\ov\xi_{s},\ov\eta_{s+s_*}, \ov\eta_{s}).
\end{array}\right.
\end{align}
Throughout this section, the indices $h,e,s$ have the ranges $1\leq e\leq e_*$, $e_*<h\leq e_*+h_*$, and $e_*+h_*<s\leq p-s_*$.
Notice that we can always normalize $\rho_0$ into the above normal form $\rho$.

  For a general  $M$ that  is a third-order perturbation of product quadric $Q$ and
  satisfies  condition~J.  
    define $\sigma_j=\tau_{1j}\tau_{2k_j}$. In suitable coordinates, $T_{ij}$
  (resp. $S_j$) is the linear part of $\tau_{ij}$ (resp. $\sigma_j$) at the origin.
  We say that the complex tangent of
  $M$ of a product quadric
  at the origin is of {\it abelian type},  if
  $\sigma_1, \dots, \sigma_p$
  commute pairwise.
 If each linear part  $S_j$ of $\sigma_j$ has exactly two eigenvalues $\mu_j,
\mu_j^{-1}$ that are different from $1$,  then $\cL S:=\{S_1,\ldots, S_p\}$ is of Poincar\'e type if and only if $|\mu_j|\neq1$ for all $j$.

  As an application of \rt{lFba}, we have the following convergent normalization.
  \begin{thm}[\!\!\cite{GS16}] \label{iabelm}
Let $M$  be a germ of real analytic submanifold  in $\cc^{2p}$
that is a third order perturbation of a product quadric given by \rea{Qgs2}-\rea{0ge1} with an abelian CR singularity. Suppose that $M$ has all eigenvalues of modulus different from one, i.e. it has no hyperbolic component $(h_*=0)$ while each $\gaa_s$ in \rea{Qgs2+} satisfies $\RE\gamma_s<1/2$ additionally.
Then $M$ is holomorphically equivalent to
\begin{gather}\nonumber 
\widehat M\colon
z_{p+j}=\Lambda_{1j}(\zeta)\zeta_j,\quad \Lambda_{1j}(0)=\la_j,\quad 1\leq j\leq
p,
\end{gather}
where  $\zeta=(\zeta_1,\ldots, \zeta_p)$ are the holomorphic solutions to
\begin{align*}\nonumber
\zeta_e&=
 A_e(\zeta)|z_e|^2-
 B_e(\zeta)(z_e^2+\ov z_e^2),\quad 1\leq e\leq
e_*,\\
\zeta_s&=
 A_s(\zeta)z_s\ov z_{s+s_*}-
 B_s(\zeta)(z_s^2+\Lambda_{1s}^2(\zeta)\ov z_{s+s_*}^2),\quad  e_*< s\leq e_*+s_*,\\
\zeta_{s+s_*}&= 
A_{s+s_*}(\zeta) \ov z_s z_{s+s_*}-
B_{s+s_*}(\zeta)(z_{s+s_*}^2+\Lambda_{1(s+s_*)}^2(\zeta)\ov z_{s}^2),
\nonumber
\end{align*}
 while 
the holomorphic function $ \Lambda_{1j}$ satisfies  $\Lambda_{2j}=\Lambda_{1j}^{-1}$ and
\begin{gather} 
	\label{lam1e}
	\Lambda_{1e}=\overline{\Lambda_{1e}\circ{\rho_z}},\  1\leq e\leq e_*; \quad
	\Lambda_{1s}^{-1}=\overline{\Lambda_{1(s+s_*)}\circ {\rho_z}},\ e_*<s\leq p-s_*,\\
	\label{rhoz5}
	{\rho_z}\colon\zeta_e\to\ov\zeta_e,
	\quad\zeta_s\to\ov\zeta_{s+s_*}, \quad\zeta_{s+s_*}\to\ov\zeta_s.
	\end{gather}
and $A_j,B_j$  are rational functions in $\Lambda_{1j}(\zeta)=\la_j+O(\zeta)$ $(1\leq j\leq p)$ defined by~:
\begin{gather} \label{AeAj}
	A_e=\frac{1+\Lambda_{1e}^{2}}{(1-\Lambda_{1e}^2)^{2}},\quad  A_j=\f{\Lambda_{1j}+\Lambda_{1j}^3}{(1-\Lambda_{1j}^2)^2}, \ j=s,s+s_*, \\
	B_j=\frac{\Lambda_{1j}}{(1-\Lambda_{1j}^2)^{2}}, \quad j=e,s,s+s_*. \label{AeAj+}
\end{gather} 
In particular, $\widehat M$ is contained in $z_{p+e}=\ov z_{p+e}$ and  $z_{p+s}\tilde\Lambda_{1s}^{2}(z'')=\ov z_{p+s+s_*}$, where $\zeta_j=z_{p+j}\tilde\Lambda_{1j}(z''), 1\leq j\leq p,$ is the inverse mapping of $z_{p+j}=\zeta_j\Lambda_{1j}(\zeta),1\leq j\leq p$.
\end{thm}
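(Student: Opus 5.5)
The plan is to reduce the statement to Theorem~\ref{lFba}, applied to the commuting family $\{\sigma_1,\dots,\sigma_p\}$ attached to $M$, and then to recover $\widehat M$ by a realization argument of the same type as in the proof of Theorem~\ref{thm:MW}.

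\textbf{Step 1: the dynamical data.} Condition J and the abelian hypothesis give commuting germs $\sigma_j=\tau_{1j}\tau_{2k_j}$. Their linear parts $S_j$ are given by \re{rSxi-mv}. The only eigenvalues of $S_j$ different from $1$ are $\mu_j^{\pm1}$.

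\textbf{Step 2: the Poincar\'e condition.} For elliptic components we have $\la_e>1$, so $\mu_e=\la_e^2\neq1$ in modulus. For complex components the extra hypothesis $\RE\gamma_s<1/2$ is exactly what should give $|\mu_s|\ne1$; I would check this from the quadric $Q_{\gamma_s}$ by computing $\la_s$. With $h_*=0$, the remark before the theorem then shows that $\mathcal S$ is of Poincar\'e type.

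\textbf{Step 3: formal complete integrability.} This is the heart of the matter and, I expect, the main obstacle. I would run a Moser--Webster-type formal normalization (Theorem~\ref{mw-formal}) for the whole family of involutions $\{\tau_{1j}\}$, $\{\tau_{2j}\}$ together with $\rho$, proceeding degree by degree.

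The unknowns are solved by splitting them into $\tau_{1j}$-even and $\tau_{1j}$-odd parts. Because $|\mu_j|\ne1$, the resonances are generated only by the products $\xi_j\eta_j$. The goal is a unique normalized formal $\Psi$ commuting with $\rho$ such that
\[
\Psi^{-1}\tau_{1j}\Psi\colon\ \xi_j\mapsto\Lambda_{1j}(\xi\eta)\,\eta_j,\quad \eta_j\mapsto\Lambda_{1j}^{-1}(\xi\eta)\,\xi_j,
\]
with all other coordinates fixed, and similarly for $\tau_{2j}$ with $\Lambda_{2j}=\Lambda_{1j}^{-1}$.

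The difficulty is simultaneity. Commutativity of the $\sigma_j$ is what makes the linearized homological equations for the different $j$ compatible. Uniqueness of the normalized solution then forces $\Psi$ to commute with $\rho$, which yields the reality conditions \re{lam1e} with $\rho_z$ as in \re{rhoz5}. Conditions (i) and (ii) of complete integrability then hold with $\mu_{mj}$ functions of $\zeta_j=\xi_j\eta_j$.

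\textbf{Step 4: convergence.} Theorem~\ref{lFba} makes the family $\{\sigma_j\}$ holomorphically integrable. Uniqueness of the normalized form then shows that the convergent map also normalizes each $\tau_{ij}$, since the $\tau_{1j}$ are recovered from the $\sigma$'s and from the fixed hypersurfaces.

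\textbf{Step 5: realization.} Exactly as in the proof of Theorem~\ref{thm:MW}, $M$ is recovered from invariant functions. The invariants of $\{\tau_{2j}\}$ in normalized coordinates are generated by $\xi_j+\xi_j\circ\tau_{2j}$ and by $\xi_j\cdot(\xi_j\circ\tau_{2j})$. Restricting to $\Fix\rho$, i.e.\ taking $\eta_e=\ov\xi_e$ and $\eta_s=\ov\xi_{s+s_*}$, expresses $z_j$ and $z_{p+j}=\Lambda_{1j}(\zeta)\zeta_j$ as real-analytic functions. Eliminating $\xi$ should give the stated quadratic relations for $\zeta_j$ in terms of $z_j,\ov z_j$, with coefficients $A_j,B_j$ rational in $\Lambda_{1j}$. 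The explicit formulas \re{AeAj}--\re{AeAj+} come from solving the $2\times2$ linear system relating $(\xi_j,\eta_j)$ to $(z_j,\ov z_j)$.

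Finally, applying \re{lam1e} shows that $\widehat M$ lies in $z_{p+e}=\ov z_{p+e}$ and in $z_{p+s}\tilde\Lambda_{1s}^2(z'')=\ov z_{p+s+s_*}$.
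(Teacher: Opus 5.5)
Your outline follows the route the paper indicates: Poincar\'e type from $|\mu_j|\neq1$, formal complete integrability of the commuting family $\{\sigma_j\}$, then Theorem~\ref{lFba}, then realization as in Theorem~\ref{thm:MW}. Step~5 is essentially right. However, Step~3, which carries the proof, only states the goal, and the mechanism you name is not the operative one. Commutativity of the $\sigma_m$ together with $|\mu_j|\neq1$ gives only a simultaneous Poincar\'e--Dulac form $\hat\sigma_m\colon\xi_i\mapsto a_{mi}(\xi\eta)\xi_i,\ \eta_i\mapsto b_{mi}(\xi\eta)\eta_i$. It gives neither of the following:
\begin{itemize}
\item condition (ii) of complete integrability, namely $a_{mi}b_{mi}\equiv1$, i.e.\ each $\xi_i\eta_i$ is a first integral;
\item the block structure of your target, in which $\hat\sigma_m$ and $\hat\tau_{1m}$ move only $(\xi_m,\eta_m)$.
\end{itemize}
In particular, your normal form forces $\tau_{1j}\tau_{2k}=\tau_{2k}\tau_{1j}$ for $k\neq j$. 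This holds for the quadric but is not a hypothesis on $M$. The relations give at once, from $\tau_{2j}=\tau_{1j}\sigma_j$ and pairwise commutation within $\{\tau_{1i}\}$, within $\{\tau_{2i}\}$ and within $\{\sigma_i\}$, only that $\tau_{1j}\sigma_k\tau_{1j}\sigma_k^{-1}$ is tangent to the identity and commutes with $\sigma_j$ and $\sigma_k$.

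One way to supply the missing argument works in simultaneous Poincar\'e--Dulac coordinates and uses the uniqueness of the Jordan decomposition:
\begin{itemize}
\item $T_{1j}\tau_{1j}$ conjugates $\{\hat\sigma_m\}$ to a family with the same semisimple parts $S_m$, so it commutes with every $S_m$.
\item Hence $\tau_{1j}$ sends each $\zeta_i=\xi_i\eta_i$ to a function of $\zeta$. It therefore induces on $\zeta$-space an involution tangent to the identity, which must be the identity.
\item This pins $\tau_{1j}$ down to $\xi_j\mapsto a_j(\zeta)\eta_j,\ \eta_j\mapsto a_j(\zeta)^{-1}\xi_j$, with all other coordinates fixed.
\item Then $\tau_{1k}\sigma_k\tau_{1k}=\sigma_k^{-1}$ yields both the invariance of every $\zeta_i$ and the block structure of $\hat\sigma_k$.
\end{itemize}
Nothing of this kind follows from ``compatibility of homological equations.''

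Second, the uniqueness argument in Step~4 does not work as stated. Theorem~\ref{lFba} yields \emph{some} convergent $\Phi$ with $\Phi^{-1}\sigma_m\Phi$ completely integrable. That $\Phi$ is determined only modulo the centralizer of the normal form, and this centralizer contains divergent maps such as $\xi_j\mapsto e^{f(\xi\eta)}\xi_j,\ \eta_j\mapsto e^{-f(\xi\eta)}\eta_j$. So $\Phi$ is not your normalized $\Psi$. Nor are the $\tau_{1j}$ ``recovered from the $\sigma$'s'': reversors of $\sigma_j$ are far from unique.

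What is needed is to rerun the argument above in the convergent coordinates given by $\Phi$. This shows that $\Phi^{-1}\tau_{1j}\Phi$ has the shape $\xi_j\mapsto\Lambda_{1j}\eta_j,\ \eta_j\mapsto\Lambda_{1j}^{-1}\xi_j$ with $\Lambda_{1j}$ convergent in $\xi\eta$. But in general $\Lambda_{2j}\neq\Lambda_{1j}^{-1}$. You still need the convergent change $\xi_j\mapsto e^{f_j}\xi_j,\ \eta_j\mapsto e^{-f_j}\eta_j$, which multiplies both $\Lambda_{1j}$ and $\Lambda_{2j}$ by $e^{-2f_j}$, so one takes $e^{4f_j}=\Lambda_{1j}\Lambda_{2j}$. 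Then you must adjust for $\rho$. Only after that does uniqueness of the normalized map give convergence.

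Two small slips in Step~5:
\begin{itemize}
\item With $\rho$ as in \eqref{rRho-mv}, $\Fix\rho$ on complex components is $\xi_s=\overline{\xi_{s+s_*}}$, $\eta_s=\overline{\eta_{s+s_*}}$, not $\eta_s=\overline{\xi_{s+s_*}}$.
\item To obtain exactly $z_{p+j}=\Lambda_{1j}\zeta_j$ and the stated $A_j,B_j$, use $\eta_j+\eta_j\circ\tau_{2j}$ and $\eta_j\cdot(\eta_j\circ\tau_{2j})$, as in the proof of Theorem~\ref{thm:MW}. Your $\xi_j$-versions give $\Lambda_{1j}^{-1}\zeta_j$ and rescaled coefficients.
\end{itemize}
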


There are many non-product real submanifolds  of abelian CR singularity.
\begin{exmp}[\!\!\cite{GS16}]  Let $0<\gaa_i<\infty$.
Let $R(z_1,\ov z_1)=|z_1|^2+\gaa_1(z_1^2+\ov z_1^2)+O(3)$ be a real-valued power series in $z_1,\ov z_1$ of real coefficients. Then the origin is an abelian CR singularity of
$$
M\colon z_3=R(z_1,\ov z_1), \quad z_4=(z_2+2\gaa_2 \ov z_2+z_2z_3)^2.
$$
\end{exmp}

 As an application of \rt{iabelm},
we have the following flattening result~:
\begin{cor} Let $M$ be as in \rta{iabelm}. In suitable holomorphic coordinates, $M$
is contained in the linear subspace   defined by $z_{p+e}=\ov z_{p+e}$ and $z_{p+s}=\ov z_{p+s+s_*}$ where $1\leq e\leq
e_*$ and $ e_*< s\leq e_*+s_*$.
\end{cor}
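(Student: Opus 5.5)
The plan is to obtain the corollary directly from the normal form $\widehat M$ in \rt{iabelm}. After a holomorphic change of coordinates we may assume $M=\widehat M$. What remains is to find one further holomorphic change of the $z''$-variables, $z''=(z_{p+1},\dots,z_{2p})$, that turns the two identities stated at the end of \rt{iabelm} into the linear ones $z_{p+e}=\ov z_{p+e}$ and $z_{p+s}=\ov z_{p+s+s_*}$.

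The first step is to check the two identities on $\widehat M$ from the structure of the normal form. Take $\zeta$ solving the implicit equations. For elliptic indices, $\zeta_e=A_e(\zeta)|z_e|^2-B_e(\zeta)(z_e^2+\ov z_e^2)$. By \re{lam1e}, $\Lambda_{1e}=\overline{\Lambda_{1e}\circ\rho_z}$. Since $A_e,B_e$ are rational in $\Lambda_{1e}$ with real coefficients, \re{AeAj}--\re{AeAj+} give that the conjugate system is the same system with $\zeta$ replaced by $\rho_z(\zeta)$. Uniqueness of the small solution of the implicit function problem then yields $\ov\zeta=\rho_z(\zeta)$, that is, $\zeta_e$ is real and $\zeta_{s+s_*}=\ov\zeta_s$. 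Consequently $z_{p+e}=\Lambda_{1e}(\zeta)\zeta_e$ is real. For the complex pairs, the relation $\Lambda_{1s}^{-1}=\overline{\Lambda_{1(s+s_*)}\circ\rho_z}$ gives $\ov z_{p+s+s_*}=\Lambda_{1s}^{-1}(\zeta)\zeta_s=z_{p+s}\Lambda_{1s}^{-2}(\zeta)$. Writing $\zeta_j=z_{p+j}\tilde\Lambda_{1j}(z'')$ then gives $\ov z_{p+s+s_*}=z_{p+s}\tilde\Lambda_{1s}^2(z'')$, which is the identity in \rt{iabelm} (with the convention used there). The elliptic identity is already linear, so only the complex pairs need further work.

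The second step flattens the complex pairs by a change of variables in $z''$ alone, fixing $z'$. Set $u(z'')=\Lambda_{1s}^{-1}(\zeta(z''))$, where $\zeta(z'')$ is the inverse of $z_{p+j}=\zeta_j\Lambda_{1j}(\zeta)$. Since $\rho_z$ is an antiholomorphic involution and $\Lambda_{1s}^{-1}\circ\rho_z=\overline{\Lambda_{1(s+s_*)}}$, the natural choice is
\[
\tilde z_{p+s}=\zeta_s,\qquad \tilde z_{p+s+s_*}=\zeta_{s+s_*},\qquad \tilde z_{p+e}=z_{p+e}.
\]
That is, we use $\zeta$ itself as the new coordinates for the complex pairs. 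On $M$ we have $\zeta_{s+s_*}=\ov\zeta_s$, so the new coordinates satisfy $\tilde z_{p+s}=\ov{\tilde z}_{p+s+s_*}$. This map is biholomorphic near $0$ because $\zeta\mapsto(\zeta_j\Lambda_{1j}(\zeta))$ has invertible linear part $\operatorname{diag}(\la_j)$. For the elliptic indices we keep either $z_{p+e}$ or $\zeta_e$, both of which are real on $M$. Because the change of coordinates only involves $z''$, the other defining equations stay analytic.

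The main obstacle is the reality step: showing rigorously that $\ov{\zeta}=\rho_z(\zeta)$ holds on $M$. This needs the conjugated implicit system to coincide exactly with the original one under $\rho_z$. That coincidence rests on the symmetry $z_s\leftrightarrow \ov z_{s+s_*}$ of the right-hand sides together with \re{lam1e}, and in the complex case on $A_{s+s_*},B_{s+s_*}$ corresponding to $A_s,B_s$ after the substitution $\Lambda\mapsto\ov\Lambda^{-1}$. I would first verify this algebraically for a single complex pair, where the computation is $A_j(\Lambda^{-1})=A_j(\Lambda)$ and $B_j(\Lambda^{-1})=B_j(\Lambda)$, both read off from \re{AeAj}--\re{AeAj+}, up to the factor $\Lambda^2$ that appears in the $\ov z_{s+s_*}^2$ term. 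With that in hand, uniqueness of the implicit solution finishes the argument.
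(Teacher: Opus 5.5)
Your proposal is correct and follows essentially the paper's intended route. The paper reads the corollary off from the final assertion of \rt{iabelm} by a holomorphic change of the $z''$-variables alone; your choice $\tilde z''=\zeta(z'')$ works, as would simply replacing $z_{p+s}$ by $z_{p+s}\tilde\Lambda_{1s}^2(z'')$, and your first step only re-derives that final assertion, which the theorem already supplies. Two small slips, neither affecting validity: since $\rho_z$ in \re{rhoz5} already includes conjugation, the reality statement on $M$ is $\rho_z(\zeta)=\zeta$, not $\ov\zeta=\rho_z(\zeta)$; and $B_j(\Lambda^{-1})=\Lambda^2B_j(\Lambda)$ rather than $B_j(\Lambda)$, which is exactly what makes the conjugated $(s+s_*)$-equation coincide with the $s$-equation for $\rho_z(\zeta)$.
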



\subsection{Analytic hull of holomorphy} 
As another application of the above normal form, we can  construct the local hull of holomorphy of $M$
via higher dimensional non-linear analytic polydiscs.
\begin{cor}[\!\!\cite{GS16}]  \label{andiscver}
Let $M$ be as in \rta{iabelm}.  Suppose that $M$
  has   only elliptic component of complex tangent. Then   in suitable holomorphic coordinates,
$\cL H_{loc}(M)$,  the local hull of holomorphy of $M$, is filled
by a real analytic family of analytic polydiscs of dimension $p$.
\end{cor}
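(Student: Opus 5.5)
We work in the coordinates supplied by \rt{iabelm}: $M$ is replaced by $\widehat M$, and since $s_*=h_*=0$, only the elliptic equations remain,
\[
\zeta_e=A_e(\zeta)|z_e|^2-B_e(\zeta)(z_e^2+\ov z_e^2),\qquad z_{p+e}=\Lambda_{1e}(\zeta)\zeta_e,\qquad 1\leq e\leq p,
\]
with $\Lambda_{1e}$ real on real $\zeta$ by \rea{lam1e}. By the inverse map $\zeta_j=z_{p+j}\tilde\Lambda_{1j}(z'')$, $\widehat M$ lies in the totally real linear subspace $\{z''\in\rr^p\}$ of the $z''$-directions. For each fixed real parameter $t=(t_1,\dots,t_p)$ with small $t_e>0$, set $c_e=\Lambda_{1e}(t)t_e$ and $z''=c$. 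The slice $\widehat M\cap\{z''=c\}$ is then the product over $e$ of the real curves
\[
\gamma_e(t)\colon\ A_e(t)|z_e|^2-B_e(t)(z_e^2+\ov z_e^2)=t_e .
\]
Because $0<\gamma_e<1/2$ corresponds to $\la_e>1$, \rea{AeAj}--\rea{AeAj+} give $A_e(0)>2B_e(0)>0$. Hence each $\gamma_e(t)$ is an ellipse bounding a disc $D_e(t)\subset\cc$ (in the $z_e$-line). We define the polydisc $\Delta(t)=\prod_e D_e(t)\times\{c(t)\}$. Its distinguished boundary is a torus contained in $\widehat M$. These polydiscs depend real analytically on $t$ and shrink to $0$ as $t\to0$. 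Their union
\[
\widehat{\cL H}=\bigcup_{t}\Delta(t)
\]
is a real analytic set of dimension $2p+p=3p$. It is bounded by $\widehat M$ and by pieces where some $t_e=0$; on such a piece the corresponding disc degenerates to a point.

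Next we show $\widehat{\cL H}\subset\cL H_{loc}(M)$. For a function $f$ holomorphic on a fixed neighborhood $U$ of $0\cap M$, restrict $f$ to each $\Delta(t)$. Applying the maximum principle / Cauchy formula on the product of discs (iterated one variable at a time) shows that $f|_{\Delta(t)}$ is determined by its values on the distinguished boundary, which lies in $M$. We then use a continuity (Kontinuit\"atssatz) argument in $t$, starting from the degenerate polydiscs near $t=0$ that lie inside $U$. This shows every holomorphic function near $M$ extends holomorphically to a neighborhood of $\widehat{\cL H}$ together with $\widehat M$, so $\widehat{\cL H}$ lies in the hull.

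For the reverse inclusion we exhibit holomorphic (hence plurisubharmonic) peak functions. The functions $\pm\IM z_{p+e}$ vanish on $\widehat M$, so the hull lies in $\{z''\in\rr^p\}$; thus it suffices to work in each real slice $z''=c$. On that slice, for each $e$ the polynomial-convex hull of the torus $\prod\gamma_e$ in $\cc^p$ is $\prod D_e$ by the product structure, since it is the hull of a product of planar Jordan curves. The main obstacle is making this slice-by-slice argument rigorous for the local hull, i.e. for holomorphic functions on a small ball rather than global polynomials. There I would try the function
\[
\phi=\sum_e\bigl(A_e(\zeta)|z_e|^2-B_e(\zeta)\,2\RE z_e^2-\RE\zeta_e\bigr)+C\sum_e(\IM z_{p+e})^2,
\]
written as a plurisubharmonic function that is $\le0$ exactly on the candidate set near $0$. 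It is plurisubharmonic because $A_e>2B_e$ makes the $z_e$-Hessian positive. Together with Hartogs-type separation, this gives the localization $\cL H_{loc}(M)=\widehat{\cL H}$, filled by the real analytic $p$-parameter family of $p$-dimensional analytic polydiscs $\Delta(t)$.
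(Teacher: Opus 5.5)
Your overall route is the one the paper intends. The normal form of Theorem~\ref{iabelm} puts $M$ in $\{\IM z''=0\}$, each real slice $z''=c(t)$ meets $M$ in the torus $\prod_e\gamma_e(t)$, and the polydiscs $\Delta(t)$ have their distinguished boundaries in $M$; this is the $p$-dimensional version of the Moser--Webster disc family. The inclusion $\bigcup_t\Delta(t)\subset\cL H_{loc}(M)$ is fine as sketched. It is cleanest to take a pseudoconvex $\Omega\supset K$, apply the sub-mean-value property of $-\log d_\Omega$ one variable at a time on $\Delta(t)$, and use connectedness of $[0,\epsilon]^p$; this avoids single-valuedness issues for extensions. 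One descriptive correction: for $p\ge2$ the union of the $\Delta(t)$ is not bounded by $\widehat M$ and pieces with $t_e=0$. It is a region with corners whose faces are $\{\phi_e=0\}$ (notation below), and $M$ is only the codimension-$p$ corner where all $\phi_e$ vanish.

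The genuine gap is the reverse inclusion, which is what makes the hull \emph{equal} to the union of polydiscs. Your $\phi$ adds the $p$ defining functions, and for $p\ge2$ the set $\{\phi\le0\}\cap\{\IM z''=0\}$ is strictly larger than $\bigcup_t\Delta(t)$. With $z''$ real, take $z_1$ slightly outside $D_1(t)$ and $z_2$ at the center of $D_2(t)$: the sum is negative, yet the point lies in no $\Delta(t')$, since $x''$ determines $t$. So the claim ``$\le0$ exactly on the candidate set'' fails, and you only get a weaker containment. You must impose the $p$ constraints separately. With $\zeta=\zeta(z'')$ from the inverse map, set
\[
\phi_e=\RE A_e(\zeta)\,|z_e|^2-\RE B_e(\zeta)\,(z_e^2+\ov z_e^2)-\RE\zeta_e+C|\IM z''|^2 ,\qquad 1\le e\le p .
\]
Each $\phi_e$ vanishes on $M$, and near $0$ the set $\{\IM z''=0,\ \max_e\phi_e\le0\}$ is exactly $\bigcup_t\Delta(t)$.

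The reason these are plurisubharmonic is not $A_e>2B_e$; that is convexity, i.e.\ why $\gamma_e(t)$ is an ellipse. The complex Hessian in $z_e$ is just $\RE A_e>0$, and the $z''$-block comes only from $C|\IM z''|^2$. Taking $C$ large is needed to dominate the mixed $z_e$--$z''$ entries of size $O(|z_e|)$, which come from the dependence of $A_e,B_e$ on $\zeta(z'')$.

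Finally, to get equality rather than just containment, localize with a compact piece saturated by full tori, e.g.\ $K_\epsilon=\{z\in M:\ \zeta_e(z)\le\epsilon\ \forall e\}$. Cut off with the pluriharmonic functions $\RE\zeta_e$ instead of a ball, since a ball destroys the tori near its boundary. Then, for a small ball $B$ on which the $\phi_e$ are plurisubharmonic, $Hull(K_\epsilon)$ is contained in $\bigcap_{\delta>0}B\cap\{\max_e\phi_e<\delta,\ |\IM z''|<\delta,\ \RE\zeta_e<\epsilon+\delta\}$. This intersection is the union of the $\Delta(t)$ with $0\le t_e\le\epsilon$. With these functions, the slice-by-slice polynomial-hull argument is no longer needed.
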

The hulls of holomorphy for real submanifolds with a CR singularity have been studied extensively, starting with the work of Bishop. In the real analytic case with minimum complex tangent space at an elliptic
complex tangent, 
we refer to Moser-Webster~\cite{MW83} for $\gaa>0$, and Krantz-Huang \cite{HK95} for $\gaa=0$.  For the smooth case, see Kenig-Webster~\cite{KW82,KW84}, Huang~\cite{H98}.
For global
results on hull of holomorphy, we refer to \cite{bedford-gaveau, bedford-kling}.

\subsection{Rigidity of quadrics}We now study the
 rigidity problem 
 of a quadric under higher order analytic perturbations,
 i.e.  the problem if such a perturbation  remains holomorphically equivalent to the quadric if  it is   formally equivalent
 to the quadric.
  The rigidity problem is  reduced to a theorem of holomorphic linearization of one or several commuting diffeomorphisms   that was devised in \cite{stolo-bsmf}.
  We emphasize that condition (1.5) ensure that $M$ is diagonalizable (condition J). 
Using  a theorem on linearization of a family of commuting holomorphic mappings
in \cite{stolo-bsmf}, we proved  the following. 
\begin{thm}[\!\!\cite{GS16}] \label{quad-brjuno}
 Let $M$ be a germ of analytic submanifold that is an higher order perturbation of a product quadric $Q$ in $\cc^{2p}$ given by \rea{Qgs2}-\rea{0ge1}. Assume that $M$ is formally equivalent to $Q$.
 Suppose that  each hyperbolic component has an eigenvalue $\mu_h$ which
is either a root of unity or satisfies Brjuno condition, and each complex component has an eigenvalue $\mu_s$ that  is not  a root of unity or  satisfies the Brjuno condition.
Then $M$ is holomorphically equivalent to the product quadric.
\end{thm}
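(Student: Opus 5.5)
The plan is to use the involution framework of this section to turn the rigidity statement into a simultaneous linearization problem for commuting biholomorphisms, and then apply the linearization theorem of \cite{stolo-bsmf}.

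First, because $M$ is a higher order perturbation of the product quadric $Q$, it satisfies Condition B. Since $M$ is formally equivalent to $Q$, the formal map carries the deck transformations of $Q$ to formal deck transformations of $\pi_1$. We then check that the formal deck transformations of $\pi_1$ are in fact convergent. The reason is that they are determined by the algebraic equations of $\cL M$; this is the same argument as in Moser--Webster for the existence of $\tau_1$. So $M$ satisfies Condition D, with involutions $\tau_{1j}$ and $\tau_{2j}=\rho\tau_{1j}\rho$, and with $\sigma_j=\tau_{1j}\tau_{2k_j}$. Formal equivalence with $Q$ also gives a formal map $\Phi$, commuting with $\rho$, that conjugates each $\tau_{ij}$ to its linear part $T_{ij}$. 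In particular the $\sigma_j$ commute and are simultaneously formally linearizable to $S_j$ as in \re{rSxi-mv}.

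By the realization part of the Moser--Webster-type correspondence (the analogue of \rt{thm:MW} for Condition D), $M$ is holomorphically equivalent to $Q$ if and only if there is a convergent $\Phi$ with $\Phi\rho=\rho\Phi$ and $\Phi^{-1}\tau_{1j}\Phi=T_{1j}$ for all $j$. The convergent $\Phi$ we aim for is one that linearizes the abelian family $\{\sigma_1,\ldots,\sigma_p\}$ together with the finite group generated by the $\tau_{1j}$.

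The key step is to apply the linearization theorem of \cite{stolo-bsmf} for commuting families. Its hypothesis is that the family is formally linearizable and that its linear part satisfies a collective Brjuno-type small divisor condition. The plan is to add the involutions to the family by averaging. Once the $\sigma_j$ are holomorphically linearized by some $\Psi$, conjugate the $\tau_{1j}$ by $\Psi$; each normalized $\tau_{1j}$ then commutes with or reverses the linear $S_k$. Averaging over the finite group generated by the $T_{1j}$, and over $\rho$, should give a correction that is still holomorphic and preserves the linear form of the $S_k$.

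For the small divisors, the linear part acts block-diagonally on pairs $(\xi_j,\eta_j)$. The divisors are $\mu^Q-\mu_j$, where each factor $\mu_j$ has $|\mu_j|\neq1$ for elliptic or complex components with $\RE\gamma_s<1/2$. For those components the divisors are of Poincaré type and cause no problem. For hyperbolic components, $|\mu_h|=1$, and we need $\mu_h$ either to satisfy Brjuno or to be a root of unity. In the root-of-unity case, formal linearizability forces $\sigma_h^{q}$ to be the identity formally, hence actually, and a finite-group averaging linearizes it. For complex components with $\RE\gamma_s=1/2$, we use the Brjuno hypothesis on $\mu_s$.

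The main obstacle is that \cite{stolo-bsmf} needs a collective condition on $\max_m|\mu_m^Q-\mu_{mj}|$, while we only have conditions on each factor separately. We must show that the per-component Brjuno, root-of-unity, or $|\mu|\neq1$ conditions imply the collective Brjuno condition for the family $\{S_j\}$. Because $S_j$ acts nontrivially only on the $j$-th block, for a given $(j,Q)$ one may use the single map $S_j$. That reduces the divisor to $\mu_j^{q_j-q_j'}-\mu_j^{\pm1}$, which is a one-variable divisor, so the componentwise hypotheses should suffice.
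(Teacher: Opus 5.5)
Your outline follows the route the paper indicates: via the involutions, rigidity reduces to holomorphic linearization of the formally linearizable commuting family $\{\sigma_1,\dots,\sigma_p\}$ by \cite{stolo-bsmf}, and the collective Brjuno condition follows from the componentwise hypotheses because each $S_m$ acts on only one block. Afterwards the $\tau_{1j}$ and $\rho$ are linearized by averaging, which preserves the $S_k$ because each group element and its linear part both commute with or both reverse every $S_k$. One small correction: for a nonresonant $(j,Q)$ the nonvanishing divisor may come from an index $m\neq j$, giving $\mu_m^{n}-1$ rather than $\mu_j^{n}-\mu_j^{\pm1}$, but it is still a one-variable divisor, so your conclusion stands.
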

Here one says that $\la$ is a Brjuno number if $\la$ is not a root of unity, $|\la|=1$, and
\eq{brjuno-cond}
-\sum_{k=1}^\infty 2^{-k}\log\min_{1\leq j\leq 2^k}|\la^j-1|<\infty.
\eeq

%
%

\subsection{Attached complex submanifolds}
We now study the existence of
 holomorphic  submanifolds attached to the real submanifold $M$. These are complex submanifolds of dimension $p$ intersecting $M$ along two totally real analytic submanifolds that intersect  transversally at a CR singularity.  Attaching complex submanifolds has less constraints than
 finding a convergent normalization.
 A remarkable feature
of   attached complex submanifolds is that their existence depends only on the existence of suitable (convergent)
 invariant submanifolds of $\sigma$.
We now describe  convergent results for attached complex submanifolds.  The
results are for a   general
 $M$, including the one of which the complex tangent might not be of abelian type.

We say that a formal complex submanifold $K$ is  {\it attached} to $M$ if $K\cap M$
 contains at least two germs of  totally real and formal submanifolds $K_1, K_2$
that intersect
 transversally at a given CR singularity. In~\cite{Kl85}, Klingenberg showed that when
$M$ is non-resonant and $p=1$, there is   a unique   formal holomorphic
curve attached to $M$ with a hyperbolic complex tangent. He also proved the convergence of the attached formal holomorphic curve under
a Siegel small divisors
 condition.  When $p>1$, we  showed that generically there is no formal complex submanifold that can be attached
to $M$   if the CR singularity has
an elliptic component.

By adapting Klingenberg's proof for $p=1$
and using a theorem of P\"oschel~\cite{Po86}, the following is proved.
\begin{thm}[{\!\!\cite[Thm.~7.5]{GS16}}] \label{1conv}
Let $M$ be a germ of analytic submanifold that is an higher order perturbation of a product quadric $Q$ in $\cc^{2p}$ without elliptic components.
Assume that the eigenvalues $\mu_1,\ldots, \mu_p$,  $\mu_1^{-1}, \ldots, \mu_p^{-1}$ of $\sigma'(0)$ are distinct. Let $\e_h^2,\e_s^2=1$, $\nu_h:=\mu_h^{\e_h}$, $\nu_s:=\bar\mu_s^{\e_s}$ and $\nu_{s+s_*}:= \bar\nu_s^{-1}$.
Assume $\nu=(\nu_1,\ldots,\nu_p)$ is 
Diophantine in the sense of P\"oschel
~\cite{Po86}.
Then $M$ admits an attached complex submanifold $M_{\e}$. 
\end{thm}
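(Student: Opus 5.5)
The plan is to reduce the existence of an attached complex submanifold to the existence of a convergent invariant submanifold of $\sigma$ (or of the family of involutions) on which the dynamics is linearizable. Then we realize it back in $\cc^{2p}$ through the Moser--Webster type correspondence. We work on the complexification $\cL M$ with coordinates $(\xi,\eta)\in\cc^p\times\cc^p$ in which $T_{ij}$, $S$ and $\rho$ take the linear normal forms \re{sxij-mv}--\re{rRho-mv}. The involutions $\tau_{1j}$ and $\tau_2=\rho\tau_1\rho$ and the map $\sigma=\tau_1\tau_2$ are defined in these coordinates. A complex $p$-submanifold $K\subset\cc^{2p}$ attached to $M$ should correspond to a $p$-dimensional complex submanifold $\cL K\subset\cL M$ with three properties: it is invariant under $\tau_1$ and $\tau_2$ (hence under $\sigma$), it is transversal to the fixed-point sets of $\tau_1,\tau_2$, and it is mapped to itself by $\rho$ in the appropriate twisted sense. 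The two totally real pieces $K_1,K_2$ are then $\pi_1(\cL K\cap\Fix(\rho))$ and its companion, obtained from the two choices of how $\rho$ acts on $\cL K$. The parameters $\e_h,\e_s$ encode which eigendirections are chosen.

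First, we choose the invariant subspace of the linear part $\sigma'(0)$. In each hyperbolic block we pick the eigenvector for $\mu_h^{\e_h}$, i.e.\ $\xi_h$ or $\eta_h$. In each complex block we pick the pair dictated by $\nu_s=\bar\mu_s^{\e_s}$ and $\nu_{s+s_*}=\bar\nu_s^{-1}$, compatibly with how $\rho$ swaps $s$ and $s+s_*$ in \re{rRho-mv}. This gives a $p$-dimensional linear subspace $E$, on which $\sigma'(0)$ acts by $\mathrm{diag}(\nu)$, with complement $E'$. It is chosen so that $\tau_1'(0)$ maps $E$ onto $E$ up to the reversing structure and so that $\rho(E)$ is suitably related to $E$. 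Second, we look for $\cL K$ as a graph $y=u(x)$ over $E$, with $u=O(2)$, such that $\sigma(\cL K)=\cL K$ and $\sigma|_{\cL K}$ is holomorphically conjugate to the linear map $x\mapsto \nu x$. This is exactly the setting of P\"oschel's theorem \cite{Po86} on invariant manifolds through a fixed point on which a map is linearizable, under his Diophantine condition on the restricted eigenvalues $\nu$ relative to the full spectrum. The distinctness of $\mu_1^{\pm1},\dots,\mu_p^{\pm1}$ rules out resonances between $E$ and $E'$ at the linear level, so the formal solution exists and is unique. That is Klingenberg's formal step for $p=1$ \cite{Kl85}, adapted here. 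Third, we use uniqueness to obtain the symmetries: $\tau_1(\cL K)$ and $\rho(\cL K)$ are also invariant manifolds of $\sigma$ (or $\sigma^{-1}$) tangent to the same linear data, so by uniqueness they coincide with $\cL K$. This gives the $\tau_j$-invariance and the reality condition, and projecting yields $K$ and the two transversal totally real pieces $K_1,K_2$.

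The main obstacle is the convergence step, together with checking that P\"oschel's Diophantine hypothesis really controls all the small divisors that arise. The divisors have the form $\nu^Q-\mu_j^{\pm1}$ for $Q\in\nn^p$, $|Q|\ge2$, and there is no elliptic component to supply a hyperbolic direction. I would first write the invariance equation as a functional equation for $u$ and the linearizing map simultaneously. Then I would apply P\"oschel's majorant/Newton scheme directly, and afterwards verify by the uniqueness argument that the symmetry constraints from $\tau_1$ and $\rho$ are automatically preserved, rather than imposing them in the iteration.
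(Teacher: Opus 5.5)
Your analytic core matches the paper's route. You pick the $\rho$-compatible $p$-dimensional eigenspace $E$ of $\sigma'(0)$ determined by $\e$, with eigenvalues $\nu$. P\"oschel's theorem then gives a convergent $\sigma$-invariant germ $\mathcal K$ tangent to $E$, and formal uniqueness gives $\rho(\mathcal K)=\mathcal K$. That last step is legitimate, since $\rho\sigma\rho=\sigma^{-1}$ and $\rho'(0)E=E$ by \eqref{rRho-mv}.

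The gap is in the geometric reduction. You require $\mathcal K$ to be invariant under $\tau_1,\tau_2$, and your third step claims uniqueness gives $\tau_1(\mathcal K)=\mathcal K$. This is false. By \eqref{rTij-mv}, $\tau_1'(0)$ interchanges the $\xi_j$- and $\eta_j$-axes, so $\tau_1'(0)E=E'$, the complementary eigenspace with eigenvalues $\nu^{-1}$. Thus $\tau_1(\mathcal K)$ is the invariant manifold of $\tau_1\sigma\tau_1=\sigma^{-1}$ tangent to $E'$, and no germ tangent to $E$ can be $\tau_1$-invariant. As a result, your second totally real piece (``the two choices of how $\rho$ acts'') is never actually constructed, and transversality is never addressed.

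The missing idea is that the second piece comes from exactly this second sheet $\tau_1(\mathcal K)$.

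\emph{The complex manifold.} Let $\pi_1\colon(z,w)\mapsto z$ on $\mathcal M$, whose deck transformations are the $\tau_{1j}$, and set $K=\pi_1(\mathcal K)$. This is a complex $p$-manifold, because $\ker d\pi_1(0)$ is the $(-1)$-eigenspace of $\tau_1'(0)$, which meets $E$ only at $0$.

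\emph{Reality of the second sheet.} Since $\rho\tau_1=\tau_2\rho$ and $\tau_2=\tau_1\sigma$, one has $\rho(\tau_1\mathcal K)=\tau_2(\mathcal K)=\tau_1\sigma(\mathcal K)=\tau_1(\mathcal K)$. This is where $\sigma$-invariance is really used.

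\emph{The two totally real pieces.} Set $K_1=\pi_1(\mathcal K\cap\Fix\rho)$ and $K_2=\pi_1(\tau_1(\mathcal K)\cap\Fix\rho)$. Both are totally real and lie in $K\cap M$, the latter because $\pi_1\circ\tau_1=\pi_1$. They are transversal at $0$ by the linear model. In a hyperbolic block their tangent lines are $\rr$ and $\lambda_{1h}\rr$, and $\lambda_{1h}\neq\pm1$ because $\mu_h=\lambda_{1h}^2\neq\mu_h^{-1}$. The complex blocks are handled similarly.

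\emph{Non-resonance.} The formal construction (invariant graph plus linearization on it) needs $\nu^Q\neq\nu_j^{\pm1}$ for all $|Q|\ge2$. This comes from P\"oschel's condition ($\omega_\nu(k)>0$), not from the distinctness of the eigenvalues, which only handles $|Q|=1$.
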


Here the  Diophantine condition for $\nu$ in the sense of P\"oschel~\cite{Po86} is defined by 
$$
-\sum 2^{-k}\log\om_{\nu}(k)<\infty,\quad 
\om_{\nu}(k):=\min_{1< |P|<2^k,P\in\nn^p}\min_{1\leq i\leq p}\{|\nu^P-\nu_i|,|\nu^P-\nu_i^{-1}|\}.
$$

Finally, we proved  the convergence of {\it all} attached  formal 
submanifolds: 
\begin{thm}[{\!\!\cite[Thm.~7.6]{GS16}}] \label{allconv}  
Let $M$ be as in \rta{1conv}. 
Suppose that the $2p$ eigenvalues $\mu_1,\dots, \mu_p,\mu_1^{-1},\dots, \mu_p^{-1}$ of $\sigma$ are non-resonant.
 If 
  the eigenvalues of $\sigma$ satisfy a Bruno type condition or $M$ has pure complex CR singularity,
 all attached formal submanifolds are convergent.
\end{thm}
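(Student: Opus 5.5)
We reduce the statement to invariant manifolds of $\sigma=\tau_1\tau_2$ on the complexification $\mathcal M$, as in the proof of \rt{1conv}. A formal complex $p$-submanifold $K$ attached to $M$ meets $M$ in two totally real formal submanifolds $K_1,K_2$ crossing transversally at $0$. Complexifying $K_1$ and $K_2$ gives two formal $p$-dimensional complex submanifolds $\mathcal K_1,\mathcal K_2$ of $\mathcal M$. Each $\mathcal K_i$ is invariant under $\tau_1$, $\tau_2$ and $\rho$, hence under $\sigma$, and the two are interchanged or fixed according to the sign pattern $\e$. Conversely, a formal $\sigma$-invariant $p$-manifold with this symmetry produces an attached submanifold through the invariant functions of $\tau_2$, exactly as in the realization part of \rt{thm:MW}. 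Moreover, convergence of $\mathcal K_i$ is equivalent to convergence of $K$.

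So it suffices to prove that every formal $\sigma$-invariant $p$-manifold tangent at $0$ to a sum of eigenspaces of $\sigma'(0)$ is convergent. Non-resonance of $\mu_1^{\pm1},\dots,\mu_p^{\pm1}$ forces such a tangent space to be the span $E_\nu$ of one eigenvector from each pair $\{\mu_j,\mu_j^{-1}\}$, and the eigenvalue on $E_\nu$ is $\nu$. We write the invariant manifold as a graph $\eta=u(\xi)$ over $E_\nu$, with $\sigma(\xi,\eta)=(\Lambda_\nu\xi+f(\xi,\eta),\Lambda_\nu'\eta+g(\xi,\eta))$.

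Invariance gives the functional equation
$$u(\Lambda_\nu\xi+f(\xi,u(\xi)))=\Lambda_\nu' u(\xi)+g(\xi,u(\xi)).$$
At degree $Q$, the coefficient of $\xi^Q$ in $u_i$ is determined by dividing by $\nu^Q-\nu_i^{-1}$. Non-resonance makes these divisors nonzero, which gives formal existence and uniqueness. Uniqueness also shows that every attached formal submanifold arises this way.

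For convergence, we use a majorant argument in the style of P\"oschel~\cite{Po86} and Brjuno. We bound $\|u_Q\|\le\eta_{|Q|}\,\delta_{|Q|}^{-1}$, where $\delta$ collects the products of small divisors along the nested decompositions of the recursion. We then show $\sup_k(\eta_k\delta_k^{-1})^{1/k}<\infty$ using the Bruno-type summability $-\sum2^{-k}\log\om(k)<\infty$, counting through a Brjuno–Siegel lemma how many small divisors below $\om(2^{j})$ can occur in a chain.

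\textbf{Main obstacle.} The hard step is this small-divisor counting, since the divisors are $\nu^Q-\nu_i^{-1}$ rather than $\nu^Q-\nu_i$. For the hyperbolic components, $|\mu_h|=1$, so these divisors really are small, and we expect to need the full Bruno-type hypothesis. In the pure complex case, the reality condition from $\rho$ forces $|\nu_s|\neq1$ after the choice of $\e$, and for the pairs $\nu_s,\nu_{s+s_*}=\bar\nu_s^{-1}$ the divisors $|\nu^Q-\nu_i^{\pm1}|$ stay bounded below. We would try to show they are of Poincar\'e type in the sense of Definition~\ref{small divisors}, giving convergence by a plain Cauchy majorant with no arithmetic condition. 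This is consistent with the dichotomy in the statement.
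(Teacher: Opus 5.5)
Your reduction is sound in outline. An attached formal submanifold gives a $\rho$-invariant, $\sigma$-invariant formal $p$-manifold $\mathcal K_1$ tangent to some $E_\nu$, and non-resonance makes it unique. Under a P\"oschel-type condition, your majorant scheme amounts to P\"oschel's theorem (i.e.\ \rt{1conv}) plus formal uniqueness. The paper takes a different route: it derives the statement from the result of \cite{stolo-bsmf} on linearization on ideals, under the Brjuno-type condition (7.38) of \cite{GS16}.

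Two inaccuracies to fix. First, $\tau_1,\tau_2$ do not preserve $\mathcal K_1$: since $\tau_1'(0)$ swaps the $\xi_j$ and $\eta_j$ directions, both carry $\mathcal K_1$ onto $\mathcal K_2=\tau_1(\mathcal K_1)=\tau_2(\mathcal K_1)$, and that is exactly why $\sigma(\mathcal K_1)=\mathcal K_1$. Second, the shape of $T_0\mathcal K_1$ comes from the transversality of $K_1,K_2$ and from $\rho$-invariance, not from non-resonance.

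The genuine gap is the pure complex case. By \rt{1conv}, $\nu_{s+s_*}=\bar\nu_s^{-1}$. Writing $\nu_s=re^{i\theta}$ with $r\neq1$, you get $\nu_{s+s_*}=r^{-1}e^{i\theta}$. So $\nu_s\nu_{s+s_*}=e^{2i\theta}$ lies on the unit circle, and non-resonance forces $\theta/\pi$ to be irrational. For $Q=(a,a+1)$ in the variables $(\xi_s,\xi_{s+s_*})$ you get
\[
\nu^Q-\nu_s^{-1}=r^{-1}e^{-i\theta}\bigl(e^{2i(a+1)\theta}-1\bigr),
\]
and these values accumulate at $0$. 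So the divisors are not bounded below.

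Nor is the single map $\sigma$ of Poincar\'e type in the sense of Definition~\ref{small divisors}. Every $Q'$ with $\mu^{Q'}=\mu^Q$ gives a value of modulus $r^{-1}$, which cannot exceed $c^{-1}d^{|Q'|}$ once $|Q'|$ is large. Meanwhile non-resonance forces $|Q'|\ge|Q|$, because $Q$ has no $\eta$-components. That definition is designed for a commuting family such as $\{\sigma_1,\dots,\sigma_p\}$, each member having a single pair $\mu_j^{\pm1}$, and for such a family Poincar\'e type is equivalent to $|\mu_j|\neq1$.

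Consequently a Cauchy majorant for $\sigma$ alone would require an arithmetic condition on $\theta$, and it cannot give the pure complex clause. You need more structure than one map. For example, $\mathcal K_1$ could be invariant under a family whose joint divisors are of Poincar\'e type; if the $\sigma_j$ commute with $\sigma$, formal uniqueness gives this invariance for free. That would have to be combined with a convergence result for that setting, such as the one from \cite{stolo-bsmf} that the paper relies on.
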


For the Brjuno type condition in \rt{allconv}, see ~\cite[(7.38)]{GS16}, which is condition  introduced in~\cite{stolo-bsmf} for linearization on ideals; the proof of \rt{allconv} also uses a result in~\cite{stolo-bsmf}. 


%
%
%

\subsection{Existence of divergent Poincar\'e-Dulac type normal forms}
Recall that the Moser-Webster formal normal form for the surface case is always convergent.
The $\Lambda_{11}, \dots,$ $ \Lambda_{1p}$ in \rt{iabelm} are subjected to further normalization. 
In~\cite{part2}, we find a  unique holomorphic normal form by refining the above normalization for $M$ satisfying a non resonance condition and a third order non-degeneracy condition (see \cite[Theorem 5.6]{part2}); in particular, it shows the existence of infinitely many formal invariants and non-product structures of the manifolds when $p>1$. For $p=1$ and $n>2$, the Moser-Webster normal form is also subject to further normalization. Under a third-order non-degenerate condition, the normal form is algebraic given by $x_n=z_1\bar z_2+(\gamma+x_2+\delta x_n^s)(z_1^2+\ov z_1^2)$ with $y_2=\cdots=y_{n}=0$.

The following result shows that not only a transformation that realizes a normal form can diverge, a normal form itself can diverge.
\begin{thm}[\!\!\cite{part2}] \label{divsig} There exists a non-resonant real analytic submanifold $M$ with
pure elliptic complex tangent in $\cc^6$ 
such that if its corresponding $\sigma$  is transformed into a map $\sigma^*$ 
which commutes with the linear part of $\sigma$ at the origin,
then $\sigma^*$ must diverge. 
\end{thm}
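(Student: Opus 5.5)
We build the example so that its associated map $\sigma$ sits in the simplest possible setting for divergence. We take $p=3$, so $M\subset\cc^6$, with three elliptic components ($e_*=3$, $h_*=s_*=0$). By the Moser--Webster-type realization, extended to condition D as described above, it is enough to construct the triple of involution families $\{\tau_{11},\tau_{12},\tau_{13},\rho\}$ with linear parts $T_{1j}$ and $\rho$ as in \rea{rTij-mv}--\rea{rRho-mv}. The $\tau_{1j}$ must commute, and each must fix a hypersurface. We then realize them by a real analytic $M$ via the invariant functions of $\tau_2=\rho\tau_1\rho$, exactly as in the proof sketch of \rt{thm:MW}.

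The eigenvalues are fixed first. We take $\la_1,\la_2,\la_3>1$, so that $\mu_j=\la_j^2$. We choose them non-resonant: the products $\mu^Q$ never equal $\mu_j^{\pm1}$ except trivially. We also arrange that combinations such as $\mu_1^{a}\mu_2^{-b}\mu_3^{c}$ with mixed signs accumulate at $1$ extremely fast along a chosen sequence of multi-indices $Q_k$, a Liouville-type condition. This is possible even though every $|\mu_j|>1$, because the exponents of mixed sign allow $|\mu^{Q}-\mu_j|$ to be tiny. The family $\{S_1,S_2,S_3\}$ is then still of Poincar\'e type, so the integrable part converges by \rt{lFba}. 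Divergence must therefore come from the non-abelian part: we deliberately perturb so that $\sigma_1,\sigma_2,\sigma_3$ do \emph{not} commute, and $\sigma=\tau_1\tau_2$ is a single map with linear part $S=\mathrm{diag}(\mu_j,\mu_j^{-1})$.

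Next we analyze the normal form. Suppose $\Phi^{-1}\sigma\Phi=\sigma^*$ commutes with $S$. Then $\sigma^*$ contains only resonant monomials $\xi^P\eta^Q$ with $\mu^{P-Q}=\mu_j^{\pm1}$. Under non-resonance these are generated by the invariants $\xi_j\eta_j$, but in dimension $6$ with three independent $\mu_j$ the resonances are exactly these. The crucial point is that $\sigma^*$ is not unique: the centralizer of $S$ acts on it. We must show that \emph{every} such $\sigma^*$ diverges, not merely one normalizing map. The plan is to use the reversibility constraints $\tau_1\sigma^*\tau_1=\sigma^{*-1}$ together with $\rho$. These pin down a canonical invariant, for instance the coefficients of $\xi_j\eta_j$-dependence in the multiplier $\mu_j(\xi\eta)$ of $\sigma^*$. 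We then show that certain coefficients of it are independent of the choice of normalization.

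These invariant coefficients are then computed by the homological equations. At degree $|Q_k|$, the coefficient of the normal form picks up a term of the form $c_k/(\mu^{Q_k}-\mu_j)$, coming from a second-order interaction of two non-resonant terms whose product lands on a resonant monomial. Here the $c_k$ are chosen coefficients of the perturbation of $\tau_{11}$ of size roughly $r^{|Q_k|}$. Choosing the small divisors so that $|\mu^{Q_k}-\mu_j|<e^{-|Q_k|^2}$ makes these invariant coefficients grow superexponentially, so $\sigma^*$ diverges.

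The main obstacle is two-fold. First, we must check that the second-order contribution is genuinely an invariant, not removable by the normalization freedom in the centralizer, and is not cancelled by other terms. We would handle this by taking the perturbation sparse: only the specific monomials $\xi^{Q_k^+}$ and $\eta^{Q_k^-}$ appear in $\tau_{11}$, so that at each level only one interaction produces the resonant term. Second, the perturbed $\tau_{11}$ must remain an involution fixing a hypersurface, and must commute with $\tau_{12},\tau_{13}$. We ensure this by conjugating the linear $T_{11}$ by a sparse holomorphic map $\psi$ that commutes with $T_{12},T_{13}$, namely by setting $\tau_{11}=\psi T_{11}\psi^{-1}$ and $\tau_{1j}=T_{1j}$ for $j>1$.
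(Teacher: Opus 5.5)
You have the right kind of mechanism: Liouville-type near-resonances among the $\mu_j^{\pm1}$ (possible even though all $\mu_j>1$), fed into the normal form by a second-order interaction, in a non-abelian example. But the step you defer as ``the main obstacle'' is the substance of the theorem, and the invariant you propose is not invariant.

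Reversibility must be applied to $\tau_1^*=\Phi^{-1}\tau_1\Phi$, not to $\tau_1$. By uniqueness of the formal Jordan decomposition, $\tau_1^*$ conjugates $S$ (the semisimple part of $\sigma^*$) to $S^{-1}=T_1ST_1$. Hence $\tau_1^*=T_1\circ\chi$ with $\chi$ in the centralizer of $S$. By non-resonance, this centralizer consists of the maps $(\xi_jc_j(\zeta),\eta_jd_j(\zeta))_j$ with $\zeta_j=\xi_j\eta_j$. So $\tau_1^*$ induces on $\zeta$-space an involution tangent to the identity, which is therefore the identity; the same holds for $\tau_2^*=\tau_1^*\sigma^*$. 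It follows that every Poincar\'e--Dulac normal form is $\sigma^*=(a(\zeta)\xi,a(\zeta)^{-1}\eta)$, and any two are conjugate by an element of the centralizer of $S$. The centralizer acts by $a\mapsto a\circ\phi$, where $\phi(\zeta)=(\zeta_jc_j(\zeta)d_j(\zeta))_j$ is an \emph{arbitrary} formal map preserving the coordinate hyperplanes. So no Taylor coefficient of $a$ is invariant as such. In particular, the dependence of $a_j$ on $\zeta_j$, which is what you propose to use, is exactly what $\phi_j$ can remove (compare $\Gamma_2=\gamma+\delta x_2^s$ in Theorem~\ref{NFofM} for $p=1$). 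Making the perturbation sparse does not help, because $\phi$ ranges over all formal, possibly divergent, maps.

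What is missing is the correct invariant and a construction aimed at it. Under a third-order non-degeneracy such as $\det Da(0)\neq0$, the orbit $\{a\circ\phi\}$ is determined by the three formal hypersurfaces $a(\{\zeta_k=0\})\subset\cc^3$. Some $\sigma^*$ converges if and only if all three are convergent: if they are, pick a convergent $\tilde a$ mapping $\{\zeta_k=0\}$ onto them, and then $\tilde a=a\circ\phi$ with $\phi=a^{-1}\circ\tilde a$ admissible. These hypersurfaces only see $a$ on the coordinate hyperplanes, so a large coefficient of $\xi_j\zeta^m$ with all $m_k\ge1$ is, by itself, invisible to them. That is precisely what your combination $\mu_1^a\mu_2^{-b}\mu_3^c$ produces. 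The two interacting terms have exponent differences $\pm P$ with every $|P_k|$ large, so the resonant monomial they create contains large powers of every $\zeta_k$. (It also sits in degree about $2|Q_k|$, not $|Q_k|$.)

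You need near-resonances that avoid one index, e.g.\ $\mu_2^b\mu_3^{-c}\to1$, so as to blow up the coefficient of $\xi_1\zeta_2^b\zeta_3^c$, i.e.\ $a(0,\zeta_2,\zeta_3)$. This is also why three eigenvalues are needed for this mechanism: with two, an accumulating near-resonance forces both exponents to be large. You must then show that lower-degree contributions to these invariant coefficients do not cancel the leading term.

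Note also that a $\psi$ commuting with $T_{12},T_{13}$ cannot carry isolated monomials $\xi^{Q_k^+}$. Its $\xi_1,\eta_1$-components are functions of $\xi_1,\eta_1$ and of invariants such as $\xi_2+\lambda_2\eta_2$, $\xi_2\eta_2$, $\xi_3+\lambda_3\eta_3$, $\xi_3\eta_3$. This conveniently supplies both partner terms $\pm P$, but your sparse bookkeeping has to be redone accordingly.
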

A transformation $\sigma^*$ that commutes with its linear part is called a Poincar\'e-Dulac normal form.
Recall that the existence of divergent Birkhoff normal form for real analytic Hamiltonian systems was proved  in~\cite{Gon12} by using small-divisors, which answers a question raised by Eliasson~\cite{El90}.  Prior to the work~\ci{Gon12}, P\'erez-Marco~\ci{PM03} proved that {\it if} there exists one divergent Birkhoff normal form, then divergent Birkhoff normal forms exist generically for the real analytic Hamiltonian systems that have the same the leading quadratic part. Later using the small-divisors, Yin~\ci{Yi15} also showed the existence of divergent Birkhoff normal form for real analytic area-preserving mapping with an elliptic fixed point. Recently,  the existence of divergent Birkhoff normal forms for Hamiltonian systems and area-preserving at the absence of small-divisors has been solved affirmatively; Krikorian~\ci{Kr22}  settled the case for the real analytic symplectic mappings and Fayad~\ci{Fa23} settled the case for the real analytic Hamiltonian systems. The reader is referred to ~\cite{PM03, Gon12, Yi15, Kr22, Fa23} for references.
The proof of \rt{divsig} also relies on the small divisors generated by the eigenvalues $\mu_1,\mu_2,\mu_3,\mu_1^{-1},\mu_2^{-1},\mu_3^{-1}$. Although all $|\mu_j|>1$, $\mu_1^a\mu_2^b\mu_3^c-1$ can still tend to $1$ rapidly. \rt{divsig} has a new feature that cannot appear in Birkhoff normal forms. 
It is well-known that Birkhoff normal forms for the non-resonant Hamiltonian systems and symplectic mappings are unique up to a perturbation of coordinates. However, the Poincar\'e-Dulac normal forms of $\sigma$  are still subject to further normalization under a large group of coordinate changes; in fact it is easy to produce a divergent Poincar\'e-Dulac normal form because of the freedom of changing coordinates. The new feature of \rt{divsig} is the divergence of all Poincar\'e-Dulac normal forms of a single $\sigma$.

\section{Some open problems}\label{sect:openprob}
\setcounter{thm}{0}\setcounter{equation}{0}

We have surveyed some results on normal forms related to CR singularity. They are of course many important topics which  we have not discussed. Among the results we stated, it is clear that some of these results give arise to new questions. We conclude this survey with a few open problems. 

\subsection{Convergence of normal forms for real analytic surfaces with $\gamma=0$}
\begin{problem}Let $M$ be a real analytic surface in $\cc^2$ with $\gamma=0$. Determine if the Huang-Yin  normal form for $M$ \rea{HYnf} converges. Determine if the normal form \rea{vol-nf} for $M$ under formal under volume-preserving maps converges.
\end{problem}
As \rt{divsig} relies on small-divisors, we have the following.
\begin{problem}Let $\mu_1,\cdots,\mu_p$ be complex numbers. Assume that $\mu=(\mu_1,\dots,\mu_p)$ are non-resonant, i.e. $\mu^P\neq1$ for $P\in\zz^p$ unless $P=0$.  Determine if there exists a real analytic $(2p)$-submanifold with $p>1$ in $\cc^{2p}$ that admits maximum number of deck transformations such that the eigenvalues of $\sigma$ for $M$ are $\mu_1,\dots,\mu_p,\mu_1^{-1},\dots,\mu_p^{-1}$ and all Poincaré-Dulac normal forms of $\sigma$ are divergent.
\end{problem}
\subsection{Optimal condition on equivalence to Bishop quadric $Q_\gamma$ with $\gamma>1/2$}
\begin{conj}Let $\mu$ be a complex number  that is not a root of unity and $|\mu|=1$. If $\mu$  is not a Brjuno number defined by \re{brjuno-cond},
there exists a real analytic $M$ in $\cc^2$ that is formally but not holomorphically equivalent to $Q_{\gamma}$ with Moser-Webster invariant $\mu$. 
 \end{conj}
\rt{quad-brjuno} says that the Brjuno condition and formal equivalence to the quadric are sufficient for the holomorphic equivalence to the quadric. It is a well-known theorem of Yoccoz~\ci{Yo95} that the Brjuno condition on $\la$ is necessary for the holomorphic linearization of the one dimensional map $z\to \la z+z^2$. 
The sufficiency of the Brjuno condition was established by Brjuno~\ci{BR71}. To reader is referred to~\ci{Gon04} for a connection between a special family of the one-dimensional maps and hyperbolic surfaces that are formally equivalent to the quadric.

\subsection{Classification of $2p$-submanifolds in $\cc^{2p}$}

Consider  germs of $2p$-dimensional real analytic submanifolds $M$ in  $(\cc^{2p},0)$ of the form
\eq{mzpjintr+}
M\colon z_{p+j}=E_j(z',\ov z'),
\quad 1\leq j\leq p,
\eeq
where $z'=(z_1,\ldots, z_p)$ and
\eq{callq}
E_j(z',\ov z')=h_j(z',\ov z')+q_j(\ov z') + O(|(z',\ov z')|^3), \quad  
j=1,\dots, p.
\eeq
Moreover,    $h_j(z',\ov z')$ are homogeneous quadratic polynomials in $z',\ov z'$ without holomorphic or anti-holomorphic terms,
and  $q_j(\ov z')$ are homogeneous quadratic polynomials in $\ov z'$. 
As a general motivation, we would like to pose the following~:
\begin{problem}For $p>1$, classify all real analytic $(2p)$-submanifolds of $\cc^{2p}$ of the form~\rea{mzpjintr+}-\rea{callq} that satisfy condition B, i.e. the   mapping $q=(q_1,\dots, q_p)$ in \rea{callq}  satisfies $q^{-1}(0)=\{0\}$.
\end{problem}
Some more  restricted versions are still very challenging~:
	\begin{problem}
		For $p>1$, let $M\subset \cc^{2p}$ be a    real analytic $(2p)$-submanifolds of the form~\rea{mzpjintr+}-\rea{callq} that satisfies condition D, i.e there is a maximum number of deck transformations. Assume further that the quadratic parts of $M$ is a {\it product of hyperbolic quadrics}, i.e.
$$
h_j(z',\bar z')+q_j(\bar z')=z_j\bar z_j+\gamma_j(z_j^2+\bar z_j)^2, \quad 1\leq j\leq p,\quad  1/2<\gamma_j<\infty.
$$
Find conditions so that a KAM phenomena holds.
	\end{problem}
	\begin{problem}
		For $p>1$, classify real analytic $(2p)$-submanifolds $M$ of $\cc^{2p}$ that satisfy condition D and that are higher-order perturbations of a product of quadrics of different types, e.g. one elliptic and one hyperbolic or parabolic.
	\end{problem}
	\begin{problem}
		For $p>1$, consider a real analytic $(2p)$-submanifold $M$ of $\cc^{2p}$ which is a higher-order perturbation of a {\it non-product} of quadrics of the form
$$
Q_{h,q}\colon z_{p+j}= h_j(z',\ov z')+q_j(\ov z'),\quad 1\leq j\leq p.
$$
 Assume that $M$ is formally equivalent to $Q_{h,q}$. 
 Determine if it is holomorphically equivalent to the $Q_{h,q}$.
\end{problem}

\subsection{Classification of Levi-flat submanifolds with CR singularity}
Gong-Lebl showed the following.
\begin{thm}[\!\!\cite{GL15}]
 Let $M\subset\cc^{n+1}$ with $n\geq 2$ be a germ of real-analytic real codimension-two submanifold with CR singularity at the origin, given by
 $$
 w=A(z,\ov z)+B(\bar z,\bar z)+O(3)
 $$
 for homogeneous quadratic polynomials $A$ and $B$ with $A$ or $B$ not identically zero. Suppose that $M$ is Levi-flat away from its CR singular set.  Then the quadric defined by $w=A(z,\bar z)+B(\bar z,\bar z)$ is also Levi-flat. Further the quadric is holomorphically equivalent to exactly one of the following Levi-flat quadrics
 \bpp
 \item $A.q\colon w=\bar z_1^2+\cdots+\bar z_k^2$ with $1\leq k\leq n$.
 \item $B.\gamma\colon w=|z_1|^2+\gamma \bar z_1^2$ with $0<\gamma<\infty$.
 \item $C.0\colon w=\bar z_1z_2$, or $C.1\colon w=\bar z_1z_2+\bar z_1^2$.
 \epp
 \end{thm}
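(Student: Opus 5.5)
Write $M$ near $0$ as $w=F(z,\bar z)$ with $F=A+B+O(3)$, where $A$ is the mixed (hermitian-type) quadratic part and $B(\bar z,\bar z)$ the antiholomorphic part. The plan has two steps: first show the quadric $M_0\colon w=A(z,\bar z)+B(\bar z,\bar z)$ is Levi-flat, then classify Levi-flat quadrics of this shape under the linear maps $z\mapsto Pz$, $w\mapsto \mu w$. For such maps $A\mapsto \mu^{-1}A(P\cdot,\ov{P}\cdot)$ and $B\mapsto \mu^{-1}B(\ov P\cdot,\ov P\cdot)$, as in \eqref{00111}.

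\textbf{Step 1: Levi-flatness passes to the quadric.} At a CR point, the CR dimension of $M$ is $n-1$, and the complex tangent space is the kernel of $\bar\partial_z F$ acting on $T^{(1,0)}$-vectors $X$ with $dw(X)=X F$. Levi-flatness is equivalent to the vanishing of the Levi form on $T^{(1,0)}M$. I would express this as a real-analytic identity in $(z,\bar z)$: a determinantal expression in the first and second derivatives $F_{\bar z}, F_{z\bar z}, F_{\bar z\bar z}$, together with $w$-derivatives through the complexified defining function, vanishes on the CR part of $M$. Since $M_{CRs}$ is a proper real-analytic subset, the identity holds on a dense set, hence identically.

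Next apply the dilation $z\mapsto tz$, $w\mapsto t^2w$, with $t\to 0$. This family $M_t$ consists of biholomorphic images of $M$ and converges to $M_0$. The lowest-order homogeneous part of the identity is exactly the Levi-flatness identity for $M_0$. One must check that this leading part is not vacuous. Since $A$ or $B$ is nonzero, $\bar\partial F$ has nonzero linear part, so the CR-point set of $M_0$ is dense and the leading term is the genuine condition. I expect this step to be technically delicate, because the Levi-form identity must be normalized so that its degree bookkeeping is uniform.

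\textbf{Step 2: classify the Levi-flat quadrics.} Now $F=A+B$ with $\bar\partial F$ linear in $(z,\bar z)$, namely $\bar\partial_{\bar z_j}F=\sum_k a_{kj}z_k+2\sum_k b_{jk}\bar z_k$. Levi-flatness then means that the quadratic part $\partial\bar\partial F$ vanishes on $\ker \bar\partial F$ at every CR point. In coordinates: $\sum_{j,k} a_{kj}X_k\bar Y_j=0$ whenever $\sum_j \bar Y_j(\mathcal A z+2\mathcal B\bar z)_j=0$, for all $z$, where $X,Y$ range over the complex tangent.

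I would split into cases by the rank of the hermitian part $\mathcal A$ (the matrix of $A$).

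Case $\mathcal A=0$. Then $w=B(\bar z,\bar z)$, which is always Levi-flat. The only invariant of the complex quadratic form $B$ under $\bar z\mapsto \ov P\bar z$ is its rank $k$, giving $A.q$ after scaling $\mu$.

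Case $\mathcal A\neq 0$. The kernel condition forces the image of $\mathcal A$ to be small. I would show $\operatorname{rank}\mathcal A=1$ by considering generic $z$. The complex tangent is a hyperplane in $z$-space, and $\mathcal A$ must vanish on it paired with itself. A rank-2 hermitian-type form cannot do so for all hyperplanes of this family, which gives a contradiction after computing with two independent $z$'s.

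With $\mathcal A=\ell_1\bar\ell_2$ of rank one, two subcases arise according to the relative position of $\ell_1$ and $\ell_2$.

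Subcase $\ell_1$ and $\ell_2$ proportional. Normalize to $|z_1|^2$. Levi-flatness then forces $B$ to depend on $\bar z_1$ alone. Takagi normalization of the $2\times 2$ real block, combined with the $\mu$-scaling, gives $B.\gamma$, with $\gamma=|b|$ after rotation. Here $\gamma>0$: the case $\gamma=0$ would be $w=|z_1|^2$, which would have to be ruled out or absorbed, and I would double-check this against the given list.

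Subcase $\ell_1$ and $\ell_2$ independent. Normalize to $\bar z_1z_2$. The kernel condition then forces $B=c\bar z_1^2$. Scaling gives $C.0$ when $c=0$ and $C.1$ when $c\neq 0$.

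\textbf{Step 3: inequivalence.} The normal forms are pairwise inequivalent by invariants: the rank of $\mathcal A$, whether $\mathcal A$ is hermitian-proportional or not, the rank of $B$, and the Bishop invariant $\gamma$.

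The main obstacle is Step 2's exclusion of rank $\geq 2$ for $\mathcal A$, together with pinning down which $B$ are compatible. This requires carefully using that the identity holds for all CR points $z$, not just one.
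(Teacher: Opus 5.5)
The survey quotes this theorem from \cite{GL15} without giving a proof, so I am judging your outline on its own merits. Step 1 is sound, and it is simpler than you fear. Set $F_t(z)=t^{-2}F(tz)$. Each $M_t$ is a dilated copy of $M$, hence Levi-flat, and $F_t\to F_0=A+B$ in $C^2$ on compacta. The CR condition $\partial_{\bar z}F_0(z)\neq0$ is open. The Levi form at a CR point is just $F_{z\bar z}$ restricted to $K(z)=\{X\colon \sum_j X_j\overline{F_{\bar z_j}}=0\}$, so it depends continuously on the $2$-jet. No $F_{\bar z\bar z}$ or $w$-derivatives enter, and vanishing passes to the limit. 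Your case $\mathcal A=0$, your two rank-one subcases, and your inequivalence invariants are all correct.

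The gap is the step you flag as the main obstacle: excluding $\operatorname{rank}\mathcal A=2$. The heuristic you give for it does not work as stated. A rank-two sesquilinear form can vanish on $H\times H$ for a whole real family of hyperplanes; for example $X_1\bar Y_2-X_2\bar Y_1$ on $\cc^2$ vanishes on every line spanned by a real vector. The Levi form at a single point gives only $\operatorname{rank}\mathcal A\le 2$. What excludes rank two is a coupling: the same matrix $\mathcal A$ also governs the antilinear part of the normal covector, and your argument has to use this.

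Concretely, write $F_0=z^t\mathcal A\bar z+\bar z^t\mathcal B\bar z$ with $\mathcal B$ symmetric, and set $v(z)=\overline{\partial_{\bar z}F_0}=\bar{\mathcal A}^t\bar z+2\bar{\mathcal B}z$. Then $K(z)=\{X\colon X^tv(z)=0\}$, and Levi-flatness says exactly $\mathcal A\,\overline{K(z)}\subseteq\cc\, v(z)$.

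\begin{itemize}
\item If $\operatorname{rank}\mathcal A=2$, then $\mathcal A\,\overline{K(z)}\neq0$, which forces $v(z)\in\operatorname{Im}\mathcal A$ for all $z$.
\item Comparing $v(z)$ with $v(iz)$ separates the antilinear and linear parts. This gives $\operatorname{Im}\bar{\mathcal A}^t=\operatorname{Im}\mathcal A$ and $\operatorname{Im}\bar{\mathcal B}\subseteq\operatorname{Im}\mathcal A$.
\item Hence a unitary change of $z$ reduces the problem to $n=2$ with $\mathcal A$ invertible.
\item In that case $K(z)=\cc\,Jv(z)$ with $J=\left(\begin{smallmatrix}0&1\\-1&0\end{smallmatrix}\right)$. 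Levi-flatness becomes the polynomial identity $v^tJ^t\mathcal AJ\,\bar v\equiv0$, where $J^t\mathcal AJ=(\det\mathcal A)(\mathcal A^{-1})^t$.
\item The part of this identity of bidegree $(2,0)$ in $(z,\bar z)$ is $2\det\mathcal A\; z^t\bar{\mathcal B}z$, so $\mathcal B=0$.
\item The part of bidegree $(1,1)$ is then $\det\mathcal A\;\bar z^t\bar{\mathcal A}z$, so $\mathcal A=0$, a contradiction.
\end{itemize}

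Without an argument of this kind, the central step of your classification is unproved.

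Second, do not try to rule out $\gamma=0$. In the subcase $A=|z_1|^2$ you obtain $B=b\bar z_1^2$ for arbitrary $b$, hence $\gamma=|b|\ge 0$. The quadric $w=|z_1|^2$ is a Bishop quadric times $\cc^{n-1}$. It is Levi-flat, its CR singular set is $\{z_1=0\}$, and by your own invariants it is inequivalent to every other form on the list. Indeed $\mathcal A\neq0$ is a multiple of a rank-one hermitian matrix, and $\gamma$ is invariant. So the list must read $0\le\gamma<\infty$; the printed range $0<\gamma<\infty$ omits a genuine case, and you should say so rather than leave it open.
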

 There are some partial results~\cite{GL15} for normalization  for higher order perturbations of the quadrics. However, both the formal and holomorphic classifications are largely open.
 
 \subsection{Hull of holomorphy for maximum CR singularity}
 One of very first questions asked by Bishop is the hull of holomorphy of smooth but not necessarily real analytic  $n$-submanifolds in $\cc^n$. This question has been studied extensively from local and global points of view. 
 
 To be clear, let us define the following.
 \begin{defn}\label{hull-defn}Let $K$ be a compact subset of $\cc^n$. $Hull(K)$ is the intersection of all pseudoconvex domains in $\cc^n$ that contain $K$. The polynomial hull $P(K)$ of $K$ is the set of points $z\in\cc^n$ such that $|p(z)|\leq\sup_{w\in K}|p(w)|$ for all holomorphic polynomials $p$. 
 \end{defn}
 For a submanifold $M$ in $\cc^n$, 
 we can also seek local hull of holomorphy of $M$ or local polynomial hull of $M$ by treating $M$ as a germ at a point, in which case we ask for a description of $Hull(M_t)$ or $ P(M_t)$ for a choice of open subsets $M_t$  of  $M$, where $t$ belongs to some parameter $T$ and $\cap_{t\in T} M_t=\{x\}$. We then say that $M$ has non trivial local hull of holomorphy if $Hull(M')\setminus M'$ is non-empty for any neighborhood $M'$ of $x$ in $M$.

 Bishop showed that a smooth real surface in $\cc^2$ has non-trivial local hull of holomorphy at an elliptic complex tangent. We have also discussed its further development in  Sections 2, 8 and 10.   Forstneri\v{c} and Stout~\ci{FS91} proved that near a hyperbolic complex tangent a real surface in $\cc^2$ is polynomial convex. The situation with parabolic complex tangent is more complicated. When $M$ defined by $z_2=f(z_1)$  has an isolated CR singularity at the origin one can define the index of the complex tangent to be the winding number of $\partial f(z_1)/{\partial \bar z_1}$ at $z_1=0$. Wiegerinck~\ci{Wi95} proved that $M$ is never locally polynomial convex at a parabolic complex tangent with index $1$; for index $-1$, J\"oricke~\ci{Jo97} showed that $M$ is always locally polynomial convex. For the index $0$ case, she  showed  that an $C^2$ surface $M$ is either locally polynomial convex or  $K\cap\overline{(\hat K\setminus K)}$ has the structure of an ``onion", and both cases occur.
 
 Our result \nrc{andiscver} on local hull of holomorphy filled by a family of polydiscs relies heavily on the convergence of the normal form of  abelian CR singularity with elliptic type.  Therefore, the theory of hull of holomorphy remains to be developed.
 Here we single out a simple but unsettled case.
 \begin{problem}Let $M\subset\cc^4$ be a $4$-submanifold defined by
 	$$
 	z_3=z_1\ov z_1+\gamma_1(z_1^2+\ov z_1^2)+R_1(z_1,z_2), \  z_4=z_2\ov z_2+\gamma_2(z_2^2+\ov z_2^2)+R_2(z_1,z_2),\  0\leq\gamma_i<1/2,
 	$$
 	where $R_i(z_1,z_2)=O(3)$ are $C^\infty$ or $C^\om$ complex varied functions. Determine if $M$ has a non-trivial hull of holomorphy at the origin.
 \end{problem}
 Recently, Modal~\ci[Thm.~1.8]{Mo23} showed the following: Let
 $$
 \Gamma\subset\cc^{2p}: w_{p+j}=\bar z_j^{m_j}+R_j(z), \quad 1\leq j\leq p, \quad z\in\ov D,
 $$
 where $m_j$ are positive integers,  $D$ is a polydisk in $\cc^p$,  $R_j$ are functions on an open set $\Omega$ containing $\ov D$, and
 $$
 \sum_{j=1}^p|R_j(\tilde z)-R_j(z)|^2\leq c\sum_{j=1}^p|\tilde z_j^{m_j}-z_j^{m_j}|^2 \quad c\in[0,1)
 $$
 for all $\tilde z,z\in\Om$. Then $\Gamma$ is polynomial convex. Note that the last condition is equivalent to $R(z)=\tilde R(z_1^{m_1},\dots, z_p^{m_p})$ with $\tilde R$ having Lipschitz ratio less than $1$.

 \subsection{Topological invariants of  submanifolds with maximum CR singularity}
 
 One of very first questions asked by Bishop is the global topological invariants of compact real submanifolds in a complex manifold. He observed that the standard $2$-sphere in $\cc^2$ has at least exactly elliptic complex tangents. There are many other results in this direction; for instance, see Lai~\ci{La72} and Forstneri\v{c}~\ci{Fo92}, and reference therein. However, the global topological invariants for maximum CR singularity remains to be developed. 
 
 \subsection{Classifications 
  under volume-preserving transformations}
 
 One can classify real analytic submanifolds by using unimodular holomorphic mappings, i.e. the mappings that preserve holomorphic $n$-form $dz_1\wedge\cdots\wedge dz_n$.  This has been achieved by Gong real analytic $n$ submanifolds in $\cc^n$ with minimum CR singularity with an elliptic or hyperbolic complex tangent. The case $\gamma=0$ is largely open except the case that the real analytic submanifold is formally equivalent to $Q_0$ in $\cc^2$ as discussed in Section~\ref{gamma=0}. The case $\gamma=1/2$ is also largely open except when $M$ is formally equivalent to $Q_{1/2}$ as discussed in Section~\ref{gamma=.5}.
 In particular, we have the following.
 
 \begin{problem}Classify $n$-dimensional real analytic submanifolds in $\cc^n$ of which $\dim M_{CRs}$ can be any value in $\{0,\dots, n-2\}$ with Bishop invariant $1/2$. 
 	Same classification problem but using unimodular mappings.
 	In particular,  classify real analytic submanifolds in $\cc^n$ that is formally equivalent to
 	\begin{equation}\label{delta2}
 		Q^*_{1/2}\colon z_n=q_{1/2}(z_1,\bar z_1)+i x_2(z_1+\bar z_1), \quad y_2=\cdots y_{n-1}=0, \quad n>2.
 	\end{equation}
 \end{problem}
 Via the Moser-Webster theory, the classification for $M$ that is formal equivalent to $ Q^*_{1/2}$ is closely related to the following.
 \begin{problem}Classify biholomorphic mappings on $\cc^n$ ($n>2$) that are formally equivalent to
 	$$
 	\hat \sigma(\xi,\eta,\zeta)= (\xi+4i\zeta_2,\eta+4\xi-8i\zeta_2,\zeta),\quad (\xi,\eta)\in\cc^2, \ \zeta=(\zeta_2,\dots,\zeta_{n-1})\in\cc^{n-2}.
 	$$
 \end{problem}
 Note that the set of fixed points of $\hat\sigma$ has codimension $2$ in $\cc^n$ and Voronin's result does not apply directly. 
 Of course, one can also consider $n$-manifolds in $\cc^n$ that are formally equivalent to  product quadrics of some or all $Q_{\gamma_e},Q_{\gamma_h}, Q_{1/2}, Q_{1/2}^*, Q_0$.
 
 \subsection{Classification of (weakly) reversible holomorphic mappings}
 We say that a biholomorphic mapping $g$ is weakly reversible, if $g^{-1}=h^{-1}gh$ for some biholomorphic map $h$ that is not necessary an involution, and $h$ is called  a reverser.
 The following two  normal forms were proved by O'Farrell-Zaitsev~\ci{OZ14}.
  \begin{thm}[\!\!\cite{OZ14}] A formal map $g$ of $\cc^2$ with linear part $(z_1,z_2)\to(\lambda z_1,\lambda^{-1} z_2)$ for $\lambda$ not a root of unity is formally weakly  
   reversible if and only if it is formally linearizable, or formally equivalent to one of the following maps
 \begin{equation}\label{two-nf}
 \left(z_1,z_2\right)\to \left(\lambda (1+p^k)z_1,\frac{z_2}{\lambda(1+p^k)}\right), \left(z_1,z_2\right)\to \left(\frac{\lambda (1+p^k)^{\frac{1}{k}}}{(1+2p^k)^\frac{1}{k}}z_1,\frac{z_2}{\lambda(1+p^k)}\right)
 \end{equation}
 with $p=z_1z_2$ and $k=1,2,\dots$. Furthermore, if $g$ is convergent,  $|\la|\neq 1$ and the formal normal form is given by the first one, the normalization converges. 
 \end{thm}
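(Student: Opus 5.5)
The plan is to reduce everything to the Poincar\'e--Dulac normal form and then study how a reverser acts on the single invariant $p=z_1z_2$. Since $\lambda$ is not a root of unity, the only resonant monomials for the linear part $(\lambda z_1,\lambda^{-1}z_2)$ are $z_1p^j$ in the first component and $z_2p^j$ in the second. So $g$ is formally conjugate to
\[
g_0(z)=\bigl(a(p)z_1,\ b(p)z_2\bigr),\qquad a(0)=\lambda,\ b(0)=\lambda^{-1}.
\]
Let $h$ be a reverser, $h^{-1}g_0h=g_0^{-1}$. Its linear part must conjugate $\mathrm{diag}(\lambda,\lambda^{-1})$ to its inverse, and $\lambda\neq\lambda^{-1}$, so $h'(0)$ is antidiagonal. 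After composing $h$ with the standard normalizing procedure, I will arrange that $h$ also takes normal forms to normal forms. Then $h$ has the shape $(z_2\alpha(p),z_1\beta(p))$, which acts on $p$ by $p\mapsto \varphi(p)=p\,\alpha\beta$. The converse direction is a direct check: each of the two maps in \eqref{two-nf}, and any linearizable map, is reversed by an explicit map of this antidiagonal type.

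The key one-dimensional reduction is as follows. Put $m(p)=a(p)b(p)$, so that $g_0$ induces $f(p)=p\,m(p)$, a formal map tangent to the identity. Reversibility forces $\varphi^{-1}f\varphi=f^{-1}$. This splits into two cases.

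\emph{Case $m\equiv1$.} Here $g_0$ preserves $p$, and $a$ is either constant (linearizable) or $a=\lambda(1+cp^k+\cdots)$. I will conjugate by maps $(z_1u(p),z_2u(p)^{-1})$ and rescalings to bring $a$ to $\lambda(1+p^k)$; the reversing relation $a(\varphi(p))=a(p)^{-1}\cdot(\text{data of }h)$ is what should kill the higher-order tail of $a$. This gives the first form.

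\emph{Case $m\not\equiv1$.} Here $f(p)=p+cp^{k+1}+\cdots$. The formal invariants of $f$ are $k$, $c$ (up to scaling) and the residue $\mu$, in the normal form $\exp\bigl(\frac{p^{k+1}}{1+\mu p^k}\partial_p\bigr)$. Conjugacy to $f^{-1}$ via $\varphi=\zeta p+\cdots$ with $\zeta^k=-1$ forces a definite constraint relating $\mu$ and $k$. After that, the angular factor $a$ can be normalized using the centralizer of $f$, which consists of the formal flows of the generator, together with the residual freedom in $h$. The outcome should match the second map in \eqref{two-nf}, whose induced map on $p$ is $p(1+p^k)^{1/k-1}(1+2p^k)^{-1/k}$. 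I expect this case to be the main obstacle: one must show that reversibility simultaneously pins down the residue and leaves no further invariant in $a$. I would first settle it by computing with the explicit vector field $X=\frac{p^{k+1}}{1+\mu p^k}\partial_p$ and checking which $\mu$ allow $\zeta^*X=-X$ modulo the centralizer.

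For the convergence claim when $|\lambda|\neq1$ and $g$ is formally equivalent to the first form, I will apply \rt{lFba} to the one-element family $\mathcal F=\{g\}$. Formal complete integrability holds because the normal form is $(\mu_1(p)z_1,\mu_2(p)z_2)$ with $\mu_j$ functions of the $D$-invariant $p$. Condition (ii) of that definition holds because the resonances $\mu^Q=\mu_j$ persist identically in $p$ when $\mu_1\mu_2\equiv1$. Finally, $|\lambda|\neq1$ makes the linear part of Poincar\'e type, since the non-resonant multipliers $\lambda^{q_1-q_2}$ grow geometrically in one direction. \rt{lFba} then gives a convergent normalization.
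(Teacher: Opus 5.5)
\textbf{Verdict: genuine gap.} The step you flag as the main obstacle (case $m\not\equiv1$) is false for some $k$. The converse check fails for the second map as printed, and your mechanism in the case $m\equiv1$ is wrong, though fixable.

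Your reduction to one variable is sound. No ``arranging'' is needed: uniqueness of the Jordan decomposition $g_0=L\circ(L^{-1}g_0)$ gives $h^{-1}Lh=L^{-1}$. Hence $h\circ(z_2,z_1)$ commutes with $L$, and $h=(z_2\alpha(p),z_1\beta(p))$ automatically.

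\textbf{The case $m\not\equiv1$.} Reversibility of $f$ forces its residue to vanish; no relation with $k$ arises. So normalize $f=\exp(cp^{k+1}\partial_p)$; its reversers are then $\varphi=\zeta p+O(p^{k+1})$ with $\zeta^k=-1$. The conjugacies preserving this $f$ are $(z_1u(p),z_2v(p))$ with $puv=\omega p+O(p^{k+1})$, $\omega^k=1$. Since $\log u-\log u\circ f$ ranges exactly over the series of order $\geq k+1$, the $k$-jet $\log\lambda+a_1p+\dots+a_kp^k$ of $\log a$ is a formal invariant modulo $p\mapsto\omega p$. The freedom in $h$ plays no role here: $h$ is a reverser, not a normalizing map.

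Taking logarithms in the first component of $g_0\circ h=h\circ g_0^{-1}$ gives $\log\alpha-\log\alpha\circ f^{-1}=-\log a\circ\varphi-\log b\circ f^{-1}$. This is solvable iff $a_j(\zeta^j-1)=0$ for $j<k$ and $a_k=c/2$. If $k$ has an odd factor $r\geq3$, some $\zeta$ with $\zeta^k=-1$ has $\zeta^j=1$ for a $j<k$, and then $a_j$ stays free. For instance, with $k=3$ and $\zeta=-1$, take
\[
g_\epsilon=\Bigl(\lambda e^{\epsilon p^2}\tfrac{(1+p^3)^{1/3}}{(1+2p^3)^{1/3}}\,z_1,\ \lambda^{-1}e^{-\epsilon p^2}(1+p^3)^{-1/3}z_2\Bigr),\qquad h=\bigl(e^{-\epsilon p^2}z_2,\,-e^{\epsilon p^2}z_1\bigr).
\]
These satisfy $g_\epsilon\circ h\circ g_\epsilon=h$, i.e.\ $h^{-1}g_\epsilon h=g_\epsilon^{-1}$ (use $f(p)^3=p^3/(1+2p^3)$). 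All $g_\epsilon$ induce the same $f(p)=p(1+2p^3)^{-1/3}$, and $\epsilon$ is an invariant up to cube roots of unity. Hence for $\epsilon\neq0$, $g_\epsilon$ is weakly reversible but not formally equivalent to $L$ or to any map in \eqref{two-nf}. The uniqueness you hope for holds only when $k$ is a power of $2$. For other $k$ the ``only if'' direction fails as quoted, so no completion of your plan can prove it.

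\textbf{Further points.} The converse ``direct check'' fails for the second map as printed when $k\geq2$. Its induced map $p(1+p^k)^{1/k-1}(1+2p^k)^{-1/k}=p+Ap^{k+1}+Bp^{2k+1}+\cdots$ has iterative residue $\tfrac{k+1}{2}-\tfrac{B}{A^2}=\tfrac{k(k-1)(k+2)}{2(k+1)^2}\neq0$ (it is $4/9$ for $k=2$), hence is not conjugate to its inverse. Presumably the second component should be $z_2/(\lambda(1+p^k)^{1/k})$. Then $m=(1+2p^k)^{-1/k}$, and the linear map $(z_1,z_2)\mapsto(z_2,\zeta z_1)$ with $\zeta^k=-1$ reverses it.

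In the case $m\equiv1$ your mechanism is off. Reversibility imposes no condition there: the swap $(z_1,z_2)\mapsto(z_2,z_1)$ reverses every $(a(p)z_1,a(p)^{-1}z_2)$. Moreover, the maps $(z_1u(p),z_2u(p)^{-1})$ commute with $g_0$, and rescalings only rescale $p$. The tail of $a$ is removed instead by $(z_1u(p),z_2v(p))$ with $\psi=puv$ arbitrary, which replaces $a$ by $a\circ\psi$; choose $\psi$ with $a\circ\psi=\lambda(1+p^k)$.

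Your convergence argument via Theorem~\ref{lFba} is fine, except that it only yields a convergent form $(\hat a(p)z_1,\hat a(p)^{-1}z_2)$. You must finish with the convergent reparametrization $\psi$ just described.
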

They also find reversers for the two normal forms and the only the first normal form admits an reverser that is an involution.

\begin{problem} 
Determine if the second normal form in \re{two-nf} can  be achieved by a convergent transformation when $|\la|\neq1$.  It seems that the answer is unknown for the case $|\lambda|=1$ with $\la$ not a root of unity.
  Find the formal and holomorphic classifications for weakly reversible holomorphic mappings  if $\lambda$ is a root of unity. 
\end{problem}

%

\appendix
\section{Overview of the 
 \'Ecalle--Voronin
 theory}\label{ecalle-voronin}

To motivate our results, let us briefly recall some of the basics of analytic theory of  parabolic diffeomorphisms of $(\cc,0)$.
The idea behind this overview is to present afterwards a somewhat similar approach to our problem in dimension 2.
For more details on the \'Ecalle-Voronin theory see \cite{ecalle2,voronin, malgrange-bourbaki, EIlSV, AG05}  in which tangent to identity diffeomorphisms are considered. For general case, see \cite{martinet-ramis2}.
Our exposition follows  \cite{stolo-klimes} closely.

Let $\phi(z)=\lambda z+\hot(z)\in\Diff(\cc,0)$ be a germ of analytic diffeomorphism fixing the origin in $\cc$,
where $\lambda=e^{2\pi i\frac{q}{p}}$ is a root of unity of some order $p\geq 1$,  $\lambda^p=1$.
Its $p$-th iteration $\phi^{\circ p}(z)=z+\hot(z)$ is a diffeomorphism tangent to the identity, and as such it possesses a unique formal infinitesimal generator: a formal vector field $\hat{\bX}(z)$ at $0$ with vanishing linear part, such that the Taylor series of $\phi^{\circ p}(z)$ agrees with the formal time-1-flow $\exp(\hat{\bX})(z)$ of $\hat{\bX}$.
The formal vector field $\hat{\bX}$ can be conjugated by some formal tangent-to-identity map $\hat{\Psi}(z)\in\hatDiff_{\id}(\cc,0)$ to its {\it normal form}
\begin{equation*}
	{\bX}_{\rm nf}(z)=\frac{c\,z^{kp}}{1+c\,\mu\, z^{kp}}\, z\frac{\partial}{\partial z},\quad k\geq 1,\quad c\neq 0,	
\end{equation*}
which is invariant by the rotation $z\mapsto\lambda z$. Consequently also the germ $\phi(z)$ is formally conjugated to the \emph{normal form}
\begin{equation*}
	\phi_{\rm nf}(z)=\lambda\exp(\tfrac1p{\bX}_{\rm nf})(z).
\end{equation*}	
As it turns out, while the formal conjugacy $\hat\Psi(z)$ is generically divergent, it is Borel summable (of order $kp$) on sectors.

\begin{thm}[Birkhoff~\ci{Bi39}, Kimura\cite{kimura}, \'Ecalle~\cite{ecalle2}, Voronin~\cite{voronin}, \dots]\label{thm:kimura}
	The germ $\phi(z)$ is conjugated to its  normal form $\phi_{\rm nf}(z)$ by a {\it cochain of bounded analytic transformations}
	$\big\{\Psi_{\Omega_j}(z)=z+\hot(z)\big\}_{j\in\zz_{2kp}}$
	on a covering by $2kp$ sectors (Leau--Fatou petals) $\Omega_j$, $j\in\zz_{2kp}$,\footnote{The sectorial covering is $\lambda$-invariant: writing $\lambda=e^{2\pi i\frac{q}{p}}$ then for every sector $\Omega_j$ the rotated sector $\lambda\Omega_j=\Omega_{l}$, $l=j+2kq\mod 2kp$, belongs again to the covering.}
	\begin{equation*}
		\Psi_{\lambda\Omega_j}\circ\phi(z)=\phi_{\rm nf}\circ\Psi_{\Omega_j}(z),\quad z\in\Omega_j.
	\end{equation*}	
	Such normalizing cochain $\big\{\Psi_{\Omega_j}(z)\big\}_{j\in\zz_{2kp}}$ is \emph{unique up to} left composition with cochains $\big\{\exp(C_{\Omega_j}{\bf X}_{\rm nf})(z)\big\}_{j\in\zz_{2kp}}$, $C_{\Omega_j}\in\cc$, of flow maps of ${\bX}_{\rm nf}$.
\end{thm}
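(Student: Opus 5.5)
We plan to prove Theorem~\ref{thm:kimura} by the classical Fatou-coordinate method. First we reduce to the tangent-to-identity case. Put $f=\phi^{\circ p}(z)=z+a z^{kp+1}+\hot$; the multiplicity of the leading term is a multiple of $p$ because $f$ commutes with $\phi$, whose linear part is the rotation by $\lambda$. By a polynomial change of coordinates commuting with $z\mapsto\lambda z$ (the finite-order part of the formal normalization $\hat\Psi$, which is $\lambda$-equivariant by uniqueness), we arrange that $f$ agrees with $\exp({\bf X}_{\rm nf})$ up to order $2kp+1$; this fixes $c$ and $\mu$. Next we take the $2kp$ attracting and repelling directions of $f$, namely the rays where $c z^{kp}$ is real negative or positive. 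Around each direction we choose a Leau--Fatou petal $\Omega_j$ of opening slightly larger than $\pi/(kp)$. These petals cover a punctured disc, and the family is $\lambda$-invariant since $\phi$ permutes them.

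On each petal we construct a Fatou coordinate. Let $t(z)$ be the rectifying time of ${\bf X}_{\rm nf}$, i.e.\ ${\bf X}_{\rm nf}t=1$, so $t=-\frac{1}{kp\,c\,z^{kp}}+\mu\log z$. We seek $\Phi_j=t+\beta_j$ with $\Phi_j\circ f=\Phi_j+1$, which amounts to $\beta_j\circ f-\beta_j=\delta:=1-(t\circ f-t)=O(z^{kp})$. In the variable $w=t(z)$, $f$ becomes $w\mapsto w+1+O(|w|^{-1-1/(kp)})$ on a half-plane type region. For an attracting petal we solve by $\beta_j=-\sum_{n\ge0}\delta\circ f^{\circ n}$, and for a repelling petal we use the analogous series with $f^{-1}$. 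Convergence follows from the summable decay $O(|w+n|^{-1-1/(kp)})$, and the resulting $\beta_j$ is bounded and tends to zero at the vertex. We then define $\Psi_{\Omega_j}=\exp\big((\Phi_j-t){\bf X}_{\rm nf}\big)(z)=z+\hot$. This conjugates $f$ to $\exp({\bf X}_{\rm nf})$ on $\Omega_j$, because in rectifying time the map $\exp(s{\bf X}_{\rm nf})$ is translation by $s$.

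To pass from $f=\phi^{\circ p}$ to $\phi$ itself, we use that $\phi$ commutes with $f$ and maps $\Omega_j$ to $\lambda\Omega_j$. Both $\Psi_{\lambda\Omega_j}\circ\phi\circ\Psi_{\Omega_j}^{-1}$ and $\phi_{\rm nf}$ commute with $\exp({\bf X}_{\rm nf})$ and are tangent to $\lambda z$. A map commuting with a translation in the Fatou coordinate and bounded in the sense above is a translation, so the two differ by a flow map $\exp(C{\bf X}_{\rm nf})$. We fix this by normalizing one petal in each $\lambda$-orbit of petals and then transporting the normalization by $\phi$ to the other petals of that orbit. This produces the equivariant cochain $\Psi_{\lambda\Omega_j}\circ\phi=\phi_{\rm nf}\circ\Psi_{\Omega_j}$.

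Uniqueness comes from a Liouville-type argument. If $\Psi_{\Omega_j},\Psi'_{\Omega_j}$ both conjugate, then $g=\Psi'_{\Omega_j}\circ\Psi_{\Omega_j}^{-1}$ commutes with $\exp({\bf X}_{\rm nf})$ on a petal. In the Fatou coordinate, $g(w)-w$ is $1$-periodic and bounded on a region containing a full horizontal strip that extends to infinity in both imaginary directions, because the petal opening exceeds $\pi/(kp)$. Hence $g(w)-w$ is constant, so $g=\exp(C_{\Omega_j}{\bf X}_{\rm nf})$.

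We expect the main obstacle to be the uniform estimates for the Fatou coordinate on petals of opening larger than $\pi/(kp)$, which are needed so that consecutive petals overlap and the Liouville argument applies. Our first attempt will control the orbits of $f$ in the $w$-plane by comparison with translation by $1$ on regions $\{|\arg(w-R)|<\pi-\epsilon\}$.
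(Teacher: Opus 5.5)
The survey does not prove this statement. It quotes it from Birkhoff, Kimura, \'Ecalle and Voronin, and only indicates that $\hat\Psi$ is Borel summable of order $kp$ on sectors and that the petals are swept out by real trajectories of $e^{i\theta}{\bf X}_{\rm nf}$. Your proposal is the classical Fatou-coordinate (Leau--Fatou, Kimura, Voronin) proof, and it is correct in outline:
\begin{itemize}
\item you reduce to $f=\exp({\bf X}_{\rm nf})+O(z^{2kp+2})$;
\item you solve $\beta_j\circ f-\beta_j=\delta$ by the telescoping series, with $f^{\circ(-1)}$ on repelling petals;
\item you set $\Psi_{\Omega_j}=\exp(\beta_j{\bf X}_{\rm nf})$. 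This works pointwise, since $f(z)=\exp\big((1-\delta(z)){\bf X}_{\rm nf}\big)(z)$, so no global injectivity of $t$ is needed;
\item you transport along $\lambda$-orbits. This closes up after $p$ steps because $\phi_{\rm nf}^{\circ p}\circ\Psi_{\Omega_j}\circ\phi^{\circ(-p)}=\exp({\bf X}_{\rm nf})\circ\Psi_{\Omega_j}\circ f^{\circ(-1)}=\Psi_{\Omega_j}$;
\item you get uniqueness from the Liouville argument.
\end{itemize}

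Compared with the Borel--Laplace route the survey alludes to, yours is elementary and self-contained, and gives existence and uniqueness at once. What it does not give for free is that $\Psi_{\Omega_j}$ is the $kp$-sum of $\hat\Psi$ in the direction of $\Omega_j$, i.e.\ Gevrey asymptotic to $\hat\Psi$. The summability approach yields that directly, together with the reading of the cocycle as Stokes data, but it requires analysis in the Borel plane. That extra information is not part of the statement. If needed, it can be recovered afterwards from the exponential flatness of the transition maps on overlaps (Ramis--Sibuya).

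Some points to correct when writing it up.
\begin{enumerate}
\item The preliminary polynomial change cannot in general commute with $z\mapsto\lambda z$, since it must remove the non-resonant terms of $\phi$; and $\hat\Psi$ is not ``$\lambda$-equivariant''. Take the truncation of $\hat\Psi$. It conjugates $\phi$ to $\phi_{\rm nf}+O(z^{2kp+2})$ for the following reason. $\hat\Psi\circ\phi\circ\hat\Psi^{-1}$ preserves ${\bf X}_{\rm nf}$ and has linear part $\lambda$, hence equals $\lambda\exp(s{\bf X}_{\rm nf})$. Its $p$-th iterate is $\exp({\bf X}_{\rm nf})$, which forces $s=1/p$.
\item After matching through order $2kp+1$ you have $\delta=O(z^{kp+1})$, not $O(z^{kp})$. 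This matters: $O(z^{kp})=O(|w|^{-1})$ is not summable along orbits. It is exactly the matching of the $z^{2kp+1}$ coefficient (the $\mu\log z$ term in $t$) that gives the decay $O(|w|^{-1-1/(kp)})$ you then use.
\item The strip in the Liouville argument is the vertical one $\{a\leq\RE w\leq a+1\}$. It lies inside $\{\RE w>R\}$ (resp.\ $\{\RE w<-R\}$) in the Fatou image of the petal.
\item A competitor that is only $z+O(z^2)$ gives $h=O(|w|^{1-1/(kp)})$, not a bounded function. The argument still works, because $H(e^{2\pi i w})=h(w)$ then has at most logarithmic growth at $0$ and $\infty$, so its singularities there are removable.
\item $\phi(\Omega_j)$ is only approximately $\lambda\Omega_j$. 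Define the transported map on $\phi(\Omega_j)$ and extend it to $\lambda\Omega_j$ by $\exp(-n{\bf X}_{\rm nf})\circ\Psi\circ f^{\circ n}$ on attracting petals, or with $f^{\circ(-n)}$ on repelling ones.
\end{enumerate}
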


The form of these sectorial domains $\Omega_j$ (Figure~\ref{figure:parabolicpetals}) is related to the dynamics of ${\bX}_{\rm nf}$: they are spanned by the real-time trajectories of the family of rotated vector fields $e^{i\theta}{\bX}_{\rm nf}(z)$,
i.e. by the real curves
\[\frac{dz}{dt}=e^{i\theta}\frac{c\,z^{kp}}{1+c\,\mu\, z^{kp}}z,\qquad t\in\rr,\]
that stay inside some disc $\{|z|<\delta_1\}$, where $\theta$ is allowed to vary in some interval $]\delta_3,\pi-\delta_3[$,
for some $\delta_1,\delta_3>0$. See Figure~\ref{figure:parabolictrajectories}.

\begin{figure}[t]
	\centering	
	\begin{subfigure}{.31\textwidth} \includegraphics[angle=90, width=\textwidth]{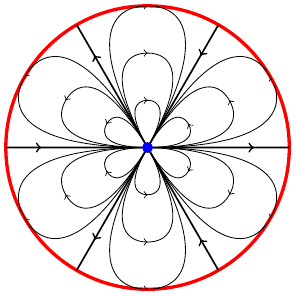} \caption{} \label{figure:parabolictrajectories} \end{subfigure}	
	\qquad
	\begin{subfigure}{.4\textwidth} \includegraphics[width=\textwidth]{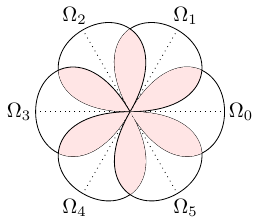} \caption{} \label{figure:parabolicpetals}  \end{subfigure}	
	\caption{(a) Real-time trajectories of the vector field $e^{i\theta}{\bX}_{\rm nf}$ inside a small disc.
		(b) The Leau--Fatou petals $\Omega_j$, $j\in\zz_{2kp}$.}
	\label{figure:parabolic}
\end{figure}

\medskip
The equivalence class of the set of the $2kp$ \emph{transition maps}
\[\psi_j=\Psi_{\Omega_{j-1}}\circ\Psi_{\Omega_j}^{\circ(-1)}\quad \text{on the {\it intersections} }\ \Omega_{j-1}\cap\Omega_j,\quad j\in\zz_{2kp},\]
modulo conjugation by cochains of flow maps $\big\{\exp(C_{\Omega_j}{\bX}_{\rm nf})\big\}_{j\in\zz_{2kp}}$, $C_{\Omega_j}\in\cc$,
\[\psi_{j}\simeq \exp(C_{\Omega_{j-1}}{\bX}_{\rm nf})\circ\psi_{j}\circ\exp(-C_{\Omega_{j}}{\bX}_{\rm nf}),\qquad j\in\zz_{2kp},\]
is then called a \emph{cocycle}. It is an analytic invariant of $\phi$  which expresses the obstruction to convergence of the formal normalizing transformation.
It was initially described by G.D.~Birkhoff \cite{Bi39} and later independently rediscovered by J.~\'Ecalle \cite{ecalle2} and S.M.~Voronin \cite{voronin}. The realization of moduli (assertion (2)) below was also conjectured by Birkhoff,  who proved a version of realization for a finite-jet deformation of realized  $\{\Psi_j\}_{j\in\zz_{2}}$; see~\cite[p.~83]{Bi39}.

\begin{thm}
	\begin{enumerate}
		\item \textnormal{(Birkhoff~\cite{Bi39}, \'Ecalle~\cite{ecalle2}, Voronin~\cite{voronin}).}
		Two germs $\phi$, $\phi'$ that are formally tangent-to-identity equivalent are analytically tangent-to-identity equivalent if and only if their cocycles $\big\{\psi_j\big\}_{j\in\zz_{2kp}}$, $\big\{\psi_j'\big\}_{j\in\zz_{2kp}}$ agree.
		
		\item \textnormal{(\'Ecalle~\cite{ecalle2}, Malgrange~\cite{malgrange-bourbaki}, Voronin~\cite{voronin}).}
		For each formal normal form $\phi_{\rm nf}$ and each collection of maps $\big\{\psi_j\big\}_{j\in\zz_{2kp}}$ on the intersections sectors, that are asymptotic to the identity and commute with $\phi_{\rm nf}$:
		\[\phi_{\rm nf}\circ\psi_j=\psi_{j+2kq}\circ\phi_{\rm nf},\]
		there exists an analytic map $\phi$ whose cocycle is represented by $\big\{\psi_j\big\}_{j\in\zz_{2kp}}$.
	\end{enumerate}	
\end{thm}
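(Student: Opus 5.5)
For part (1), the plan is to argue via sectorial normalizations. First I will record that the formal normalizing map $\hat\Psi$ is unique once its linear part is fixed, up to composition with formal flow maps $\exp(C\bX_{\rm nf})$ and rotations commuting with $\phi_{\rm nf}$. Then I will invoke Theorem~\ref{thm:kimura}, which supplies for $\phi$ and for $\phi'$ normalizing cochains $\{\Psi_{\Omega_j}\}$ and $\{\Psi'_{\Omega_j}\}$ on the same Leau--Fatou petals. I take the petals to be the same for both maps because they are determined only by $\bX_{\rm nf}$.

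For the forward direction of (1), I assume $\phi'=g^{-1}\circ\phi\circ g$ with $g$ analytic and tangent to the identity. Then $\{\Psi_{\Omega_j}\circ g\}$ is also a normalizing cochain for $\phi'$, possibly after shrinking the petals slightly. By the uniqueness clause of Theorem~\ref{thm:kimura}, it differs from $\{\Psi'_{\Omega_j}\}$ only by left composition with flow maps. The transition maps of $\Psi\circ g$ are literally the same as those of $\Psi$. Hence the two cocycles agree modulo the stated equivalence.

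For the converse, I assume that after adjusting by flow maps we have $\psi_j=\psi_j'$ for all $j$. I then define $g_j:=\Psi_{\Omega_j}^{-1}\circ\Psi'_{\Omega_j}$ on $\Omega_j$. On each overlap $\Omega_{j-1}\cap\Omega_j$ the equality of transition maps gives $g_{j-1}=g_j$, so the $g_j$ glue to a bounded holomorphic map on a punctured disc. By Riemann's removable singularity theorem this map extends holomorphically across $0$ to some $g$. Each $\Psi_{\Omega_j}$ is asymptotic to the identity to first order, so $g$ is tangent to the identity. Finally $g$ conjugates $\phi'$ to $\phi$ because each $g_j$ does; the petal-relabeling $\lambda\Omega_j$ is handled because the conjugation identity holds on every petal.

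For part (2), realization, I will follow Malgrange's approach. Given the $\psi_j$, I first build the manifold obtained by gluing the sectors $\Omega_j\times$(model) via the $\psi_j$. Equivalently, I seek maps $\Psi_j$ on the petals with $\Psi_{j-1}\circ\Psi_j^{-1}=\psi_j$. I will solve this as a nonabelian Cousin problem with flat asymptotics by Newlander--Nirenberg. First I patch the $\psi_j$ by a $C^\infty$ partition of unity in the angular variable, producing a smooth map $\Theta$ that is asymptotic to the identity and satisfies $\Theta_{j-1}=\psi_j\circ\Theta_j$. The structure $\Theta^*J_{\rm std}$ is then an integrable almost complex structure on a punctured disc, flat to infinite order at $0$, so it extends smoothly across $0$. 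Newlander--Nirenberg (dimension one, so just the measurable Riemann mapping theorem / Beltrami equation) gives a chart $\chi$. Then $\phi:=\chi^{-1}\circ\ldots$, more precisely $\phi=(\Theta\circ\chi)^{-1}\circ\phi_{\rm nf}\circ(\Theta\circ\chi)$ on each piece, is well defined. This uses the compatibility $\phi_{\rm nf}\circ\psi_j=\psi_{j+2kq}\circ\phi_{\rm nf}$, and the resulting $\phi$ is holomorphic with cocycle $\{\psi_j\}$.

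I expect the main obstacle to be controlling the regularity at $0$. I need the Beltrami coefficient to be flat, so that the solution $\chi$ is smooth with identity-like jet at $0$. Without this, the constructed $\phi$ would not be a germ with the correct formal normal form. Flatness requires that each $\psi_j$ be asymptotic to the identity uniformly on the sector intersections.
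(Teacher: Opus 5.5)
The paper does not prove this theorem; the appendix quotes it from Birkhoff, \'Ecalle, Voronin and Malgrange, so the comparison has to be with the standard arguments in that literature. Your outline reproduces those arguments correctly. For (1) you use the uniqueness clause of Theorem~\ref{thm:kimura} in one direction. In the other you glue $g_j=\Psi_{\Omega_j}^{-1}\circ\Psi'_{\Omega_j}$ across the overlaps and remove the singularity at $0$. For (2) you follow Malgrange: solve the nonabelian Cousin problem $\Psi_{\Omega_{j-1}}\circ\Psi_{\Omega_j}^{-1}=\psi_j$ through a smooth cochain with flat Beltrami coefficient, then define $\phi$ piecewise.

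A few points should be made explicit when you write it out.

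In the converse of (1), replacing $\Psi'_{\Omega_j}$ by $\exp(C_{\Omega_j}{\bf X}_{\rm nf})\circ\Psi'_{\Omega_j}$ preserves $\Psi'_{\lambda\Omega_j}\circ\phi'=\phi_{\rm nf}\circ\Psi'_{\Omega_j}$ only if $C_{\lambda\Omega_j}=C_{\Omega_j}$. This is because $\phi_{\rm nf}$ commutes with the flow of ${\bf X}_{\rm nf}$. Your check that $g\circ\phi'=\phi\circ g$ uses this relation. So for $p>1$, ``the cocycles agree'' must be read modulo $\lambda$-invariant flow cochains, which is exactly the freedom the uniqueness clause produces in the forward direction.

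In (2), your cutoffs in $\arg z$ are not $\phi_{\rm nf}$-invariant. Hence $\phi_{\rm nf}$ need not be holomorphic for $\Theta^*J_{\rm std}$, and a global formula such as $\chi\circ\phi_{\rm nf}\circ\chi^{-1}$ would fail. Your piecewise definition is therefore essential. With indices made explicit, set $\Psi_{\Omega_j}=\Theta_j\circ\chi$ and take $\phi=\Psi_{\lambda\Omega_j}^{-1}\circ\phi_{\rm nf}\circ\Psi_{\Omega_j}$ on the $j$-th piece. Consistency on overlaps is then precisely the hypothesis $\phi_{\rm nf}\circ\psi_j=\psi_{j+2kq}\circ\phi_{\rm nf}$.

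Flatness of the Beltrami coefficient also needs flat bounds on the derivatives of $\psi_j$ minus the identity. These follow from Cauchy estimates on slightly narrower sectors, so place the interpolation zones strictly inside the overlaps.

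Finally, normalize $\chi$ so that $d\chi(0)$ is the identity; this is possible because the Beltrami coefficient vanishes at $0$. Then each $\Psi_{\Omega_j}$ is tangent to the identity and asymptotic to the Taylor series of $\chi$. That series is a formal holomorphic series because $\bar\partial\chi$ is flat at $0$. This yields the formal conjugacy of $\phi$ to $\phi_{\rm nf}$. The uniqueness clause of Theorem~\ref{thm:kimura} then identifies $\{\Psi_{\Omega_j}\}$ with the canonical normalizing cochain up to flow maps, so the cocycle of $\phi$ is indeed represented by $\{\psi_j\}$.
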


If one wants to obtain the modulus of analytic equivalence with respect to conjugation by general transformations in $\Diff(\cc,0)$, one has to consider the cocycles modulo an action of the group of rotations $z\mapsto e^{2\pi i\frac{r}{kp}}z$, $r\in \zz_{kp}$, which preserve ${\bX}_{\rm nf}$. 	

\medskip

We remark that the Écalle-Voronin moduli space has quite a few applications, see~\cite{EIlSV}. For instance it plays an important role in the classifications of two transversal intersecting real analytic curves in $\cc$ and cusp-type singular real analytic curves; see Nakai~\ci{Na98}. Ahern-Gong~\cite{AG05, AG05a}. In~\ci{Je08}, Jenkins   used normalization on sectorial domains to investigate the classification of biholomorphic mappings of the form
$$
(z,w_1,\dots, w_n)\to (f(z),\la_1w_1g_1(z),\dots, \la_nw_ng_n(z))
$$
where $f(z)=z+O(2), g_j(z)=\lambda_j+O(1)$ are holomorphic functions with $|\lambda_j|\neq0,1$.
 Sectorial holomorphic solutions has 
  been obtained as "holomorphic realization" of formal series solutions of holomorphic differential equations near an irregular singularity \cite{ramis-houches} and they were used to define the moduli of such equations \cite{babbitt-v}. It has been then used for germs of holomorphic vector fields at a fixed point \cite{martinet-ramis1,martinet-ramis2,camacho-sad-ln} in $(\cc^2,0)$. The moduli space obtained is related to that of Écalle-Voronin through the {\it holonomy of an invariant manifold}. There are many other applications of these concepts among which are the theory of $q$-difference equations \cite{rsz-q-diff} and classification of CR manifolds \cite{KLS}.

\bibliographystyle{alpha}
\def\cprime{$'$}

\end{document}